\newcommand{\calO}{{\mathcal{O}}}
\newcommand{\calA}{{\mathcal{A}}}
\newcommand{\calB}{{\mathcal{B}}}
\newcommand{\calC}{\mathcal{C}}
\newcommand{\calD}{\mathcal{D}}
\newcommand{\calH}{\mathcal{H}}
\newcommand{\calF}{\mathcal{F}}
\newcommand{\Z}{\mathbf{Z}}
\newcommand{\G}{\mathbf{G}}
\newcommand{\N}{\mathbf{N}}
\newcommand{\C}{\mathbf{C}}
\newcommand{\F}{\mathbf{F}}
\newcommand{\Q}{\mathbf{Q}}
\newcommand{\A}{\mathbf{A}}
\renewcommand{\P}{\mathbf{P}}
\newcommand{\Spec}{{\mathrm{Spec}}}
\newcommand{\Aff}{\mathrm{Aff}}
\newcommand{\QAff}{\mathrm{QAff}}
\newcommand{\CAlg}{\mathrm{CAlg}}
\newcommand{\Ab}{\mathrm{Ab}}
\newcommand{\Hom}{\mathrm{Hom}}
\newcommand{\Fun}{\mathrm{Fun}}
\newcommand{\Tot}{\mathrm{Tot}}
\newcommand{\Ext}{\mathrm{Ext}}
\newcommand{\Tor}{\mathrm{Tor}}
\newcommand{\QCoh}{\mathrm{QCoh}}
\newcommand{\Ind}{\mathrm{Ind}}
\newcommand{\perf}{\mathrm{perf}}
\newcommand{\Coh}{\mathrm{Coh}}
\newcommand{\Vect}{\mathrm{Vect}}
\newcommand{\Mod}{\mathrm{Mod}}
\newcommand{\Pic}{\mathrm{Pic}}
\newcommand{\et}{\mathrm{\acute{e}t}}
\newcommand{\im}{\mathrm{im}}
\newcommand{\id}{\mathrm{id}}
\newcommand{\coker}{\mathrm{coker}}
\newcommand{\cofib}{\mathrm{cofib}}
\newcommand{\fib}{\mathrm{fib}}
\renewcommand{\ker}{\mathrm{ker}}
\newcommand{\opp}{\mathrm{opp}}
\newcommand{\colim}{\mathop{\mathrm{colim}}}
\newcommand{\adjunction}[4]{\xymatrix@1{#1{\ } \ar@<0.3ex>[r]^{ {\scriptstyle #2}} & {\ } #3 \ar@<0.3ex>[l]^{ {\scriptstyle #4}}}}
\begin{document}

\bibliographystyle{alpha}

\newtheorem{theorem}{Theorem}[section]
\newtheorem*{theorem*}{Theorem}
\newtheorem*{definition*}{Definition}
\newtheorem{proposition}[theorem]{Proposition}
\newtheorem{lemma}[theorem]{Lemma}
\newtheorem{corollary}[theorem]{Corollary}

\theoremstyle{definition}
\newtheorem{definition}[theorem]{Definition}
\newtheorem{question}[theorem]{Question}
\newtheorem{remark}[theorem]{Remark}
\newtheorem{example}[theorem]{Example}
\newtheorem{notation}[theorem]{Notation}
\newtheorem{convention}[theorem]{Convention}
\newtheorem{construction}[theorem]{Construction}
\newtheorem{claim}[theorem]{Claim}

\title{Algebraization and Tannaka duality}
\address{Institute for Advanced Study \\ Einstein Drive \\ Princeton, NJ 08540}
\email{bhargav.bhatt@gmail.com}
\author{Bhargav Bhatt}
\maketitle

\section{Introduction}

Our goal in this paper is to identify certain naturally occurring colimits of schemes and algebraic spaces. The statements, which are essentially algebraization results for maps between schemes and algebraic spaces, are elementary and explicit. However, our techniques are indirect: we use (and prove) some new Tannaka duality theorems for maps of algebraic spaces. Our approach to these theorems relies on a systematic deployment of perfect complexes (ergo, we use some derived algebraic geometry) instead of ample line bundles or vector bundles.  Consequently, the Tannaka duality results we obtain have fewer, and much weaker, finiteness constraints than some of the existing ones: we only insist that our algebraic spaces be quasi-compact and quasi-separated (qcqs), and do not require any quasi-projectivity or noetherian hypotheses. All rings are assumed to be commutative.

\subsection{Algebraization of jets}
The first colimit we identify is that of an affine (adic) formal scheme.

\begin{theorem}
	\label{thm:alglim}
If $A$ is a ring which is $I$-adically complete for some ideal $I$, and $X$ is a qcqs algebraic space, then $X(A) \simeq \lim X(A/I^n)$ via the natural map.
\end{theorem}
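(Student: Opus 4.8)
A direct reduction to the affine case along a Zariski cover of $X$ does not work, because the hypothesis of $I$-adic completeness is not inherited by localizations $A_f$; so the plan is to proceed through the Tannaka duality theorem for qcqs algebraic spaces developed in this paper, which identifies, for any ring $R$, the space $X(R) = \Hom(\Spec R, X)$ with a suitable space of colimit-preserving symmetric monoidal functors $\QCoh(X) \to \Mod_R$ (those obeying the relevant connectivity/flatness side condition). Granting this, one rewrites both sides Tannakianly: since forming functors out of $\QCoh(X)$ commutes with limits in the target, $\lim_n X(A/I^n)$ becomes the corresponding space of functors $\QCoh(X) \to \lim_n \Mod_{A/I^n}$, and the canonical map of the theorem becomes post-composition with the completion functor $c\colon \Mod_A \to \lim_n \Mod_{A/I^n}$, $M \mapsto (M\otimes_A A/I^n)_n$, which is colimit-preserving and symmetric monoidal. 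So the theorem reduces to showing that post-composition with $c$ is an equivalence on the relevant functor categories out of $\QCoh(X)$.

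For this I would use that $\QCoh(X)$ is compactly generated with compact objects the perfect complexes $\mathrm{Perf}(X)$ --- valid for any qcqs algebraic space --- so that a colimit-preserving symmetric monoidal functor out of $\QCoh(X)$ is equivalent data to a symmetric monoidal functor out of $\mathrm{Perf}(X)$; as perfect complexes are dualizable, such a functor lands among dualizable objects, and dualizability is detected on truncations, so the dualizable objects of $\lim_n \Mod_{A/I^n}$ are exactly $\lim_n \mathrm{Perf}(A/I^n)$. The key input is that, because $A$ is $I$-adically complete, $c$ restricts to an equivalence $\mathrm{Perf}(A) \simeq \lim_n \mathrm{Perf}(A/I^n)$: full faithfulness is formal, since $\lim_n$ commutes with the finite (co)limits and retracts building perfect complexes and $A = \lim_n A/I^n$ by hypothesis, so $\lim_n$ provides a retraction of $c$ on $\mathrm{Perf}(A)$; essential surjectivity says a compatible system of perfect complexes on the $A/I^n$ lifts to a perfect complex on $A$. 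Granting this, a Tannakian functor $\QCoh(X)\to\lim_n\Mod_{A/I^n}$ restricts to $\mathrm{Perf}(X)\to\mathrm{Perf}(A)$, which $\Ind$-extends to a colimit-preserving symmetric monoidal $\widetilde F\colon \QCoh(X)=\Ind(\mathrm{Perf}(X))\to\Ind(\mathrm{Perf}(A))=\Mod_A$; one then checks that $c\circ\widetilde F$ recovers the original functor and that the side condition transfers (for perfect complexes this is a Nakayama argument over the complete ring $A$), thereby producing the inverse to post-composition with $c$, and hence the theorem.

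The principal obstacle is the essential surjectivity in $\mathrm{Perf}(A)\simeq\lim_n\mathrm{Perf}(A/I^n)$, a non-noetherian form of Grothendieck existence for perfect complexes: given a compatible system $(P_n)$ one forms $P=\lim_n P_n$ and must show, via a uniform Tor-amplitude bound and a complete-Nakayama argument, that $P$ is perfect and that $P\otimes_A A/I^n\simeq P_n$; some additional care is needed when $I$ is not finitely generated, where even the $I$-adic derived completion requires attention. Subsidiary issues are the compact generation of $\QCoh(X)$ for an arbitrary qcqs algebraic space --- no resolution property or noetherian hypothesis is available --- and verifying that the connectivity/flatness conditions appearing in the Tannaka duality theorem are unaffected by replacing $\Mod_A$ with $\lim_n\Mod_{A/I^n}$; but it is only through the perfect-complex gluing statement that the hypothesis that $A$ is $I$-adically complete is genuinely used.
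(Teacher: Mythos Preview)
Your overall strategy matches the paper's exactly: reduce via Tannaka duality to the key lemma $D_\perf(A) \simeq \lim_n D_\perf(A/I^n)$ (Lemma~\ref{lem:perfcomplexnilpext}), then read off the map $\Spec(A)\to X$. However, you have misread the Tannaka duality statement. The paper's Theorem~\ref{thm:TannakaAlgSpaces} asserts
\[
\Hom(S,X)\simeq \Fun_\otimes(D_\perf(X),D_\perf(S))\simeq \Fun^L_\otimes(D(X),D(S))
\]
with \emph{no} connectivity or flatness side condition whatsoever; removing these hypotheses from Lurie's earlier result is one of the main points of the paper. Your ``subsidiary issue'' of transferring such conditions through the limit is therefore a non-issue --- and this matters, because the paper explicitly notes (in the introduction) that Lurie's flatness hypothesis would render his theorem inapplicable here, precisely because flatness is hard to control through limits. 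Had you actually needed to verify a flatness condition, that would be a genuine gap. With the correct statement, your argument simplifies: the compatible system $\{\epsilon_n\}$ yields an exact symmetric monoidal functor $D_\perf(X)\to\lim_n D_\perf(A/I^n)\simeq D_\perf(A)$, and Theorem~\ref{thm:TannakaAlgSpaces} immediately produces $\epsilon:\Spec(A)\to X$.

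For the gluing lemma itself, your sketch (form $P=\lim_n P_n$ in the derived sense, then argue via uniform Tor-amplitude and complete Nakayama) can be made to work but is not the paper's route. The paper instead represents the system $\{K_n\}$ by a \emph{strict} projective system $\{P_n^\bullet\}$ of bounded complexes of finite projective $A/I^n$-modules such that $P_{n+1}^\bullet/I^n \simeq P_n^\bullet$ as honest chain complexes (not merely in the derived category), built inductively using that idempotents and chain maps lift along square-zero surjections; the termwise inverse limit is then manifestly a bounded complex of finite projective $A$-modules by the argument of Lemma~\ref{lem:vectcont}. This avoids derived limits and any delicate Nakayama-over-non-noetherian-rings issues entirely, and is where the nilpotence of $I^n/I^{n+1}$ is used.
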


An equivalent formulation is: if $A = \lim A/I^n$, then $\Spec(A)$ is a colimit of the diagram $\{\Spec(A/I^n)\}$ in the category of qcqs algebraic spaces. Theorem \ref{thm:alglim} is straightforward to prove if $A/I$ is local; its content becomes apparent only when $\Spec(A/I)$ has some non-trivial global geometry. Note also that there are no noetherian assumptions on any object in sight, so the ideal $I$ might not be finitely generated. In fact, the result extends to more general topological rings $A$ that arise naturally in $p$-adic geometry (see Remark \ref{rmk:alglimadm}). This answers a question asked by Drinfeld, and has the following representability consequence in the theory of arc spaces, which was our original motivation for pursuing Theorem \ref{thm:alglim}.

\begin{corollary}
	\label{cor:formalarcdescent}
	If $X$ is a qcqs algebraic space, then the ``formal arc'' functor $\mathrm{Arcs}_X(R) := X(R\llbracket t \rrbracket)$ is an fpqc sheaf on the category of rings, and is identified with the functor $R \mapsto \lim X(R[t]/(t^n))$.
\end{corollary}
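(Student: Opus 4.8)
The plan is to read the corollary off from Theorem~\ref{thm:alglim} applied to the ring $A = R\llbracket t\rrbracket$ with the ideal $I = tA$, followed by a short descent bookkeeping argument. First I would observe that $R\llbracket t\rrbracket$ is $t$-adically separated and complete: it is separated because a power series lying in $t^nR\llbracket t\rrbracket$ for every $n$ vanishes, and it is complete because $R\llbracket t\rrbracket/t^nR\llbracket t\rrbracket \cong R[t]/(t^n)$ with transition maps exhibiting $R\llbracket t\rrbracket = \lim_n R[t]/(t^n)$. Theorem~\ref{thm:alglim} then supplies a bijection
$$\mathrm{Arcs}_X(R) = X(R\llbracket t\rrbracket) \xrightarrow{\sim} \lim_n X(R[t]/(t^n))$$
induced by the projections $R\llbracket t\rrbracket \to R[t]/(t^n)$. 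Since a ring map $R \to S$ induces compatible maps $R\llbracket t\rrbracket \to S\llbracket t\rrbracket$ and $R[t]/(t^n) \to S[t]/(t^n)$, this bijection is natural in $R$, which gives the asserted identification of functors.

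It then remains to show that $G\colon R \mapsto \lim_n X(R[t]/(t^n))$ is an fpqc sheaf. Since a limit of sheaves is a sheaf, it suffices to treat each $G_n\colon R \mapsto X(R[t]/(t^n))$ separately, and for this I would invoke the fact that a qcqs algebraic space $X$, regarded as a functor on rings, satisfies descent for the fpqc topology (standard, and valid for arbitrary quasi-separated algebraic spaces). Given a faithfully flat map $R \to R'$, the induced map $R[t]/(t^n) \to R'[t]/(t^n)$ is again faithfully flat, because $R'[t]/(t^n) = R'\otimes_R R[t]/(t^n)$ and $R[t]/(t^n)$ is a free (hence flat) $R$-module; and the same flatness gives canonical identifications
$$\bigl(R' \otimes_R \cdots \otimes_R R'\bigr)[t]/(t^n) \;\cong\; R'[t]/(t^n) \otimes_{R[t]/(t^n)} \cdots \otimes_{R[t]/(t^n)} R'[t]/(t^n),$$
so that the \v{C}ech nerve of $R[t]/(t^n) \to R'[t]/(t^n)$ is obtained by applying $(-)[t]/(t^n)$ to the \v{C}ech nerve of $R \to R'$. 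Feeding this into the fpqc descent property of $X$ shows that $G_n(R)$ is the equalizer of $G_n(R') \rightrightarrows G_n(R'\otimes_R R')$; passing to the limit over $n$ finishes the proof.

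I do not anticipate a serious obstacle here: granted Theorem~\ref{thm:alglim}, the corollary amounts to the elementary observations that $(-)[t]/(t^n)$ is flat and carries fpqc covers together with their \v{C}ech nerves to fpqc covers, and that limits preserve the sheaf condition. The only input beyond Theorem~\ref{thm:alglim} is fpqc descent for algebraic spaces, which is classical; if one wished to avoid citing it, one could instead deduce fpqc descent for $\mathrm{Arcs}_X$ from fppf descent plus the completeness input of Theorem~\ref{thm:alglim}, but this seems like unnecessary work.
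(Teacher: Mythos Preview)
Your proposal is correct and follows the same approach as the paper: the identification $X(R\llbracket t\rrbracket) \simeq \lim_n X(R[t]/(t^n))$ is deduced directly from Theorem~\ref{thm:alglim} applied to $A = R\llbracket t\rrbracket$ with $I = (t)$ (this is exactly the content of Corollary~\ref{cor:arcGreenberg}(1) in the body), and the fpqc sheaf property follows from the identification since each $R \mapsto X(R[t]/(t^n))$ is an fpqc sheaf and limits of sheaves are sheaves. You have in fact spelled out the fpqc descent step more carefully than the paper does---the paper leaves this implicit---but the argument is the expected one.
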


As the functor $\mathrm{Arcs}_X$ is almost never locally finitely presented (even for $X$ an algebraic variety),  one cannot reduce Corollary \ref{cor:formalarcdescent} to the corresponding assertion on the category of noetherian rings (which is easier to prove). This corollary answers a question raised in \cite[\S 2]{NicaiseSebag} and pointed out to us by Nicaise. The following feature of the proof of Theorem \ref{thm:alglim} seems noteworthy: given a compatible system $\{\epsilon_n:\Spec(A/I^n) \to X\} \in \lim X(A/I^n)$, we construct an algebraization $\epsilon:\Spec(A) \to X$ without ever musing about points of $\Spec(A) \setminus \Spec(A/I)$.

\subsection{Algebraization of products}
The second result deals with products, rather than cofiltered inverse limits, of rings; this question was brought to our attention by Poonen.

\begin{theorem}
	\label{thm:algprod}
	If $\{A_i\}_{i \in I}$ is a set of rings, and $X$ is a qcqs algebraic space, then $X(\prod_i A_i) \simeq \prod_i X(A_i)$ via the natural map.
\end{theorem}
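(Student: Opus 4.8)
If $X=\Spec R$ is affine the assertion is immediate, since $X(B)=\Hom_{\mathrm{ring}}(R,B)$ and $\Hom(R,-)$ carries a product of rings to the product of the corresponding sets. To bootstrap this to an arbitrary qcqs algebraic space the plan is to invoke the Tannaka duality theorem proved later in this paper: for every ring $B$ the natural functor is an equivalence $X(B)\simeq\Fun^{\otimes,\star}(\mathrm{Perf}(X),\mathrm{Perf}(B))$, the space of symmetric monoidal exact functors subject to the relevant connectivity condition $\star$. Since a functor into a product of $\infty$-categories amounts to a compatible family of functors into the factors — also at the level of the symmetric monoidal, exact, and $\star$-structures — applying this with $B=\prod_iA_i$ and with each $B=A_i$ identifies the natural map of the theorem with postcomposition
\[
\Psi_{\ast}\colon\Fun^{\otimes,\star}\!\bigl(\mathrm{Perf}(X),\mathrm{Perf}(\textstyle\prod_iA_i)\bigr)\longrightarrow\Fun^{\otimes,\star}\!\bigl(\mathrm{Perf}(X),\textstyle\prod_i\mathrm{Perf}(A_i)\bigr)
\]
along the base-change functor $\Psi\colon\mathrm{Perf}(\prod_iA_i)\to\prod_i\mathrm{Perf}(A_i)$.

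The key point is that $\Psi$ is fully faithful. Indeed, for perfect complexes $P,P'$ over $\prod_iA_i$ with base changes $P_i,P_i'$ to $A_i$, the tensor product of perfect complexes commutes with this product, so $P^{\vee}\otimes P'\simeq\prod_i(P_i^{\vee}\otimes P_i')$; applying $\tau_{\geq0}$ and $\Omega^{\infty}$, which also commute with products, gives the identification of mapping spaces. Consequently $\Psi_{\ast}$ is fully faithful, and in particular $X(\prod_iA_i)\to\prod_iX(A_i)$ is injective: a morphism $\Spec(\prod_iA_i)\to X$ is pinned down by its restrictions to the $\Spec(A_i)$. (When $X$ is separated this injectivity is elementary: the equaliser of two such maps is a closed subscheme of $\Spec(\prod_iA_i)$ cut out by an ideal contained in $\prod_{j\neq i}A_j$ for every $i$, hence zero.) It remains to prove essential surjectivity of $\Psi_{\ast}$, i.e.\ that every $F=(F_i)_i\in\Fun^{\otimes,\star}(\mathrm{Perf}(X),\prod_i\mathrm{Perf}(A_i))$ factors through $\Psi$; such a factorisation is then automatically unique and compatible with all the structure, since $\Psi$ is fully faithful and reflects it. Write $f_i\colon\Spec(A_i)\to X$ for the morphism with $f_i^{\ast}=F_i$.

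Since $\Psi$ is fully faithful and exact with idempotent-complete source, its essential image is a thick subcategory of $\prod_i\mathrm{Perf}(A_i)$; and as $X$ is qcqs, $\mathrm{Perf}(X)$ is generated as a thick subcategory by a single perfect complex $G$ (because $\mathrm{QCoh}(X)$ admits a single compact generator). It therefore suffices to show that $F(G)=(f_i^{\ast}G)_i$ lies in the essential image of $\Psi$ — equivalently, that the perfect complexes $f_i^{\ast}G$ on $\Spec(A_i)$ glue to a perfect complex on $\prod_iA_i$. To this end, fix an affine étale cover $p\colon U=\Spec C\to X$ (automatically of finite presentation, $X$ being qcqs), and let $V_i:=\Spec(A_i)\times_XU$. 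Using quasi-compactness of $X$ together with finite presentation of $p$ one may organize the $V_i$ into uniformly affine covers, say $V_i=\Spec(B_i)$ with $\prod_iA_i\to\prod_iB_i$ faithfully flat and $(\prod_iB_i)\otimes_{\prod_iA_i}A_j\simeq B_j$, so that $\mathcal V:=\Spec(\prod_iB_i)\to\Spec(\prod_iA_i)$ is an fpqc cover restricting to $V_j$ over each factor $\Spec(A_j)$. Now the fixed complex $G|_U$ has a representative by a bounded complex of finitely generated projective $C$-modules, each a direct summand of a fixed finite free module; pulling this back to each $V_i$ and taking the product over $i$ produces a bounded complex of finitely generated projective $\prod_iB_i$-modules whose base change to $B_j$ recovers $f_j^{\ast}G|_{V_j}$, so $(f_i^{\ast}G|_{V_i})_i$ is perfect on $\mathcal V$. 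Its canonical descent datum for $\mathcal V\to\Spec(\prod_iA_i)$ is assembled from the descent data of the individual $f_i^{\ast}G|_{V_i}$, so by fpqc descent of perfect complexes $(f_i^{\ast}G)_i$ is the restriction of a perfect complex on $\prod_iA_i$. This provides the required factorisation and completes the proof.

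The step that demands the most care is the gluing in the previous paragraph, and within it the passage from the fixed geometric datum $p\colon U\to X$ to uniform control over the index set $I$: choosing $p$ so that the $V_i$ are uniformly affine — for instance by reducing first to the case where $X$ is a separated scheme and then taking $U$ to be the disjoint union of a finite affine open cover, so that each $V_i$ is a disjoint union of a bounded number of affine schemes — and verifying that a product of flat modules over a product of rings is flat and that the tensor product distributes over the relevant products, which is what lets the descent datum be recognised. By contrast, the analogous gluing in Theorem~\ref{thm:alglim} — manufacturing a perfect complex on $A$ out of perfect complexes on the quotients $A/I^{n}$ — is considerably more delicate and relies on a formal-GAGA-type input; the present product structure keeps $\mathrm{Perf}(\prod_iA_i)$ close enough to a literal product of the $\mathrm{Perf}(A_i)$ that the gluing is comparatively soft.
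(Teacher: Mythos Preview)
Your overall strategy matches the paper's: invoke Tannaka duality (Theorem~\ref{thm:TannakaAlgSpaces}) to reduce the question to showing that, for a family $\{f_i:\Spec(A_i)\to X\}$, the assignment $K\mapsto \prod_i f_i^*K$ carries perfect complexes to perfect complexes. Reducing further to a single compact generator $G$ via the thick-subcategory argument is a nice touch (the paper checks all $K$, but this is equivalent). The full faithfulness of $\Psi:D_\perf(\prod_i A_i)\to\prod_i D_\perf(A_i)$ is correct and appears as Lemma~\ref{lem:prodfullyfaithful}.

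The genuine gap is in your descent step. You assert that $\prod_i A_i \to \prod_i B_i$ is faithfully flat and that ``the tensor product distributes over the relevant products,'' flagging both as needing verification but not supplying it. These are exactly the hard points, and the second is false in the generality you need: the \v{C}ech nerve of $\Spec(\prod_i B_i)\to\Spec(\prod_i A_i)$ involves $(\prod_i B_i)\otimes_{\prod_i A_i}(\prod_i B_i)$, which is \emph{not} $\prod_i(B_i\otimes_{A_i}B_i)$ for infinite $I$ when the $B_i$ are not uniformly finite projective $A_i$-modules (and \'etale covers are essentially never finite as modules). So there is no mechanism to ``assemble'' a descent datum on the correct scheme from the individual descent data, and fpqc descent cannot be invoked. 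The suggested reduction ``to the case where $X$ is a separated scheme'' is also unjustified for algebraic spaces: there is no evident d\'evissage that preserves the shape of the problem.

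The paper's proof (\S\ref{sec:algprodspaces}) sidesteps descent entirely. Instead of gluing, it shows directly that each $f_i^*K$ can be represented by a bounded complex of projectives of \emph{uniformly} bounded embedding dimension, so that the product is visibly perfect (Lemma~\ref{lem:boundglobalpatch}). The substantive work is Lemma~\ref{lem:boundlocalglobal}: a bound on the ``local size'' of a perfect complex over a Nisnevich cover of bounded length yields a bound on its ``global size,'' via a function $f:\N\to\N$ independent of the ring. This converts the uniform presentation of $G|_U$ (which you correctly identified) into a uniform global presentation of each $f_i^*G$ over $A_i$ itself, with no cover and hence no descent required. Your argument would be repaired by replacing the descent paragraph with this size-bound argument; what remains of your write-up (Tannaka framework, full faithfulness of $\Psi$, reduction to a generator) is then correct.
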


An equivalent formulation is: the scheme $\Spec(\prod_i A_i)$ is a coproduct of $\{\Spec(A_i)\}$ in the category of qcqs algebraic spaces. Note that {\em some} finiteness hypothesis on $X$ is necessary: the (typically non-quasi-compact) scheme $\sqcup_i \Spec(A_i)$ is a coproduct of $\{\Spec(A_i)\}$ in the category of all schemes. The non-trivial case, again, is when the rings $A_i$ have interesting global geometry. Moreover, like Theorem \ref{thm:alglim}, the proof of Theorem \ref{thm:algprod} also circumvents ever contemplating points of $\Spec(\prod_i A_i) \setminus \sqcup_i \Spec(A_i)$.  Theorem \ref{thm:alglim} may be used to describe adelic points on algebraic spaces over global fields (see Corollary \ref{cor:adelicpoints}).

\subsection{Formal glueing} 
The third result concerns the classical Beauville-Laszlo theorem \cite{BLDescent}, which is incredibly useful in the construction of bundles on families of curves arising, for example, in geometric representation theory. Recall that this theorem asserts: given an affine scheme $X$ with a Cartier divisor $Z \subset X$, one can patch compatible quasi-coherent {\em sheaves} on $\widehat{X}$ (the completion of $X$ along $Z$) and $U := X \setminus Z$ to a quasi-coherent sheaf on $X$, provided the sheaves being patched are flat along $Z$. This is a sheaf-theoretic manifestation of the principle that $\widehat{X}$ is an algebro-geometric analogue of a tubular neighbourhood of $Z$ in $X$, so $X$ behaves as though it were built by glueing $\widehat{X}$ to $U$ over $\widehat{X} \setminus Z = \widehat{X} \times_X U$. In the next theorem, we vivify this geometric intuition by showing that $X$ is literally the pushout of $\widehat{X}$ and $U$ along $\widehat{X} \times_X U$, which perhaps clarifies the glueing result for sheaves. Along the way, we also offer an improvement on the glueing result itself: the patching works unconditionally for quasi-coherent {\em complexes}.

\begin{theorem}
	\label{thm:BLintro}
	Let $\pi:Y \to X$ be a map of qcqs algebraic spaces. Assume there exists a finitely presented closed subspace $Z \subset X$ satisfying\footnote{The condition $Z \times_X^L Y \simeq Z$ means:  the maps $Y \to X$ and $Z \to X$ are mutually $\Tor$-independent, and $\pi^{-1}(Z) \simeq Z$.}  $Z \times_X^L Y \simeq Z$. Set $U = X\setminus Z$ and $V = Y \setminus \pi^{-1}(Z)$. Then:
	\begin{enumerate}
		\item The fibre square 
			\[ \xymatrix{ V \ar[r]^j \ar[d]^\pi & Y \ar[d]^\pi \\
				U \ar[r]^j & X }\]
		 	is a pushout in qcqs algebraic spaces. 
		\item The natural map induces an equivalence $D(X) \simeq D(Y) \times_{D(V)} D(U)$.
	\end{enumerate}
\end{theorem}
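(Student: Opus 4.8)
The plan is to prove part (2) first and then deduce part (1) from it by Tannaka duality. For (1): a cocone under $(U \leftarrow V \to Y)$ with vertex a qcqs algebraic space $T$ is the same datum as a pair of maps $Y \to T$, $U \to T$ together with an identification of their restrictions to $V$. By the Tannaka duality theorem of this paper (applicable since $T$ is a qcqs algebraic space), such data correspond to symmetric monoidal colimit-preserving functors $D(T) \to D(Y)$ and $D(T) \to D(U)$ together with an identification after composing to $D(V)$; since $\Fun^{\otimes,\L}(D(T),-)$ preserves limits of presentable symmetric monoidal categories, this is exactly a symmetric monoidal colimit-preserving functor $D(T) \to D(Y) \times_{D(V)} D(U)$, which by part (2) and Tannaka duality again is a map $X \to T$. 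Checking that this chain of identifications is implemented by the natural maps exhibits the square as a pushout. So everything rests on (2).

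For (2), the natural functor $\Phi : D(X) \to D(Y) \times_{D(V)} D(U)$ sends $M$ to the triple $(\pi^* M, j_U^* M, \can)$; it preserves colimits, hence admits a right adjoint $\Psi$ carrying $(N_Y, N_U, \phi)$ to the fibre product $R\pi_* N_Y \times_{R(\pi j_V)_* j_V^* N_Y} Rj_{U*} N_U$ formed in $D(X)$. I will show the unit and counit of $(\Phi,\Psi)$ are equivalences. Unwinding, the unit $M \to \Psi\Phi(M)$ being an equivalence says precisely that the Mayer--Vietoris square with corners $M$, $R\pi_*\pi^* M$, $Rj_{U*}j_U^* M$, $R(\pi j_V)_*(\pi j_V)^* M$ is cartesian in $D(X)$. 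The fibre of the map from the first corner to the third is the local cohomology $R\Gamma_Z(M)$, and the fibre of the map from the second to the fourth is $R\Gamma_Z(R\pi_*\pi^* M)$ (because $\pi^{-1}(Z) = Z\times_X Y$ maps to $Z$); so the square is cartesian if and only if the canonical map $R\Gamma_Z(M) \to R\Gamma_Z(R\pi_*\pi^* M)$ is an equivalence.

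The heart of the matter is then the behaviour of $\pi^*$ and $R\pi_*$ on $Z$-supported complexes. Working Zariski-locally on $X$ I may assume $X = \Spec(A)$ and $Z = V(I)$ with $I = (f_1,\dots,f_r)$ finitely generated; then $R\Gamma_Z$ is the smashing localization $M \mapsto K_\infty(I) \otimes^\L_A M$ given by tensoring with the stable Koszul complex, which is a filtered colimit of perfect complexes. Using the projection formula for perfect complexes, together with the fact that $R\pi_*$ commutes with filtered colimits since $\pi$ is qcqs, one computes $R\Gamma_Z(R\pi_*\pi^* M) \simeq R\pi_*\pi^*\big(R\Gamma_Z(M)\big)$, so it remains to prove that the unit $N \to R\pi_*\pi^* N$ is an equivalence whenever $N$ is $I$-power torsion, i.e.\ lies in the localizing subcategory of $D(X)$ generated by $\calO_Z$. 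As that class of $N$ is closed under colimits, shifts and cofibres (again using that $R\pi_*$ preserves these), it suffices to treat $N = i_* G$ with $i : Z \hookrightarrow X$ and $G \in D(Z)$. Here the hypothesis $Z\times_X^\L Y \simeq Z$ enters twice: the $\Tor$-independence of $Z$ and $Y$ over $X$ gives the base-change equivalence $\pi^* i_* G \simeq i'_*(\pi')^* G$ (with $i':\pi^{-1}(Z)\hookrightarrow Y$ and $\pi':\pi^{-1}(Z)\to Z$), while $\pi^{-1}(Z)\simeq Z$ says $\pi'$ is an isomorphism, so $R\pi_*\pi^* i_* G \simeq i_* R\pi'_*(\pi')^* G \simeq i_* G$, as wanted. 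The counit $\Phi\Psi \simeq \id$ is verified object-wise in the same spirit: applying $j_U^*$ to $\Psi(N_Y,N_U,\phi)$ and using flat base change along the open immersion $j_U$ collapses the fibre product to $N_U$ by a graph argument, and applying $\pi^*$ and using the torsion-versus-open recollements on $X$ and on $Y$ recovers $N_Y$.

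The main obstacle I anticipate is exactly the $Z$-supported computation above: promoting the hypothesis ``$\pi$ is an isomorphism along $Z$'' to the assertion that $\pi^*$ and $R\pi_*$ are mutually inverse equivalences between the subcategories of $Z$-supported complexes in $D(X)$ and $D(Y)$, compatibly with the recollements. Everything else — the existence of $\Psi$, the identification of the relevant fibres with local cohomology, the globalization in $X$ via Zariski descent for $D(-)$, and the deduction of (1) from (2) — is formal given the Tannaka duality theorems and standard facts about qcqs algebraic spaces. One secondary point needing care is that all the equivalences in sight must be recorded as symmetric monoidal equivalences of presentable $\infty$-categories, so that the $\Fun^{\otimes,\L}$-manipulation in the proof of (1) is legitimate.
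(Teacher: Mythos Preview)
Your proposal is correct and follows essentially the same route as the paper. The paper factors the argument slightly differently: it isolates the assertion that $\pi^*$ induces an equivalence $D_Z(X) \simeq D_{\pi^{-1}(Z)}(Y)$ as a standalone lemma (proved exactly as you sketch, by reducing to $\calO_Z$-complexes via compact generation and invoking the hypothesis $Z \times_X^{\mathrm L} Y \simeq Z$), and then proves a general proposition deducing the fibre-product description of $D(X)$ and the pushout statement from that equivalence; your Mayer--Vietoris/recollement argument for the unit and counit is the same computation, just interleaved rather than separated. For (1), the paper passes through the observation that $D(X) \simeq \lim D(X_i)$ forces $D_\perf(X) \simeq \lim D_\perf(X_i)$ (dualizable objects are detected pointwise in a limit of symmetric monoidal $\infty$-categories) and then applies Tannaka duality, which is equivalent to your direct manipulation with $\Fun^{\otimes,\mathrm L}(D(T),-)$. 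The only place your write-up is thinner than the paper's is the counit on the $Y$-component: you need both directions of the equivalence on $Z$-supported complexes (not just $N \simeq R\pi_*\pi^* N$ but also $\pi^* R\pi_* L \simeq L$ for $L \in D_{\pi^{-1}(Z)}(Y)$), and the base-change identity $\pi^* Rj_{U*} \simeq Rj_{V*}\pi_V^*$, both of which go through by the same stable-Koszul argument you already used---but you should say so explicitly.
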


Here $D(X)$ is the $\infty$-category of quasi-coherent complexes on $X$. The Beauville-Laszlo theorem concerns the special case where $X = \Spec(A)$ for some ring $A$, $Z = \Spec(A/f)$ for $f \in A$ a regular element, and $Y = \Spec(\lim A/f^n)$ is the completion. In this case, they show an analogue of (2) for modules that are $f$-regular. In order to get the general consequence (2) above, it is crucial to work with $\infty$-categories:  the corresponding statement about the full module category or the classical derived category is false.

\subsection{Tannaka duality}

The ``surjectivity'' assertions in Theorem \ref{thm:alglim} and Theorem \ref{thm:algprod}, as well as (2) in Theorem \ref{thm:BLintro}, may be viewed as algebraization results for maps.  Despite the elementary formulations, we do not have a constructive proof of any of these, even for schemes, except in special cases. Instead, the desired algebraization is constructed by first building a suitable functor on (derived) categories of quasi-coherent sheaves; Tannaka duality results then show that this functor is the pullback functor for a morphism. 

The implementation of the strategy above necessitates certain {\em derived} Tannaka duality results.\footnote{It is easy to see why the derived setting is preferable in approaching Theorem \ref{thm:alglim}: perfect complexes are easier to manipulate than (finitely presented) quasi-coherent sheaves, especially with respect to operations involving both limits and tensor products.} These duality results rely crucially on Lurie's \cite{LurieDAGVIII}, but cannot directly be deduced from it: Lurie works in greater generality, and consequently has stronger hypotheses. Nevertheless, leveraging his ideas  with some more classical techniques,  we show the following, which suffices for the applications above.

\begin{theorem}
	\label{thm:TannakaAlgSpaces}
	If $X$ and $S$ are algebraic spaces with $X$ qcqs, then pullback induces equivalences
	\[ \Hom(S,X) \simeq \Fun_{\otimes}(D_\perf(X),D_\perf(S)) \simeq \Fun^L_{\otimes}(D(X),D(S)).\]
\end{theorem}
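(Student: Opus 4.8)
The plan is to establish the two displayed equivalences after first reducing the source $S$ to an affine scheme. The mapping space $\Hom(S,X)$ and both functor‑categories are, as functors of $S$, sheaves for the fpqc topology: the first because $X$ is an algebraic space, the other two because $D$ and $D_\perf$ satisfy fpqc descent and $\Fun_\otimes(\calC,-)$ and $\Fun^L_\otimes(\calC,-)$ preserve limits. Since a qcqs algebraic space is an fpqc colimit of affine schemes (its Čech nerve along an affine atlas), it suffices to prove that for a ring $R$ and a qcqs algebraic space $X$, pullback induces $X(R)\simeq\Fun_\otimes(D_\perf(X),D_\perf(R))\simeq\Fun^L_\otimes(D(X),D(R))$.

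The second equivalence is the easier one. Since $X$ is qcqs, $D(X)$ is compactly generated with $D(X)^\omega=D_\perf(X)$, and the perfect complexes are exactly the dualizable objects; likewise for $R$. (The compact generation statement for algebraic spaces is itself obtained by dévissage along a scallop decomposition, gluing perfect generators via Mayer--Vietoris.) As $\Ind$ is symmetric monoidal, Ind‑extension gives a functor $\Fun_\otimes(D_\perf(X),D_\perf(R))\to\Fun^L_\otimes(D(X),D(R))$; conversely a colimit‑preserving symmetric monoidal $F\colon D(X)\to D(R)$ carries compact (=dualizable) objects to dualizable (=perfect) objects, hence restricts to $D_\perf(X)\to D_\perf(R)$, and is recovered from that restriction by Ind‑extension. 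This reduces everything to identifying $\Fun_\otimes(D_\perf(X),D_\perf(R))$ with $X(R)$.

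When $X=\Spec(A)$ is affine this is classical derived Tannaka duality: $\Mod_A$ is the free presentable symmetric monoidal $\infty$‑category on $A$, so $\Fun^L_\otimes(\Mod_A,D(R))\simeq\Hom_{\mathrm{CAlg}}(A,R)=\Hom_{\mathrm{Ring}}(A,R)=\Hom(\Spec R,\Spec A)$, and this passes to perfect complexes by the previous paragraph. For a general qcqs algebraic space $X$ I would run Lurie's Tannaka machinery of \cite{LurieDAGVIII}: the diagonal of $X$ is a quasi‑compact monomorphism, hence quasi‑affine, so $X$ has the structural properties on which that machinery rests, and it yields a fully faithful embedding $X(R)\hookrightarrow\Fun^L_\otimes(D(X),D(R))$ whose essential image is the class of ``tame'' functors — concretely, those that are $t$‑exact on $D_\perf$, equivalently those arising by pullback along a map of ordinary affine schemes. (Full faithfulness can also be seen by hand: $\Hom(\Spec R,X)$ is an fpqc sheaf in $R$, and a point $f$ is reconstructed from the $E_\infty$‑$\calO_X$‑algebra $f_*\calO_{\Spec R}$, reducing the mapping‑space computation to the affine case over an étale atlas.)

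The crux — the step I expect to be the main obstacle — is therefore to remove \emph{all} finiteness hypotheses from the surjectivity statement: every colimit‑preserving symmetric monoidal $F\colon D(X)\to D(R)$ is tame. By the second paragraph $F$ is determined by an exact symmetric monoidal $D_\perf(X)\to D_\perf(R)$; since a perfect complex on $\Spec R$ has a given amplitude exactly when all of its derived fibres at residue fields do, one reduces to $R=k$ a field, and must show an exact symmetric monoidal $F\colon D_\perf(X)\to D_\perf(k)$ is $t$‑exact. The classical route to this passes through an ample line bundle on $X$ to resolve objects by vector bundles; no such bundle exists on a general qcqs algebraic space, and this is precisely where the perfect‑complex‑based approach, following the ideas of \cite{LurieDAGVIII}, is essential: one fixes a single perfect generator of $D_\perf(X)$, uses compatibility of $F$ with tensor product and with the duality on $D_\perf$ to pin down the amplitude of its image, and propagates $t$‑exactness along a scallop decomposition of $X$, where the statement collapses to the affine case. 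Carrying out this dévissage uniformly, with no noetherian or quasi‑projectivity crutches, is the heart of the argument and the reason Lurie's theorem cannot be quoted verbatim.
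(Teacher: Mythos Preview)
Your reduction to affine $S$, the identification $\Fun_\otimes(D_\perf(X),D_\perf(S)) \simeq \Fun^L_\otimes(D(X),D(S))$ via $\Ind$ and dualizability, and the full faithfulness step all match the paper. The divergence, and the gap, is in the essential surjectivity.

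You propose to invoke Lurie's Tannaka machinery from \cite{LurieDAGVIII} once you know the diagonal of $X$ is quasi-affine, and then to close the argument by verifying that every $F$ is ``tame,'' which you gloss as $t$-exactness on $D_\perf$. Two problems. First, Lurie's result \cite[Theorem 3.4.2]{LurieDAGVIII} requires the diagonal of $X$ to be \emph{affine}, not merely quasi-affine; a general qcqs algebraic space need not satisfy this, so you cannot plug into his statement. Second, even granting affine diagonal, Lurie's tameness condition is not $t$-exactness: it asks that $F$ preserve connective objects \emph{and flat objects}. The paper is explicit that the flatness clause is the severe one and is precisely what prevents a direct citation. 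Your proposed endgame --- ``pin down the amplitude of the image of a perfect generator via duality and propagate along a scallop decomposition'' --- does not touch flatness at all, and is in any case too vague to constitute an argument for connectivity.

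The paper's route is genuinely different after this point. It proves connectivity preservation (Lemma~\ref{lem:preserveconnectivity}) by reducing to $S=\Spec(L)$ for a field $L$, observing that then $G\calO_S$ is a ``field object'' in $\CAlg(D(X))$ so that $D(X,G\calO_S)$ admits no proper nonzero localizing subcategories, and using this to localize $F$ onto a single affine stratum of a scallop decomposition of $X$. But it does \emph{not} then quote Lurie. Instead it uses connectivity to show $F$ induces functors $\Aff_{/X}\to\Aff_{/S}$ and $\QAff_{/X}\to\QAff_{/S}$ (via the embedding $\QAff_{/X}^\opp\hookrightarrow\CAlg(D(X))$), checks these preserve \'etale maps and \'etale surjections (via cotangent complexes and conservativity), pulls back an affine \'etale hypercover of $X$ to one of $S$, and glues the resulting affine-case maps by descent. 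This bypasses flatness entirely, which is the whole point.
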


Here $D_\perf(X) \subset D(X)$ is the full subcategory of perfect complexes on $X$,  and similarly for $S$. Also, $\Fun_{\otimes}(D_\perf(X),D_\perf(S))$ parametrizes exact symmetric monoidal functors $D_\perf(X) \to D_\perf(S)$, while $\Fun_{\otimes}^L(D(X),D(S))$ parametrizes cocontinuous (i.e., colimit-preserving) symmetric monoidal functors. Note that, due to the existence of Fourier-Mukai transforms, there is no hope of proving such a result without keeping track of the $\otimes$-structure. The relevance of Theorem \ref{thm:TannakaAlgSpaces} to the previous discussion on colimits is:

\begin{corollary}
	\label{cor:colimitschemescrit}
	Fix a qcqs algebraic space $X$, and a diagram  $\{X_i\}$ of qcqs spaces over $X$. If pullback induces $D_\perf(X) \simeq \lim D_\perf(X_i)$, then $X \simeq \colim X_i$ in the category of qcqs algebraic spaces.
\end{corollary}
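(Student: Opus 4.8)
The plan is to deduce Corollary~\ref{cor:colimitschemescrit} from Theorem~\ref{thm:TannakaAlgSpaces} by a purely formal manipulation, essentially ``taking $\Hom(-,T)$'' of the two sides. Fix a test object $T$, an arbitrary qcqs algebraic space; I need to show that the natural map $\Hom(X,T) \to \lim \Hom(X_i,T)$ is an equivalence of spaces (or sets, depending on how one wishes to phrase it — since all objects are $0$-truncated classical algebraic spaces these are discrete, but the argument is cleanest run in the $\infty$-categorical register). By Theorem~\ref{thm:TannakaAlgSpaces} applied to each pair, $\Hom(Y,T) \simeq \Fun_\otimes(D_\perf(T), D_\perf(Y))$ naturally in the qcqs algebraic space $Y$ (here $T$ is the source of the functor category because pullback goes the other way). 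Hence the map in question is identified with
\[ \Fun_\otimes(D_\perf(T), D_\perf(X)) \longrightarrow \lim_i \Fun_\otimes(D_\perf(T), D_\perf(X_i)). \]

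Now I invoke the hypothesis $D_\perf(X) \simeq \lim_i D_\perf(X_i)$. Since the functors in the diagram $\{D_\perf(X_i)\}$ are the pullback functors, which are exact and symmetric monoidal, this equivalence is one of symmetric monoidal $\infty$-categories (with exact functors), and the limit is computed as such. The functor $Y \mapsto \Fun_\otimes(D_\perf(T), -)$ sends this limit to a limit: mapping out of a fixed symmetric monoidal $\infty$-category $D_\perf(T)$ into a cofiltered (or arbitrary) limit of symmetric monoidal $\infty$-categories commutes with the limit, because $\Fun_\otimes(\calC, -)$ is a right adjoint-type construction — more precisely, $\Fun_\otimes(\calC, -)$ preserves limits since limits of $\infty$-categories, and limits in the symmetric monoidal setting, are detected on underlying objects and on the relevant mapping spaces. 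Therefore the displayed map is an equivalence, so $\Hom(X,T) \simeq \lim_i \Hom(X_i,T)$ for every qcqs algebraic space $T$. By the Yoneda lemma in the category of qcqs algebraic spaces, this says precisely that $X$ represents the functor $T \mapsto \lim_i \Hom(X_i, T)$, i.e.\ $X \simeq \colim_i X_i$ in qcqs algebraic spaces.

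The one point requiring a little care — and the main (mild) obstacle — is making sure the equivalence in the hypothesis is promoted to, and the limit is taken in, the appropriate symmetric monoidal context, and that $\Fun_\otimes(D_\perf(T),-)$ genuinely commutes with that limit. This is where one should be slightly careful about which $2$-category of ``$\otimes$-categories'' one works in; the cleanest route is to note that $\Fun_\otimes(D_\perf(T), -)$, as a functor on symmetric monoidal $\infty$-categories and exact symmetric monoidal functors, is corepresented (it is a mapping space in a presentable $\infty$-category, or can be exhibited as such after restricting attention to the relevant essentially small categories), hence preserves all limits. Alternatively, one can use the second equivalence in Theorem~\ref{thm:TannakaAlgSpaces}: if the hypothesis $D_\perf(X)\simeq \lim D_\perf(X_i)$ is upgraded (via Ind-completion, which is compatible with the relevant structures since pullback functors are cocontinuous and the categories are compactly generated) to $D(X) \simeq \lim D(X_i)$ in presentable symmetric monoidal $\infty$-categories with cocontinuous functors, then $\Fun^L_\otimes(D(T), -)$ — mapping out of a fixed presentable symmetric monoidal category — manifestly commutes with limits, and the same Yoneda argument concludes. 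Either way, no new geometric input is needed beyond Theorem~\ref{thm:TannakaAlgSpaces}; the corollary is a formal consequence.
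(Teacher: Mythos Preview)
Your main argument is correct and is essentially the paper's proof (given as the second half of Proposition~\ref{prop:colimitschemes}): apply Theorem~\ref{thm:TannakaAlgSpaces} to identify $\Hom(-,T)$ with $\Fun_\otimes(D_\perf(T),D_\perf(-))$, then use that mapping out of a fixed symmetric monoidal $\infty$-category preserves limits. The paper states this in one sentence; you have simply unpacked the Yoneda step more carefully.

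However, the alternative route you sketch at the end is genuinely wrong and you should delete it. You propose to upgrade $D_\perf(X)\simeq\lim D_\perf(X_i)$ to $D(X)\simeq\lim D(X_i)$ ``via Ind-completion''. But $\Ind$ is a left adjoint, not a right adjoint, and does \emph{not} preserve limits. The paper makes this failure explicit in a remark following Proposition~\ref{prop:colimitschemes}: for $X=\Spec(\Z_p)$ and $X_n=\Spec(\Z/p^n)$ one has $D_\perf(X)\simeq\lim D_\perf(X_n)$, yet $D(X)\to\lim D(X_n)$ is not even faithful (the module $\Q_p$ dies in each $D(X_n)$). So the implication goes only in the direction $D(X)\simeq\lim D(X_i)\Rightarrow D_\perf(X)\simeq\lim D_\perf(X_i)$, via dualizable objects, and not the reverse. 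Your first argument already suffices; the ``alternative'' should be dropped.
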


Besides the applications above, Corollary \ref{cor:colimitschemescrit} should be also useful in excising hypotheses on the diagonal in certain existence results; for example, we indicate in Remark \ref{rmk:chow} why the separatedness assumption in Grothendieck's formal geometry version of Chow's theorem can be dropped completely.

As mentioned above, Lurie proved a related Tannakian result for a very general class of (spectral) derived stacks in \cite[Theorem 3.4.2]{LurieDAGVIII}. When specialized to algebraic spaces, his result differs from Theorem \ref{thm:TannakaAlgSpaces} in two ways: he requires the diagonal of $X$ to be affine, and he ``only'' shows that cocontinuous symmetric monoidal functors $F:D(X) \to D(S)$ that preserve connective objects and {\em flat objects} come from geometry. The first restriction is relatively mild, at least in applications, but the last one is severe, rendering his result inapplicable to Theorems \ref{thm:alglim}, \ref{thm:algprod}, and \ref{thm:BLintro} (as it is quite difficult to control flatness properties of modules through limits).  An analogue of Theorem \ref{thm:TannakaAlgSpaces} for noetherian stacks with some tameness and quasi-projectivity hypotheses (over a field) can be found in \cite{IwanariFukuyama}. A generalization of Theorem \ref{thm:TannakaAlgSpaces} to a fairly large class of stacks, together with some applications, is also the subject of forthcoming joint work of Daniel Halpern-Leistner and the author.

In the world of schemes, one can go further than Theorem \ref{thm:TannakaAlgSpaces} to get an underived statement. In fact, Lurie already did so \cite{LurieTD} for algebraic stacks under the afore-mentioned constraints, and these were removed by Brandenburg and Chirvasitu in the case of schemes to show:

\begin{theorem}\cite{BrandenburgChirvasitu}
	\label{thm:BC}
If $X$ and $S$ are schemes with $X$ qcqs, then pullback induces an equivalence
\[ \Hom(S,X) \simeq \Fun^L_{\otimes}(\QCoh(X),\QCoh(S)).\]
\end{theorem}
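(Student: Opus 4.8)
The plan is to bootstrap from the derived Tannaka duality of Theorem~\ref{thm:TannakaAlgSpaces}. For qcqs schemes $X,S$ that result identifies $\Hom(S,X)$ with $\Fun^L_\otimes(D(X),D(S))$, and under this identification a morphism $f\colon S\to X$ corresponds to $Lf^*$, which is \emph{right $t$-exact} and recovers the ordinary quasi-coherent pullback by $f^*\simeq H^0\circ(Lf^*|_{\QCoh(X)})$. Thus it suffices to prove that the ``restrict to hearts'' functor
\[ \Phi\colon\Fun^L_\otimes(D(X),D(S))\longrightarrow\Fun^L_\otimes(\QCoh(X),\QCoh(S)),\qquad F\longmapsto H^0\circ(F|_{\QCoh(X)}), \]
is well defined and an equivalence of $\infty$-categories; composing a quasi-inverse with Theorem~\ref{thm:TannakaAlgSpaces} then yields the claim, including the (a priori nonobvious) fact that the target is $0$-truncated. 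As a sanity check, when $X=\Spec A$ both sides can be computed directly from the universal property of module categories: a cocontinuous symmetric monoidal functor out of $\Mod_A$, resp.\ out of $D(A)$, into a presentable symmetric monoidal, resp.\ stable, target $\calC$ is the datum of a ring map $A\to\End_{\calC}(\mathbf 1)$, resp.\ $A\to\pi_0\End_{\calC}(\mathbf 1)$; for $\calC=\QCoh(S)$ or $D(S)$ this is $A\to\Gamma(S,\calO_S)=\Hom(S,\Spec A)$, and $\Phi$ is visibly an equivalence.

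For general qcqs $X$, full faithfulness of $\Phi$ is the formal half. Indeed, every $F$ in the essential image of geometry is the left derived functor of $H^0\circ(F|_{\QCoh(X)})$ (since $Lf^*$ is right $t$-exact and agrees with $f^*$ on the heart, so equals the unique such left derived extension), whence $\Phi$ is injective on equivalence classes by Theorem~\ref{thm:TannakaAlgSpaces}; and the operation ``left derive'' is faithful on symmetric monoidal natural transformations, because restricting a derived natural transformation back to the heart returns the one we started with. The substantial half is essential surjectivity: given a cocontinuous symmetric monoidal $G\colon\QCoh(X)\to\QCoh(S)$ (hence right exact), I would construct a cocontinuous symmetric monoidal $\widetilde G\colon D(X)\to D(S)$ with $H^0\circ(\widetilde G|_{\QCoh(X)})\simeq G$, and then apply Theorem~\ref{thm:TannakaAlgSpaces} to $\widetilde G$ to produce $f\colon S\to X$ with $\widetilde G\simeq Lf^*$, so that $G\simeq f^*$. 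Using that $D(X)$ is the derived $\infty$-category of $\QCoh(X)$ for $X$ qcqs, every connective object is the geometric realization of a simplicial object of $\QCoh(X)$, which one arranges to have flat terms; one then defines $\widetilde G$ of such a realization to be the realization of the termwise image, extends to all of $D(X)$ via shifts, and reads off symmetric monoidality of $\widetilde G$ from that of $G$ on the simplicial level.

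I expect the main obstacle to be precisely the construction of $\widetilde G$ for \emph{arbitrary} qcqs $X$: it requires both a serviceable supply of flat quasi-coherent sheaves surjecting onto every object of $\QCoh(X)$ \emph{and} the assertion that $G$ carries (enough of) these to flat, equivalently $\otimes$-acyclic, objects of $\QCoh(S)$ --- without the latter, $|G(M_\bullet)\otimes G(N_\bullet)|$ need not compute the derived tensor $\widetilde G(M)\otimes^L\widetilde G(N)$, and $\widetilde G$ fails to be monoidal. Enough flats is available when $X$ is affine, and more generally semi-separated, so a natural route is to prove the theorem first in the semi-separated case by the argument above, and then descend to a general qcqs $X$ along a semi-separated hypercover, using that $\QCoh$ is a stack for the Zariski topology valued in cocomplete symmetric monoidal categories and that $\Fun^L_\otimes(-,\QCoh(S))$ converts the resulting limit presentation of $\QCoh(X)$ into the matching limit of functor categories. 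Verifying that last compatibility --- that cocontinuous tensor functors out of $\QCoh(X)$ glue from their restrictions along the hypercover, with the right homotopy coherence --- is where the scheme structure is genuinely needed and is, to my mind, the crux.
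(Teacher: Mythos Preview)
Your approach is genuinely different from the paper's. The paper does not deduce the underived statement from the derived Theorem~\ref{thm:TannakaAlgSpaces}; rather, it recalls Brandenburg--Chirvasitu's direct proof, which works entirely at the abelian level. For full faithfulness one localizes on $S$, using that a symmetric monoidal natural transformation $\eta\colon f^*\to g^*$ yields $\calO_S$-algebra maps $\calO_{f^{-1}(Z)}\to\calO_{g^{-1}(Z)}$ and hence $g^{-1}(Z)\subset f^{-1}(Z)$ for every closed $Z\subset X$, reducing to both $f$ and $g$ landing in a common affine open. For essential surjectivity, given $F$, one defines $f^{-1}(Z)\subset S$ via $F(\calO_Z)=\calO_{f^{-1}(Z)}$; when $U:=X\setminus Z$ is affine one obtains honest maps from affine opens $V\subset S\setminus f^{-1}(Z)$ to $U$, and the crux is a \emph{covering argument}: if some $s\in S$ lay in $f^{-1}(Z_i)$ for the complements $Z_i$ of every member of a finite affine open cover of $X$, then $\bigotimes_i\calO_{f^{-1}(Z_i)}\otimes\kappa(s)\neq 0$ whereas $\bigotimes_i\calO_{Z_i}=0$, a contradiction. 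This buys a proof that uses only elementary scheme theory and the Zariski topology; your route, if it worked, would explain the underived theorem as a shadow of the derived one.

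Your bootstrap has a gap you name but do not close. To left-derive $G$ to a \emph{symmetric monoidal} $\widetilde G$ one needs $G$ to carry enough flats to $\otimes$-acyclics, and nothing in the hypotheses supplies this: cocontinuous symmetric monoidal functors preserve dualizables (vector bundles by Lemma~\ref{lem:vectdualizable}), but flatness is not a dualizability condition in $\QCoh$. You locate the crux in the descent step from semi-separated to general qcqs, but the flat-preservation problem is already present for semi-separated, even quasi-affine, $X$; only the genuinely affine case is trivial via the universal property of $\Mod_A$. The paper's argument sidesteps the issue entirely by never leaving the abelian category and exploiting the Zariski topology of $X$ directly, which is also why the proof is specific to schemes. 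The Remark at the end of \S\ref{sec:TD} makes a related point: even with the derived theorem in hand, the author does not know how to recover the underived equivalence for algebraic spaces, so the passage you are attempting is not expected to be formal.
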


Here  $\Fun^L_{\otimes}(\QCoh(X),\QCoh(S))$ denotes the category of all cocontinuous symmetric monoidal functors between the abelian categories of quasi-coherent sheaves on $X$ and $S$.  We can use this result, in lieu of Theorem \ref{thm:TannakaAlgSpaces}, to prove Theorem \ref{thm:algprod} in the world of schemes. We conclude this introduction by recording a strengthening of Theorem \ref{thm:BC} in a special case that arises often in practice.

\begin{proposition}
	\label{prop:BCforVect}
	Fix schemes $X$ and $S$. If $X$ is qcqs with enough vector bundles, then pullback induces 
\[ \Hom(S,X) \simeq \Fun^L_{\otimes}(\Vect(X),\Vect(S)).\]
\end{proposition}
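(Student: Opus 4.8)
The plan is to deduce this from Theorem~\ref{thm:BC} by comparing cocontinuous symmetric monoidal functors out of $\QCoh(X)$ with symmetric monoidal functors out of $\Vect(X)$; the hypothesis that $X$ has enough vector bundles (the resolution property) is precisely what makes this comparison faithful. Recall that for \emph{any} scheme $X$ the dualizable objects of the symmetric monoidal abelian category $\QCoh(X)$ are exactly the finite locally free sheaves, i.e.\ the objects of $\Vect(X)$ (this assertion is Zariski-local, and over an affine it is the statement that a dualizable module is finitely generated projective). Since a symmetric monoidal functor carries dualizable objects to dualizable objects and preserves the finite coproducts that exist in $\Vect$, restriction defines a functor
\[ \rho\colon \Fun^L_\otimes(\QCoh(X),\QCoh(S)) \longrightarrow \Fun^L_\otimes(\Vect(X),\Vect(S)), \]
and composing with the equivalence of Theorem~\ref{thm:BC} yields the pullback functor $\Hom(S,X)\to\Fun^L_\otimes(\Vect(X),\Vect(S))$. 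It therefore suffices to prove that $\rho$ is an equivalence.

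For that I would establish, as a consequence of the resolution property, that the inclusion $\Vect(X)\hookrightarrow\QCoh(X)$ presents $\QCoh(X)$ as the cocompletion of $\Vect(X)$ ``allowing cokernels of maps of vector bundles''. Concretely: (i) since $X$ is qcqs, $\QCoh(X)\simeq\Ind(\QCoh(X)^{\mathrm{fp}})$, where $\QCoh(X)^{\mathrm{fp}}$ denotes the finitely presented objects; and (ii) the resolution property forces $\QCoh(X)^{\mathrm{fp}}$ to be the closure of $\Vect(X)$ under finite colimits. The content of (ii) is that every finitely presented quasi-coherent sheaf $M$ is the cokernel of a map of vector bundles: pick a surjection $E_0\twoheadrightarrow M$ with $E_0$ a vector bundle (resolution property); its kernel $K$ is finitely generated, this being Zariski-local and hence reducible to the standard fact that the kernel of a surjection of a finite free module onto a finitely presented module is finitely generated; now pick a surjection $E_1\twoheadrightarrow K$ with $E_1$ a vector bundle, so that $M\simeq\coker(E_1\to E_0)$. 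Granting (i) and (ii), the universal properties of $\Ind(-)$ and of the finite-colimit completion (all in their symmetric monoidal incarnations, which apply because $\otimes$ on $\QCoh$ is cocontinuous) produce from any $F\in\Fun^L_\otimes(\Vect(X),\Vect(S))$ an essentially unique cocontinuous symmetric monoidal extension $\widetilde F\colon\QCoh(X)\to\QCoh(S)$; this is inverse to $\rho$, also on monoidal natural transformations, since such transformations between cocontinuous functors are determined by --- and freely extend from --- their restrictions to the generating subcategory $\Vect(X)$. Together with Theorem~\ref{thm:BC}, this proves the proposition.

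The step I expect to be the main obstacle is the precise form of (ii): it is not enough that every finitely presented sheaf is \emph{a} cokernel of a map of vector bundles; one must also check that morphisms and relations match, i.e.\ that $\QCoh(X)^{\mathrm{fp}}$ is \emph{exactly} the finite-colimit completion of $\Vect(X)$ rather than a proper quotient of it, and here the rigidity of $\Vect(X)$ (every object dualizable) together with the resolution property is what makes the Hom-computation come out correctly --- all while dragging the symmetric monoidal structure along. An alternative, more derived, route bypasses this bookkeeping: by Thomason's theorem the resolution property gives a symmetric monoidal equivalence $D_\perf(X)\simeq D^b(\Vect(X))$, and likewise for $S$; combining the universal property of the bounded derived $\infty$-category of an exact category with Theorem~\ref{thm:TannakaAlgSpaces} then identifies $\Hom(S,X)\simeq\Fun_\otimes(D_\perf(X),D_\perf(S))$ with the symmetric monoidal exact functors $\Vect(X)\to D^b(\Vect(S))$, after which the fact that every such functor is already a pullback --- so preserves the standard $t$-structure and lands in $\Vect(S)$ --- yields the stated description over $\Vect(S)$.
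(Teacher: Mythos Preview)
Your overall strategy---reduce to Theorem~\ref{thm:BC} by showing that restriction
\[
\rho:\Fun^L_\otimes(\QCoh(X),\QCoh(S)) \to \Fun^L_\otimes(\Vect(X),\Vect(S))
\]
is an equivalence---is exactly what the paper does (this is Corollary~\ref{cor:TDresolution}), and your argument for full faithfulness via left Kan extension is the same as the paper's. The difference lies in essential surjectivity, and there your proposal has a genuine gap.

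The issue is your step (ii). You write that ``$\QCoh(X)^{\mathrm{fp}}$ is the closure of $\Vect(X)$ under finite colimits'' and then invoke ``the universal property of the finite-colimit completion'' to extend $F$. But these are two different things: $\Vect(X)$ generating $\QCoh_{fp}(X)$ under finite colimits does \emph{not} mean $\QCoh_{fp}(X)$ is the free finite-colimit completion of $\Vect(X)$. Indeed it is not: the paper's Examples~\ref{ex:surjqproj} and~\ref{ex:surjproj} exhibit symmetric monoidal functors $\Vect(X)\to\Vect(S)$ that do not extend to $\QCoh(X)\to\QCoh(S)$, precisely because they fail to preserve surjections. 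So the abstract universal property you cite would overproduce extensions. You flag this yourself as ``the main obstacle,'' but you do not resolve it: the right-exactness hypothesis on $F$ (the superscript $L$) is never actually used in your construction of $\widetilde{F}$.

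The paper fills this gap by an explicit construction: given $G\in\Fun^L_\otimes(\Vect(X),\Vect(S))$ and $Q\in\QCoh_{fp}(X)$, one picks a presentation $E_2\to E_1\to Q\to 0$ with $E_i\in\Vect(X)$, sets $F(Q):=\coker(G(E_2)\to G(E_1))$, and then carefully checks---using the right-exactness of $G$---that this is independent of the resolution up to unique isomorphism and is functorial. The paper's \S\ref{ss:qcohvect} then recasts this more abstractly: $\QCoh(X)$ is universal not for functors out of $\Vect(X)$ as a bare category, but for \emph{exact} functors out of $\Vect(X)$ equipped with its class of surjections (equivalently, its exact structure inherited from $\QCoh(X)$). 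That is the correct universal property; ``finite-colimit completion'' is the wrong one.

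Your alternative derived route via $D_\perf(X)\simeq D^b(\Vect(X))$ and Theorem~\ref{thm:TannakaAlgSpaces} is a genuinely different argument and looks workable, though it trades the elementary bookkeeping above for the (symmetric monoidal) universal property of $D^b$ of an exact category, which the paper does not develop. Note also that $S$ is not assumed to have the resolution property, so the target is $D_\perf(S)$ rather than $D^b(\Vect(S))$; your final step (that the resulting functor lands in $\Vect(S)$ because it is a pullback) is fine and not circular, since by that point you have already identified it with some $f^*$.
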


Here the assumption on $X$ means that every finitely presented quasi-coherent sheaf is the cokernel of a map of vector bundles; any scheme that is quasi-projective over an affine has this property, and this property is studied in depth in \cite{TotaroResProperty}. The object $\Fun^L_{\otimes}(\Vect(X),\Vect(S))$ denotes the category of right exact symmetric monoidal functors $\Vect(X) \to \Vect(S)$. It is important to note that the property of being ``right exact'' for a sequence of bundles is not intrinsic to the category of vector bundles: one needs the ambient category of all quasi-coherent sheaves to make sense of it. The quasi-projective case of Proposition \ref{prop:BCforVect} was also shown much earlier by Savin \cite{SavinVectDuality} with a different proof.

\subsection*{Sketch of proofs}
We begin with Theorem \ref{thm:alglim}.  The injectivity of $X(A) \to \lim X(A/I^n)$ is relatively elementary. For surjectivity, given compatible maps $\{\epsilon_n:\Spec(A/I^n) \to X\}_{n \in \N}$, one must construct a map $\epsilon:\Spec(A) \to X$ algebraizing $\{\epsilon_n\}$.  If $X$ is a quasi-projective variety, then $X$ has ``enough'' vector bundles: every quasi-coherent sheaf can be ``approximated'' by finite complexes of vector bundles. The data $\{\epsilon_n\}$ defines a functor $F:\Vect(X) \to \Vect(\Spec(A))$ as the composition of the pullback $\lim \epsilon_n^*:\Vect(X) \to \lim \Vect(\Spec(A/I^n))$ and the inverse of the equivalence $\Vect(\Spec(A)) \simeq \lim \Vect(\Spec(A/I^n))$ (see Lemma \ref{lem:vectcont}). One then checks that $F$ preserves exact sequences, so Proposition \ref{prop:BCforVect} gives the desired map $\epsilon:\Spec(A) \to X$. In general, $X$ might not admit a single non-trivial vector bundle, which renders this approach useless. However, $X$ always has enough perfect complexes by a fundamental result going back to Thomason \cite{ThomasonTrobaugh}. Hence, the preceding strategy can be salvaged at the derived level using perfect complexes, instead of vector bundles, and Theorem \ref{thm:TannakaAlgSpaces}.

The proof of Theorem \ref{thm:algprod} is similar, but the construction of an appropriate pullback functor $D_{\perf}(X) \to D_{\perf}(\Spec(\prod_i A_i))$ associated to a family of maps $\epsilon_i:\Spec(A_i) \to X$ is harder: one must show that if $K \in D_{\perf}(X)$, then $\prod_i f_i^* K$ is a perfect $(\prod_i A_i)$-complex. We check this by verifying that the ``size'' of $f_i^* K$ is bounded independently of $i$, which, in turn, is accomplished via an analysis of the number of sections needed to generate a module over a ring once the corresponding numbers over a Nisnevich cover have been specified. More details can be found at the start of \S \ref{sec:algprodspaces}. 

For Theorem \ref{thm:TannakaAlgSpaces}, the full faithfulness of $\Hom(S,X) \to \Fun^L_{\otimes}(D(X),D(S))$ is a consequence of a result of Lurie. For essential surjectivity, fix a cocontinuous symmetric monoidal functor $F:D(X) \to D(S)$. We first check that $F$ preserves connecitivty; this allows us to ``pull back'' affine $X$-spaces to affine $S$-spaces simply by applying $F$ to the corresponding commutative algebra in $D^{\leq 0}(X)$. Viewing a quasi-affine $X$-space as the complement of a (constructible) closed subspace in an affine $X$-space, one may also pull back quasi-affine $X$-spaces along $F$. The crucial assertion is that this procedure respects \'etale morphisms as well as coverings; this is deduced by showing the analogous assertions for arbitrary commutative algebras in $D(X)$ as pushing forward the structure sheaf gives a fully faithful embedding of quasi-affine $X$-spaces into commutative algebras in $D(X)$. Consequently, the construction of $f:S \to X$ such that $f^* = F$ is \'etale local on $X$, so we reduce to the case where $X$ is affine, which is easy.

Proposition \ref{prop:BCforVect} is deduced painlessly from Theorem \ref{thm:BC} by writing quasi-coherent sheaves as filtered colimits of cokernels of maps of vector bundles. The key observation is that one can recover $\QCoh(X)$ from $\Vect(X)$ equipped with the (extra) data of the class of surjective maps.

For Theorem \ref{thm:BLintro}, we first check that pullback induces $D_Z(X) \simeq D_{\pi^{-1}(Z)}(X)$\footnote{Here $D_Z(X) \subset D(X)$ denotes the full subcategory of complexes acyclic on $X-Z$, i.e., the kernel of $D(X) \to D(X-Z)$. In particular, the inclusion $D_Z(X) \subset D(X)$ has a right adjoint $\underline{\Gamma}_Z(-):D(X) \to D_Z(X)$ given by Grothendieck's theory of local cohomology. We prefer working with local cohomology, instead of completions, to access geometry ``near'' $Z$ in Theorem \ref{thm:BLintro} as the former has better compatibility properties with geometric operations on $D(X)$, such as pullback and pushforward.}. The rest of the proof of (1) is then a formalisation of the idea that $D(X)$ is an extension of $D(U)$ by $D_Z(X)$, and that $D(Y)$ is an extension of $D(V)$ by $D_{\pi^{-1}(Z)}(Y)$. Finally, Theorem \ref{thm:TannakaAlgSpaces} immediately yields (2) from (1).

\subsection*{An outline of the paper}

We begin by proving Theorem \ref{thm:TannakaAlgSpaces} in \S \ref{sec:TD}; this section contains the most serious dose of derived algebraic geometry in this paper, and one can find outsider-friendly discussions of some key notions in \cite{GrothIntroInfCat}, \cite{ToenDAGIntro}, and \cite[\S 2--3]{BFNIntegral}. The non-derived analogue of Theorem \ref{thm:TannakaAlgSpaces} for schemes (i.e., Brandenburg and Chirvasitu's Theorem \ref{thm:BC}, as well as Proposition \ref{prop:BCforVect}) is the subject of \S \ref{sec:TDschemes}. Theorem \ref{thm:alglim} is then taken up in \S \ref{sec:jets}; we also discuss examples illustrating the limit of such results. Formal glueing results, including Theorem \ref{thm:BLintro}, are the focus of \S \ref{sec:BL}, though we begin by establishing Corollary \ref{cor:colimitschemescrit} using Theorem \ref{thm:TannakaAlgSpaces}. Theorem \ref{thm:algprod} is proven across \S \ref{sec:algprodsch} and \S \ref{sec:algprodspaces}: the former contains a non-derived proof for schemes using Theorem \ref{thm:BC}, while the latter handles algebraic spaces using Theorem \ref{thm:TannakaAlgSpaces} (and is independent of the former). Finally, the limits of Theorem \ref{thm:algprod} are explored through some examples in \S \ref{sec:algprodex}.

\subsection*{Notation}
We use the language of $\infty$-categories from \cite{LurieHTT}, except that we use the term ``(co)continuous functor'' to describe (co)limit preserving functors. For an $\infty$-category $\calC$, we write $h(\calC)$ for its homotopy-category. For a map $f:K \to L$ in a stable $\infty$-category, we write $\fib(f)$ and $\cofib(f)$ for the fibre and cofibre  respectively. A functor between stable $\infty$-categories is always assumed to be exact.

For a symmetric monoidal $\infty$-category $\calC$, we write $\CAlg(\calC)$ for the $\infty$-category of commutative algebra objects (in the sense of $E_\infty$-rings; see \cite[\S 2.1]{LurieHA}). If $\calC$ is an ordinary category, so is $\CAlg(\calC)$: the latter coincides with the classical defined category of commutative monoids in $\calC$ (by \cite[Example 2.1.3.3]{LurieHA}). We will often use the notion of a dualizable object in a symmetric monoidal $\infty$-category and its basic properties (see \cite[\S 4.2.5]{LurieHA}). In particular, we will freely use that such objects are preserved by symmetric monoidal functors, and that $K \otimes -$ is continuous if $K$ is dualizable.

For symmetric monoidal $\infty$-categories $\calC$ and $\calD$, let $\Fun_{\otimes}(\calC,\calD)$ denote the $\infty$-category of symmetric monoidal functors $\calC \to \calD$. We use a superscript of ``$L$'' to denote the class of cocontinuous functors, while a subscript of ``$c$'' denotes the class of functors preserving compact objects. For example, $\Fun^L_{\otimes,c}(\calC,\calD)$ is the full subcategory of $\Fun_{\otimes}(\calC,\calD)$ spanned by cocontinuous symmetric monoidal functors $F:\calC \to \calD$ that preserve compact objects; these notions are typically used only when $\calC$ and $\calD$ are presentable with enough compact objects. 

%If $\calC$ and $\calD$ are compactly generated and stable, we write  $\Fun^L_{\otimes,c}(\calC,\calD) \subset \Fun_{\otimes}(\calC,\calD)$ for the full subcategory spanned by cocontinuous symmetric monoidal functors preserving compact objects, and $\Fun^L_{\otimes}(\calC,\calD) \subset \Fun_{\otimes}(\calC,\calD)$ for the full subcategegory of all cocontinuous functors; there are evident inclusions $\Fun^L_{\otimes,c}(\calC,\calD) \subset \Fun^L_{\otimes}(\calC,\calD) \subset \Fun_{\otimes}(\calC,\calD)$.

For any algebraic space $X$, let $D(X)$ be the quasi-coherent derived category of $X$ (viewed as a symmetric monoidal stable $\infty$-category; see \cite[Definition 2.3.6]{LurieDAGVIII}),  and let $D_\perf(X) \subset D(X)$ be the full subcategory of perfect complexes. For qcqs $X$, we will repeatedly use: $D(X)$ is a compactly generated stable symmetric monoidal $\infty$-category, and $D_\perf(X) \subset D(X)$ coincides simultaneously with the class of compact objects and the class of dualizable objects; see \cite[\S 3.3]{BvdB}, \cite[\S 4]{LipmanNeemanPerfect}, \cite[Tag 09IU]{StacksProject}, \cite[Corollary 2.7.33]{LurieDAGVIII}, and \cite[Corollary 1.5.12]{LurieDAGXII}. We will also use a version of this with supports: if $Z \subset X$ is a constructible closed subspace, then $D_Z(X)$ is compactly generated with compact objects given by  $D_Z(X) \cap D_\perf(X)$, so $D_Z(X) \subset D(X)$ is the smallest full stable $\infty$-subcategory closed under colimits that contains $D_Z(X) \cap D_\perf(X)$; see \cite[Theorem 6.8]{RouqierDimensions} and \cite[Lemma 2.2.1]{SchwedeShipleyStableCat}. All operations involving these objects are assumed to take place in the appropriate $\infty$-categorical sense; for example, if $\{X_i\}$ is a diagram of algebraic spaces, then $\lim D(X_i)$ is the $\infty$-categorical limit in the sense of \cite[\S 4]{LurieHTT}. Let $\QCoh(X) := D(X)^\heartsuit$ be the abelian category of quasi-coherent sheaves, and $D^{cl}(X) := h(D(X))$, i.e., the classical derived category of complexes of $\calO_X$-modules with quasi-coherent cohomology sheaves. Let $\Aff_{/X}$ be the $\infty$-category of affine morphisms over $X$ (in the world of spectral algebraic spaces, so $\Aff_{/X} \simeq \CAlg(D^{\leq 0}(X))$),  and let $\QAff_{/X}$ be $\infty$-category of quasi-affine morphisms over $X$. As a rule, all geometric functors  (such $f^*$, $f_*$, $\Gamma(X,-)$, $\underline{\Gamma}_Z(-)$, etc.) are assumed to be derived, except in \S \ref{sec:TDschemes} and \S \ref{sec:algprodsch}; we will use adornments (such as $\otimes_A^L$ instead of $\otimes_A$) if there is potential for confusion.

In \S \ref{sec:TD}, we will encounter some potentially non-connective commutative algebras $\calA \in \CAlg(D(X))$ for $X$ a qcqs algebraic space. The associated symmetric monoidal $\infty$-category $D(X,\calA)$ of $\calA$-modules in $D(X)$ (denoted $\Mod_{\calA}^{\mathrm{Comm}}(D(X))^{\otimes}$ in \cite[\S 3.3]{LurieHA}) plays an important role, so we give a relatively concrete (but imprecise) description. If $X = \Spec(R)$ is affine, then $D(X) \simeq D(R)$ via $\Gamma(X,-)$, so $\calA$ defines a commutative $R$-algebra  $A := \Gamma(X,\calA)$, and $D(X,\calA) \simeq D(A)$ via $\Gamma(X,-)$ (see \cite[Corollary 3.4.1.9]{LurieHA}), where we write $D(A)$ for the $\infty$-category of $A$-module spectra (denoted $\Mod_A$ in \cite[\S 8]{LurieHA}). In general, if we write $X = \colim U_i$ as a colimit of a diagram $\{U_i\}$ of affine schemes $U_i = \Spec(R_i)$ \'etale over $X$, then $D(X,\calA) \simeq \lim D(U_i,\calA|_{U_i}) \simeq \lim D(A_i)$ via $K \mapsto \{\Gamma(U_i,K|_{U_i})\}$, where $A_i := \Gamma(U_i,\calA)$ is the commutative $R_i$-algebra of global sections of $\calA|_{U_i}$. Moreover, the forgetful functor $D(X,\calA) \to D(X)$ is conservative, continuous  (\cite[Corollary 4.2.3.3]{LurieHA}), and cocontinuous (\cite[Corollary 4.2.3.5]{LurieHA}). In the special case $\calA = \calO_X$, this discussion recovers the \'etale descent equivalence $D(X) \simeq \lim D(U_i) \simeq \lim D(R_i)$. An important non-connective example is given by $\calA = j_* \calO_U$ for $j:U \to X$ a quasi-affine morphism; in this case, $j_*$ induces $D(U) \simeq D(X,\calA)$ (see \cite[Corollary 2.5.16]{LurieDAGVIII}), and the equivalence $D(X,\calA) \simeq \lim D(U_i,\calA|_{U_i})$ above reduces to the \'etale descent equivalence $D(U) \simeq \lim D(U \times_X U_i)$. For further psychological comfort, note that the Raynaud-Gruson d\'evissage (see \cite[Theorem 5.7.6]{RaynaudGruson}, \cite[Tag 08GL]{StacksProject} and \cite[Theorem 1.3.8]{LurieDAGXII}) allows us to choose a {\em finite} diagram $\{U_i\}$ realizing $X$.

\subsection*{Acknowledgements} 
I am very grateful to Vladimir Drinfeld, Johannes Nicaise, and Bjorn Poonen for bringing the algebraization questions treated here to my attention; to Johan de Jong and Ofer Gabber for enlightening discussions, which had a conspicuous influence on this work; to Jacob Lurie, Bertrand To\"{e}n, and Gabriele Vezzosi for conversations and communications that greatly improved my understanding of derived algebraic geometry, and consequently contributed indirectly, but significantly, to \S \ref{sec:TD}; to Daniel Halpern-Leistner and Brandon Levin for useful discussions; and especially to Brian Conrad: his numerous suggestions significantly improved the readability of this paper, and his insistence on the ``correct'' generality  (in the form of comments on an earlier note proving only Theorem \ref{thm:alglim} for schemes using \cite{ToenDerAz} and \cite{BrandenburgChirvasitu}) led to Theorem \ref{thm:TannakaAlgSpaces} in the first place. I was supported by NSF grants DMS 1340424 and DMS 1128155,

\begin{remark}
	On circulation of the main results in this manuscript, we learnt that some of these were known to some experts, at least under mild conditions: Lurie informed us that that Theorem \ref{thm:TannakaAlgSpaces} was familiar to him when $X$ has affine diagonal, and Gabber has told us that he was roughly aware of Theorem \ref{thm:alglim} and Theorem \ref{thm:algprod} through a potential extension of \cite{BrandenburgChirvasitu} to qcqs algebraic spaces.
\end{remark}

\newpage
\section{Tannaka duality for algebraic spaces}
\label{sec:TD}

The goal of this section is to prove the following:

\begin{theorem}
	\label{thm:TD}
	If $X$ and $S$ are qcqs algebraic spaces, then pullback induces isomorphisms 
	\[ \Hom(S,X) \simeq \Fun_{\otimes}(D_\perf(X),D_\perf(S)) \simeq \Fun^L_{\otimes,c}(D(X),D(S)) \simeq \Fun^L_{\otimes}(D(X),D(S)).\]
\end{theorem}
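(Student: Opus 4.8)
The plan is to reduce everything to the equivalence $\Hom(S,X) \simeq \Fun^L_{\otimes}(D(X),D(S))$, which carries all the geometric content; the remaining three objects in the chain are tied to it by soft arguments. Concretely, since $X$ is qcqs one has $D(X) \simeq \Ind(D_\perf(X))$ as symmetric monoidal $\infty$-categories, with $D_\perf(X)$ simultaneously the compact and the dualizable objects (and likewise for $S$). First I would use $\Ind$-completion to identify exact symmetric monoidal functors $D_\perf(X) \to D_\perf(S)$ with those cocontinuous symmetric monoidal functors $D(X) \to D(S)$ that preserve compact objects, i.e.\ $\Fun_{\otimes}(D_\perf(X),D_\perf(S)) \simeq \Fun^L_{\otimes,c}(D(X),D(S))$. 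Next, the subscript $c$ is automatic: a cocontinuous functor between stable $\infty$-categories is exact, an exact symmetric monoidal functor preserves dualizable objects, and dualizable $=$ compact on both sides, so every $F \in \Fun^L_{\otimes}(D(X),D(S))$ already preserves compact objects, giving $\Fun^L_{\otimes,c}(D(X),D(S)) \simeq \Fun^L_{\otimes}(D(X),D(S))$. Finally, the full faithfulness of $\Hom(S,X) \to \Fun^L_{\otimes}(D(X),D(S))$, $f \mapsto f^*$, I would take directly from Lurie's general Tannaka duality machinery \cite{LurieDAGVIII}. It then remains to prove essential surjectivity of this last functor.

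So let $F: D(X) \to D(S)$ be cocontinuous symmetric monoidal; I must produce $f: S \to X$ with $f^* \simeq F$. The first step is to show $F$ is right $t$-exact, i.e.\ $F(D^{\leq 0}(X)) \subseteq D^{\leq 0}(S)$; since $F$ preserves perfect complexes and $D^{\leq 0}(X)$ is generated under colimits by connective perfect complexes, it suffices to check that $F$ sends connective perfect complexes to connective objects — this needs a separate argument, but I do not expect it to be the main difficulty. Granting right $t$-exactness, symmetric monoidality promotes $F$ to a functor $\CAlg(D(X)) \to \CAlg(D(S))$ on commutative algebra objects which preserves connectivity of algebras, hence restricts to $\Aff_{/X} \to \Aff_{/S}$. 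Then, since a quasi-affine morphism $j: U \to X$ is the complement of a finitely presented closed subspace inside an affine $X$-space and satisfies $D(U) \simeq \Mod_{j_* \calO_U}(D(X))$, the assignment $[U \to X] \mapsto j_* \calO_U$ embeds $\QAff_{/X}$ fully faithfully into $\CAlg(D(X))$; applying $F$ and checking that the conditions carving out this full subcategory are preserved, I obtain $F: \QAff_{/X} \to \QAff_{/S}$, $[U \to X] \mapsto [U' \to S]$, with $D(U') \simeq \Mod_{F(j_* \calO_U)}(D(S))$.

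The hard part — and the step I expect to be the main obstacle — is that $[U \to X] \mapsto [U' \to S]$ carries \'etale morphisms to \'etale morphisms and \'etale coverings to \'etale coverings. My plan is to deduce this from the analogous statements for arbitrary commutative algebra objects: since $j_* \calO$ embeds quasi-affine $X$-spaces fully faithfully into $\CAlg(D(X))$, it suffices to show that $F: \CAlg(D(X)) \to \CAlg(D(S))$ preserves the purely $\otimes$-categorical notion of an \'etale morphism of $E_\infty$-algebras, and that of a faithfully flat cover. Preservation of \'etale-ness should hold because $F$ is symmetric monoidal, cocontinuous and (now) $t$-exact, so it carries finite \'etale algebras (dualizable algebras equipped with a separability idempotent) to finite \'etale algebras and localizations to localizations; flatness is where the $t$-exactness input is genuinely used; and surjectivity of a cover translates into conservativity of the associated base-change functor on module categories, a colimit-compatible $\otimes$-categorical property that $F$ respects.

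With this in hand the construction of $f: S \to X$ becomes \'etale-local on $X$, and I would finish as follows. Using Raynaud--Gruson \cite{RaynaudGruson}, choose a \emph{finite} \'etale cover $\{U_i = \Spec A_i \to X\}$ with each $U_i \to X$ quasi-affine; then $\{U_i' \to S\}$ is an \'etale cover of $S$, and on each piece $F$ restricts to a cocontinuous symmetric monoidal functor $F_i: D(U_i) \simeq \Mod_{j_{i*}\calO_{U_i}}(D(X)) \to \Mod_{F(j_{i*}\calO_{U_i})}(D(S)) \simeq D(U_i')$. Since $U_i$ is affine, the affine case of the theorem applies: by \'etale descent on the target it reduces to the identification $\Fun^L_{\otimes}(D(\Spec A),D(\Spec B)) \simeq \Hom_{\mathrm{ring}}(A,B)$, which comes from $\infty$-categorical Tannaka duality for affine schemes together with the full faithfulness of the Eilenberg--MacLane embedding of ordinary rings into $E_\infty$-rings \cite{LurieHA}. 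This yields unique maps $f_i: U_i' \to U_i$ with $f_i^* \simeq F_i$. Because $F$ respects fibre products of quasi-affine $X$-spaces (flat base change identifies $j_* \calO_{U_i \times_X U_j}$ with $j_{i*}\calO_{U_i} \otimes_{\calO_X} j_{j*}\calO_{U_j}$, which $F$ preserves), the $f_i$ agree over the overlaps $U_i' \times_S U_j' = (U_i \times_X U_j)'$, and the functoriality of the whole construction lets them descend along $S \simeq \colim U_\bullet'$ to a map $f: S \to X$. Finally $f^* \simeq F$ is verified after pulling back along the \'etale cover $\{U_i'\}$, where it is exactly the identification $f_i^* \simeq F_i$ constructed above.
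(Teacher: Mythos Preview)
Your overall architecture matches the paper's closely: the categorical identifications via $\Ind$-completion and dualizable $=$ compact, full faithfulness via Lurie, the passage from $F$ to functors $\Aff_{/X} \to \Aff_{/S}$ and $\QAff_{/X} \to \QAff_{/S}$, preservation of \'etale surjections via conservativity of base change on module categories, and the final \'etale descent to the affine case are all essentially what the paper does.

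There is, however, a genuine gap at the step where you write that preservation of connectivity ``needs a separate argument, but I do not expect it to be the main difficulty.'' In fact this is the principal technical difficulty of the entire proof: the paper explicitly remarks that an earlier version of the theorem \emph{imposed} $F(D^{\leq 0}(X)) \subset D^{\leq 0}(S)$ as a hypothesis, and that the possibility of proving it unconditionally only emerged later. There is no soft reason a cocontinuous symmetric monoidal functor should preserve connectivity (contrast with preservation of dualizable objects, which is formal). The paper's argument is substantial: reduce to $S = \Spec(L)$ for a field $L$, so that $\calA := G\calO_S$ is a ``field object'' of $\CAlg(D(X))$ and $D(X,\calA)$ has no proper nonzero localizing subcategories; use this to decide, for each constructible closed $Z \subset X$, whether $[\calA]$ lies in $Z$ or its complement; and feed this dichotomy through a Raynaud--Gruson d\'evissage of $X$ into elementary Nisnevich squares to eventually factor $F$ through $D(X) \to D(\Spec A_n)$ for some affine piece, where connectivity preservation is trivial. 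Without this step you cannot even define the functor $\Aff_{/X} \to \Aff_{/S}$, so everything downstream collapses.

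A secondary point: your plan for preserving \'etaleness --- reducing to ``finite \'etale algebras plus localizations'' --- is different from the paper's and not obviously workable, since a general \'etale map is not a composite of a finite \'etale map and a localization (in the standard-\'etale presentation $A \to (A[t]/f)_g$, the finite piece $A \to A[t]/f$ need not be \'etale before localizing). The paper instead shows, quite formally from base change for cotangent complexes, that $F_\calA(L_{\calB/\calA}) \simeq L_{F(\calB)/F(\calA)}$ for any map $\calA \to \calB$ in $\CAlg(D(X))$, checks separately that $F$ preserves morphisms locally of almost finite presentation, and then invokes the characterization ``\'etale $\Leftrightarrow$ $L_{U/V} \simeq 0$ and locally of almost finite presentation.'' This is both cleaner and avoids any structure theorem for \'etale maps.
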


Recall that $\Fun_{\otimes}(D_\perf(X),D_\perf(S))$ is the $\infty$-category of exact symmetric monoidal functors $D_\perf(X) \to D_\perf(S)$. We begin with the purely categorical aspects.

\begin{lemma}
There are natural identifications
\[ \Fun_{\otimes}(D_\perf(X),D_\perf(S)) \simeq \Fun^L_{\otimes,c}(D(X),D(S)) \simeq \Fun^L_{\otimes}(D(X),D(S)).\]
\end{lemma}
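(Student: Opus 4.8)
The plan is to establish the two equivalences separately, each by exhibiting an explicit pair of adjoint (indeed inverse) functors and checking that the relevant properties are automatic.

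**First**, for $\Fun_{\otimes}(D_\perf(X),D_\perf(S)) \simeq \Fun^L_{\otimes,c}(D(X),D(S))$, I would use the universal property of $\mathrm{Ind}$-completion. Since $X$ and $S$ are qcqs, $D(X) = \mathrm{Ind}(D_\perf(X))$ and $D(S) = \mathrm{Ind}(D_\perf(S))$ as compactly generated stable $\infty$-categories (this is the statement recalled in the Notation section: $D_\perf$ is exactly the subcategory of compact objects). By \cite[\S 5.3.5]{LurieHTT}, any exact functor $D_\perf(X) \to D_\perf(S)$ extends uniquely to a cocontinuous functor $D(X) \to D(S)$ that preserves compact objects, and conversely restriction to compact objects is inverse to this; so on the level of underlying $\infty$-categories the two sides agree. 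It remains to check this equivalence is compatible with the symmetric monoidal enhancement. Here one invokes that $\mathrm{Ind}$-completion is a symmetric monoidal operation (Day convolution / \cite[\S 4.8.1]{LurieHA}) with the property that $D_\perf(X) \hookrightarrow D(X)$ is symmetric monoidal and the monoidal structure on $D(X)$ is the unique cocontinuous one extending it; hence a symmetric monoidal structure on an extension $F$ is the same datum as one on $F|_{D_\perf}$. This gives the equivalence of $\infty$-categories of symmetric monoidal functors. I expect this step to be essentially formal once the right references are assembled.

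**Second**, for $\Fun^L_{\otimes,c}(D(X),D(S)) \simeq \Fun^L_{\otimes}(D(X),D(S))$, the content is that \emph{every} cocontinuous symmetric monoidal functor $F:D(X)\to D(S)$ automatically preserves compact objects — the subscript $c$ is free. The key point is that, for qcqs algebraic spaces, the compact objects of $D(X)$ coincide with the dualizable objects (again recalled in the Notation section). A symmetric monoidal functor always preserves dualizable objects, and $F$ is cocontinuous by hypothesis, so $F$ sends compact $=$ dualizable objects of $D(X)$ to dualizable $=$ compact objects of $D(S)$. Thus the inclusion $\Fun^L_{\otimes,c}(D(X),D(S)) \subset \Fun^L_{\otimes}(D(X),D(S))$ is an equality of full subcategories.

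**The main obstacle** I anticipate is purely bookkeeping in the first step: making precise that the $\mathrm{Ind}$-completion equivalence on underlying $\infty$-categories upgrades to an equivalence on the level of symmetric monoidal functor categories (as opposed to merely matching objects), i.e., that the unique-extension property holds coherently for the $E_\infty$-structures and not just for the bare functors. This requires citing the symmetric monoidal universal property of $\mathrm{Ind}$ in the form that $\mathrm{Fun}^L_\otimes(\mathrm{Ind}(\calC), \calD) \simeq \mathrm{Fun}_\otimes(\calC, \calD)$ for $\calD$ presentable, plus the identification of compact-object-preserving functors out of a compactly generated category with arbitrary functors on the compact part. None of this is deep, but it is where all the care is needed; the dualizability argument in the second step is genuinely short.
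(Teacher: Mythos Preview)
Your proposal is correct and follows essentially the same approach as the paper: the first identification via $D(X) = \Ind(D_\perf(X))$, and the second via the coincidence of compact and dualizable objects together with the fact that symmetric monoidal functors preserve dualizables. The paper's proof is terser and does not dwell on the symmetric monoidal bookkeeping you flag, but the substance is identical.
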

\begin{proof}
	The first identification is a consequence of $D(X) = \Ind(D_\perf(X))$. For the second, we must show that every cocontinuous symmetric monoidal functor $F:D(X) \to D(S)$ preserves perfect complexes. As $D_\perf(X) \subset D(X)$ is the full subcategory of dualizable objects by \cite[Corollary 2.7.33]{LurieDAGVIII}, this is an immediate consequence of symmetric monoidal functors preserving dualizable objects.
\end{proof}

The preceding identifications will be used without comment in the sequel. We now check full faithfulness of $\Hom(S,X) \to \Fun_{\otimes}(D_\perf(X),D_\perf(S))$.

\begin{proof}[Proof of full faithfulness]
	The functors $\Hom(-,X)$ and $\Fun_{\otimes}(D_\perf(X),-)$ are stacks for the Zariski (in fact, fpqc), topology, so we may assume $S$ is affine. In this case, any map $S \to X$ is quasi-affine. Thus, the full faithfulness follows from Lurie's theorem \cite[Proposition 3.3.1]{LurieDAGVIII}.
\end{proof}

For essential surjectivity, fix an $F \in \Fun^L_{\otimes,c}(D(X),D(S))$. As before, we are free to localize on $S$, so we assume $S$ is affine. We will use $F$ to progressively build compatible \'etale hypercovers of $X$ and $S$ by (quasi-)affine schemes.  The first, and most essential, step is to ``localize'' algebraic geometry over $S$ in terms of sheaf theory over $X$ via a right adjoint to $F$; if $F$ arises from geometry, then this adjoint is simply the pushforward. The construction of this adjoint highlights the utility of using the functor $F:D(X) \to D(S)$, instead of its restriction to the full subcategories of perfect complexes.

\begin{lemma}
	\label{lem:TDfindadjoint}
$F$ admits a cocontinuous and conservative right adjoint $G:D(S) \to D(X)$. Moreover, $G$ is lax monoidal, and induces a symmetric monoidal equivalence $D(S) \simeq D(X,G \calO_S)$. Under this equivalence, the functor $F:D(X) \to D(S)$ corresponds to the base change functor $D(X) \to D(X,G\calO_S)$.
\end{lemma}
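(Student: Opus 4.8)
The plan is to produce the right adjoint $G$ by an adjoint functor theorem, then extract its formal properties from the fact that $F$ is a cocontinuous symmetric monoidal functor between compactly generated stable $\infty$-categories, and finally identify $D(S)$ with modules over the algebra $G\calO_S$ via the Barr--Beck--Lurie theorem. First I would observe that $D(X)$ is presentable and $F$ is cocontinuous, so by the adjoint functor theorem (\cite[Corollary 5.5.2.9]{LurieHTT}) $F$ admits a right adjoint $G:D(S)\to D(X)$. To see $G$ is cocontinuous: since $F$ is cocontinuous and preserves compact objects (it carries $D_\perf(X)$ into $D_\perf(S)$, being symmetric monoidal and hence preserving dualizable objects), its right adjoint $G$ preserves filtered colimits; as $G$ is automatically exact (right adjoint between stable categories), it is cocontinuous. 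Conservativity of $G$ is equivalent to the unit $\calO_X \to G F \calO_X = GF\calO_X$... more precisely, to $D_\perf(X)$ generating $D(X)$ together with the fact that $F$ detects zero objects on a set of compact generators — here is where one uses that $F$ is symmetric monoidal, so $F(\calO_X)=\calO_S\neq 0$, and more generally $F$ carries the compact generators of $D(X)$ (shifts of a single generator, or the perfect complexes) to nonzero dualizable objects, whence $G$ is conservative because its left adjoint $F$ sends a generating set to nonzero objects; I would phrase this via: $FK = 0$ for $K$ perfect forces $K=0$ because $K$ is a retract of $K\otimes K^\vee \otimes K$ and $F$ of a nonzero... actually cleanly, $F$ is conservative on $D_\perf(X)$ since $\id_{F K} = F(\id_K)$ and a dualizable object with zero image has zero image of its identity; combined with compact generation this gives that $F$ itself is conservative, hence (by Barr--Beck considerations) $G$ is conservative. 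Let me instead just say: $F$ preserves compact objects and a set of compact generators to a set of objects that jointly detect equivalences, so $G$ is conservative.

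Next, $G$ is lax symmetric monoidal as the right adjoint of a symmetric monoidal functor (\cite[Corollary 7.3.2.7]{LurieHA} or \cite[Proposition 3.2.2.1]{LurieHA}), so $G\calO_S$ acquires the structure of a commutative algebra object $\calA := G\calO_S \in \CAlg(D(X))$, and the lax structure provides for every $M \in D(S)$ an $\calA$-module structure on $GM$, i.e.\ $G$ factors through $D(X,\calA)$. Then I would apply the monadic form of Barr--Beck--Lurie (\cite[Theorem 4.7.3.5]{LurieHA}) to the adjunction $F \dashv G$: $G$ is conservative and preserves all colimits (in particular $G$-split geometric realizations), so $D(S)$ is equivalent, via $G$, to the $\infty$-category of modules over the monad $GF(-)$ on $D(X)$; since $F$ is colimit-preserving and symmetric monoidal, $GF(-) \simeq G(F(\calO_X)\otimes -) \simeq GF\calO_X \otimes -$... the point being $GF(M) \simeq \calA \otimes M$ using the projection formula for the adjunction (which holds because $F$ is symmetric monoidal and both $F,G$ are cocontinuous — this is \cite[Proposition 3.4.1.9]{LurieHA}-type reasoning, or one derives it directly: $G(FM) = G(FM \otimes \calO_S) \simeq M \otimes G\calO_S$ by the projection formula $G(N\otimes F M)\simeq GN \otimes M$). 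Hence the monad is the free $\calA$-module monad, and Barr--Beck--Lurie upgrades this to a symmetric monoidal equivalence $D(S)\simeq D(X,\calA) = D(X,G\calO_S)$, under which $G$ becomes the forgetful functor and its left adjoint $F$ becomes base change $M \mapsto \calA\otimes M$.

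The main obstacle is the projection formula $G(N \otimes FM) \simeq GN \otimes M$ (equivalently $GF(M)\simeq G\calO_S \otimes M$), since everything downstream — identifying the monad as free, getting the symmetric monoidal (not merely $\infty$-categorical) enhancement of the equivalence — rests on it. I would establish it by the standard argument: the natural lax-monoidal comparison map $GN \otimes M \to G(N \otimes FM)$ is an equivalence when $M$ is dualizable (both sides turn $M$ into its dual and one reduces to the identity $M = \calO_X$), and then both functors of $M$ are cocontinuous (left side obviously; right side because $F$ and $G$ are both cocontinuous), so the comparison is an equivalence on all of $D(X) = \Ind(D_\perf(X))$ as the dualizable = perfect objects generate under colimits. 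Once the projection formula is in hand, the Barr--Beck--Lurie identification and its symmetric monoidal refinement (via \cite[Corollary 4.8.5.11, Proposition 4.8.5.8]{LurieHA} on module categories, or the general principle that a monadic adjunction whose monad is given by tensoring with a commutative algebra is the free/forget adjunction of a commutative algebra, symmetric monoidally) are formal, and the final sentence identifying $F$ with base change is immediate from $F = $ (left adjoint of forgetful) $= \calA \otimes_{\calO_X} (-)$.
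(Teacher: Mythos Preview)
Your overall architecture is right and matches the paper: adjoint functor theorem for existence of $G$, compact-preservation of $F$ for cocontinuity of $G$, lax monoidality of the right adjoint, a projection formula $G\calO_S \otimes E \simeq GF(E)$ checked first on dualizable $E$ and then extended by cocontinuity, and Barr--Beck--Lurie to identify $D(S)$ with $G\calO_S$-modules. The paper proceeds in exactly this way.

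However, your argument for the conservativity of $G$ has a genuine gap. You repeatedly try to deduce it from properties of $F$ on $D_\perf(X)$ --- that $F$ is conservative on perfects, or that $F$ carries compact generators to ``objects that jointly detect equivalences'' --- but these are the wrong statements. Conservativity of $G$ means $GM \simeq 0 \Rightarrow M \simeq 0$ for $M \in D(S)$; equivalently, the essential image of $F$ generates $D(S)$ under colimits. Nothing you wrote establishes this: claiming that $\{F(K_i)\}$ jointly detect equivalences in $D(S)$ is precisely the assertion to be proved, and deducing it from ``$G$ conservative via Barr--Beck'' is circular since Barr--Beck takes conservativity as a hypothesis. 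Conservativity of $F$ itself (which you argue, somewhat shakily, via dualizability) is irrelevant here.

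The missing input is that $S$ is affine, which was arranged just before the lemma. With $S$ affine, $\calO_S$ is a compact generator of $D(S)$, so $\Gamma(S,-) = \Hom_{D(S)}(\calO_S,-)$ is conservative; since $\calO_S = F(\calO_X)$, adjunction gives $\Gamma(S,K) \simeq \Hom_{D(X)}(\calO_X, GK) = \Gamma(X,GK)$, so $\Gamma(S,-)$ factors through $G$ and hence $G$ is conservative. This is exactly how the paper proceeds. Once you insert this, the rest of your plan goes through.
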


As $G$ is lax monoidal, the object $G\calO_S$ is naturally an $E_\infty$-algebra, i.e., lifts canonically to $\CAlg(D(X))$; this explains the notation $D(X,G\calO_S)$.

\begin{proof}
	The existence of $G$ follows from the adjoint functor theorem as $F$ preserves colimits. The cocontinuity of $G$ is a formal consequence of $F$ preserving compact objects (and $D(X)$ being compactly generated). Moreover, $\Gamma(S,-)$ on $D(S)$ factors through $G$ by adjunction: $\Gamma(S,K) \simeq \Gamma(X,GK)$. As $S$ is affine, it follows that $G$ is conservative. To get the monoidal behaviour, note that the right adjoint of any symmetric monoidal functor is lax monoidal by \cite[Proposition 3.2.1]{LurieDAGVIII}. For the last assertion, we use Barr-Beck-Lurie. To apply this theorem, we must identify the monad resulting from the adjunction as $K \mapsto G\calO_S \otimes K$.   By \cite[Corollary 6.3.5.18]{LurieHA}, it is enough to check that the natural map $G\calO_S \otimes E \to G(F(E))$ is an equivalence for any $E \in D(X)$. In fact, we may restrict to $E \in D_\perf(X)$ by cocontinuity. For such $E$, one checks that $\Hom(K,-)$ applied to either side is $H^0(S, F(K^\vee) \otimes F(E))$ for any $K \in D_\perf(X)$; this proves the claim by Yoneda as $D_\perf(X)$ generates $D(X)$ under colimits. To see that that this equivalence is symmetric monoidal, we must show that the natural map induces an isomorphism $G(K) \otimes_{G\calO_S} G(L) \simeq G(K \otimes L)$ for $K,L \in D(S)$. By cocontinuity, as $D(S)$ is generated under colimits by $\calO_S$, we may assume $K = L = \calO_S$, whence it is clear.
\end{proof}

Using this picture, we can ``pullback'' commutative algebras and modules in $D(X)$ in a tractable way:

\begin{lemma}
	\label{lem:TDCAlg}
	$F$ induces a cocontinuous functor $\CAlg(D(X)) \to \CAlg(D(S))$ with right adjoint $G$. For any $\calA \in \CAlg(D(X))$, there is an induced cocontinuous symmetric monoidal functor $F_\calA:D(X,\calA) \to D(S,F(\calA))$ that preserves compact objects, and is compatible with $F$ under the forgetful functor. For a map $\calA \to \calB$ in $\CAlg(D(X))$, there is a canonical identification $F_{\calB}(L_{\calB/\calA} ) \simeq L_{F(\calB)/F(\calA)}$. 
\end{lemma}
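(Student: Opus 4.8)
The plan is to establish the three assertions in turn, each by invoking a standard property of (lax) symmetric monoidal functors. For the first, I would obtain the functor $\CAlg(D(X)) \to \CAlg(D(S))$ by applying $\CAlg(-)$ to the symmetric monoidal functor $F$; since $G$ is lax symmetric monoidal by Lemma~\ref{lem:TDfindadjoint}, the adjunction $F \dashv G$ lifts to an adjunction $\CAlg(F) \dashv \CAlg(G)$ with both functors computed on underlying objects by $F$ and $G$ respectively (cf.\ \cite[\S 3.2]{LurieHA}), so the right adjoint is $G$, and $\CAlg(F)$ is cocontinuous, being a left adjoint. (One could also argue directly that $\CAlg(F)$ preserves sifted colimits, which the forgetful functor to $D(X)$ creates, coproducts, which are relative tensor products, and the initial object, hence all colimits.)

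For the second assertion I would use the standard fact that a symmetric monoidal functor between presentable symmetric monoidal $\infty$-categories sends $\calA$-modules to $F(\calA)$-modules, producing a symmetric monoidal functor $F_\calA : D(X,\calA) \to D(S,F(\calA))$ for the relative tensor products which commutes with $F$ under the forgetful functors \cite[\S\S 3.4, 4.5]{LurieHA}. Cocontinuity of $F_\calA$ then follows by postcomposing with the colimit-reflecting forgetful functor $D(S,F(\calA)) \to D(S)$: the resulting composite is $F$ precomposed with the colimit-preserving forgetful functor out of $D(X,\calA)$. For compactness, observe that the free modules $\calA \otimes E$ with $E \in D_\perf(X)$ are compact in $D(X,\calA)$ — their corepresented functors factor as $\mathrm{Map}_{D(X)}(E,-)$ followed by the forgetful functor, both colimit-preserving — and generate $D(X,\calA)$ under colimits because $D_\perf(X)$ generates $D(X)$; since $F_\calA(\calA \otimes E) \simeq F(\calA) \otimes F(E)$ with $F(E)$ perfect, hence compact, and $F_\calA$ is exact and preserves retracts, it carries the thick subcategory generated by the $\calA \otimes E$, i.e.\ all compact objects, into the compact objects of $D(S,F(\calA))$.

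The third assertion is the substantive point, and I expect the bookkeeping there to be the main obstacle. The plan is to reduce to free algebras: if $\calB = \mathrm{Sym}_\calA(M)$ for $M \in D(X,\calA)$, then $L_{\calB/\calA} \simeq \calB \otimes_\calA M$ (the extension of scalars of $M$ along $\calA \to \calB$), and applying $F_\calB$ while using that $F_\calA$ intertwines $\mathrm{Sym}_\calA$ with $\mathrm{Sym}_{F(\calA)}$ and extension of scalars with extension of scalars — both consequences of symmetric monoidality together with the second assertion — identifies $F_\calB(L_{\calB/\calA})$ with $\mathrm{Sym}_{F(\calA)}(F_\calA M) \otimes_{F(\calA)} F_\calA M \simeq L_{F(\calB)/F(\calA)}$, naturally in $M$. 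For general $\calB$ I would resolve it by the monadic bar construction $\mathrm{Bar}_\bullet$, a simplicial $\calA$-algebra with free terms and geometric realization $\calB$; then $F$ carries this to the analogous free resolution of $F(\calB)$ over $F(\calA)$ (a geometric realization is a sifted colimit, preserved by $F$), and since both the cotangent complex functor $\calB' \mapsto L_{\calB'/\calA} \otimes_{\calB'} \calB$ and $F_\calB$ commute with sifted colimits, passing to realizations yields the equivalence. The delicate step is tracking the base-change-to-$\calB$ (respectively to $F(\calB)$) along the simplicial diagram and its compatibility with $F_\calB$; this is routine but tedious, and one could alternatively cite the functoriality of the cotangent complex with respect to cocontinuous symmetric monoidal functors directly from \cite[\S 7.3]{LurieHA}.
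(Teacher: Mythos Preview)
Your proposal is correct, but the paper takes a somewhat different and more economical route for the second and third assertions. Rather than constructing $F_\calA$ directly from the general fact that symmetric monoidal functors induce functors on module categories, the paper exploits the equivalence $D(S) \simeq D(X, G\calO_S)$ from Lemma~\ref{lem:TDfindadjoint}: under this identification, $F(\calA)$ corresponds to $\calA \otimes G\calO_S$, so $F_\calA$ is simply the base change functor $D(X,\calA) \to D(X, \calA \otimes G\calO_S)$ internal to the world of $D(X)$-algebras. This immediately gives cocontinuity and symmetric monoidality, and preservation of compact objects follows since the forgetful right adjoint is cocontinuous. For the cotangent complex statement, the paper then simply invokes the base change formula \cite[Proposition 8.3.3.7]{LurieHA} for cotangent complexes along a map of presentable $\infty$-categories, rather than unwinding a bar resolution. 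Your direct approach is perfectly valid and perhaps more self-contained, but the paper's identification via $G\calO_S$ makes the bookkeeping you flag as ``routine but tedious'' disappear entirely: once $F_\calA$ is literally a base change of commutative algebras in $D(X)$, the compatibility with cotangent complexes is a one-line citation.
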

\begin{proof}
	The first assertion comes from \cite[Remark 3.2.2]{LurieDAGVIII}. The equivalence $D(S) \simeq D(X,G\calO_S)$ carries $F(\calA)$ to $\calA \otimes G\calO_S$ by Lemma \ref{lem:TDfindadjoint}. In particular, the desired functor $D(X,\calA) \to D(S,F(\calA))$ is simply the base change functor $D(X,\calA) \to D(X,\calA \otimes G\calO_S)$. One then easily checks that $F_{\calA}$ is cocontinuous, symmetric monoidal, and compatible with $F$; the preservation of compact objects is a consequence of the forgetful right adjoint preserving colimits. Finally, the claim about cotangent complexes is immediate from Lurie's perspective \cite[\S 8.3]{LurieHA} on the functor of points of the cotangent complex in an arbitrary presentable $\infty$-category. More precisely, it follows from the base change formula \cite[Proposition 8.3.3.7]{LurieHA}; see also \cite[Proposition 1.1.2]{LurieDAGXIV} for a similar assertion.
\end{proof}

The next task is to show that $F$ preserves connective objects. For this, we recall a result on quasi-affine maps in the derived setting. First, note that (opposite of) the category $\Aff_{/X}$ of affine $X$-spaces is identified with $\CAlg(D^{\leq 0}(X))$ via pushforward of the structure sheaf. By abuse of notation, for any $Y \in \Aff_{/X}$, we write $\calO_Y \in \CAlg(D^{\leq 0}(X))$  for the corresponding algebra. In the derived setting, this discussion to quasi-affine maps, thanks to a result of Lurie:

\begin{lemma}
	\label{lem:TDQAffDesc}
	Let $f:U \to X$ be a quasi-affine morphism. Then $f_*$ induces a symmetric monoidal equivalence $D(U) \simeq D(X,f_* \calO_U)$. Moreover, the functor $U \mapsto f_* \calO_U$ determines a fully faithful functor $\QAff_{/X}^\opp \to \CAlg(D(X))$.
\end{lemma}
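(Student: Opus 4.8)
The plan is to reduce the statement to two elementary cases — affine morphisms and quasi-compact open immersions — and then assemble, the open-immersion case being governed by idempotent algebras. For the first assertion, I would factor a quasi-affine $f$ as $U \xrightarrow{j} \bar U \xrightarrow{g} X$ with $j$ a quasi-compact open immersion and $g$ affine. For $g$ affine, the equivalence $D(\bar U) \simeq D(X, g_*\calO_{\bar U})$ is the standard description of quasi-coherent complexes on a relative spectrum, essentially the identification $\Aff_{/X} \simeq \CAlg(D^{\leq 0}(X))$ promoted to module categories, and it is symmetric monoidal. For $j$ a quasi-compact open immersion one has $j^* j_* \simeq \id$, so the projection formula gives $j_*\calO_U \otimes j_*\calO_U \simeq j_*(j^*j_*\calO_U) \simeq j_*\calO_U$; hence $j_*\calO_U \in \CAlg(D(\bar U))$ is idempotent, the smashing localization $\Mod_{j_*\calO_U}(D(\bar U)) \hookrightarrow D(\bar U)$ is precisely the (fully faithful) functor $j_*$, and $D(U) \simeq D(\bar U, j_*\calO_U)$ symmetric monoidally. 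Composing and using transitivity of module categories, $D(U) \simeq \Mod_{j_*\calO_U}(\Mod_{g_*\calO_{\bar U}}(D(X))) \simeq \Mod_{g_* j_* \calO_U}(D(X)) = D(X, f_*\calO_U)$. This is \cite[Corollary 2.5.16]{LurieDAGVIII}, to which I would refer for details. Two consequences I will use below: $f_* \colon D(U) \to D(X)$ is conservative (it becomes the forgetful functor out of $\Mod_{f_*\calO_U}$), and the formation of $f_*\calO_U$ commutes with flat base change on $X$.

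For the full faithfulness, functoriality is immediate: a map $h \colon U \to U'$ over $X$ induces $f'_*\calO_{U'} \to f'_* h_* \calO_U = f_*\calO_U$ in $\CAlg(D(X))$, contravariantly in $h$. Fix $U, U'$ quasi-affine over $X$ and set $\calA := f_*\calO_U$. Since maps of $X$-spaces glue, $\CAlg(D(-))$ is a sheaf of $\infty$-categories, and $f_*\calO_U$, $f'_*\calO_{U'}$, $\calA$ are compatible with flat (hence \'etale) base change, both $\Hom_{\QAff_{/X}}(U,U')$ and $\Hom_{\CAlg(D(X))}(f'_*\calO_{U'},\calA)$ are sheaves on $X$ and the comparison map is a map of sheaves; so I may assume $X = \Spec(R)$ is affine. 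When $U'$ is affine over $X$, say $U' = \Spec_X(\calB')$ with $\calB' = f'_*\calO_{U'}$ connective, the universal property of relative Spec — which follows from the affine case $\Aff_{/X} \simeq \CAlg(D^{\leq 0}(X))$ by covering $U$ by affines and descending — gives $\Hom_{/X}(U, U') \simeq \Hom_{\CAlg(D(X))}(\calB', \calA)$, which is the assertion in this case.

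In general, choose a quasi-compact open immersion $\iota \colon U' \hookrightarrow \bar U' = \Spec_X(\bar\calB')$ with $\bar U'$ affine over $X$. By the first assertion, $\calB' = f'_*\calO_{U'}$ is idempotent as a $\bar\calB'$-algebra: $\calB' \otimes_{\bar\calB'} \calB' \simeq \calB'$. Hence the pushforward $\bar\calB' \to \calB'$ realizes $\Hom_{\CAlg(D(X))}(\calB',\calA) \hookrightarrow \Hom_{\CAlg(D(X))}(\bar\calB',\calA)$ as the inclusion of those algebra maps $\varphi$ for which $\calA \to \calB' \otimes_{\bar\calB',\varphi} \calA$ is an equivalence. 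On the geometric side, $\Hom_{\QAff_{/X}}(U,U') \hookrightarrow \Hom_{/X}(U,\bar U') \simeq \Hom_{\CAlg(D(X))}(\bar\calB',\calA)$ (the affine case just treated) is the inclusion of those $h$ factoring through $\iota$. It remains to match the two conditions: a map $h \colon U \to \bar U'$ corresponding to $\varphi$ factors through $U' = \Spec_{\bar U'}(\iota_* \calO_{U'})$ if and only if the open immersion $U \times_{\bar U'} U' \hookrightarrow U$ is an isomorphism, if and only if $f_*\calO_{U \times_{\bar U'} U'} \to \calA$ is an equivalence (using conservativity of $f_*$); and, since $\iota$ is an open immersion, the square defining $U \times_{\bar U'} U'$ is $\Tor$-independent, so base change identifies $f_*\calO_{U \times_{\bar U'} U'}$ with $\calA \otimes_{\bar\calB',\varphi} \calB'$ compatibly with the natural maps. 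Thus the two inclusions cut out the same subspace of $\Hom_{\CAlg(D(X))}(\bar\calB',\calA)$, and the comparison map $\Hom_{\QAff_{/X}}(U,U') \to \Hom_{\CAlg(D(X))}(\calB',\calA)$ is an equivalence.

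I expect the bookkeeping in the last paragraph to be the main obstacle: one must keep careful track of the (generally non-connective) algebras $f_*\calO_U$ under base change and the relative Spec construction, and verify that ``factoring through the open $U' \subset \bar U'$'' is detected on algebras by the idempotent-localization condition $\calA \simeq \calB' \otimes_{\bar\calB'} \calA$, i.e.\ by a \emph{property} rather than extra data. Pinning down the idempotency $\calB' \otimes_{\bar\calB'} \calB' \simeq \calB'$ and the identification of $f_*\calO_{U\times_{\bar U'}U'}$ is the one place where it is essential that $j$ (respectively $\iota$) is a quasi-compact open immersion rather than an arbitrary flat localization.
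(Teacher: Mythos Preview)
Your argument is correct and takes a genuinely different route from the paper. The paper cites \cite[Proposition 3.2.5 and Lemma 3.2.8]{LurieDAGVIII} for both the equivalence $D(U) \simeq D(X, f_*\calO_U)$ and the full faithfulness, supplying only the symmetric monoidal assertion by hand: it checks $f_*K \otimes_{f_*\calO_U} f_*L \simeq f_*(K \otimes L)$ first for $K = f^*K'$ via the projection formula, and then in general by noting that the essential image of $f^*$ generates $D(U)$ under colimits (true separately for open immersions and for affine maps, hence for their composite). Your approach through the factorisation $f = g \circ j$ and idempotent algebras is more self-contained and conceptually unified: the idempotency of $j_*\calO_U$ for a quasi-compact open immersion $j$ simultaneously yields the symmetric monoidal equivalence (as a smashing localisation) and, in the full-faithfulness half, explains why factoring through the open $U' \subset \bar U'$ is a \emph{property} detected by $\calA \simeq \calB' \otimes_{\bar\calB'} \calA$ rather than extra data. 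The paper's version is quicker if one is willing to black-box Lurie; yours makes the mechanism visible and gives an independent proof of the full faithfulness.

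One step deserves an extra line. In matching the two conditions at the end, you assert that the open immersion $j' \colon V' := U \times_{\bar U'} U' \hookrightarrow U$ is an isomorphism if and only if the induced map on pushforwards of structure sheaves is an equivalence, ``using conservativity of $f_*$''. Conservativity of $f_*$ only gets you as far as $\calO_U \simeq j'_*\calO_{V'}$ in $D(U)$; you still need that this forces $V' = U$. This holds, but requires one more observation: setting $Z = U \setminus V'$, the cofibre sequence $\underline{\Gamma}_Z(\calO_U) \to \calO_U \to j'_*\calO_{V'}$ gives $\underline{\Gamma}_Z(\calO_U) = 0$, and then the projection-formula identity $\underline{\Gamma}_Z(K) \simeq K \otimes \underline{\Gamma}_Z(\calO_U)$ forces $D_Z(U) = 0$, whence $Z = \emptyset$ since $\calO_Z$ would otherwise be a nonzero object of $D_Z(U)$. (Also, the natural map goes $\calA \to f_*\calO_{V'}$, not the reverse, though of course this is immaterial for the equivalence claim.)
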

\begin{proof}
	Almost everything can be found in \cite[Proposition 3.2.5 and Lemma 3.2.8]{LurieDAGVIII}. These references do not explicitly state that the equivalence $D(U) \simeq D(X,f_* \calO_U)$ is symmetric monoidal, so we prove it here. Given $K,L \in D(U)$, we must check that $\phi_{K,L}:f_*K \otimes_{f_* \calO_U} f_* L \to f_* (K \otimes_{\calO_U} L)$ is an equivalence. If $K = f^* K'$ for some $K' \in D(X)$, then the claim results from the projection formula. In general, for fixed $L$, the collection of $K \in D(X)$ for which $\phi_{K,L}$ is an equivalence is closed under colimits. As $f$ is quasi-affine, the essential image of the pullback $f^*:D(X) \to D(U)$ generates the target under colimits (as this is true for open immersions and affine maps separately), which implies the claim.
\end{proof}

For any $U \in \QAff_{/X}$, we simply write $\calO_U \in D(X)$ for the pushforward of the structure sheaf. Then the association $U \mapsto \calO_U$ lets us view $\QAff_{/X}^\opp$ as a full subcategory of $\CAlg(D(X))$, and one has a symmetric monoidal identification $D(X,\calO_U) \simeq D(U)$ by Lemma \ref{lem:TDQAffDesc}. Note also that if $U \subset X$ is a quasi-compact open subset, the forgetful functor $D(X,\calO_U) \to D(X)$ lets us view $D(U) \simeq D(X,\calO_U)$ as the right orthogonal of $D_{X\setminus U}(X)$. Using this, we show that $F$ preserves connectivity.

\begin{lemma}
	\label{lem:preserveconnectivity}
	$F$ preserves connective complexes, and thus $G$ preserves coconnective complexes.
\end{lemma}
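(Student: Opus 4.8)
The plan is to reduce the assertion to a statement about connective perfect complexes, and ultimately about the image of the unit object, and then to leverage the symmetric monoidal structure together with $\otimes$-duality. First, since $F$ is exact and cocontinuous, $D^{\leq 0}(S)\subseteq D(S)$ is closed under colimits, and $D^{\leq 0}(X)$ is generated under colimits by its connective perfect objects (a standard fact for qcqs $X$, coming from Thomason's generation theorem together with the pseudo-coherence of finitely presented quasi-coherent sheaves), it suffices to prove that $F(P)\in D^{\leq 0}(S)$ for every connective perfect complex $P$ on $X$. Write $S=\Spec R$; recall that $\Gamma(S,-)\colon D(S)\xrightarrow{\sim} D(R)$ is $t$-exact, that $\Gamma(X,-)$ is left $t$-exact, and that, by Lemma~\ref{lem:TDfindadjoint} together with cocontinuity, $\Gamma(S,F(E))\simeq \Gamma(X,G\calO_S\otimes E)$ for every $E\in D(X)$.

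The crucial input is that $G\calO_S$ is coconnective, i.e.\ $G\calO_S\in D^{\geq 0}(X)$ — the abstract counterpart of the left $t$-exactness of pushforward. Since coconnectivity is tested on an étale cover, one chooses a separated étale surjection $p\colon V\to X$ with $V$ affine; by Zariski's main theorem $p$ is quasi-affine, so Lemma~\ref{lem:TDQAffDesc} applies to the coconnective algebra $\calO_V:=p_*\calO_V\in\CAlg(D(X))$, and the projection formula identifies $\Gamma\bigl(V,(G\calO_S)|_V\bigr)$ with $\Gamma(X,G\calO_S\otimes\calO_V)\simeq\Gamma(S,F(\calO_V))$. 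One is thus reduced to showing that $F(\calO_V)$ has coconnective global sections, and here one exploits that $p$ is étale: then $L_{\calO_V/\calO_X}=0$, hence $L_{F(\calO_V)/\calO_S}=0$ by Lemma~\ref{lem:TDCAlg}, which rigidifies $F(\calO_V)$ enough to conclude (a quasi-affine-over-$S$, formally étale algebra).

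Granting that $G\calO_S\in D^{\geq 0}(X)$: if $E$ is a vector bundle then $G\calO_S\otimes E$ is again coconnective (as $E$ is flat), so $F(E)$ lies in $D^{\geq 0}(S)$ because its global sections $\Gamma(S,F(E))\simeq\Gamma(X,G\calO_S\otimes E)$ do; the same applies to $F(E^\vee)\simeq F(E)^\vee$ and to all $F(\Lambda^kE)\simeq\Lambda^kF(E)$ and $F(\mathrm{Sym}^kE)\simeq\mathrm{Sym}^kF(E)$. Combining this coconnectivity with the vanishing $\Lambda^kF(E)=0$ for $k$ exceeding the rank of $E$, with $\det F(E)=\Lambda^{\mathrm{top}}F(E)$ being an honest line bundle, and with $\chi(F(E))=\mathrm{rk}(E)$, one checks that $F(E)$ is concentrated in degree $0$, i.e.\ is a vector bundle. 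A general connective perfect complex $P$ is then handled by dévissage along its Tor-amplitude, using the truncation triangle $\tau^{\leq-1}P\to P\to H^0(P)$ together with the observation that $H^0(P)$, being pseudo-coherent, is again $\tau^{\geq 0}$ of a connective perfect complex.

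The main obstacle is establishing the coconnectivity of $G\calO_S$: unlike the full faithfulness established earlier (which is essentially formal, via Lurie), there is no adjunction formality available for the $t$-structure, and one must genuinely use the geometry of $X$. In particular, one has to be careful that — in contrast to the case of schemes — the étale charts of an algebraic space need not be affine over $X$, which is exactly why the étale-ness of $p$ (through the vanishing of its cotangent complex) must be brought to bear rather than mere quasi-affineness.
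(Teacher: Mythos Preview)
Your argument has a genuine gap at the crucial step: you assert that $L_{F(\calO_V)/\calO_S}=0$ ``rigidifies $F(\calO_V)$ enough to conclude (a quasi-affine-over-$S$, formally \'etale algebra)'', and hence that $F(\calO_V)$ has coconnective global sections. But the vanishing of the cotangent complex does not, by itself, force an $E_\infty$-$R$-algebra to be coconnective; you give no argument for this implication. The parenthetical suggests you are treating $F(\calO_V)$ as $q_*\calO_W$ for some quasi-affine \'etale $q:W\to S$, but that is precisely what Lemma~\ref{lem:TDQAff} establishes, and its proof for the affine case (the base case of the quasi-affine argument) already requires $F$ to carry $\CAlg(D^{\leq 0}(X))$ into $\CAlg(D^{\leq 0}(S))$, i.e.\ requires Lemma~\ref{lem:preserveconnectivity}. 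So the step is circular. (Even the identification $L_{\calO_V/\calO_X}=0$ for the pushed-forward algebra is only verified in the paper during the proof of Lemma~\ref{lem:TDQAff}.)

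There is a second structural problem. Even granting that $F$ carries vector bundles to vector bundles, your final d\'evissage ``along the Tor-amplitude'' via $\tau^{\leq -1}P\to P\to H^0(P)$ does not stay within perfect complexes, and more fundamentally a qcqs algebraic space need not have any nontrivial vector bundles at all, so connective perfect complexes on $X$ cannot in general be resolved by vector bundles. (The exterior-power argument for vector bundles is also delicate in positive characteristic, since $\infty$-categorical exterior powers need not agree with the classical ones that satisfy $\Lambda^{r+1}E=0$.)

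The paper's proof proceeds entirely differently: after reducing to connective perfect $K$ and then to $S=\Spec(L)$ for a field $L$ (by specializing at a bad point), one observes that $D(X,G\calO_S)\simeq D(L)$ has no nontrivial localizing subcategories. Hence for every constructible closed $Z\subset X$ with complement $U$, either $G\calO_S$ is supported on $Z$ or $F$ factors through $D(U)$. Running this against a Raynaud--Gruson stratification of $X$ by affine pieces reduces to the case $X$ affine, where connective perfects are retracts of finite colimits of finite free modules and the claim is immediate.
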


The possiblity that Lemma \ref{lem:preserveconnectivity} could be true was suggested by an email exchange with Lurie; an earlier version of Theorem \ref{thm:TD} imposed the conclusion of Lemma \ref{lem:preserveconnectivity} as a hypothesis.

\begin{proof}
	By adjunction, it is enough to prove the assertion for $F$. By approximation by perfect complexes (see Lemma \ref{lem:approximateconnective} below), it is enough to check that $F(K) \in D^{\leq 0}_\perf(S)$ if $K \in D_\perf^{\leq 0}(X)$. If not, then there exists a point $s \in S$ such that $F(K)_s \in D_\perf(\kappa(s))$ is non-connective. By replacing $S$ with $\Spec(\kappa(s))$, we may assume $S = \Spec(L)$ for a field $L$. Now $\calA := G\calO_S \in \CAlg(D(X))$ is a field object, i.e., $D(X,\calA) \simeq D(\Spec(L))$ as a symmetric monoidal $\infty$-category (by Lemma \ref{lem:TDfindadjoint}). In particular, $D(X,\calA)$ admits no non-trivial full stable subcategories closed under colimits except itself: such a category would be closed under retracts, so it would contain the unit object, which generates $D(X,\calA) \simeq D(\Spec(L))$ under colimits.  As a special case, if $Z \subset X$ is a constructible closed subset with open complement $U$, then either $D_Z(X,\calA) = D(X,\calA)$ or $\calA \simeq \calA \otimes \calO_U \in D(U)$. We write $[\calA] \in U$ if the latter possibility occurs, and $[\calA] \in X\setminus U$ otherwise. Note that if $[\calA] \in U$, then $F$ factors through $D(X) \to D(U)$ via a functor $D(U) \to D(S)$ that preserves compact objects (as one identifies the latter functor as the base change along $\calO_U \to \calO_U \otimes \calA \simeq \calA$, and then notes that the forgetful right adjoint certainly commutes with direct sums). In the next paragraph, this will be used implicitly in arguments replacing $X$ with $U$.
	
	Choose a sequence $\emptyset = U_0 \subset U_1 \subset \dots U_n = X$ of quasi-compact opens in $X$ such that $U_i$ is the pushout of an \'etale map $V_{i-1} \to U_{i-1}$ along a quasi-compact open immersion $V_{i-1} \hookrightarrow \Spec(A_{i} )$; such presentations always exist (see \cite[Tag 08GL]{StacksProject} or \cite[Theorem 1.3.8]{LurieDAGXII}). Let $Z_i = U_{i} \setminus U_{i-1}$, viewed as a reduced subscheme (say), so $Z_i \simeq \Spec(A_i) \setminus V_{i-1}$ by hypothesis. Choose the minimal $i$ such that $[\calA] \in U_i$. Then $F$ factors through $D(X) \to D(U_i)$, so we may replace $X$ with $U_i$ to assume $i = n$, i.e., that $[\calA] \in Z_{n}$ or, equivalently, that $\calA \in D_{Z_{n}}(X)$. Now $D_{Z_{n}}(X) \simeq D_{Z_{n}}(\Spec(A_{n} ))$ by construction (see Lemma \ref{lem:formalglueingcrit}). Hence, $\calA$ lifts canonically to an object of $D(\Spec(A_{n} ))$; in fact, $\calA \simeq \calA \otimes \calO_{\Spec(A_n)}$. This implies that $F$ factors through the pullback $D(X) \to D(\Spec(A_{n} ))$. Hence, we reduce to the case where $X$ is affine, where everything is clear: any connective perfect complex $K$ is then a retract of a finite colimit of finite free $\calO_X$-modules, so $F(K)$ has the same property on $S$, whence $F(K)$ is connective as $D^{\leq 0}(S)$ contains $\calO_S = F(\calO_X)$ and is closed under retracts and colimits.
\end{proof}

The following lemma was used above.

\begin{lemma}
	\label{lem:approximateconnective}
	Every $K \in D^{\leq 0}(X)$ may be written as a filtered colimit $K = \colim K_i$ with $K_i \in D_\perf^{\leq 0}(X)$.
\end{lemma}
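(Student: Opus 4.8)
The plan is to reduce to the affine case via étale descent and then invoke the standard fact that over an affine scheme every connective complex is a filtered colimit of connective perfect complexes. First I would recall that for $X$ qcqs, by the structure theory cited in the Notation section (Raynaud--Gruson dévissage), one can write $X$ as a finite colimit of affine schemes $U_i = \Spec(R_i)$ étale over $X$, with $D(X) \simeq \lim D(R_i)$. Over each affine $\Spec(R_i)$, a connective complex is a filtered colimit of connective perfect complexes: indeed, $D^{\leq 0}(R_i)$ is generated under filtered colimits by the perfect connective objects, since any $M \in D^{\leq 0}(R_i)$ can be built as the colimit of its truncations of finitely generated free resolutions, or more simply because $D(R_i) \simeq \Ind(D_\perf(R_i))$ is compatible with the $t$-structure on connective objects (cf.\ \cite[\S 8.2.5]{LurieHA}, \cite[Tag 09PA]{StacksProject}).

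The subtlety is that filtered colimits need not be preserved exactly by the limit functor $\lim D(R_i)$, so one cannot simply take a colimit presentation on each $U_i$ and glue. The cleanest route is instead to work directly with $D(X)$ itself. Here I would argue as follows: the full subcategory of $D^{\leq 0}(X)$ consisting of objects expressible as filtered colimits of objects in $D_\perf^{\leq 0}(X)$ is closed under filtered colimits; it contains $D_\perf^{\leq 0}(X)$; and it suffices to show it is all of $D^{\leq 0}(X)$. Since $D(X) = \Ind(D_\perf(X))$, every $K \in D(X)$ is a filtered colimit $K = \colim K_j$ of perfect complexes $K_j$; if $K$ is connective, applying the (filtered-colimit-preserving) truncation functor $\tau^{\leq 0}$ gives $K = \tau^{\leq 0} K = \colim \tau^{\leq 0} K_j$. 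Each $\tau^{\leq 0}K_j$ is connective, but need not be perfect, so this is not yet enough. However, each $\tau^{\leq 0} K_j$ is \emph{pseudo-coherent and connective} — more precisely, since $K_j \in D_\perf(X)$, its truncation $\tau^{\leq 0}K_j$ is a connective object which is locally (on an affine étale cover) a connective pseudo-coherent complex, hence itself a filtered colimit of connective perfect complexes by the affine case applied compatibly (one may use that $\tau^{\leq 0} K_j$ has bounded-below, finitely-presented cohomology in each degree). Iterating, $K$ is a filtered colimit of filtered colimits of connective perfect complexes, hence a filtered colimit of connective perfect complexes.

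The main obstacle I anticipate is making the passage through $\tau^{\leq 0}K_j$ fully rigorous: one must check that truncating a perfect complex yields something that is still a filtered colimit of connective perfect complexes, which amounts to a pseudo-coherence statement that is étale-local and thus reduces to the affine case — but there the relevant fact is classical. An alternative, perhaps cleaner, way to organize the argument and sidestep this is to use the standard fact that on a qcqs scheme or algebraic space the functor $D^{\leq 0}(X) \to \mathrm{Ind}(D_\perf^{\leq 0}(X))$ is an equivalence, equivalently that $D_\perf^{\leq 0}(X)$ generates $D^{\leq 0}(X)$ under filtered colimits; this is essentially \cite[Tag 09IU, 09PA]{StacksProject} combined with \cite[\S 8.2.5, Proposition 1.5.12]{LurieDAGXII}. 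I would cite this and note that it follows from compact generation of $D(X)$ by perfect complexes together with the standard connective approximation argument (write $K = \colim \tau^{\leq 0} K_j$ for a filtered presentation $K = \colim K_j$ by perfect complexes and then replace each $\tau^{\leq 0}K_j$ by its connective perfect approximations). If one is willing to invoke the cited references, the lemma is immediate; the proof proposal above merely unwinds why.
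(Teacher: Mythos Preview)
Your argument has a genuine gap in the non-noetherian setting. You assert that for $K_j \in D_\perf(X)$ the truncation $\tau^{\leq 0} K_j$ is pseudo-coherent (or has finitely presented cohomology), and hence is a filtered colimit of connective perfect complexes by reduction to the affine case. But over a non-noetherian ring this fails: take $R = k[\epsilon_1,\epsilon_2,\dots]/(\epsilon_i\epsilon_j)_{i,j}$ and the perfect complex $K = [R \xrightarrow{\epsilon_1} R]$ in degrees $0,1$; then $\tau^{\leq 0} K \simeq H^0(K)[0]$ is the maximal ideal $(\epsilon_1,\epsilon_2,\dots)$ placed in degree $0$, which is not even finitely generated, let alone pseudo-coherent. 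So the step ``$\tau^{\leq 0} K_j$ is a filtered colimit of connective perfects by the affine pseudo-coherent case'' is unjustified, and invoking it makes the argument circular: you are using the conclusion of the lemma for $\tau^{\leq 0} K_j$ to prove it for $K$.

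The paper's proof handles exactly this issue by first reducing to noetherian $X$ via absolute noetherian approximation: one writes $X = \lim X_i$ with each $X_i$ finitely presented over $\Z$ and uses $K \simeq \colim f_i^* f_{i,*} K$ to assume $X$ noetherian. Then truncations of perfect complexes are bounded coherent, so your strategy of writing $K = \colim \tau^{\leq 0} K_j$ does reduce to the case of a bounded connective coherent $K$. For such $K$ the paper builds by hand a tower $K_0 \to K_1 \to \cdots$ of connective perfect complexes over $K$ with $\cofib(K_n \to K)$ increasingly connective (using \cite[Tag 08HH]{StacksProject} at each step), and concludes by left-completeness of $D(X)$. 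Your outline is morally right once noetherianness is available; the missing ingredient is precisely the noetherian reduction, without which the truncation step collapses.
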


\begin{proof}
	By absolute noetherian approximation (see \cite[Theorem 1.2.2]{CLONagata} or \cite[Tag 07SU]{StacksProject}), we can write $X = \lim X_i$ as cofiltered limit of qcqs and finitely presented $\Z$-spaces $X_i$.  If $f_i:X \to X_i$ is the natural map, then the natural map $ \colim f_i^* f_{i,*} K \to K$ is an isomorphism, so we reduce to the case where $X = X_i$ is noetherian.  As $D(X) = \Ind(D_\perf(X))$, any $K \in D(X)$ can be written as a filtered colimit $K = \colim K_j$ with $K_j \in D_\perf(X)$. If $K$ is connective, then we can also write $K = \colim \tau^{\leq 0} K_j$ (as filtered colimits are exact), so $K$ may be expressed as a filtered colimit of connective coherent complexes (by the noetherian assumption). We may then assume $K$ is itself a bounded coherent connective complex. Fix some $N > 0$. We will construct a diagram 
	\[ K_0 \to K_1 \to K_2 \to K_3 \to \dots \]
	of perfect complexes in $D^{\leq 0}(X)_{/K}$ such that $\cofib(K_i \to K)$ is $(i\cdot N)$-connective. The left-completeness of $D(X)$ (see \cite[Proposition 2.3.18]{LurieDAGVIII}) then gives $\colim K_i \simeq K$, proving the claim. As $K$ is connective, we start with $K_0 = 0$. Fix some $n > 0$, and assume inductively we have constructed a finite tower
	\[ K_0 \to K_1 \to K_2 \to \dots \to K_{n-1} \]
	in $D^{\leq 0}(X)_{/K}$ such that $\cofib(K_i \to K)$ is $(i\cdot N)$-connective for $i \leq n-1$. Let $Q := \cofib(K_{n-1} \to K)$. Choose a connective perfect complex $L$ and a map $L \to Q$ with an $(n\cdot N)$-connective cofibre; this can be done via \cite[Tag 08HH]{StacksProject}. Set $K_n := L \times_Q K$. This gives a map of cofibre sequences
	\[ \xymatrix{ K_{n-1} \ar@{=}[d] \ar[r] & K_n \ar[r] \ar[d] & L \ar[d] \\
			K_{n-1} \ar[r] & K \ar[r] & Q. }\]
	Then $\cofib(K_n \to K)$ is thus $(n \cdot N)$-connective. Continuing in this manner gives the desired diagram.
\end{proof}

\begin{remark}
	As $F$ preserves connectivity, there is an induced adjunction $\adjunction{\QCoh(X)}{H^0F}{\QCoh(S)}{H^0G}$, where $H^0 F$ is the composition 
	\[ \QCoh(X) \hookrightarrow D^{\leq 0}(X) \stackrel{F}{\to} D^{\leq 0}(S) \stackrel{\calH^0}{\to} \QCoh(S),\]
	while $H^0 G$ is the composition
	\[ \QCoh(S) \hookrightarrow D^{\geq 0}(S) \stackrel{G}{\to} D^{\geq 0}(S) \stackrel{\calH^0}{\to} \QCoh(X).\]
	Moreover, the left adjoint $H^0 F$ is symmetric monoidal, while the right adjoint $H^0G$ preserves filtered colimits. It follows formally that $H^0 F:\CAlg(\QCoh(X)) \to \CAlg(\QCoh(S))$ preserves compact objects.
\end{remark}

Recall that we are viewing both $\Aff_{/X}$ and $\QAff_{/X}$ as full subcategories of $\CAlg(D(X))$ via Lemma \ref{lem:TDQAffDesc}. We check that $F$ preserves these subcategories, i.e., one can pullback (quasi-)affine morphisms via $F$:

\begin{lemma}
	\label{lem:TDQAff}
	$F$ induces functors $\Aff_{/X} \to \Aff_{/S}$ and $\QAff_{/X} \to \QAff_{/S}$. For any $U \in \QAff_{/X}$, one has an induced cocontinuous symmetric monoidal functor $F_U:D(X,\calO_U) \to D(S,\calO_{F(U)} )$ that preserves compact objects, is compatible with $F$, and carries $L_{U/X}$ to $L_{F(U)/S}$.
\end{lemma}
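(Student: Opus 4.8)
The plan is to split the statement into two parts: first, that $F$ carries affine $X$-spaces to affine $S$-spaces; second, the corresponding assertion for quasi-affine spaces together with the compatibilities. For the affine case, recall that $\Aff_{/X}^\opp \simeq \CAlg(D^{\leq 0}(X))$ by pushforward of the structure sheaf, and similarly over $S$. By Lemma \ref{lem:TDCAlg}, $F$ induces a cocontinuous functor $\CAlg(D(X)) \to \CAlg(D(S))$, so it suffices to observe that this functor preserves the subcategories of connective commutative algebras. But this is exactly the content of Lemma \ref{lem:preserveconnectivity}: if $\calA \in \CAlg(D^{\leq 0}(X))$, then $F(\calA) \in D^{\leq 0}(S)$, hence $F(\calA) \in \CAlg(D^{\leq 0}(S))$. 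The induced functor $\Aff_{/X} \to \Aff_{/S}$ is then $\calA \mapsto F(\calA)$ on the level of algebras. The module-category and cotangent-complex compatibilities in the affine case are immediate specializations of Lemma \ref{lem:TDCAlg} applied to the connective algebra $\calA = \calO_U$ and the map $\calO_X \to \calO_U$.

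For the quasi-affine case, the plan is to present any $U \in \QAff_{/X}$ as a quasi-compact open subspace $U \hookrightarrow Y$ of an affine $X$-space $Y = \Spec_X(\calB)$ with $\calB \in \CAlg(D^{\leq 0}(X))$ (such a presentation exists by the standard structure theory of quasi-affine morphisms; concretely, one may take $\calB = \Gamma_*$ of $\calO_U$, or invoke \cite[Lemma 3.2.8]{LurieDAGVIII}). Under the equivalence $D(X,\calB) \simeq D(Y)$ of Lemma \ref{lem:TDQAffDesc}, the open $U \subset Y$ is cut out by a constructible closed subset $Z = Y \setminus U$, so $D(U) \simeq D(Y,\calO_U)$ is realized as the right orthogonal of $D_Z(Y)$ inside $D(Y)$, and $\calO_U = \calO_Y \otimes_{\calO_X} \calO_U$ is obtained from $\calO_Y$ by the corresponding localization (i.e., $\calO_U \simeq \underline{\Gamma}_Z$-completion, or rather the Bousfield localization killing $D_Z$). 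Since $F_{\calB}:D(X,\calB) \to D(S,F(\calB))$ of Lemma \ref{lem:TDCAlg} is cocontinuous, symmetric monoidal, compatible with $F$, and carries the compactly-generated subcategory $D_Z(Y)$ into $D_{F(Z)}(F(Y))$ — here $F(Z) = F(Y) \setminus F(U)$ and one uses that $F_{\calB}$ preserves compact objects, that $D_Z(Y)$ is generated under colimits by its perfect objects, and that those perfect objects are carried to perfect objects supported on $F(Z)$ — it follows that $F_{\calB}$ carries the localization $\calO_U$ of $\calO_Y$ to the corresponding localization of $F(\calO_Y)$, i.e., $F(\calO_U) \simeq \calO_{F(U)}$ for a quasi-compact open $F(U) \subset F(Y)$. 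In particular $F(U) \in \QAff_{/S}$, and $F$ restricted to $\QAff_{/X}$ lands in $\QAff_{/S}$. The functor $F_U:D(X,\calO_U) \to D(S,\calO_{F(U)})$ is then the composite $D(X,\calO_U) \simeq D(U) \to D(S, F(\calO_U)) \simeq D(F(U))$ obtained from $F_{\calO_U}$ of Lemma \ref{lem:TDCAlg}; cocontinuity, symmetric monoidality, preservation of compact objects, and compatibility with $F$ are all inherited from Lemma \ref{lem:TDCAlg}, as is the identification $F_U(L_{U/X}) \simeq L_{F(U)/S}$, applied to the map $\calO_X \to \calO_U$ in $\CAlg(D(X))$.

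The main obstacle I expect is the claim that $F_{\calB}$ carries $D_Z(Y)$ into $D_{F(Z)}(F(Y))$ and, correspondingly, that the localization of $\calO_Y$ along $Z$ is carried to the localization of $F(\calO_Y)$ along $F(Z)$ — in other words, that $F$ ``commutes with removing a constructible closed subspace.'' This is really the heart of the matter: it requires knowing that $D_Z(Y)$ is generated under colimits by perfect complexes supported on $Z$ (which is recorded in the Notation section, via \cite[Theorem 6.8]{RouqierDimensions}), that a cocontinuous monoidal functor preserving compact objects therefore sends $D_Z(Y)$ into the subcategory generated by the images of those perfect complexes, and that the image of a perfect complex supported on $Z$ is perfect and supported on a constructible closed subset of $F(Y)$ — so that its vanishing locus defines the open $F(U)$. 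One must also check that $F(U) \subset F(Y)$ is quasi-compact, which follows since $Z$ is cut out by finitely many functions (as $Y$ is affine and $Z$ constructible) whose images cut out $F(Z)$. The verification that the resulting localization functor on algebras agrees with the one coming from the open immersion $U \hookrightarrow Y$ is then a formal consequence of Lemma \ref{lem:TDQAffDesc} together with the fact that $\calO_U$ is the $\otimes$-idempotent algebra corresponding to the smashing localization at $D_Z(Y)$, which is preserved by symmetric monoidal cocontinuous functors.
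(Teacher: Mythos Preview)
Your approach is essentially the same as the paper's: reduce to the affine case via connectivity preservation, then handle the quasi-affine case by presenting $U$ as an open in an affine $Y$ and tracking the localization along the closed complement $Z$. The paper organizes this slightly differently---it replaces $X$ by $\overline{U}$ and $S$ by $F(\overline{U})$ to reduce to the case $U \subset X$ open, then identifies the cofibre sequence $\underline{\Gamma}_Z(\calO_X) \to \calO_X \to \calO_U$ with its counterpart on $S$ via the equivalence $D(X,G\calO_S) \simeq D(S)$---but the content is the same as your smashing-localization formulation. (One small phrasing issue: you write ``$F(Z) = F(Y)\setminus F(U)$,'' which reads as circular; you mean to define $F(Z)$ via the affine functor on the closed immersion $Z \hookrightarrow Y$ and then set $F(U) := F(Y)\setminus F(Z)$, as your later text indicates.)

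There is, however, a real step you have glossed over in the cotangent complex claim. Lemma \ref{lem:TDCAlg} gives $F(L_{\calO_U/\calO_X}) \simeq L_{\calO_{F(U)}/\calO_S}$ for the \emph{algebra-internal} cotangent complex in $\CAlg(D(X))$. To conclude $F_U(L_{U/X}) \simeq L_{F(U)/S}$ you need that the equivalence $D(U) \simeq D(X,\calO_U)$ carries the \emph{geometric} cotangent complex $L_{U/X}$ to $L_{\calO_U/\calO_X}$. This is clear for $U$ affine over $X$, but for quasi-affine $U$ the algebra $\calO_U$ is non-connective and the identification is not automatic. The paper handles this by factoring $U \hookrightarrow \overline{U} \to X$, using transitivity to reduce to showing $L_{\calO_U/\calO_{\overline{U}}} = 0$ for a quasi-compact open immersion, and then observing that $\calO_{\overline{U}} \to \calO_U$ is an epimorphism in $\CAlg(D(X))$ (since $\calO_U \otimes_{\calO_{\overline{U}}} \calO_U \simeq \calO_U$ by base change), whence its cotangent complex vanishes. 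You should add this step.
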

\begin{proof}
	The case of affine morphisms is immediate from Lemma \ref{lem:TDCAlg} as  $\Aff_{/X}^\opp \simeq \CAlg(D^{\leq 0}(X))$. Moreover, in this case, $F$ also preserves morphisms of locally almost finite presentation (see Lemma \ref{lem:TDpreservefp}). For the quasi-affine case, fix some quasi-affine map $f:U \to X$. Then we can choose a factorisation $U \stackrel{j}{\hookrightarrow} \overline{U} \stackrel{\pi}{\to} X$ with $\pi$ affine  and $j$ a quasi-compact open immersion. Let $i:Z \hookrightarrow \overline{U}$ be the (constructible) closed complement of $U$, given some finitely presented closed subscheme structure. By the affine case, we obtain an almost finitely presented closed immersion $Z' := F(Z) \hookrightarrow F(\overline{U} ) =: \overline{U'}$ in $\Aff_{/S}$. Let $U' := \overline{U'} \setminus Z'$ be the displayed quasi-compact open subset. Then we claim that $F$ carries $U$ to $U'$, i.e., that $F(\calO_U) \simeq \calO_{U'}$ in $\CAlg(D(X))$. For this assertion, we may replace $X$ with $\overline{U}$ and $S$ with $\overline{U'}$ to assume that $U \subset X$ and $U' \subset S$ are quasi-compact open subsets with constructible closed complements $Z \subset X$ and $Z' \subset S$ respectively. Now note that one has a cofibre sequence
	\[ \underline{\Gamma}_Z(\calO_X) \to \calO_X \to \calO_U \]
	which defines another cofibre sequence
	\[ F(\underline{\Gamma}_Z(\calO_X)) \to \calO_S \to F(\calO_U).\]
We claim that this last sequence coincides with 
\[ \underline{\Gamma}_{Z'}(\calO_S) \to \calO_S \to  \calO_{U'},\]
which certainly implies the desired result. For this, we check that the equivalence $\Phi:D(X,G\calO_S) \simeq D(S)$, given by the inverse of $G$, carries $D_Z(X,G\calO_S)$ onto $D_{Z'}(S)$; one then uses the description of $\underline{\Gamma}_{Z} \to \id_X$ and $\underline{\Gamma}_{Z'} \to \id_S$ as counits of the adjunctions $\adjunction{D_Z(X,G\calO_S)}{ }{D(X,G\calO_S)}{ }$ and $\adjunction{D_{Z'}(S)}{ }{D(S)}{ }$ respectively. The construction of $\Phi$ shows that $\Phi(\calO_Z \otimes G\calO_S) = F(\calO_Z) = \calO_{Z'}$ as commutative algebras. It is thus enough to note that $D_Z(X,G\calO_S)$ is generated under colimits by $(\calO_Z \otimes G\calO_S)$-complexes, and that $D_{Z'}(S)$ is generated under colimits by $\calO_{Z'}$-complexes; for this, one reduces to the affine case by suitable Mayer-Vietoris sequences, and then follows the proof of \cite[Proposition 3.10]{ToenDerAz} or \cite[Lemma 6.17]{LurieDAGXI}. It remains to check that $F_U(L_{U/X} ) \simeq L_{F(U)/S}$. For this, it is enough to check that the identification $D(U) \simeq D(X,\calO_U)$ carries $L_{U/X}$ to $L_{\calO_U/\calO_X}$. If $U \in \Aff_{/X}$, then this is clear. By the transitivity cofibre sequences, we reduce to showing that $L_{\calO_U/\calO_X} = 0$ if $U \subset X$ is a quasi-compact open. Note that $\calO_X \to \calO_U$ is an epimorphism in $\CAlg(D(X))$: one has $\calO_U \otimes \calO_U \simeq \calO_U$ via base change for coherent cohomology (see \cite[Corollary 1.1.3 (3)]{LurieDAGXII}). The  base change formula for cotangent complexes \cite[Proposition 8.3.3.7]{LurieHA} then shows $L_{\calO_U/\calO_X} \simeq 0$.
\end{proof}

The next lemma was used earlier.

\begin{lemma}
	\label{lem:TDpreservefp}
	The functor $F:\Aff_{/X} \to \Aff_{/S}$ preserves morphisms locally of (almost) finite presentation.
\end{lemma}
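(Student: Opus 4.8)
The plan is to reduce the assertion to Lurie's cotangent-complex criterion for (almost) finite presentation, and then transport the two resulting conditions through $F$ separately. Recall that criterion (\cite[\S 7.2.4, \S 7.4.3]{LurieHA}, applied \'etale-locally on the base): an affine morphism $f\colon U\to V$ of spectral algebraic spaces over the classical algebraic space $X$ is locally of finite presentation (resp.\ locally almost of finite presentation) if and only if \textbf{(i)} the induced morphism $\pi_0 U\to \pi_0 V$ of classical algebraic spaces is locally of finite presentation, and \textbf{(ii)} the relative cotangent complex $L_{U/V}\in D(U)$ is perfect (resp.\ almost perfect); both conditions are \'etale-local on $X$ and on $V$, so this is a legitimate reduction. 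Thus it suffices to show that $F$ preserves each of (i) and (ii).

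Condition (ii) is the easy half, and is essentially forced by the functors already in hand. By Lemma \ref{lem:TDCAlg} applied to the morphism $\calO_V\to\calO_U$ in $\CAlg(D(X))$ (together with the transitivity cofibre sequences and, after unwinding the equivalences $D(U)\simeq D(X,\calO_U)$ and $D(F(U))\simeq D(S,\calO_{F(U)})$, the standard identification of $L_{U/V}$ with $L_{\calO_U/\calO_V}$ for affine morphisms), there is a canonical equivalence $L_{F(U)/F(V)}\simeq F_{\calO_U}(L_{U/V})$, where $F_{\calO_U}\colon D(X,\calO_U)\to D(S,\calO_{F(U)})$ is cocontinuous, symmetric monoidal, preserves compact objects, and is compatible with $F$ through the conservative $t$-exact forgetful functors; in particular $F_{\calO_U}$ preserves connective objects by Lemma \ref{lem:preserveconnectivity}. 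Any cocontinuous symmetric monoidal functor carries dualizable (hence perfect) objects to dualizable objects; and since $F_{\calO_U}$ in addition preserves colimits and connectivity, it also carries almost perfect objects to almost perfect objects, as a connective object is almost perfect precisely when for every $n$ it admits a map from a perfect object with $n$-connective cofibre (compare the construction in Lemma \ref{lem:approximateconnective}), and $L_{U/V}$ is connective. Hence $L_{F(U)/F(V)}$ is perfect (resp.\ almost perfect) whenever $L_{U/V}$ is.

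For condition (i), first observe that $F$ is $t$-exact in the strong sense that $F\big(D^{\leq n}(X)\big)\subseteq D^{\leq n}(S)$ for every $n\in\Z$: indeed $K\in D^{\leq n}(X)$ iff $K[n]\in D^{\leq 0}(X)$, whence $F(K)[n]\simeq F(K[n])\in D^{\leq 0}(S)$ by Lemma \ref{lem:preserveconnectivity}. Applying $F$ to the triangle $\tau^{\leq -1}K\to K\to\pi_0 K$ for connective $K$, and using the case $n=-1$, yields $\pi_0 F(K)\simeq H^0F(\pi_0 K)$; in particular $\calO_{\pi_0 F(U)}\simeq H^0F(\calO_{\pi_0 U})$ and $\calO_{\pi_0 F(V)}\simeq H^0F(\calO_{\pi_0 V})$ in the relevant categories $\CAlg(\QCoh(-))$, compatibly with the algebra maps. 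Now ``$\pi_0 U\to\pi_0 V$ is locally of finite presentation'' is exactly the statement that $\calO_{\pi_0 U}$ is a compact object of $\CAlg\big(\QCoh(\pi_0 V)\big)\simeq\CAlg\big(\Mod_{\calO_{\pi_0 V}}(\QCoh(X))\big)$, by the functorial characterization of local finite presentation via cofiltered limits of affine test schemes. Since $H^0F$ is symmetric monoidal, cocontinuous, and sends $\calO_{\pi_0 V}$ to $\calO_{\pi_0 F(V)}$, it induces a cocontinuous symmetric monoidal base-change functor $\Mod_{\calO_{\pi_0 V}}(\QCoh(X))\to\Mod_{\calO_{\pi_0 F(V)}}(\QCoh(S))$ whose right adjoint (restriction of scalars along the unit $\calO_{\pi_0 V}\to H^0G(\calO_{\pi_0 F(V)})$, composed with $H^0G$) preserves filtered colimits, because $G$ is cocontinuous and preserves coconnective objects. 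By the same formal argument used in the Remark preceding this lemma, the induced functor on commutative algebra objects preserves compact objects; hence $\calO_{\pi_0 F(U)}\simeq H^0F(\calO_{\pi_0 U})$ is compact, i.e.\ $\pi_0 F(U)\to\pi_0 F(V)$ is locally of finite presentation. Together with the previous paragraph and Lurie's criterion, this proves the lemma.

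The step I expect to be the main obstacle is condition (i): recasting ``finite presentation is preserved'' as preservation of a compactness property in $\CAlg$ of an appropriate module category, and checking that $H^0F$ transports it. This hinges on the strong $t$-exactness observation (needed to identify $\pi_0 F(\calO_U)$ with $H^0F$ of $\calO_{\pi_0 U}$) and on a module-theoretic refinement of the Remark's assertion that $H^0F$ preserves compact commutative algebras. By contrast, condition (ii) follows almost immediately from Lemmas \ref{lem:TDCAlg} and \ref{lem:preserveconnectivity}.
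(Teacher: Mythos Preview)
Your proof is correct and follows essentially the same approach as the paper: both invoke Lurie's cotangent-complex criterion \cite[Theorem 8.4.3.18]{LurieHA}, handle the classical part via preservation of compact objects in $\CAlg$ (using that $H^0G$ preserves filtered colimits), and handle the higher part via the identification $F_{\calO_U}(L_{U/V})\simeq L_{F(U)/F(V)}$ from Lemma~\ref{lem:TDCAlg}. The paper's proof is terser---it treats only the absolute case $V=X$ explicitly and leaves the bootstrap to general $V$ (via $F_{\calO_V}$) and the transport of almost perfectness implicit---whereas you spell out the relative case, the ``strong $t$-exactness'' needed to identify $\pi_0 F(\calO_U)$ with $H^0F(\calO_{\pi_0 U})$, and the argument that $F_{\calO_U}$ preserves (almost) perfect objects.
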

\begin{proof}
	We first remark that $H^0F:\CAlg(\QCoh(X)) \to \CAlg(\QCoh(S))$ preserves compact objects as $H^0 G$ is compatible with filtered colimits. It follows that if $\calA \in \CAlg(D^{\leq 0}(X))$ is locally of almost finite presentation, then $H^0 F(\calA)$ is finitely presented as an ordinary algebra; here we use that $\calA' \in \CAlg(\QCoh(X))$ is a compact object if and only if the corresponding affine morphism $\underline{\Spec}(\calA') \to X$ is a finitely presented map of classical schemes. To handle higher homotopy groups, we use the characterization of (almost) finite presentation in terms of cotangent complexes in the presence on finite presentation at the classical level (see \cite[Theorem 8.4.3.18]{LurieHA}).
\end{proof}

Recall that a map $g:U \to V$ of qcqs algebraic spaces is \'etale if and only if $L_{U/V} \simeq 0$ and $g$ is locally of almost finite presentation.

\begin{lemma}
	The functor $F:\QAff_{/X} \to \QAff_{/S}$ preserves \'etale morphisms.
\end{lemma}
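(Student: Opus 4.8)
The plan is to combine the two characterizations that $F$ is already known to respect: preservation of morphisms locally of almost finite presentation (Lemma~\ref{lem:TDpreservefp}, extended to quasi-affine maps as in Lemma~\ref{lem:TDQAff}) and preservation of the cotangent complex (the identification $F_U(L_{U/X}) \simeq L_{F(U)/S}$ from Lemma~\ref{lem:TDQAff}). Given an \'etale quasi-affine map $g:U \to X$, I would first record that $g$ is in particular quasi-affine, so $F(U) = U' \in \QAff_{/S}$ is defined, together with the cocontinuous symmetric monoidal functor $F_U:D(X,\calO_U) \to D(S,\calO_{U'})$ compatible with $F$. Since being \'etale is equivalent to being locally of almost finite presentation together with having vanishing relative cotangent complex, it suffices to verify these two properties for $g':U' \to S$.

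For the finite presentation property: an \'etale quasi-affine morphism $g$ factors, Zariski-locally on $X$, through a quasi-compact open immersion into an affine $X$-scheme which is itself \'etale (hence locally of almost finite presentation) over $X$; more directly, I would use the factorization $U \hookrightarrow \overline{U} \to X$ chosen in the proof of Lemma~\ref{lem:TDQAff}, arrange (shrinking if necessary, or by general position) that $\overline U \to X$ can be taken locally of almost finite presentation, and invoke Lemma~\ref{lem:TDpreservefp} to see $\overline{U'} = F(\overline U) \to S$ is locally of almost finite presentation; then $U' = \overline{U'} \setminus Z'$ is a quasi-compact open in $\overline{U'}$, hence also locally of almost finite presentation over $S$. (Alternatively, one reduces the whole statement to the affine case by working \'etale-locally, since an \'etale quasi-affine map is, locally on the source and target, a quasi-compact open immersion into an \'etale affine scheme.) For the cotangent complex: since $L_{U/X} \simeq 0$ by hypothesis, the compatibility $F_U(L_{U/X}) \simeq L_{F(U)/S}$ gives $L_{U'/S} \simeq F_U(0) \simeq 0$ because $F_U$ is exact and symmetric monoidal, hence carries the zero object to the zero object. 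Combining, $g':U' \to S$ is locally of almost finite presentation with trivial cotangent complex, so it is \'etale.

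The only real subtlety is bookkeeping: one must make sure the factorization $U \hookrightarrow \overline U \to X$ can be chosen so that $\overline U \to X$ is locally of almost finite presentation (not merely affine), so that Lemma~\ref{lem:TDpreservefp} applies. This is harmless — after passing to a Zariski cover of $X$, an \'etale quasi-affine morphism is a quasi-compact open immersion into a standard \'etale affine, which is of finite presentation — and the conclusion is Zariski-local on $X$ by the sheaf property already used throughout. I expect the cotangent-complex half to be immediate from the machinery in place; the finite-presentation half is the (mild) obstacle, purely because it requires choosing a good compactification. No derived-categorical input beyond Lemmas~\ref{lem:TDQAff} and~\ref{lem:TDpreservefp} is needed.
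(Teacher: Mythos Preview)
Your argument is essentially the paper's, but you have only treated the special case where the \'etale morphism in question is the structure map $g:U \to X$ (i.e., $V = X$, the terminal object of $\QAff_{/X}$). The lemma, and its use in the subsequent Lemma~\ref{lem:TDetalecover} on \'etale surjections, concerns an arbitrary morphism $g:U \to V$ in $\QAff_{/X}$: one must show $F(g):F(U) \to F(V)$ is \'etale. This is the gap.

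The fix is small. For the cotangent complex, instead of quoting only the assertion $F_U(L_{U/X}) \simeq L_{F(U)/S}$ from Lemma~\ref{lem:TDQAff}, invoke the more general identification $F_{\calB}(L_{\calB/\calA}) \simeq L_{F(\calB)/F(\calA)}$ from Lemma~\ref{lem:TDCAlg} with $\calA = \calO_V$ and $\calB = \calO_U$; combined with the identification of $L_{U/V}$ with $L_{\calO_U/\calO_V}$ (obtained by transitivity as in the proof of Lemma~\ref{lem:TDQAff}), this gives $L_{F(U)/F(V)} = 0$ whenever $L_{U/V} = 0$. For finite presentation, your factorization manoeuvre works but is heavier than needed: since $U \to V$ is quasi-affine (both are quasi-affine over $X$ and $V \to X$ is separated), one may simply replace the base $X$ by $V$ and apply the same package---the functor $F_V:D(X,\calO_V) \simeq D(V) \to D(F(V))$ from Lemma~\ref{lem:TDQAff} enjoys the same properties as $F$, so Lemma~\ref{lem:TDpreservefp} applies to $F_V$. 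With this adjustment, your argument and the paper's one-line ``immediate from Lemma~\ref{lem:TDQAff}'' coincide.
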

\begin{proof}
	This is immediate from Lemma \ref{lem:TDQAff}.
\end{proof}

We also have:

\begin{lemma}
	\label{lem:TDetalecover}
	The functor $F:\QAff_{/X} \to \QAff_{/S}$ preserves finite limits and \'etale surjections.
\end{lemma}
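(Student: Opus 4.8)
\textbf{Proof plan for Lemma~\ref{lem:TDetalecover}.}
The plan is to treat finite limits and \'etale surjections separately, since the two assertions have rather different flavors. For finite limits: by Lemma~\ref{lem:TDQAffDesc} we may regard $\QAff_{/X}^\opp \subset \CAlg(D(X))$ via $U \mapsto \calO_U$, and under this embedding a fibre product $U \times_W V$ of quasi-affine $X$-spaces corresponds to the (relative tensor) coproduct $\calO_U \otimes_{\calO_W} \calO_V$ in $\CAlg(D(X))$. Since $F$ is cocontinuous and symmetric monoidal, and $F_{\calA}$ for $\calA \in \CAlg(D(X))$ is cocontinuous symmetric monoidal (Lemma~\ref{lem:TDCAlg}), it commutes with such relative tensor products; combined with the fact that $F$ preserves the final object $\calO_X \mapsto \calO_S$, this shows $F$ preserves pullbacks, hence all finite limits (a terminal object together with pullbacks). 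One must also check that the resulting algebra $F(\calO_U \otimes_{\calO_W} \calO_V)$ really does represent a quasi-affine $S$-space, but that is guaranteed by Lemma~\ref{lem:TDQAff}, which tells us $F$ lands in $\QAff_{/S}$ in the first place; so the content here is just that the \emph{underlying algebra map} is compatible with relative tensor products, which is formal.

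For \'etale surjections, the idea is to characterize surjectivity of a map $g\colon U \to X$ of qcqs algebraic spaces (with $g$ \'etale, say, though quasi-affine is enough for the formulation) in terms of the sheaf theory that $F$ sees. The cleanest such criterion: an \'etale (or more generally quasi-affine, flat, locally finitely presented) map $g\colon U \to X$ is surjective if and only if the pullback functor $g^*\colon D(X) \to D(U)$ is conservative; equivalently, if and only if $\calO_U$ is a ``descendable'' or at least a faithfully-flat-enough algebra, so that $D_{X \setminus g(U)}(X)$ is the zero category. I would run this through the support formalism already in play: for a constructible closed $Z \subset X$ with complement $j\colon W \hookrightarrow X$, surjectivity of $g$ onto $X$ forces, for every such $Z$ meeting the image, that $\calO_U \otimes \calO_Z \neq 0$; and the key point is that $F$ detects this, because $F$ is conservative on the relevant pieces --- here one invokes the analysis of $[\calA]\in U$ versus $[\calA]\in X\setminus U$ from the proof of Lemma~\ref{lem:preserveconnectivity}, together with the fact, used there, that $D_Z(X,G\calO_S)$ is carried onto $D_{Z'}(S)$ by the equivalence $\Phi$ (this is established inside the proof of Lemma~\ref{lem:TDQAff}). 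So the strategy is: $g\colon U \to X$ \'etale surjective $\iff$ $g^*$ conservative $\iff$ for all constructible closed $Z$, $\calO_Z \otimes \calO_U \neq 0 \iff$ (applying $F$ and using that it preserves the support decomposition and is suitably conservative) for all constructible closed $Z' \subset S$ of the form $F(Z)$, $\calO_{Z'} \otimes \calO_{F(U)} \neq 0$; and since every constructible closed subset of $S$ arises this way up to refinement (as $F$ preserves the stratification data $\{Z_i\}$ used in Lemma~\ref{lem:preserveconnectivity}), this says exactly that $F(U) \to S$ is surjective.

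The main obstacle I expect is the last step of the previous paragraph: controlling which closed subspaces of $S$ are ``seen'' by $F$. The functor $F$ a priori only tells us about $F(Z)$ for $Z \subset X$ constructible closed, and it is not obvious that these exhaust (even up to the equivalence relation of having the same support) all constructible closed subspaces of $S$ --- this is really the assertion that $F$ is ``surjective on the level of topological spaces,'' which is part of what we are trying to prove. The way around this is to not try to see all of $S$ at once, but to work \'etale-locally and inductively along the presentation $\emptyset = U_0 \subset U_1 \subset \cdots \subset U_n = X$ used in Lemma~\ref{lem:preserveconnectivity}: one checks surjectivity stratum by stratum, where on each stratum the situation is reduced (via Lemma~\ref{lem:formalglueingcrit} and Lemma~\ref{lem:TDQAff}) to the affine case, and for affine $X$ the statement ``$F$ preserves \'etale surjections'' is elementary because surjectivity of an \'etale map of affine schemes is detected by $\pi_0$ and $F(\calO_X) = \calO_S$ is faithful. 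Apart from this, one should double-check the compatibility of $F$ with forming the complement $W = X \setminus Z$ (already done in Lemma~\ref{lem:TDQAff} via the local cohomology cofibre sequence) so that ``image of $g$'' behaves well; with that in hand the reduction to the affine, stratum-by-stratum, case is routine.
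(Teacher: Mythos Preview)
Your treatment of finite limits is correct and matches the paper's: embed $\QAff_{/X}^{\opp}$ in $\CAlg(D(X))$ via $U \mapsto \calO_U$, observe that fibre products go to relative tensor products (this is base change for coherent cohomology), and use that $F$ is symmetric monoidal.

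For \'etale surjections you have the right starting characterization---an \'etale $g\colon U \to V$ is surjective iff $g^*\colon D(V) \to D(U)$ is conservative---but then you take a long detour through support theory, constructible closed subsets, and the stratification from Lemma~\ref{lem:preserveconnectivity}. You correctly flag the obstacle: you cannot control which closed subsets of $S$ arise as $F(Z)$, and your proposed stratum-by-stratum reduction to the affine case is vague (and would be genuinely laborious to make precise, since you would have to re-run the argument over each $\Spec(A_i)$ with a new $F$). There is also a minor slip: the lemma concerns maps $U \to V$ in $\QAff_{/X}$, not just maps to $X$ itself, so the support-theoretic criterion would need to be stated relative to $V$.

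The paper sidesteps all of this with a single observation you are not using: Lemma~\ref{lem:TDfindadjoint} gives a symmetric monoidal equivalence $D(S) \simeq D(X,G\calO_S)$, under which $D(S,\calO_{F(V)})$ identifies with $D(X,G\calO_S \otimes \calO_V)$ and likewise for $U$. The functor $D(S,\calO_{F(V)}) \to D(S,\calO_{F(U)})$ then becomes $K \mapsto K \otimes_{\calO_V} \calO_U$ on $D(X,G\calO_S \otimes \calO_V)$, which is conservative because $-\otimes_{\calO_V}\calO_U$ is already conservative on all of $D(X,\calO_V)$ by hypothesis. That is the whole argument: no support theory, no stratification, no need to ``see'' closed subsets of $S$. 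The moral is that the adjoint $G$ lets you pull the entire question back to the $X$ side, where the hypothesis lives.
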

\begin{proof}
	The preservation of finite limits follows from the symmetric monoidal assumption on $F$, together with the fact that the fully faithful functor $\QAff_{/X}^\opp \to \CAlg(D(X))$ given by $U \mapsto \calO_U$  preserves finite colimits (which comes from base change for coherent cohomology). Now assume $f:U \to V$ is an \'etale map. Then $f$ is surjective if and only if $D(X,\calO_V) \to D(X,\calO_U)$ is conservative. Thus, it is enough to check that for surjective $f$, the induced functor $D(S,\calO_{F(V)} ) \to D(S,\calO_{F(U)} )$ is conservative. For this, consider the commutative diagram
	\[ \xymatrix{ D(X,\calO_V) \ar[r] \ar[d] & D(X,G\calO_S \otimes \calO_V) \ar[r]^-{\simeq} \ar[d] & D(S, \calO_{F(V)} ) \ar[d]  \\
	D(X,\calO_U) \ar[r] & D(X,G\calO_S \otimes \calO_U) \ar[r]^-{\simeq} &  D(S,\calO_{F(U)} ).}\]
	The second vertical arrow is simply $K \mapsto K \otimes_{\calO_V} \calO_U$, which is conservative by hypothesis. Hence, the last vertical arrow is also conservative, as wanted.
\end{proof}

We can now put the above ingredients together.

\begin{proof}[Proof of Theorem]
	Note first that the theorem is true for $X$ affine (by \cite[Theorem 3.4.2]{LurieDAGVIII} and Lazard's theorem that flat modules are ind-(finite free),  for example). In general, we may choose an \'etale hypercover $\pi^\ast:U^\ast \to X$ with each $U^i$ affine, so $U^i \to X$ is quasi-affine. Then $F(U^\ast) \to S$ is an \'etale hypercover by quasi-affine $S$-schemes by Lemma \ref{lem:TDetalecover}. By the affine case, there is a map $f:F(U^\ast) \to U^\ast$ of simplicial schemes such that the pullback  $f^*:D(X,\calO_{U^i} ) \to D(S,\calO_{F(U^i)} )$ coincides with $F_{U^i}$. Under the \'etale descent identifications $D(X) = \Tot D(X,\calO_{U^\ast} )$ and $D(S) \simeq \Tot D(S,\calO_{F(U^\ast)} )$, one has $\Tot F_{U^i} \simeq F$. It follows that $|f|:|F(U^\ast)| \to |U^\ast|$ is the desired map $S \to X$.
\end{proof}

\begin{remark}
	The previous results give us an identification $\Hom(S,X) \simeq \Fun^L_{\otimes}(D(X),D(S))$, and a fully faithful embedding $\Hom(S,X) \subset \Fun^L_{\otimes}(\QCoh(X),\QCoh(S))$ for qcqs algebraic spaces. We do not know if the latter is an equivalence: it is not clear if every $F \in \Fun^L_{\otimes}(\QCoh(X),\QCoh(S))$ preserves the subcategory of finitely presented quasi-coherent sheaves (= the subcategory of compact objects)\footnote{Gabber has informed us that this obstruction is the only one.}. The identification of compact objects with dualizable objects in $D(X)$ solves this problem in the derived setting. 
\end{remark}

\newpage

\section{The case of schemes, revisited}
\label{sec:TDschemes}

Brandenburg and Chirvasitu \cite{BrandenburgChirvasitu} have shown the following:

\begin{theorem}
	\label{thm:BCbody}
	For qcqs schemes $S$ and $X$, one has $\Hom(S,X) \simeq \Fun^L_{\otimes}(\QCoh(X),\QCoh(S))$.
\end{theorem}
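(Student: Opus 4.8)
To prove Theorem~\ref{thm:BCbody} I would deduce it from the derived statement of Theorem~\ref{thm:TD} together with the (elementary) affine case of Tannaka duality, using Zariski descent to reduce the global geometry of $X$ to that of an affine cover. Throughout, ``$\QCoh$-functor'' abbreviates ``cocontinuous (i.e.\ right exact and coproduct-preserving) symmetric monoidal functor''. First, both $\Hom(-,X)$ and $\Fun^L_{\otimes}(\QCoh(X),\QCoh(-))$ are stacks for the Zariski topology in the second variable ($\QCoh$ satisfies descent, and $\Fun^L_{\otimes}(\QCoh(X),-)$ carries the resulting limits of presentable categories to limits), so we may assume $S$ affine. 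For full faithfulness, note that a map $f\colon S\to X$ restricts on quasi-coherent sheaves to the $\QCoh$-functor $f^\ast|_{\QCoh(X)}$, and this restriction already determines $f$: being symmetric monoidal and right exact, $f^\ast|_{\QCoh(X)}$ determines the induced functor on quasi-coherent $\calO_X$-algebras and on quasi-affine $X$-schemes (written as complements of finitely presented closed subschemes of affine ones), and since $S$ is affine $f$ is itself quasi-affine and is recovered from this data --- exactly as in the quasi-affine case of \cite[Proposition~3.3.1]{LurieDAGVIII}. Equivalently, $f^\ast|_{\QCoh(X)}$ determines $Lf^\ast$ on $D_\perf(X)$, hence on $D(X)=\Ind(D_\perf(X))$, so full faithfulness follows from that in Theorem~\ref{thm:TD}.

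For essential surjectivity, fix $F\in\Fun^L_{\otimes}(\QCoh(X),\QCoh(S))$ with $S=\Spec(B)$. The affine base case $X=\Spec(A)$ is classical: $\QCoh(X)=\Mod_A$ is generated under colimits by the unit $A$ and every $A$-module has a presentation by free modules, so a right-exact coproduct-preserving symmetric monoidal $F$ is determined by $F(A)=\calO_S=B$ together with the induced $A$-action on it, i.e.\ by a ring map $\varphi\colon A\to\End_B(B)=B$, and then $F\simeq(-\otimes_{A,\varphi}B)=f^\ast|_{\QCoh(X)}$ for $f=\Spec(\varphi)$ (compare \cite[Theorem~3.4.2]{LurieDAGVIII}). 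The plan for general $X$ is to transport a Zariski hypercover $U^\bullet\to X$ with each $U^i$ affine (hence each $U^i\to X$ quasi-affine) through $F$, mimicking the proof of Theorem~\ref{thm:TD}: build a right adjoint $G\colon\QCoh(S)\to\QCoh(X)$ of $F$ (it exists by the adjoint functor theorem, is lax monoidal, and --- crucially, since $S$ is affine and $\Gamma(S,-)\simeq\Gamma(X,G(-))$ with $\Gamma(S,-)$ faithful --- is conservative); use it to pull back quasi-coherent $\calO_X$-algebras and quasi-affine $X$-schemes to the corresponding objects over $S$ (the latter via the idempotent algebras $j_\ast\calO_U$ attached to quasi-compact opens $U$, which $F$ preserves since it is symmetric monoidal); check that this operation respects \'etale morphisms, finite limits, and coverings; and finally glue the resulting hypercover $F(U^\bullet)\to S$ of quasi-affine $S$-schemes, the affine case above supplying the required map $|F(U^\bullet)|\to|U^\bullet|$ on geometric realizations. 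One then checks $f^\ast|_{\QCoh(X)}\simeq F$ by construction.

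I expect the main obstacle to be precisely the step ``$F$ takes coverings to coverings'', equivalently that the monomorphisms into $S$ produced from the opens of a cover of $X$ are jointly surjective. The difficulty is that $F$, being merely right exact, need not preserve monomorphisms or conservativity, so one cannot naively transport the faithfulness of $\bigoplus_i j_{i\ast}\calO_{U_i}$ over $\calO_X$ (which encodes $\bigcup_i U_i=X$) along $F$; this is exactly where the normalizations $F(\calO_X)=\calO_S$ and the affineness of $S$ must be used. In the derived setting of \S\ref{sec:TD} this was handled cleanly because $D_\perf(X)\subset D(X)$ is simultaneously the subcategory of compact and of dualizable objects, the right adjoint $G$ is cocontinuous, and one can argue with the categories-with-supports $D_Z(X)$ and Thomason--Neeman localization. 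None of these tools is available for the abelian category $\QCoh(X)$: the subcategory of finitely presented objects is not so rigidly controlled, $G$ need not be cocontinuous, and $\QCoh(X)$ lacks a good ``generation with supports''. Overcoming this --- via a careful analysis of the idempotent algebras attached to opens of $X$ and the behaviour of $G$ against them, together with the extra bookkeeping needed when $X$ is not semi-separated (so that intersections of affine opens, and $j_\ast\calO_U$ for non-affine quasi-affine $U$, behave badly) --- is the technical heart of the argument and is the content of \cite{BrandenburgChirvasitu}.

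Finally, I would double-check the compatibility of the two reductions: that the functor $\Hom(S,X)\to\Fun^L_{\otimes}(\QCoh(X),\QCoh(S))$ constructed by pullback agrees with the one implicitly used above, and that the descent arguments in the first paragraph are compatible with the hypercover gluing in the second (both being instances of Zariski descent for $\Hom(-,X)$ and for $\QCoh$), so that the map $f\colon S\to X$ obtained really satisfies $f^\ast|_{\QCoh(X)}\simeq F$ with the claimed coherences. Granting the covering step, these compatibilities are formal, and the equivalence $\Hom(S,X)\simeq\Fun^L_{\otimes}(\QCoh(X),\QCoh(S))$ follows.
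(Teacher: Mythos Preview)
Your proposal takes a genuinely different route from the paper's (i.e., Brandenburg--Chirvasitu's) proof, and it leaves the crucial step unresolved.

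For essential surjectivity, you try to transplant the derived hypercover argument of \S\ref{sec:TD}: build a right adjoint $G$, pull back quasi-affine $X$-schemes via idempotent algebras, and glue. The paper does something much more direct and elementary. Given $F$, it defines $f^{-1}(Z) \subset S$ for each closed $Z \subset X$ via $F(\calO_Z) =: \calO_{f^{-1}(Z)}$; for each affine open $U = X \setminus Z$ and each affine open $V \subset S \setminus f^{-1}(Z)$, it produces a map $f_{V,U}\colon V \to U$ from the induced functor $\QCoh(U) \to \QCoh(V)$ and the affine case. The ``covering'' step you flag as the main obstacle is then dispatched by a short tensor-product trick: if some $s \in S$ lay in $f^{-1}(Z_i)$ for every member of a finite affine cover $\{U_i\}$ with $Z_i = X \setminus U_i$, then $\bigotimes_i \calO_{Z_i} = 0$ (since $\bigcup U_i = X$) forces $\bigotimes_i \calO_{f^{-1}(Z_i)} = 0$, contradicting the nonvanishing of its fibre at $s$. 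This is the heart of the argument, and it bypasses entirely the machinery you set up --- no right adjoint, no conservativity analysis, no idempotent algebras.

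Your full faithfulness argument is also shakier than the paper's. Invoking Theorem~\ref{thm:TD} via ``$f^\ast|_{\QCoh(X)}$ determines $Lf^\ast$ on $D_\perf(X)$'' is not obviously legitimate: a symmetric monoidal natural transformation $\eta\colon f^\ast \to g^\ast$ on $\QCoh$ does not automatically extend to one on $D_\perf$, so you cannot simply import faithfulness from the derived statement. The paper instead argues directly: $\eta$ being monoidal forces $\eta_{\calO_Z}$ to be an algebra map, hence $g^{-1}(Z) \subset f^{-1}(Z)$ for every closed $Z$, which lets one localize on $S$ to reduce both $f$ and $g$ to maps into a common affine open of $X$, where the claim is easy.
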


For convenience, we recall the key points of their proof below.

\begin{proof}
	We first prove full faithfulness of $\Hom(S,X) \to \Fun^L_{\otimes}(\QCoh(X),\QCoh(S))$. Say $f,g \in \Hom(S,X)$ admit a symmetric monoidal natural transformation $\eta:f^* \to g^*$; it follows that $\eta$ lifts to a natural transformation of the two induced functors $\CAlg(\QCoh(X)) \to \CAlg(\QCoh(S))$. We will show $f = g$ and $\eta = \id$. Assume first that $S$ and $X$ are affine. Then the map $\eta_{\calO_X}:\calO_S \to \calO_S$ is a ring homomorphism in $\QCoh(S)$, and hence the identity. As $\QCoh(X)$ is generated by $\calO_X$ under colimits, the claim follows in this case. In general, the claim is local on $S$. Moreover, for any closed subset $Z \subset X$, the map $\eta_Z:\calO_{f^{-1}(Z)} \to \calO_{g^{-1}(Z)}$ is a $\calO_S$-algebra map, so $g^{-1}(Z) \subset f^{-1}(Z)$. In particular, we may cover $S$ by affine opens $S_i$ such that both $f|_{S_i}$ and $g|_{S_i}$ factor through some affine open $U_i \subset X$. By replacing $S$ with each element of such a cover, we may assume both $f$ and $g$ factor through an affine open $j:U \hookrightarrow X$.  Both $f^*$ and $g^*$ then factor through $j^*$ as cocontinuous symmetric monoidal functors; here one uses $j^* j_* \simeq \id$. Moreover, one checks that $\eta$ induces a symmetric monoidal natural transformation of the resulting two functors. Thus, by replacing $X$ with $U$, we reduce to the affine case treated earlier.

	For essential surjectivity, fix some functor $F$. As $\Hom(-,X)$ and $\Fun^L_{\otimes}(\QCoh(X),\QCoh(-))$ are fpqc stacks, we may assume $S$ is affine. If $X$ is affine, the claim is clear. In general, for every closed subscheme $Z \subset X$, one has a closed subscheme $f^{-1}(Z) \subset S$ defined via $F(\calO_Z) = \calO_{f^{-1}(Z)}$ with functors $\QCoh(Z) \to \QCoh(f^{-1}(Z))$ and $\QCoh_Z(X) \to \QCoh_{f^{-1}(Z)}(S)$. If $Z$ is constructible with an affine complement $U \subset X$, and $V \subset S \setminus Z'$ is some affine open, one has an induced cocontinuous symmetric monoidal functor $\QCoh(U) \to \QCoh(V)$. As $U$ and $V$ are affine, this arises as pullback along a map $f_{V,U}:V \to X$ factoring through $U$. Using full faithfulness, it is easy to check that the collection $\{f_{V,U}\}$ of maps thus obtained are compatible. It is thus enough to check the collection of all $V$'s obtained by this procedure cover $S$. If not, there exists some $s \in S$ such that $s \in f^{-1}(Z)$ for all $Z \subset X$ closed. Choose an affine open cover $\{U_1,\dots,U_n\}$ of $X$ with complements $Z_i := X \setminus U_i$. Then $\otimes_{i=1}^n \calO_{Z_i} = 0$ as $\cup_i U_i = X$, so $\otimes_{i=1}^n \calO_{f^{-1}(Z_i)} = 0$ as well. On the other hand, $\calO_{f^{-1}(Z_i)} \otimes \kappa(s) \neq 0$, so the tensor product $\otimes_{i=1}^n \calO_{f^{-1}(Z_i)} \neq 0$ as well (since tensor products of non-zero vector spaces are non-zero), which is a contradiction. 
\end{proof}

Recall that a qcqs scheme $X$ is said to have {\em enough vector bundles} if every finitely presented quasi-coherent sheaf can be expressed as the cokernel of a map of vector bundles; any scheme that is quasi-projective over an affine is an example, and the class of such schemes is closed under cofiltered limits with affine transitions. For such schemes, one may go even further than Theorem \ref{thm:BC}

\begin{corollary}
	\label{cor:TDresolution}
	Let $X$ and $S$ be qcqs schemes. Assume $X$ has enough vector bundles. Then 
	\[ \Hom(S,X) \simeq \Fun^L_{\otimes}(\Vect(X),\Vect(S)).\]
\end{corollary}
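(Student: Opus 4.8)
The plan is to bootstrap from Brandenburg--Chirvasitu's Theorem \ref{thm:BCbody}. Any $f\in\Hom(S,X)$ induces a cocontinuous symmetric monoidal functor $f^{*}\colon\QCoh(X)\to\QCoh(S)$; since a cocontinuous functor of abelian categories is right exact, and since a symmetric monoidal functor preserves dualizable objects, which in $\QCoh$ of a scheme are exactly the finite locally free sheaves (i.e.\ $\Vect$), the restriction $f^{*}|_{\Vect(X)}$ is a right exact symmetric monoidal functor $\Vect(X)\to\Vect(S)$. Thus pullback factors as
\[ \Hom(S,X)\ \xrightarrow{\ \sim\ }\ \Fun^{L}_{\otimes}(\QCoh(X),\QCoh(S))\ \xrightarrow{\ \mathrm{res}\ }\ \Fun^{L}_{\otimes}(\Vect(X),\Vect(S)), \]
where the first arrow is the equivalence of Theorem \ref{thm:BCbody} and $\mathrm{res}$ is restriction along $\Vect(X)\subset\QCoh(X)$ (well defined by the same two observations). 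So it suffices to prove that $\mathrm{res}$ is an equivalence.

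I would first dispatch full faithfulness of $\mathrm{res}$. Because $X$ has enough vector bundles, every finitely presented quasi-coherent sheaf is a finite colimit of vector bundles, and on a qcqs scheme every quasi-coherent sheaf is a filtered colimit of finitely presented ones; hence $\Vect(X)$ is a set of generators of $\QCoh(X)$, and each $M\in\QCoh(X)$ is canonically the colimit of the tautological diagram $\Vect(X)_{/M}\to\QCoh(X)$. Consequently a cocontinuous functor out of $\QCoh(X)$, and a monoidal natural transformation between two such, is determined by its restriction to $\Vect(X)$ (and the restriction of a monoidal transformation is again monoidal); this gives full faithfulness of $\mathrm{res}$.

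For essential surjectivity, fix a right exact symmetric monoidal $F_{0}\colon\Vect(X)\to\Vect(S)$. The idea is to extend $F_{0}$ first to the category $\QCoh(X)^{\mathrm{fp}}$ of finitely presented sheaves by the formula $\widehat{F_{0}}(\coker(E_{1}\xrightarrow{u}E_{0})):=\coker(F_{0}(E_{1})\xrightarrow{F_{0}(u)}F_{0}(E_{0}))$, and then to all of $\QCoh(X)=\Ind(\QCoh(X)^{\mathrm{fp}})$ by filtered colimits; granting that this is well defined, functorial and right exact on $\QCoh(X)^{\mathrm{fp}}$, the ind-extension $\widehat{F_{0}}$ is cocontinuous. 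It is also symmetric monoidal: writing $M=\colim_{\Vect(X)_{/M}}E$ and $N=\colim_{\Vect(X)_{/N}}E'$ one has $M\otimes N=\colim E\otimes E'$ over pairs, with each $E\otimes E'\in\Vect(X)$, so cocontinuity of $\widehat{F_{0}}$ and of $\otimes$ on $\QCoh(S)$ reduce the comparison $\widehat{F_{0}}(M\otimes N)\simeq\widehat{F_{0}}(M)\otimes\widehat{F_{0}}(N)$ to the isomorphism $F_{0}(E\otimes E')\simeq F_{0}(E)\otimes F_{0}(E')$ on $\Vect(X)$, and the unit to $F_{0}(\calO_{X})=\calO_{S}$. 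Since $\widehat{F_{0}}|_{\Vect(X)}\simeq F_{0}$, this shows $\mathrm{res}$ is essentially surjective, and combined with Theorem \ref{thm:BCbody} it proves the corollary.

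The hard part will be the well-definedness and functoriality (and right exactness) of $\widehat{F_{0}}$ on $\QCoh(X)^{\mathrm{fp}}$: two vector-bundle presentations of the same finitely presented sheaf, or of a morphism between two such sheaves, must yield canonically identified cokernels after applying $F_{0}$. The naive approach — lifting $E_{0}\to M\to M'$ along an epimorphism $E_{0}'\twoheadrightarrow M'$ — fails because vector bundles need not be projective in $\QCoh(X)$; the remedy is to invoke the resolution property once more to pass to a common refinement of the two presentations (resolving the relevant fibre products or kernels by vector bundles), after which right exactness of $F_{0}$ forces the cokernels to agree. This is exactly the sense in which $\QCoh(X)$ is reconstructed from $\Vect(X)$ together with its class of epimorphisms: $\QCoh(X)^{\mathrm{fp}}$ is obtained from $\Vect(X)$ by freely adjoining cokernels of maps subject to keeping the right exact sequences of bundles right exact, and $\QCoh(X)$ is its ind-completion. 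Equivalently — and this may be the cleanest way to organize the step — one verifies a Gabriel--Popescu-type statement: restricted Yoneda identifies $\QCoh(X)$ with the localization of the category of additive presheaves on $\Vect(X)$ at the morphisms rendered invertible by the right exact sequences of bundles, so that cocontinuous (symmetric monoidal) functors out of $\QCoh(X)$ correspond precisely to right exact (symmetric monoidal) functors out of $\Vect(X)$; applying this with target $\QCoh(S)$ and noting that such a functor automatically takes values in $\Vect(S)$ yields the corollary directly.
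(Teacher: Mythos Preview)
Your proposal is correct and follows the same approach as the paper: reduce via Theorem \ref{thm:BCbody} to showing that restriction to $\Vect(X)$ is an equivalence, establish full faithfulness via generation of $\QCoh(X)$ by $\Vect(X)$ under colimits, and construct an inverse by extending along vector-bundle presentations of finitely presented sheaves and then ind-completing. The well-definedness step you flag as ``the hard part'' is carried out in the paper exactly as you suggest---by dominating any two resolutions by a common surjective refinement and checking that the induced identification of cokernels is independent of choices---and your alternative Gabriel--Popescu-type organization is precisely what the paper develops systematically in \S\ref{ss:qcohvect}.
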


Here $\Fun^L_{\otimes}(\Vect(X),\Vect(S))$ refers to category of all symmetric monoidal functors $\Vect(X) \to \Vect(S)$ that are right exact; by duality, such functors preserve all exact sequences of vector bundles. The proof below entails building certain functors out of $\QCoh(X)$ starting with functors out of $\Vect(X)$; a more systematic approach is discussed in \S \ref{ss:qcohvect}.

\begin{proof}
	We know $\Hom(S,X) = \Fun_{\otimes}^L(\QCoh(X),\QCoh(S))$, so we will identify the right hand side with $\Fun^L_{\otimes}(\Vect(X),\Vect(S))$. Any symmetric monoidal functor $F:\QCoh(X) \to \QCoh(S)$ preserves vector bundles (as these are the dualizable objects; see Lemma \ref{lem:vectdualizable}), and thus induces a symmetric monoidal functor $\phi(F):\Vect(X) \to \Vect(S)$ that preserves surjections. This construction gives a functor 
	\[ \phi:\Fun^L_{\otimes}(\QCoh(X),\QCoh(S)) \to \Fun^L_{\otimes}(\Vect(X),\Vect(S)).\]
	Next, we claim that any $F \in \Fun^L_{\otimes}(\QCoh(X),\QCoh(S))$ is a left Kan extension of its restriction 
	\[ \psi(F):\Vect(X) \stackrel{\phi(F)}{\to} \Vect(S) \stackrel{i}{\to} \QCoh(S).\]
	This will prove that $\phi$ is fully faithful. To see this, it is enough to note that $\Vect(X) \subset \QCoh(X)$ is a full subcategory that generates $\QCoh(X)$ under colimits (as every finitely presented quasi-coherent sheaf is a cokernel of a map of vector bundles, by assumption). 
	
	It remains to check that $\phi$ is essentially surjective. Given $G \in \Fun^L_{\otimes}(\Vect(X),\Vect(S))$, we will build a cocontinuous symmetric monoidal functor $F:\QCoh(X) \to \QCoh(S)$ extending $G$. For this, we first extend to $\QCoh_{fp}(X)$, so fix some $Q \in \QCoh_{fp}(X)$. Given a ``resolution'' $E_\bullet$ of $Q$, i.e., an exact sequence
	\[ E_2 \to E_1 \to Q \to 1\]
	with $E_i \in \Vect(X)$, we set $F(Q) := \coker(F(E_2) \to F(E_1)) \in \QCoh_{fp}(S)$. We will show that this construction is well-defined (i.e., independent of $E_\bullet$ up to unique isomorphism) and functorial in $Q$. Note first that if $Q \in \Vect(X)$, then $F(Q) = G(Q)$ by the assumption on $G$. To show well-definedness in general, fix a second resolution $G_\bullet$ of $Q$ and a surjective map $\phi_\bullet:E_\bullet \to G_\bullet$ of resolutions; here ``surjective'' simply means that the map $\phi_i:G_i \to E_i$ is surjective for each $i$. Then a diagram chase and the assumption on $G$ show that $\phi_\bullet$ induces an isomorphism
	\[\phi_*:\coker(F(E_2) \to F(E_1)) \simeq \coker(F(G_2) \to F(G_1)).\]
	Note that $\phi_*$ is defined using only $\phi_1$, but the existence of a $\phi_2$ is needed to get a well-defined map. As any two resolutions can be dominated (in the sense of surjections) by a common third one, it follows that $F(Q)$ is well-defined up to isomorphism. 
	
	We next show that $F(Q)$ is well-defined up to unique isomorphism, i.e., the isomorphism $\phi_*$ above is independent of map $\phi_\bullet$ chosen.  Indeed, assume we have two maps $\phi_\bullet,\psi_\bullet:E_\bullet \to G_\bullet$ of resolutions. To show that the induced maps
	\[ \phi_*,\psi_*:\coker(F(E_2) \to F(E_1))  \to \coker(F(G_2) \to F(G_1)) \]
	are the same, we can always replace the resolution $E_\bullet$ by one mapping surjectively onto it (by the argument used to show $F(Q)$ was well-defined up to isomorphism). After doing such a replacement, we can assume that the two maps $\phi_1,\psi_1:E_1 \to G_1$ differ by a map lifting to $G_2$. In this case, the two induced maps
	\[ F(\phi_1),F(\psi_1):F(E_1) \to F(G_1)\]
	differ by a map lifting to $F(G_2)$ by functoriality of $F$ in $\Vect(X)$, and thus the resulting two maps
	\[ \phi_*,\psi_*:\coker(F(E_2) \to F(E_1))  \to \coker(F(G_2) \to F(G_1)) \]
	are visibly the same, which proves that $F(Q)$ is well-defined up to unique isomorphism. 
	
	Next, we make this construction is functorial in $Q$. Given a map $h:Q_1 \to Q_2$, one finds a resolution $E_\bullet$ of $Q_1$, $G_\bullet$ of $Q_2$, and a map $\phi_\bullet:E_\bullet \to G_\bullet$ lifting $h$. This defines a map $\phi_*:F(Q_1) \to F(Q_2)$. Using the trick used to show well-definedness of $F(Q)$ above, one checks that $\phi_*$ is independent of $E_\bullet$, $G_\bullet$, and $\phi_\bullet$. Thus, the construction $Q \mapsto F(Q)$ is functorial in $Q$, so we obtain a functor $F:\QCoh_{fp}(X) \to \QCoh(S)$ which extends $G$, and carries resolutions as above to right exact sequences. As one can lift right exact sequences in $\QCoh_{fp}(X)$ to right exact sequences of resolutions, it follows that $F$ is right exact, so we have produced a finitely cocontinuous functor $F:\QCoh_{fp}(X) \to \QCoh(S)$ extending $G$. By passing to inductive limits, one obtains a cocontinuous functor $F:\QCoh(X) \to \QCoh(S)$ extending $G$. We leave it to the reader to check that one may endow $F$ with the structure of a symmetric monoidal functor extending the given one on $G$ in a unique (and evident) way, which is enough to prove the desired claim.
\end{proof}

\begin{lemma}
	\label{lem:vectdualizable}
Let $X$ be a qcqs scheme. Then $E \in \QCoh(X)$ is dualizable if and only if $E$ is a vector bundle.
\end{lemma}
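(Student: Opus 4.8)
The plan is to reduce both implications to classical commutative algebra over an affine open, using that restriction $\QCoh(X) \to \QCoh(U)$ along an affine open $U = \Spec(A) \hookrightarrow X$ is a symmetric monoidal functor (so preserves dualizable objects), and that the relevant equalities between maps of quasi-coherent sheaves can be tested Zariski-locally.

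For the implication ``vector bundle $\Rightarrow$ dualizable'': let $E$ be finite locally free and set $E^\vee := \underline{\Hom}_{\calO_X}(E,\calO_X)$. Take $\mathrm{ev}:E^\vee \otimes E \to \calO_X$ to be the canonical pairing, and, using that the canonical map $E \otimes E^\vee \to \underline{\End}_{\calO_X}(E)$ is an isomorphism whenever $E$ is finite locally free (it is so Zariski-locally, where $E$ is free), define the coevaluation $\mathrm{coev}:\calO_X \to E \otimes E^\vee$ to be the map classifying $\id_E \in \Gamma(X,\underline{\End}_{\calO_X}(E))$ under the inverse of this isomorphism. The two triangle identities assert that certain morphisms of quasi-coherent sheaves equal identity maps; this may be checked after restricting to any affine open over which $E$ is free, where it is the standard computation for a finite free module. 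Hence $E$ is dualizable.

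For the implication ``dualizable $\Rightarrow$ vector bundle'': fix $E \in \QCoh(X)$ dualizable, and let $U = \Spec(A) \subseteq X$ be an affine open. Since restriction is symmetric monoidal, $M := \Gamma(U,E)$ is a dualizable $A$-module, so it suffices to show that every dualizable $A$-module is finitely generated projective; this exhibits $E$ as finite locally free on an affine cover, hence as a vector bundle. So let $M$ be a dualizable $A$-module with dual $N$, evaluation $\epsilon: N \otimes_A M \to A$, and coevaluation $\eta: A \to M \otimes_A N$; write $\eta(1) = \sum_{i=1}^{n} m_i \otimes n_i$. The first triangle identity gives $m = \sum_{i=1}^{n} m_i\,\epsilon(n_i \otimes m)$ for all $m \in M$, so $M$ is generated by $m_1,\dots,m_n$; moreover the $A$-linear maps $\phi:A^n \to M$, $e_i \mapsto m_i$, and $\psi:M \to A^n$, $m \mapsto (\epsilon(n_i \otimes m))_i$, satisfy $\phi\circ\psi = \id_M$, so $M$ is a direct summand of $A^n$. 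Thus $M$ is finitely generated projective, hence finitely presented, and a finitely presented projective $A$-module corresponds to a finite locally free $\calO_{\Spec(A)}$-module.

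I do not expect a serious obstacle: the content is just the classical identification of dualizable modules over a commutative ring with finitely generated projective modules, combined with the routine fact that dualizability of a quasi-coherent sheaf is Zariski-local. The one step requiring slight care is the globalization in the first implication — producing a coevaluation map on all of $X$ without assuming $E$ is a global direct summand of a trivial bundle — which is exactly what the description via $\underline{\End}_{\calO_X}(E)$ accomplishes.
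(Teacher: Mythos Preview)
Your proof is correct. For the forward direction, the paper simply asserts that vector bundles are clearly dualizable, while you spell out the construction of $\mathrm{coev}$ via $\underline{\End}_{\calO_X}(E)$; this is fine and more detailed than what the paper offers.

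For the converse, your route genuinely differs from the paper's. You extract from the coevaluation $\eta(1) = \sum m_i \otimes n_i$ an explicit splitting $M \hookrightarrow A^n \twoheadrightarrow M$ via the first triangle identity, directly exhibiting $M$ as finitely generated projective. The paper instead argues categorically: dualizability gives $\Hom(E,-) \simeq E^\vee \otimes (-)$, which commutes with filtered colimits, so $E$ is finitely presented; and $E \otimes (-) \simeq \Hom(E^\vee,-)$ is left exact, so $E$ is flat; then one invokes that finitely presented flat modules are finite locally free. Your argument is more elementary and self-contained (it avoids the ``finitely presented $+$ flat $\Rightarrow$ locally free'' step), while the paper's argument better highlights the conceptual reason dualizable objects are small: tensoring with a dualizable object is both a left and a right adjoint.
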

\begin{proof}
	It is clear that vector bundles are dualizable. Conversely, assume $E \in \QCoh(X)$ is dualizable with dual $E^\vee$. To show $E$ is a vector bundle, by localising, we may assume $X = \Spec(A)$ is affine. We now identify $\QCoh(X)$ with $\Mod_A$ to solve the corresponding question for modules. Then $\Hom(E,-) = E^\vee \otimes (-)$ commutes with filtered colimits, and thus $E$ is finitely presented. Similarly, $\Hom(E^\vee,-) = E \otimes (-)$, so $E$ is flat. Any finitely presented flat $A$-module is finite locally free, proving the claim.
\end{proof}

One may wonder if the cocontinuity condition on the functors appearing on the right hand side of Corollary \ref{cor:TDresolution} is automatically satisfied: the next two examples show this is not the case, and that such functors abound in nature. Moreover, these examples also indicate a potential subtlety in the applying  Corollary \ref{cor:TDresolution}: the condition that a map in $\Vect(S)$ be surjective is defined in terms of the ambient category $\QCoh(S)$, and is not intrinsic to the category $\Vect(S)$. One may raise similar objections to Theorem \ref{thm:TD}, but they are easily refuted: it is almost impossible (certainly quite unnatural) to write down a non-exact functor $D_\perf(X) \to D_\perf(S)$, and the exactness condition is intrinsic to the $\infty$-categories of perfect complexes.

\begin{example}
	\label{ex:surjqproj}
	Let $X$ be an affine regular noetherian scheme of dimension $2$, and let $x \in X$ be a closed point. Set $S = X \setminus \{x\}$. Then the inclusion $j:S \to X$ induces an equivalence $j^*:\Vect(X) \to \Vect(S)$ by the Auslander-Buschbaum formula; explicitly, for any $E \in \Vect(S)$, the double dual $\overline{E}^{\ast \ast}$ of any $\overline{E} \in \Coh(X)$ extending $E$ is a vector bundle extending $E$. However, the map $j$ is certainly not an isomorphism. This does not contradict Corollary \ref{cor:TDresolution} as the symmetric monoidal equivalence $\Vect(S) \simeq \Vect(X)$  does {\em not} preserve surjections: the inverse to $j^*$ is given by reflexivising a coherent extension, and the reflexivisation process loses surjectivity properties at a missing point. Explicitly, if $X = \A^2 = \Spec(k[y,z])$ over a field $k$ and $x = (0,0)$, then the map $\calO_X^{\oplus 2} \stackrel{(y,z)}{\to} \calO_X$ in $\Vect(X)$ is surjective over $S$, but not at $x$.
\end{example}

Example \ref{ex:surjqproj} might lead one to suspect that such phenomenon can be avoided in the projective case. However, this is not the case:

\begin{example}
	\label{ex:surjproj}
	Fix a field $k$, and let $X = \P^1$ and $S = \Spec(k)$. We will construct a symmetric monoidal functor $F:\Vect(\P^1) \to \Vect(S)$ which does not come from $k$-point of $\P^1$. Our functor $F$ will not preserve surjections. To construct $F$, consider the natural map $\pi:\A^2 \setminus \{0\} \to \P^1$. Then $\pi^*:\Vect(\P^1) \to \Vect(\A^2-\{0\} )$ is certainly symmetric monoidal. As in Example \ref{ex:surjqproj}, we know $j^*:\Vect(\A^2) \to \Vect(\A^2 \setminus \{0\} )$ is an equivalence, where $j:\A^2 \setminus \{0\} \to \A^2$ is the defining map. Thus, we find a symmetric monoidal functor $F:\Vect(\P^1) \to \Vect(k)$ given via $F = i^* \circ (j^*)^{-1} \circ \pi^*$, where $i:S \to \A^2$ is the origin. One can check easily that $F$ does not come from geometry. In fact, the surjection $\calO_X^{\oplus 2} \to \calO_X(1)$ (defining $\P^1$) is carried by $F$ to the $0$ map $\calO_S^{\oplus 2} \to F(\calO_X(1)) \simeq \calO_S$. 
\end{example}

\subsection{Recovering $\QCoh$ from $\Vect$}
\label{ss:qcohvect}

Fix a qcqs scheme $X$ with enough vector bundles. Examples \ref{ex:surjqproj} and \ref{ex:surjproj} show that there is no way to recover $\QCoh(X)$ from $\Vect(X)$ as there is no hope knowing what ``surjective'' maps should be intrinsically in terms of $\Vect(X)$. We sketch now why this is the only obstruction: one functorially recovers $\QCoh(X)$ from $\Vect(X)$ equipped with (the extra data of) the class of ``surjective maps.'' The ideas below already appear in the proof of Corollary \ref{cor:TDresolution} implicitly, and will not be used elsewhere in the paper. We begin by defining the ambient category where all constructions will take place.

\begin{definition}
	Let $\widehat{\Vect}(X)$ be the category of additive presheaves $\Vect(X)^\opp \to \Ab$ on $\Vect(X)$.
\end{definition}

The map $\Vect(X) \to \widehat{\Vect}(X)$ enjoys a good universal property.

\begin{lemma}
	\label{lem:TDVectUnivProp}
	The category $\widehat{\Vect}(X)$ is a cocomplete abelian category, and the Yoneda embedding $\Vect(X) \to \widehat{\Vect}(X)$ is the universal additive map $\Vect(X) \to \calA$ to a cocomplete abelian category. 
\end{lemma}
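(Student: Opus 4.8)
The plan is to establish the two assertions of Lemma~\ref{lem:TDVectUnivProp} separately: first that $\widehat{\Vect}(X)$ is a cocomplete abelian category, and then that the Yoneda embedding is the universal additive functor into such a category.

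\textbf{Cocompleteness and abelianness.} The category of additive presheaves on any small (or essentially small) additive category is a classical Grothendieck abelian category: kernels, cokernels, and arbitrary (co)limits are all computed objectwise in $\Ab$. Thus the only point to address is that $\Vect(X)$ is essentially small for $X$ qcqs, which is standard (a vector bundle is determined by finitely presented gluing data over a finite affine cover). Once this is granted, exactness of objectwise sequences in $\Ab$ immediately gives that $\widehat{\Vect}(X)$ is abelian, and objectwise colimits give cocompleteness; I would simply cite this and move on.

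\textbf{Universal property.} Let $\calA$ be a cocomplete abelian category and $T:\Vect(X) \to \calA$ an additive functor. I want to produce a cocontinuous functor $\widehat{T}:\widehat{\Vect}(X) \to \calA$, unique up to unique isomorphism, with $\widehat{T}\circ y \simeq T$ (where $y$ is Yoneda), and show moreover that any cocontinuous functor out of $\widehat{\Vect}(X)$ arises this way. The construction is the usual left Kan extension: every object $M \in \widehat{\Vect}(X)$ is canonically a colimit of representables, $M \simeq \colim_{(y(E)\to M)} y(E)$ over the (essentially small) category of elements, so one is forced to set $\widehat{T}(M) := \colim_{(y(E)\to M)} T(E)$, which exists since $\calA$ is cocomplete. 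That $\widehat{T}$ is cocontinuous and restricts to $T$ along $y$ is the formal content of left Kan extension along a fully faithful functor from a small category (Yoneda being fully faithful), and uniqueness follows because any cocontinuous extension of $T$ must agree with this colimit formula. Conversely, restriction along $y$ gives an inverse assignment, so $\mathrm{Fun}^L(\widehat{\Vect}(X),\calA) \simeq \mathrm{Fun}^{\mathrm{add}}(\Vect(X),\calA)$, which is precisely the asserted universal property.

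\textbf{Expected main obstacle.} There is no deep obstacle here: the statement is a packaging of the standard theory of presheaf categories on additive (pre)additive categories, analogous to the free cocompletion of a category under colimits. The only genuine verification is the essential smallness of $\Vect(X)$ for qcqs $X$, and even that is routine. The mild subtlety worth flagging is that ``universal'' should be read in the appropriate $2$-categorical sense (equivalence of functor categories, not of functors), so the uniqueness in the universal property is uniqueness up to coherent natural isomorphism; the proof via the left Kan extension formula handles this automatically. I would therefore keep the written proof short, citing the presheaf-category generalities and spending a sentence or two on essential smallness.
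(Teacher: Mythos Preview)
Your proposal is correct. The paper itself does not give a proof of this lemma: it simply writes ``Left to the reader'' and points to Kelly's work on enriched free cocompletions and to Day--Lack, so your sketch is precisely the kind of argument those references supply. The two-step structure you use (objectwise abelianness/cocompleteness of additive presheaves, then the left Kan extension formula for the universal property) is the standard one, and there is nothing to compare against in the paper beyond the citations.

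One minor remark: you might make explicit that the extension $\widehat{T}$ is automatically additive (hence lands in the additive/abelian world) because a cocontinuous functor between additive categories preserves finite coproducts, which coincide with finite products there. This is implicit in your argument but worth a sentence, since the universal property is phrased for \emph{additive} maps.
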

\begin{proof}
	Left to the reader (see \cite[Proposition 3.6]{KellyColimits} as well as \cite{DayLackLimits}).
\end{proof}

Roughly speaking, we view the extra data of the class of surjective maps in $\Vect(X)$ as a topology on $\Vect(X)$; the category $\widehat{\Vect}(X)$ is then the category of presheaves, while the category of interest will be the category for sheaves. However, to avoid discussing topologies on additive categories, we encode the data of surjections in terms of their coequalizers to isolate the objects of interest.

\begin{definition}
	Let $S$ be the class of maps in $\widehat{\Vect}(X)$ of the form $\coker\Big(E \times_Q E \stackrel{p_1-p_2}{\to} E\Big) \to Q$, where $E \to Q$ is a surjective map in $\Vect(X)$. Let $\calC \subset \widehat{\Vect}(X)$ be the full subcategory of {\em $S$-local presheaves}, i.e., presheaves $F$ such that $1 \to F(Q) \to F(E) \stackrel{p_1-p_2}{\to} F(E \times_Q E)$ is exact for every surjection $E \to Q$ or, equivalently, that $F(Q) \simeq F(T)$ for any map $T \to Q$ in $S$.
\end{definition}

The Yoneda embedding $\Vect(X) \to \widehat{\Vect}(X)$ lands in $\calC$, and the basic properties of $\calC$, summarized in the next lemma, are close analogues of the usual properties of sheaves on a coherent site.

\begin{lemma}
$\calC$ is closed under limits and filtered colimits in $\widehat{\Vect}(X)$. The inclusion $i:\calC \to \widehat{\Vect}(X)$ admits an exact left adjoint $L:\widehat{\Vect}(X) \to \calC$. In particular, $\calC$ admits all limits and colimits.
\end{lemma}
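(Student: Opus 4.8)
The statement to prove is a package of standard "sheaf-like" facts about the category $\calC$ of $S$-local presheaves: (i) $\calC$ is closed under limits and filtered colimits in $\widehat{\Vect}(X)$; (ii) the inclusion $i:\calC \to \widehat{\Vect}(X)$ has an exact left adjoint $L$; (iii) $\calC$ is complete and cocomplete. The strategy I would follow is to treat $S$ as (the data of) a Grothendieck topology on the additive category $\Vect(X)$ — a single covering family $\{E \to Q\}$ for each surjection of vector bundles — and then mimic, essentially verbatim, the construction of the associated sheaf functor on a coherent site, taking advantage of the finiteness built into the situation (covers are by a single map, and the relevant fibre products $E \times_Q E$ live in $\Vect(X)$ since $X$ has enough vector bundles, so the kernel is again a vector bundle). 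None of the steps is deep; the only real work is bookkeeping, and I would say explicitly which routine verifications are left to the reader, in keeping with the tone of this subsection.

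\emph{Step 1 (closure under limits and filtered colimits).} The defining condition for $F \in \calC$ is that, for every surjection $E \twoheadrightarrow Q$ in $\Vect(X)$, the sequence of abelian groups $1 \to F(Q) \to F(E) \to F(E\times_Q E)$ is exact. Since limits of abelian groups are left exact and a limit of presheaves is computed objectwise, a limit in $\widehat{\Vect}(X)$ of $S$-local presheaves is again $S$-local; similarly, filtered colimits of abelian groups are exact, so the condition is preserved under filtered colimits computed objectwise. This gives (i) with no more than a sentence of justification.

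\emph{Step 2 (construction of $L$, sheafification).} For $F \in \widehat{\Vect}(X)$ define the "plus construction"
\[
F^+(V) \;=\; \colim_{E \twoheadrightarrow V}\; \ker\Bigl(F(E) \xrightarrow{\,p_1^* - p_2^*\,} F(E\times_V E)\Bigr),
\]
the colimit running over the (cofiltered, by forming fibre products of covers) system of surjections from vector bundles onto $V$; here I use that $X$ has enough vector bundles so that such $E$ exist and that $E\times_V E \in \Vect(X)$. One checks in the usual way that $F \mapsto F^+$ is functorial, that $F^+$ is "separated" (the first map in the defining sequence is injective), that $F^{++}$ is $S$-local, and that $F \mapsto F^{++}$ is left adjoint to $i$; exactness of $L := (-)^{++}$ follows because the plus construction is a filtered colimit of finite limits of exact-in-$F$ operations — kernels are left exact, filtered colimits are exact — and the standard site argument that sheafification is exact goes through unchanged. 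That the Yoneda image of $\Vect(X)$ already lies in $\calC$ is immediate from the fact that, for a surjection $E \twoheadrightarrow Q$ with kernel $K \in \Vect(X)$, the sequence $0 \to K \to E \to Q \to 0$ makes $Q$ the coequalizer of $E\times_Q E \rightrightarrows E$ in $\Vect(X)$, so representable presheaves satisfy the local condition.

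\emph{Step 3 (completeness and cocompleteness).} Limits in $\calC$ are computed in $\widehat{\Vect}(X)$ by Step 1, since $\widehat{\Vect}(X)$ is complete (Lemma~\ref{lem:TDVectUnivProp}). For colimits, given a diagram in $\calC$, form the colimit in $\widehat{\Vect}(X)$ — which exists, again by Lemma~\ref{lem:TDVectUnivProp} — and apply $L$; the adjunction $L \dashv i$ shows this is a colimit in $\calC$. Hence $\calC$ has all limits and colimits.

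The step I expect to be the main obstacle — really the only place requiring care rather than recitation — is verifying that the left adjoint $L$ is \emph{exact} (not merely right exact, which is automatic for a left adjoint). Right exactness is free; left exactness amounts to showing $L$ preserves finite limits, which reduces, as on an ordinary site, to checking that the plus construction commutes with finite limits. This works precisely because each cover is a single morphism (so the index category of the colimit defining $F^+$ is cofiltered once one knows covers can be refined by pulling back along the fibre-product cover), and filtered colimits commute with finite limits in $\Ab$. I would flag this point and otherwise defer the diagram chases to the reader, consistent with the stated aim that "the ideas below already appear in the proof of Corollary~\ref{cor:TDresolution} implicitly, and will not be used elsewhere in the paper."
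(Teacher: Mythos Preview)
Your proposal is correct and follows essentially the same approach as the paper: both define the plus construction $L^+(F)(Q) = \colim \ker(F(E) \to F(E\times_Q E))$ over surjections $E \twoheadrightarrow Q$, iterate it twice to obtain the left adjoint $L$, and deduce exactness from the filteredness of the indexing system together with exactness of filtered colimits in $\Ab$. The only cosmetic difference is that the paper makes the filteredness argument precise by rewriting $L^+(F)(Q)$ as a colimit over the poset $J(Q)$ of subobjects $T \hookrightarrow Q$ arising from $S$, whereas you appeal directly to refining covers by fibre products.
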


\begin{proof}
We first define a functor $L^+:\widehat{\Vect}(X) \to \widehat{\Vect}(X)$ via the familiar formula from sheaf theory, i.e., 
\[ L^+(F)(Q) = \colim \ker(F(E) \to F(E \times_Q E)),\]
where the colimit is indexed by the (opposite of the) category $I(Q)$ of surjective maps $E \to Q$. Using the class $S$ introduced above, we can rewrite this as
\[ L^+F(Q) = \colim F(T),\]
where the colimit is indexed by the (opposite of the) category $J(Q)$ of maps $T \to Q$ in $S$. Note that each map in $S$ is a monomorphism, so $J(Q)$ is a poset, and a subposet of the poset $\mathrm{Sub}(Q)$ of subobjects of $Q$ in $\widehat{\Vect}(X)$. As a fibre product of bundle surjections defines a square of bundle surjections, $J(Q)$ is stable under finite intersections in $\mathrm{Sub}(Q)$, so the second colimit defining $L^+(F)(Q)$ is filtered. Let us temporarily call $F \in \widehat{\Vect}(X)$ {\em separated} if $F(Q) \to F(E)$ is injective for any $E \to Q$ in $I(Q)$. Using the second description, one checks that $L^+(F)$ is always separated for any $F \in \widehat{\Vect}(X)$.   Moreover, if $F$ is separated, then a diagram chase shows that $L^+(F) \in \calC$. In particular, the functor $L^+ \circ L^+$ can be viewed as a functor $L:\widehat{\Vect}(X) \to \calC$.  It is also clear that $L(F) = L^+(F) = F$ if $F \in \calC$, and one then checks that $L$ gives a left adjoint to $i$ (using that any map $F \to G$ with $G \in \calC$ extends canonically to a map $L^+(F) \to G$). Finally, the second description of $L^+$ given above shows that $L^+$ is exact, and hence so is $L$.
\end{proof}

As in sheaf theory, the left-adjoint $L:\widehat{\Vect}(X) \to \calC$ is a localization of $\widehat{\Vect}(X)$.

\begin{lemma}
	\label{lem:TDVectUnivPropVect}
	Any cocontinuous functor $G:\widehat{\Vect}(X) \to \calA$ to a cocomplete abelian category $\calA$ which carries maps in $S$ to isomorphisms factors uniquely over $L$, i.e., $T \simeq T \circ i \circ L$.
\end{lemma}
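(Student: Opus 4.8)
The plan is to reduce the statement to the single claim that $G$ sends every unit map $\eta_F\colon F\to iL(F)$ of the adjunction $(L,i)$ to an isomorphism, and then to extract this claim from the fact that $L$ exhibits $\calC$ as the localization of $\widehat{\Vect}(X)$ at the class generated by $S$. For the reduction: if $G$ inverts all $\eta_F$, then $G(\eta)\colon G\Rightarrow (G\circ i)\circ L$ is a natural isomorphism, which is the asserted factorization of $G$ through $L$, with factor $G\circ i$. Conversely, any factorization $G\simeq G'\circ L$ yields, after precomposing with $i$ and using that the counit $L\circ i\to\id_{\calC}$ is an isomorphism (because $i$ is fully faithful), an identification $G\circ i\simeq (G'\circ L)\circ i\simeq G'$; thus the factor is forced to be $G\circ i$, which gives uniqueness. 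So everything reduces to the displayed claim.

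First I would note that $L$ itself inverts $S$: for $(s\colon T\to Q)\in S$ and any $H\in\calC$, the adjunction together with Yoneda gives $\Hom_{\calC}(L(Q),H)\simeq H(Q)$ and $\Hom_{\calC}(L(T),H)\simeq H(T)$, and $H(Q)\simeq H(T)$ since $H$ is $S$-local, so $L(s)$ is an isomorphism by Yoneda. Now $\widehat{\Vect}(X)$ is a presheaf category on the essentially small category $\Vect(X)$, hence locally presentable, and $\calC$ is by construction the reflective subcategory of $S$-local objects for the essentially small set $S$; consequently the class $W:=\{f\mid L(f)\text{ is an isomorphism}\}$ is the smallest strongly saturated class of morphisms containing $S$ (the accessible-localization package; cf.\ \cite[\S 5.5.4]{LurieHTT}). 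Since $G$ is cocontinuous, the class $\{f\mid G(f)\text{ is an isomorphism}\}$ is itself strongly saturated: it contains all isomorphisms, has the two-out-of-three property, is closed under colimits in the arrow category $\Fun(\Delta^{1},\widehat{\Vect}(X))$ (which are computed pointwise and hence preserved by the cocontinuous $G$, and a colimit of isomorphisms is an isomorphism), and is closed under retracts. As it contains $S$, it contains $W$. Finally, each $\eta_F$ lies in $W$: the triangle identity $\epsilon_{LF}\circ L(\eta_F)=\id_{LF}$ and invertibility of the counit $\epsilon$ force $L(\eta_F)$ to be an isomorphism. Hence $G(\eta_F)$ is an isomorphism for every $F$, completing the argument.

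The one genuinely non-formal point---and thus the main obstacle---is upgrading the object-level description of $\calC$ (as the $S$-local presheaves) to the morphism-level statement that $W$ is the strongly saturated class generated by $S$, which is exactly what makes cocontinuity of $G$ the correct hypothesis to pass from ``$G$ inverts $S$'' to ``$G$ inverts every $\eta_F$''. If one prefers to sidestep the localization machinery, one can argue concretely from $L=L^{+}\circ L^{+}$ with $L^{+}(F)(Q)=\colim_{(T\to Q)\in J(Q)}F(T)$: it then suffices to invert $\eta^{+}_{F}\colon F\to L^{+}(F)$ (applying $G$ to this twice handles $\eta_F$), and $\eta^{+}_F$ is a filtered colimit of comparison maps each built from the maps in $S$ and hence inverted by the cocontinuous, $S$-inverting functor $G$; pinning down that diagram of arrows is a bit delicate, which is why I would favour the localization-theoretic route.
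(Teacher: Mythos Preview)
Your proof is correct and follows essentially the same route as the paper: reduce to showing $G$ inverts each unit $\eta_F\colon F\to iL(F)$, observe that the class of morphisms inverted by a cocontinuous $G$ is strongly saturated, and invoke the Bousfield-localization fact (from \cite[\S 5.5.4]{LurieHTT}) that $\eta_F$ lies in the strongly saturated class generated by $S$. Your write-up is in fact more explicit than the paper's on several points (uniqueness, why $L$ inverts $S$, why $\eta_F\in W$ via the triangle identity), but the skeleton is identical.
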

\begin{proof}
	For notational convenience, let $M = i \circ L$. It is enough to show that for any $F \in \widehat{\Vect}(X)$, the map $F \to M(F)$ is in the smallest strongly saturated class $\overline{S}$ of maps in $\widehat{\Vect}(X)$ containing $S$; here a subcategory $\overline{S} \subset \mathrm{Mor}([1],\widehat{\Vect}(X))$ is called a strongly saturated class if it is closed under colimits, satisfies the $2$-out-of-$3$ property, and if maps in $\overline{S}$ are stable under arbitrary pushouts in $\widehat{\Vect}(X)$; see \cite[Definition 5.5.4.5]{LurieHTT} for more. Indeed, for any $G:\widehat{\Vect}(X) \to \calA$ as in the lemma, the collection of maps $f$ in $\widehat{\Vect}(X)$ such that $G(f)$ is an isomorphism is a strongly saturated class containing $S$, and thus $G$ carries $\overline{S}$ to isomorphisms, so $G(F) \simeq G(M(F))$ as wanted. To finish, it is enough to note $F \to M(F)$ is in $\overline{S}$ by general facts about Bousfield localizations; see \cite[Proposition 5.5.4.15]{LurieHTT}.
\end{proof}

This leads to a simple universal property describing the Yoneda embedding $\Vect(X) \to \calC$.

\begin{corollary}
	\label{lem:TDVectUnivPropC}
	Any exact additive functor $F:\Vect(X) \to \calA$ to a cocomplete additive category $\calA$ extends uniquely to a cocontinuous functor $\calC \to \calA$.
\end{corollary}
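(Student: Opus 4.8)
The plan is to obtain the extension by composing the two universal properties just established: first extend $F$ along the Yoneda embedding $y: \Vect(X) \to \widehat{\Vect}(X)$, and then descend along the localization $L: \widehat{\Vect}(X) \to \calC$. Concretely, given an exact additive $F: \Vect(X) \to \calA$, Lemma \ref{lem:TDVectUnivProp} produces a cocontinuous functor $\widehat{F}: \widehat{\Vect}(X) \to \calA$, unique up to canonical isomorphism, with $\widehat{F} \circ y \simeq F$. I would then check that $\widehat{F}$ inverts every map in the class $S$; granting this, Lemma \ref{lem:TDVectUnivPropVect} gives $\widehat{F} \simeq \overline{F} \circ L$ with $\overline{F} := \widehat{F} \circ i: \calC \to \calA$ cocontinuous. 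Since the composite $L \circ y$ is precisely the Yoneda embedding $\Vect(X) \hookrightarrow \calC$ (using $L \circ i \simeq \id_{\calC}$ and the fact that $y$ lands in $\calC$), the functor $\overline{F}$ restricts to $F$ on $\Vect(X)$, which is the desired extension.

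The one substantive point — and where I expect most of the work to lie — is verifying that $\widehat{F}$ kills $S$, and this is exactly where exactness of $F$ is used. Fix a surjection $\phi: E \to Q$ in $\Vect(X)$ and put $K = \ker(\phi)$; since $Q$ is locally free, the sequence $0 \to K \to E \stackrel{\phi}{\to} Q \to 0$ is locally split, so $K$ is again a vector bundle and there is an isomorphism $E \times_Q E \simeq E \oplus K$ carrying $p_1 - p_2: E \times_Q E \to E$ to the composite $E \oplus K \twoheadrightarrow K \hookrightarrow E$. Hence, computing cokernels in $\widehat{\Vect}(X)$, the source of the corresponding $S$-map is $\coker(K \hookrightarrow E)$, and applying the cocontinuous $\widehat{F}$ turns this into $\coker\big(F(K) \to F(E)\big)$. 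Because $F$ is exact, $F(K) \to F(E) \to F(Q) \to 0$ is exact in $\calA$, so this cokernel is canonically identified with $F(Q) = \widehat{F}(Q)$; unwinding the definitions, the map that $\widehat{F}$ sends the $S$-map to is precisely the resulting comparison map induced by $F(\phi)$, hence an isomorphism. (The same computation indicates, conversely, that exactness is the natural hypothesis to impose on the source functor.)

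Finally, uniqueness: if $G_1, G_2: \calC \to \calA$ are cocontinuous and both restrict to $F$ along $\Vect(X) \hookrightarrow \calC$, then $G_1 \circ L$ and $G_2 \circ L$ are cocontinuous functors $\widehat{\Vect}(X) \to \calA$ (using that $L$, as a left adjoint, is cocontinuous) extending $F$ along $y$, hence equal by the uniqueness clause of Lemma \ref{lem:TDVectUnivProp}; precomposing the identity $G_1 \circ L \simeq G_2 \circ L$ with $i$ and using $L \circ i \simeq \id_{\calC}$ gives $G_1 \simeq G_2$. Beyond Lemmas \ref{lem:TDVectUnivProp} and \ref{lem:TDVectUnivPropVect}, the only ingredient is the elementary fact that a surjection of vector bundles is locally split, so no further geometric input enters.
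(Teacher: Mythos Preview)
Your proof is correct and follows essentially the same route as the paper: extend $F$ to $\widehat{F}$ via Lemma~\ref{lem:TDVectUnivProp}, use the identification $E\times_Q E \simeq E\oplus K$ together with exactness of $F$ to check that $\widehat{F}$ inverts every map in $S$, and then descend via Lemma~\ref{lem:TDVectUnivPropVect}. Your write-up is in fact a bit more careful than the paper's (you spell out the uniqueness argument and the verification that the resulting functor restricts correctly along $\Vect(X)\hookrightarrow\calC$), but there is no substantive difference in strategy.
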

\begin{proof}
By Lemma \ref{lem:TDVectUnivProp}, any such $F$ extends uniquely to a cocontinuous functor $\widehat{F}:\widehat{\Vect}(X) \to \calA$. We claim that $\widehat{F}$ carries maps in $S$ to isomorphisms. To see this, fix a surjection $E \to Q$ of bundles. The kernel $K$ is a bundle as well, so we obtain a short exact sequence
\[ 1 \to K \to E \to Q \to 1\]
in $\Vect(X)$. As $F$ is exact, the sequence
\[1 \to F(K) \to F(E) \to F(Q) \to 1\]
is also exact. Using the identification $E \times_Q E \simeq E \times K$, it follows that $F(E \times_Q E) \simeq F(E) \times_{F(Q)} F(E)$. This implies that $F$ carries the map $\coker(E \times_Q E \stackrel{p_1-p_2}{\to} E) \to Q$ in $S$ to an isomorphism. Lemma \ref{lem:TDVectUnivPropVect} then shows that $\widehat{F}$ factors through a cocontinuous functor $\calC \to \calA$, proving the claim. 
\end{proof}

The category $\calC$ constructed above is actually a familiar object.

	\begin{proposition}
		The functor $\QCoh(X) \to \widehat{\Vect}(X)$ given by $F \mapsto \Hom(-,F)$ factors through an equivalence $\QCoh(X) \simeq \calC$. 
	\end{proposition}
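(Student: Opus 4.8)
The plan is to show that the functor $\Phi: \QCoh(X) \to \widehat{\Vect}(X)$, $F \mapsto \Hom(-, F)$, lands in $\calC$, and then that the resulting functor $\QCoh(X) \to \calC$ is an equivalence by exhibiting an explicit inverse and using the universal properties already established. First I would check that $\Phi$ does land in $\calC$: given a surjection $E \to Q$ of vector bundles, its kernel $K$ is again a vector bundle (since $X$ has enough vector bundles, a finitely presented quasi-coherent sheaf has a resolution by vector bundles, and a kernel appearing here is the image of such a resolution — more precisely, one uses that locally the surjection splits, so $E \times_Q E \simeq E \times K$), and therefore for any $F \in \QCoh(X)$ the sequence $0 \to \Hom(Q,F) \to \Hom(E,F) \to \Hom(E\times_Q E, F)$ is exact because $\Hom(-,F)$ is left exact and $E \times_Q E \to E \to Q$ is a right-exact sequence in $\QCoh(X)$ (here it is essential that ``surjection'' is being tested in the ambient category $\QCoh(X)$). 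Hence $\Phi$ factors through $\calC$.

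Next I would construct the functor in the other direction and check it is inverse to $\Phi$. By Corollary \ref{lem:TDVectUnivPropC}, the inclusion $\Vect(X) \hookrightarrow \QCoh(X)$, which is exact and additive and whose target is cocomplete abelian, extends uniquely to a cocontinuous functor $\Psi: \calC \to \QCoh(X)$. I claim $\Psi$ and $\Phi$ are mutually inverse. In one direction, $\Psi \circ \Phi: \QCoh(X) \to \QCoh(X)$ is a cocontinuous functor (since $\Phi$ is cocontinuous — filtered colimits and cokernels in $\QCoh(X)$ are computed ``objectwise'' after Yoneda, using that vector bundles are compact-ish generators in the relevant sense — and $\Psi$ is cocontinuous) which restricts to the identity on $\Vect(X)$; since every object of $\QCoh(X)$ is a colimit (filtered colimit of cokernels of maps of vector bundles, by the enough-vector-bundles hypothesis) of vector bundles, and both $\Psi\Phi$ and $\id$ are cocontinuous and agree on $\Vect(X)$, they agree, so $\Psi\Phi \simeq \id_{\QCoh(X)}$. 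In the other direction, $\Phi \circ \Psi: \calC \to \calC$ is cocontinuous (using that $\calC$ has all colimits and $L$ is exact, so cocontinuity of $\Phi\Psi$ follows from cocontinuity of $\Phi$ and $\Psi$) and restricts to the Yoneda embedding on $\Vect(X)$, hence to the identity there; since $\Vect(X)$ generates $\calC$ under colimits (every $S$-local presheaf is a colimit of representables, as $\calC$ is a localization of the presheaf category $\widehat{\Vect}(X)$ in which representables generate), the same argument gives $\Phi\Psi \simeq \id_\calC$.

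The main obstacle I anticipate is the verification that $\Phi$ is cocontinuous, i.e., that the Yoneda-type functor $F \mapsto \Hom(-,F)$ from $\QCoh(X)$ into $\calC$ preserves all colimits — equivalently, that after sheafification every quasi-coherent sheaf is correctly reconstructed as the ``$\calC$-colimit'' of the vector bundles mapping to it. This requires knowing that (i) the functor $\Hom(E, -)$ commutes with the relevant colimits when $E$ is a vector bundle (true for filtered colimits since $E$ is finitely presented; for cokernels one uses right exactness arguments together with the enough-vector-bundles hypothesis to lift right-exact sequences to resolutions, exactly as in the proof of Corollary \ref{cor:TDresolution}), and (ii) that sheafification $L$ is exact (already established) so that colimits in $\calC$ are computed as $L$ of colimits in $\widehat{\Vect}(X)$. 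Assembling these, one checks that for a presentation $E_\bullet \to Q \to 0$ with $E_i$ vector bundles, $\Phi(Q) = \coker(\Phi(E_1) \to \Phi(E_0))$ in $\calC$, and more generally $\Phi$ preserves filtered colimits; this is precisely the content needed to run the generation-under-colimits argument above. The remaining bookkeeping — compatibility of the symmetric monoidal structures, and that $\Psi$ is inverse rather than merely adjoint to $\Phi$ — is then formal, following the pattern already laid out in \S\ref{ss:qcohvect}.
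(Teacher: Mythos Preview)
Your proposal is correct and follows essentially the same route as the paper: factor $\Phi$ through $\calC$, build the inverse $\Psi$ from the universal property of Corollary~\ref{lem:TDVectUnivPropC}, then conclude $\Phi\Psi\simeq\id$ and $\Psi\Phi\simeq\id$ from cocontinuity of both functors together with generation of $\QCoh(X)$ and $\calC$ under colimits by $\Vect(X)$. The paper's treatment of the main obstacle (cocontinuity of $\Phi$) is slightly more explicit than yours on the right-exactness step---it checks directly that $\Phi$ carries a surjection $F\to Q$ in $\QCoh(X)$ to an epimorphism in $\calC$ via the sheaf-theoretic local-lifting criterion (any $E_1\to Q$ with $E_1\in\Vect(X)$ can be refined by a bundle surjection $E_2\to E_1$ so that $E_2\to Q$ lifts to $F$), rather than invoking resolutions---but your sketch points at the same content.
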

	\begin{proof}
		The association $F \mapsto \Hom(-,F)$ gives a continuous functor $\QCoh(X) \to \widehat{\Vect}(X)$ that factors through $\calC$, thus giving a continuous functor $\Phi:\QCoh(X) \to \calC$. We first check that $\Phi$ is cocontinuous. For this, it is enough to check that $\Phi$ preserves filtered colimits and finite colimits separately. For finite colimits, we must check $\Phi$ is right exact. If $K \to F \to Q \to 1$ is a right exact sequence in $\QCoh(X)$, then the only non-trivial claim is that $\Phi(F) \to \Phi(Q)$ is surjective in $\calC$. By definition of $\calC$, we must check that for any map $E_1 \to Q$ with $E_1 \in \Vect(X)$, there exists a surjection $E_2 \to E_1$ such that the composite $E_2 \to Q$ lifts to $F$; this follows from the existence of enough vector bundles on $X$, the fact that vector bundles are compact in $\QCoh(X)$, and approximation by finitely presented quasi-coherent sheaves (see Lemma \ref{lem:DeligneIndQC}). To show preservation under filtered colimits, as $i:\calC \to \widehat{\Vect}(X)$ preserves filtered colimits, it is enough to check that the composite $\QCoh(X) \to \widehat{\Vect}(X)$ preserves filtered colimits; this is a consequence of objects in $\Vect(X)$ being compact in both $\QCoh(X)$ and $\widehat{\Vect}(X)$.
		
		To show $\Phi$ is an equivalence, we construct an inverse $\Psi:\calC \to \QCoh(X)$. The canonical inclusion $\Vect(X) \to \QCoh(X)$ is exact. By the universal property in Lemma \ref{lem:TDVectUnivPropC}, this inclusion factors through a cocontinuous functor $\Psi:\calC \to \QCoh(X)$. As both $\Phi$ and $\Psi$ are cocontinuous, and because both $\QCoh(X)$ and $\calC$ are generated under colimits by $\Vect(X)$, it is enough to check that $\Psi \circ \Phi$ and $\Phi \circ \Psi$ are the identity on $\Vect(X)$, which is obvious.
	\end{proof}

This gives a characterization of $\QCoh(X)$ in terms of $\Vect(X)$ equipped with the notion of surjections.

	\begin{corollary}
Any exact additive functor $\Vect(X) \to \calA$ to a cocomplete abelian category $\calA$ factors uniquely through a cocontinuous functor $\QCoh(X) \to \calA$.
	\end{corollary}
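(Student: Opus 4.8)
The plan is to deduce this formally by combining the two results just established: the universal property of the Yoneda embedding $\Vect(X) \to \calC$ from Corollary~\ref{lem:TDVectUnivPropC}, and the identification $\QCoh(X) \simeq \calC$ from the preceding Proposition. So there is essentially nothing new to prove; the task is just to assemble these pieces and track the relevant embeddings.

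First I would recall that the equivalence $\Phi:\QCoh(X) \xrightarrow{\sim} \calC$ constructed above carries the inclusion $\Vect(X) \hookrightarrow \QCoh(X)$ to the Yoneda embedding $\Vect(X) \to \calC$; this is built into the proof of the Proposition, where the inverse $\Psi$ is produced precisely by extending $\Vect(X) \hookrightarrow \QCoh(X)$ via Corollary~\ref{lem:TDVectUnivPropC}, and $\Psi\circ\Phi$, $\Phi\circ\Psi$ are checked to be the identity on $\Vect(X)$. Now given an exact additive functor $F:\Vect(X) \to \calA$ with $\calA$ cocomplete abelian (hence cocomplete additive), Corollary~\ref{lem:TDVectUnivPropC} furnishes a cocontinuous functor $\widetilde{F}:\calC \to \calA$ extending $F$, unique with this property. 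Precomposing with the equivalence $\Phi$ (which is cocontinuous) yields a cocontinuous functor $\widetilde{F}\circ\Phi:\QCoh(X) \to \calA$ restricting to $F$ on $\Vect(X)$, which gives existence. One then equips $\widetilde F$, hence $\widetilde F \circ \Phi$, with a symmetric monoidal (or merely additive, as needed) structure exactly as in the proof of Corollary~\ref{cor:TDresolution}; this is routine.

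For uniqueness, if $F':\QCoh(X) \to \calA$ is any cocontinuous functor restricting to $F$ on $\Vect(X)$, then $F'\circ\Psi:\calC \to \calA$ is cocontinuous and restricts to $F$, hence equals $\widetilde{F}$ by the uniqueness clause of Corollary~\ref{lem:TDVectUnivPropC}; composing with $\Phi$ and using $\Psi\circ\Phi \simeq \id$ gives $F' \simeq \widetilde{F}\circ\Phi$. Alternatively, one may argue directly that $\QCoh(X)$ is generated under colimits by $\Vect(X)$ (using that $X$ has enough vector bundles), so any two cocontinuous functors out of it agreeing on $\Vect(X)$ agree. I do not expect any genuine obstacle: the real content has already been absorbed into the Proposition and Corollary~\ref{lem:TDVectUnivPropC}, and the only point requiring a moment's attention is the compatibility of $\Vect(X)\hookrightarrow\QCoh(X)$ with the Yoneda embedding under $\Phi$, which is immediate from how $\Phi$ and $\Psi$ were constructed.
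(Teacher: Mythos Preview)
Your proposal is correct and matches the paper's intended argument: the paper states this corollary without proof, as it follows immediately by composing the universal property of $\Vect(X) \to \calC$ (Corollary~\ref{lem:TDVectUnivPropC}) with the equivalence $\QCoh(X) \simeq \calC$ from the preceding Proposition, exactly as you outline. Your remark about equipping $\widetilde{F}$ with a symmetric monoidal structure is extraneous here, since the corollary as stated concerns only additive cocontinuous extensions, but this does not affect the correctness of your argument.
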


%\begin{remark}
%	Let $X$ be a qcqs scheme with enough vector bundles. Consider the pair $(\Vect(X),\tau)$, where $\tau$ is the collection of all surjective maps $E \to Q$ in $\Vect(X)$. Then we claim that $\QCoh(X)$ can be realized as the localisation of the category $\widehat{\Vect}(X)$ of additive presheaves on $\Vect(X)$ along the class $S := S(\tau)$ of maps $\coker(E \times_Q E) \stackrel{p_1 - p_2}{\to} E$; here we view $\Vect(X)$ as a full subcategory of $\widehat{\Vect}(X)$ via $E \mapsto \Hom(-,E)$.  To see this, let $i:\calC \hookrightarrow \widehat{\Vect}(X)$ denote the class of presheaves $F$ such that $\Hom(s,F)$ is an isomorphism for each map $s \in S$, i.e., the class of $S$-local objects. Then clearly $\calC$ admits limits, and the inclusion $i$ preserves these limits. Moreover, as each $E \in \Vect(X)$ is compact in $\widehat{\Vect}(X)$. Quillen's small object argument \cite[Proposition A.1.2.5]{LurieHTT} then constructs a left adjoint $L:\widehat{\Vect}(X) \to \calC$ 
%\end{remark}

\newpage
\section{Algebraization of jets}
\label{sec:jets}

The goal of this section is to prove the following theorem.

\begin{theorem}
	\label{thm:alglimitlqs}
Let $A$ be a ring equipped with an ideal $I$ such that $A$ is $I$-adically complete. Fix a qcqs algebraic space $X$. Then $X(A) \simeq \lim X(A/I^n)$ via the natural map.
\end{theorem}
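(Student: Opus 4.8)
The plan is to deduce the theorem from the Tannakian machinery of \S \ref{sec:TD}. Applying Corollary \ref{cor:colimitschemescrit} to $X = \Spec(A)$ and the diagram $\{X_n := \Spec(A/I^n)\}$ of affine $\Spec(A)$-schemes, it suffices to prove that pullback along $A \to A/I^n$ induces an equivalence of (symmetric monoidal) $\infty$-categories
\[ D_\perf(A) \xrightarrow{\ \sim\ } \lim_n D_\perf(A/I^n), \qquad K \mapsto \{K \otimes_A^L A/I^n\}_n ;\]
granting this, Corollary \ref{cor:colimitschemescrit} gives $\Spec(A) \simeq \colim_n \Spec(A/I^n)$ in qcqs algebraic spaces, and then the universal property of the colimit yields $X(A) \simeq \lim_n X(A/I^n)$ for every qcqs $X$ --- so this single categorical statement subsumes both the injectivity and the surjectivity asserted in the theorem. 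The only input about $A$ that I expect to need is the completeness identity $\R\lim_n A/I^n \simeq A$ in $D(A)$: the transition maps are surjective, so the tower is Mittag--Leffler, whence $\R\lim_n A/I^n \simeq \lim_n A/I^n \simeq A$ by hypothesis; note this requires no finiteness on $I$.

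\emph{Full faithfulness} is the easy half. For $K, L \in D_\perf(A)$, the mapping spectrum between their images in $\lim_n D_\perf(A/I^n)$ is $\R\lim_n \Hom_{A/I^n}(K \otimes_A^L A/I^n,\, L \otimes_A^L A/I^n)$, which by adjunction (extension of scalars is left adjoint to restriction of scalars) equals $\R\lim_n \Hom_A(K, L \otimes_A^L A/I^n)$. Since $K$ is dualizable this is $\R\lim_n\big((K^\vee \otimes_A L) \otimes_A^L A/I^n\big)$, and since $K^\vee \otimes_A L$ is perfect, hence dualizable, the functor $(K^\vee \otimes_A L) \otimes_A^L -$ commutes with the limit; combined with $\R\lim_n A/I^n \simeq A$ this gives $(K^\vee \otimes_A L) \otimes_A^L A \simeq \Hom_A(K, L)$. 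One checks routinely that this chain of identifications is the map induced by pullback.

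The crux is \emph{essential surjectivity}, which I expect to be the main obstacle. Given a compatible system $\{K_n\}$ with coherent identifications $K_{n+1} \otimes_{A/I^{n+1}}^L A/I^n \simeq K_n$, the natural candidate for an algebraization is $K := \R\lim_n K_n$, the limit formed in $D(A)$ along the induced transition maps $K_{n+1} \to K_n$; one must then show that $K$ is perfect over $A$ and that the canonical maps $K \otimes_A^L A/I^n \to K_n$ are equivalences. The difficulty is that $A/I^n$ is not dualizable over $A$ --- and $I$ may fail to be finitely generated --- so tensoring does not a priori commute with $\R\lim$, and one cannot simply ``differentiate'' the limit. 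The two ingredients I would try to assemble are: (1) a bound on the Tor-amplitude of $K_n$ over $A/I^n$ that is \emph{uniform in $n$}, obtained by working modulo $I$ (all the schemes $\Spec(A/I^n)$ share the single qcqs underlying space $V(I)$) together with the fibre sequences $\fib(K_{n+1} \to K_n) \simeq K_{n+1} \otimes_{A/I^{n+1}}^L (I^n/I^{n+1})$, whose terms are annihilated by $I$; and (2) a derived-Nakayama argument over the complete ring $A \simeq \R\lim_n A/I^n$ --- in the spirit of the approximation-by-perfect-complexes technique of Lemma \ref{lem:approximateconnective} --- promoting the resulting pseudo-coherence of $K$ and the mod-$I$ identification $K \otimes_A^L A/I \simeq K_1$ to perfectness of $K$ and to the full compatibility $K \otimes_A^L A/I^n \simeq K_n$. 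Once the displayed $D_\perf$-equivalence is in place, Corollary \ref{cor:colimitschemescrit} finishes the proof.
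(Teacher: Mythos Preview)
Your overall architecture is exactly the paper's: reduce to the equivalence $D_\perf(A) \simeq \lim_n D_\perf(A/I^n)$ and invoke Tannaka duality. The paper even remarks that one may bypass the separate injectivity argument by using Proposition \ref{prop:colimitschemes}, which is precisely what you do via Corollary \ref{cor:colimitschemescrit}. Your full faithfulness argument is correct and is what the paper means by ``easy to check using that $K \otimes_A -$ commutes with limits if $K \in D_\perf(A)$.''

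The gap is in essential surjectivity. What you have written is a plan, not an argument: you propose $K := \R\lim_n K_n$ and hope to prove it perfect via uniform Tor-amplitude bounds and a derived Nakayama step, but you do not carry this out, and you flag yourself that $I$ need not be finitely generated. That is a real obstruction: without finite generation, the standard derived-completeness and derived-Nakayama machinery is unavailable, and there is no evident way to pass from ``$K \otimes_A^L A/I$ is perfect'' to ``$K$ is perfect,'' nor to verify $K \otimes_A^L A/I^n \simeq K_n$ for all $n$ (tensoring with $A/I^n$ does not commute with the $\R\lim$ defining $K$, since $A/I^n$ is not $A$-perfect). Your ingredient (1), the uniform amplitude bound, is correct in spirit but does not by itself produce an $A$-perfect complex.

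The paper's Lemma \ref{lem:perfcomplexnilpext} takes a different, more hands-on route that sidesteps these issues entirely: one inductively chooses \emph{strict} representatives $P_n^\bullet$ of $K_n$ by bounded complexes of finite projective $A/I^n$-modules such that $P_{n+1}^\bullet \otimes_{A/I^{n+1}} A/I^n \simeq P_n^\bullet$ as \emph{chain complexes}, not merely in the derived category. The key input is that the kernel of $A/I^{n+1} \to A/I^n$ is nilpotent, so idempotents and hence finite projectives lift (this is \cite[Tag 09AR]{StacksProject}); one then sets $P^\bullet := \lim_n P_n^\bullet$, which is visibly a bounded complex of finite projective $A$-modules because $A = \lim A/I^n$. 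This argument uses no completeness or Nakayama beyond the classical lifting of projectives through nilpotent ideals, and works for arbitrary (not finitely generated) $I$. You should replace your sketch with this lifting argument.
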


\begin{proof}
	We first prove injectivity, though this step could be avoided by using the argument in the next paragraph together with Proposition \ref{prop:colimitschemes} (which is independent of this \S). Fix two maps $f,g:\Spec(A) \to X$ that induce the same map $\epsilon_n:\Spec(A/I^n) \to X$.  Let $Z \to \Spec(A)$ be the pullback of $\Delta:X \to X \times X$ along $(f,g):\Spec(A) \to X \times X$. Then $Z \to \Spec(A)$ is a quasi-compact monomorphism with a specified system of compatible sections over each $\Spec(A/I^n)$. In particular, $Z$ is quasi-affine. By Example \ref{ex:alglimitQA} (or simply the next paragraph), one obtains a map $\Spec(A) \to Z$ such that the composite $\Spec(A) \to Z \to \Spec(A)$ agrees with the identity modulo $I^n$ for all $n$. In other words, $Z \to \Spec(A)$ is a quasi-compact monomorphism with a section, and hence an isomorphism, which proves $f=g$.

	For surjectivity, fix a compatible system of maps $\{\epsilon_n:\Spec(A/I^n) \to X\}$. Pullback gives a compatible system $\{\epsilon_n^* : D_\perf(X) \to D_\perf(A/I^n)\}$ of exact symmetric monoidal functors, and thus an exact symmetric monoidal functor
	\[ F:D_\perf(X) \to \lim D_\perf(A/I^n).\]
	By Lemma \ref{lem:perfcomplexnilpext} for the special case $I_n := I^n$, the canonical map $D_\perf(A) \to \lim D_\perf(A/I^n)$ is an equivalence, so $F$ may be viewed as an exact symmetric monoidal functor $D_\perf(X) \to D_\perf(A)$. By Theorem \ref{thm:TD}, this comes from a unique map $\epsilon:\Spec(A) \to X$ with $F \simeq \epsilon^\ast$. It is clear by construction that $\epsilon$ extends each $\epsilon_n$ (as $F$ extends $\epsilon_n^*$), which gives the surjectivity of $X(A) \to \lim X(A/I^n)$. 
\end{proof}

The following patching result for perfect complexes is a crucial ingredient in the proof above.

\begin{lemma}
	\label{lem:perfcomplexnilpext}
	Let $A$ be a ring equipped with a descending sequence $\{\dots I_n \subset  \dots \subset I_2 \subset I_1\}$ of ideals. Assume that $A \simeq \lim A/I_n$, and that $I_n/I_{n+1}$ is nilpotent for each $n$. Then $D_\perf(A) \simeq \lim D_\perf(A/I_n)$.
\end{lemma}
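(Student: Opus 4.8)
The plan is to prove this by reducing to the classical module-level statement, which is already in the literature, and then bootstrapping up to perfect complexes via the standard functoriality of nilpotent (or, here, pro-nilpotent) deformations.

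First I would recall the key input: for a surjection $B \to B/J$ with $J$ nilpotent, restriction of scalars gives an equivalence between $D_\perf(B)$ and the $\infty$-category of pairs consisting of a perfect $B/J$-complex together with a suitably compatible lift — more concretely, the functor $D_\perf(B) \to D_\perf(B/J)$ is conservative, detects perfectness (a $B$-complex $M$ with $M \otimes_B^L B/J$ perfect over $B/J$ and $M$ itself, say, almost perfect, is perfect), and lifting is governed by obstruction theory living in the cotangent complex, which vanishes for nilpotent (indeed square-zero, hence by dévissage nilpotent) extensions. The cleanest packaging, which I would cite rather than reprove, is that for $J$ nilpotent the square
\[ \xymatrix{ D_\perf(B) \ar[r] \ar[d] & D_\perf(B/J) \ar[d] \\ D_\perf(B) \ar[r] & D_\perf(B/J) } \]
— more precisely the statement that $D_\perf(B) \simeq D_\perf(B/J) \times_{D_\perf(B/J)} D_\perf(B/J)$ in the appropriate derived sense, i.e. rigidity of $K$-theory / the category of perfect complexes along nilpotent extensions. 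In any case, the concrete statement I need is: for each $n$, the map $D_\perf(A/I_{n+1}) \to D_\perf(A/I_n)$ is, together with the relevant data, enough to recover $D_\perf(A/I_{n+1})$, so that $\lim_n D_\perf(A/I_n)$ is computed ``levelwise'' without derived-limit pathologies.

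Second, I would handle the passage to the honest inverse limit $A = \lim A/I_n$. The functor $D_\perf(A) \to \lim_n D_\perf(A/I_n)$ is defined by base change. For \emph{full faithfulness}: given $M, N \in D_\perf(A)$, one has $\mathrm{RHom}_A(M,N) \simeq \lim_n \mathrm{RHom}_{A/I_n}(M_n, N_n)$ — this is where completeness of $A$ enters, via the Milnor sequence and the fact that $M$ is perfect hence $\mathrm{RHom}$ commutes with the relevant limit; the $\lim^1$ term vanishes because the transition maps on each cohomology group are eventually surjective (using that $\mathrm{RHom}_{A/I_n}(M_n,N_n)$ is a perfect $A/I_n$-complex and the tower $\{A/I_n\}$ has surjective transition maps, so one gets a Mittag-Leffler condition). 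For \emph{essential surjectivity}: given a compatible system $\{M_n \in D_\perf(A/I_n)\}$ with $M_{n+1} \otimes^L A/I_n \simeq M_n$, set $M := \lim_n M_n \in D(A)$. One then checks $M$ is perfect: the key point is that $M \otimes_A^L A/I_1 \simeq M_1$ (because each $I_n/I_{n+1}$ is nilpotent, so there is no $\mathrm{Tor}$-obstruction and no $\lim^1$ after reducing mod $I_1$ — each $M_n \otimes A/I_1 \simeq M_1$ already on the nose up to coherent isomorphism), and $M$ is derived $I_1$-complete, so by a standard criterion (e.g. the fact that a derived-complete complex whose reduction mod a finitely generated ideal is perfect, and which is bounded and has finitely generated cohomology appropriately, is perfect — or more robustly, the formal-GAGA-style statement $D_\perf(A) \simeq D_\perf(A/I_1)$-completely) $M$ is perfect over $A$. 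Actually the slicker route: combine the levelwise rigidity from the previous paragraph with the limit, i.e. first prove $D_\perf(A/I_n) \simeq D_\perf(A/I_1) \times \dots$ is ``constant up to the nilpotent layers'', reducing $\lim_n D_\perf(A/I_n)$ essentially to a completeness statement about $A$-complexes, and then invoke derived completeness of $A$.

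The main obstacle, as I see it, is the essential surjectivity step — specifically, verifying that the limit $M = \lim_n M_n$ of a compatible system of perfect complexes is actually \emph{perfect} over $A$, not merely pseudo-coherent or derived complete. The subtlety is that $I$ (or the $I_n$) need not be finitely generated and $A$ need not be Noetherian, so one cannot invoke naive finiteness arguments; one genuinely needs the hypothesis that the successive layers $I_n/I_{n+1}$ are nilpotent (which forces $\mathrm{Tor}$-independence of $M_{n+1}$ with $A/I_n$ levelwise and controls the $\lim^1$ terms) together with derived $I_1$-adic completeness of $A$. I would isolate this as the technical heart and prove it by showing $M$ has $\mathrm{Tor}$-amplitude bounded independently of $n$ (read off from $M_1$, since tensoring down kills nothing by the nilpotence of the layers) and finitely generated $H^*$ in each degree, then concluding perfectness from derived completeness; alternatively, cite Lurie's or the Stacks project's criterion for perfectness under completion once the reduction mod $I_1$ is perfect and the complex is derived complete.
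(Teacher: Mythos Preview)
Your full faithfulness argument is fine and matches the paper's: for $M, N \in D_\perf(A)$ one has $\mathrm{RHom}_A(M,N) \simeq M^\vee \otimes_A^L N$, and tensoring with a perfect complex commutes with limits, so $\mathrm{RHom}_A(M,N) \simeq \lim_n \mathrm{RHom}_{A/I_n}(M_n, N_n)$ follows directly from $A \simeq \lim A/I_n$.

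The gap is in essential surjectivity. Your proposed route --- set $M = \mathrm{R}\lim_n M_n$ in $D(A)$ and then argue perfectness via ``derived $I_1$-complete with perfect reduction mod $I_1$'' --- does not go through in this generality. The criterion you want to invoke requires $I_1$ to be finitely generated (or $A$ Noetherian), which the lemma does not assume; you flag this yourself but do not resolve it. More basically, to even verify $M \otimes^L_A A/I_1 \simeq M_1$ you must commute $-\otimes^L_A A/I_1$ past the derived limit defining $M$, and $A/I_1$ is not perfect over $A$, so this step has no justification. The fallback you suggest (bounded Tor-amplitude plus finitely generated cohomology) is likewise not a criterion for perfectness over a non-Noetherian ring. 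Your first paragraph about rigidity along a single nilpotent thickening is also not doing real work: the displayed square is trivial as written, and the actual content you need at each finite stage is a lifting statement, not an equivalence of categories.

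The paper takes a different and more elementary route that you are missing: it \emph{strictifies} the system at the chain level. Using the nilpotence of $I_n/I_{n+1}$, one inductively represents each $K_n$ by a bounded complex $P_n^\bullet$ of finite projective $A/I_n$-modules such that the transition map $P_{n+1}^\bullet \to P_n^\bullet$ induces an isomorphism $P_{n+1}^\bullet / I_n \simeq P_n^\bullet$ of honest chain complexes, not merely in the derived category (this is the content of \cite[Tag 09AR]{StacksProject}). Then $P^\bullet := \lim_n P_n^\bullet$ is a bounded complex whose terms are inverse limits of compatible systems of finite projective modules over the tower $\{A/I_n\}$; such a limit is a finite projective $A$-module by the idempotent-lifting argument (compare Lemma~\ref{lem:vectcont}). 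Hence $P^\bullet$ is visibly perfect over $A$, and one checks it maps to the given system. No derived-completeness machinery, no Tor-amplitude criteria, and no finite-generation hypotheses on the $I_n$ are needed.
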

\begin{proof}
	The full faithfulness of $D_\perf(A) \to \lim D_\perf(A/I_n)$ is easy to check using that $K \otimes_A - $ commutes with limits if $K \in D_\perf(A)$. The essential surjectivity is standard; see \cite[Tag 09AW]{StacksProject} or \cite[Corollary 12]{FaltingsVeryRamified}. The key point is represent an object $\{K_n\} \in \lim D_\perf(A/I_n)$ by a projective system $\{P_n\}$ of complexes of finite projective $A/I_n$-modules such that the transition map $P_{n+1} \to P_n$ induces an isomorphism $P_{n+1}/I_{n} \simeq P_{n}$ of chain complexes (not merely in the derived category);  this can be done by induction on $n$ using the nilpotence assumption and \cite[Tag 09AR]{StacksProject}.
\end{proof}

\begin{remark}
	\label{rmk:alglimadm}
	As pointed out by Nicaise, Theorem \ref{thm:alglimitlqs} and its proof apply to any {\em admissible} topological ring $A$, i.e., a ring $A$ equipped with ideals $\{I_n\}$ as in Lemma \ref{lem:perfcomplexnilpext}. An elementary example of such a ring, which is not adic as in Theorem \ref{thm:alglimitlqs}, is:  $A = k\llbracket x,y \rrbracket$ with $I_k = (x \cdot y^k)$. An example that is more relevant in $p$-adic geometry comes from crystalline cohomology: $A := \widehat{\F_p \langle x \rangle}$, the completed divided power polynomial algebra on a generator $x$, with $I_k = \overline{\langle x \rangle^{[k]}} = \overline{(\gamma_n(x))_{n \geq k}}$ being the $k^{\mathrm{th}}$-level of the Hodge filtration. 
\end{remark}

Theorem \ref{thm:alglimitlqs} immediately implies the following result identifying jet spaces and the Greenberg functor at infinite level as honest mapping functors. 

\begin{corollary} 
	\label{cor:arcGreenberg}
	Let $X$ be a qcqs space over some base ring $A$.
	\begin{enumerate} 
		\item The arc space functor $R \mapsto \lim X(R[t]/(t^n))$ is isomorphic to the functor $R \mapsto X(R\llbracket t \rrbracket)$ on the category of $A$-algebras.
		\item If $A = k$ is a perfect field $k$ of characteristic $p > 0$, the infinite level Greenberg functor $R \mapsto \lim X(W_n(R))$ is isomorphic to the functor $R \mapsto X(W(R))$ on the category of $k$-algebras.
	\end{enumerate}
\end{corollary}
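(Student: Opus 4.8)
The plan is to derive both assertions directly from Theorem~\ref{thm:alglimitlqs}, in the generalized (``admissible'') form recorded in Remark~\ref{rmk:alglimadm}, applied to a suitable complete topological ring built out of $R$. For (1), I would apply Theorem~\ref{thm:alglimitlqs} to $A = R\llbracket t\rrbracket$ with the ideal $I = (t)$: one has $(t)^n = (t^n)$, the quotient $R\llbracket t\rrbracket/(t)^n$ is canonically $R[t]/(t^n)$, and $R\llbracket t\rrbracket = \lim_n R[t]/(t^n)$ is $(t)$-adically complete by definition. Thus Theorem~\ref{thm:alglimitlqs} yields $X(R\llbracket t\rrbracket) \simeq \lim_n X(R[t]/(t^n))$ via the natural reduction map; since $R \mapsto R\llbracket t\rrbracket$ and $R \mapsto R[t]/(t^n)$ are functorial on $A$-algebras and the comparison map is the evident one, this is an isomorphism of functors, proving (1).

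For (2), I would apply the admissible form of Theorem~\ref{thm:alglimitlqs} to $A = W(R)$ equipped with the ideals $I_n := V^n W(R) = \ker\bigl(W(R) \to W_n(R)\bigr)$, $n \geq 1$. These form a descending chain (as $I_{n+1} = V^n(VW(R)) \subseteq I_n$) with $W(R)/I_n = W_n(R)$ and $W(R) = \lim_n W_n(R)$, so the only point to check is that each $I_n/I_{n+1}$ is nilpotent. For this I would use that $R$ is a $k$-algebra, hence an $\F_p$-algebra, so that Frobenius $F$ is a ring endomorphism of $W(R)$ and the standard identities $V^n(x) \cdot z = V^n\bigl(x\, F^n(z)\bigr)$, $V(x)V(y) = p\,V(xy)$, and $F(p) = p$ hold; crucially, $\mathrm{char}\,R = p$ forces $p = 0$ in $R = W(R)/VW(R)$, i.e.\ $pW(R) \subseteq VW(R)$. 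Then for any $x \in W(R)$ one computes $p\, V^n(x) = V^n\bigl(x\, F^n(p)\bigr) = V^n(px) \in V^n\bigl(VW(R)\bigr) = V^{n+1} W(R)$, so $p\, V^n W(R) \subseteq V^{n+1} W(R)$; combining this with $V^n(x)V^n(y) = p^n V^n(xy)$ gives, for $n \geq 1$, $I_n^2 \subseteq p^n V^n W(R) \subseteq p\, V^n W(R) \subseteq I_{n+1}$. Hence $I_n/I_{n+1}$ is square-zero, in particular nilpotent, so $W(R)$ is admissible; Remark~\ref{rmk:alglimadm} then gives $X(W(R)) \simeq \lim_n X(W(R)/I_n) = \lim_n X(W_n(R))$, naturally in $R$ as in (1).

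The only non-formal ingredient is the nilpotence check in (2), and I do not expect it to be a serious obstacle: once one observes that $\mathrm{char}\,R = p$ yields $pW(R) \subseteq VW(R)$, the Witt-vector relations make the associated graded pieces square-zero, and the rest is a direct appeal to Theorem~\ref{thm:alglimitlqs}. The one thing to be careful about is the indexing — Lemma~\ref{lem:perfcomplexnilpext} only requires $I_n/I_{n+1}$ nilpotent for $n \geq 1$, which is what is proved above (the piece $I_0/I_1 = R$ is of course not nilpotent, and plays no role).
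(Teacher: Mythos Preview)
Your proposal is correct and follows exactly the same route as the paper: part (1) is an immediate application of Theorem~\ref{thm:alglimitlqs} to $R\llbracket t\rrbracket$ with its $(t)$-adic topology, and part (2) invokes the admissible version in Remark~\ref{rmk:alglimadm} for $W(R)$ with $I_n = \ker(W(R)\to W_n(R))$, the only input being that $I_n/I_{n+1}$ is nilpotent. The paper simply asserts this nilpotence; your Witt-vector computation showing $I_n/I_{n+1}$ is in fact square-zero is a correct and more explicit justification of the same fact.
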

\begin{proof}
	(1) is immediate from Theorem \ref{thm:alglimitlqs}, while (2) follows from Remark \ref{rmk:alglimadm} as the kernel of $W_{n+1}(R) \to W_n(R)$ is nilpotent.
\end{proof}

\begin{remark}
	\label{rmk:chow}
	Theorem \ref{thm:alglimitlqs} implies (and is equivalent to) an existence result for sections. Let $A$ be an $I$-adically complete ring for some ideal $I$, set $S = \Spec(A)$, and fix a qcqs map $\nu:Y \to S$ of algebraic spaces.  If we write $\widehat{X}$ for the formal $I$-adic completion of an $S$-space $X$, then Theorem \ref{thm:alglimitlqs} is equivalent to
	\[ \Hom_S(S,Y) \simeq \Hom_{\widehat{S}}(\widehat{S},\widehat{Y}).\]
In particular, every formal section of $\nu$ is automatically algebraic. Even when $A$ is noetherian, such results typically impose stronger hypotheses on the diagonal: \cite[Theorem 5.4.1]{EGA3_1} requires $\pi$ to be separated, while \cite[Theorem 5.4.1]{LurieDAGXII} requires $\pi$ to have affine diagonal. By Example \ref{ex:propernoetheriancolimit}, a similar remark also applies to $\Hom_S(X,Y)$, where $X$ is a proper $A$-space with $A$ noetherian.  Moreover, using the trick in Remark \ref{rmk:Gabberlimit}, if one works exclusively with schemes, one can drop all conditions on the diagonal: one has 
\[ \Hom_S(X,Y) \simeq \Hom_{\widehat{S}}(\widehat{X},\widehat{Y}) \]
for a proper $S$-scheme $X$ and an arbitrary $S$-scheme $Y$, i.e., \cite[Theorem 5.4.1]{EGA3_1} is true for all $Y$.
\end{remark}

\begin{remark}[Gabber]
	\label{rmk:Gabberlimit}
	Theorem \ref{thm:alglimitlqs} applies to any scheme $X$. Indeed, to prove $X(A) = \lim X(A/I^n)$, we may assume $X$ is quasi-compact. Any quasi-compact scheme is a {\em reasonable} algebraic space in the sense of \cite[Tag 03I8]{StacksProject}, i.e., there exists a surjective \'etale map $U \to X$ with bounded fibre degree\footnote{A typical unreasonable example is $\A^1_{\C}/\Z$, where $\Z$ acts by translation.}, where $U$ is an affine scheme. By inspection of the proof of \cite[Tag 03K0]{StacksProject}, we can write $X = \colim X_i$ where the colimit is filtered and computed in the category of fpqc sheaves, $X_i \to X$ is a local isomorphism (in particular, it is \'etale), and $X_i$ is qcqs. Given a compatible system $\{\epsilon_n:\Spec(A/I^n) \to X\}$, there exists some $i$ such that $\epsilon_1$ lifts to a map $\widetilde{\epsilon_1}:\Spec(A/I) \to X_i$. By the infinitesimal lifting property, each map $\epsilon_n$ factors through a unique map $\widetilde{\epsilon_n}:\Spec(A/I^n) \to X_i$ lifting $\widetilde{\epsilon_1}$. By the qcqs case, one finds a map $\widetilde{\epsilon}:\Spec(A) \to X_i$ extending each $\widetilde{\epsilon_n}$. Composing back down to $X$ then shows that $X(A) \to \lim X(A/I^n)$ surjective. For injectivity, one repeats the trick involving the diagonal in the proof of Theorem \ref{thm:alglimitlqs}, using the surjectivity just proven in lieu of Example \ref{ex:alglimitQA}. More generally, the same argument applies to any Zariski locally reasonable algebraic space. Similarly, Corollary \ref{cor:arcGreenberg} also extends to all schemes.
\end{remark}

\begin{remark}
	Theorem \ref{thm:alglimitlqs} and Remark \ref{rmk:Gabberlimit} show: if $A$ is a ring which is $I$-adically complete for some ideal $I$, then $\colim \Spec(A/I^n) \simeq \Spec(A)$ in the category of schemes. This gives an instance of the general phenomenon that colimits in schemes can be quite different from the corresponding colimit in fpqc sheaves. Indeed, the colimit of the diagram $\{\Spec(A/I^n)\}$ as an fpqc sheaf over $\Spec(A)$, which may be viewed as an ind-scheme, has no $A$-points unless $I$ is nilpotent. In particular, this shows that (the perfect complex component of) Theorem \ref{thm:TD} does not extend to ind-schemes.
\end{remark}

We give a few examples where Theorem  \ref{thm:alglimitlqs} can be proved directly by classical methods.

\begin{example}
	\label{ex:alglimitQA}
	Assume $X$ is quasi-affine. Then the affine case immediately shows that $X(A) \to \lim X(A/I^n)$ is injective. For surjectivity, fix maps $\epsilon_n:\Spec(A/I^n) \to X$ compatible in $X$. Choose an affine $Y$ containing $X$ as a quasi-compact open with constructible closed complement $Z$. Then the compsite maps $\mu_n:\Spec(A/I^n) \to Y$ extends to a unique map $\mu:\Spec(A) \to Y$. For $K \in D_{Z,\perf}(Y)$, one has 
	\[ \mu^* K = \lim \mu_n^* K = 0,\]
	where the first equality uses that $\mu^* K$ is perfect, so $- \otimes_{\calO_{\Spec(A)}} \mu^* K$ commutes with limits, while the second equality is a simple consequence of $\mu_n$ factoring through $X$. Writing $\calO_Z$ as a colimit of such $K$ then implies $\mu^* \calO_Z = 0$, i.e., $\mu^{-1}(Z) = \emptyset$, which implies that $\mu$ factors through a map $\epsilon:\Spec(A) \to X$, as desired.
\end{example}

\begin{example}
	\label{ex:alglimqproj}
	Assume $X = \P^m$. A compatible system $\{\epsilon_n:\Spec(A/I^n) \to X\}$ of maps is determined by a compatible system of objects $\{L_n,s_{n,0},\dots,s_{n,m}\}$ comprising an invertible $(A/I^n)$-module $L_n$ and elements $s_{n,i} \in L_n$ such that the induced map $(A/I^n)^{\oplus m} \stackrel{s_{n,i}}{\to} L_n$ is surjective.  Then $L := \lim L_n$ is an invertible $A$-module by Lemma \ref{lem:vectcont}, and $s_i := \lim s_{n,i}$ is an element of $L$ such that $A^{\oplus m} \stackrel{s_i}{\to} L$ is surjective (by Nakayama). This gives a map $\epsilon:\Spec(A) \to X$ lifting $\epsilon_n$ for each $n$. One checks that $\epsilon$ is the unique such extension, which proves $X(A) \simeq \lim X(A/I^n)$ in this case. More generally, the same argument applies to any scheme that is quasi-projective over an affine.
\end{example}

\begin{example}
	Assume $A$ is noetherian, and $X$ is separated. Then we can write $X = \lim X_i$ with $X_i$ finitely presented separated $\Z$-schemes, and all transition maps $X_i \to X_j$ affine. As the assertion $X(A) = \lim X(A/I^n)$ is compatible with inverse limits in $X$, we may reduce to the case $X = X_i$ is a finitely presented $\Z$-scheme. Now $\lim X(A/I^n)$ is exactly the set of sections of the map $\widehat{X \times \Spec(A)} \to \mathrm{Spf}(A)$ obtained as the formal $I$-adic completion of the map $X \times \Spec(A) \to \Spec(A)$. By formal GAGA, which applies as both $A$ and $X$ are noetherian, each such section is (uniquely) algebraizable by \cite[Tag 0899]{StacksProject}, which immediately gives $X(A) \simeq \lim X(A/I^n)$.
\end{example}

The following lemma was used Example \ref{ex:alglimqproj} and mentioned in the introduction.

\begin{lemma}
	\label{lem:vectcont}
Let $A$ be a ring, $I \subset A$ an ideal, and assume $A \simeq \lim A/I^n$. Then $\Vect(A) \simeq \lim \Vect(A/I^n)$.
\end{lemma}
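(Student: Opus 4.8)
The plan is to verify directly that the natural functor $P\mapsto\{P/I^nP\}$ is an equivalence $\Vect(A)\to\lim_n\Vect(A/I^n)$, splitting this into full faithfulness, which I expect to be a soft completeness argument, and essential surjectivity, which I expect to be the real work and will handle by lifting idempotents.

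For full faithfulness, I would use that for $P,Q\in\Vect(A)$ finiteness of $P$ gives $\Hom_A(P,Q)\simeq P^\vee\otimes_A Q$ and, compatibly in $n$, $\Hom_{A/I^n}(P/I^nP,Q/I^nQ)\simeq(P^\vee\otimes_A Q)\otimes_A A/I^n$; so the map on morphism sets is the canonical map $M\to\lim_n M/I^nM$ for the finite projective module $M:=P^\vee\otimes_A Q$. Writing $M$ as a direct summand of a finite free $A^r$ and noting that $\lim_n(-\otimes_A A/I^n)$ carries the splitting $A^r\simeq M\oplus N$ to the splitting $A^r\simeq\lim_n M/I^nM\oplus\lim_n N/I^nN$ compatibly with the original, I conclude each summand---in particular $M$---is $I$-adically complete (using $A\simeq\lim_n A/I^n$), which is exactly full faithfulness.

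For essential surjectivity, an object of $\lim_n\Vect(A/I^n)$ amounts to finite projective $A/I^n$-modules $P_n$ together with isomorphisms $P_{n+1}/I^nP_{n+1}\simeq P_n$. I would fix $r\geq1$ and an idempotent $e_1\in\mathrm{M}_r(A/I)$ with $\im(e_1)\simeq P_1$, then inductively lift $e_{n-1}$ to an idempotent $e_n\in\mathrm{M}_r(A/I^n)$ along the square-zero extension $A/I^n\to A/I^{n-1}$ (here $(I^{n-1})^2\subseteq I^n$ for $n\geq2$). The content of the induction is to arrange, simultaneously, isomorphisms $\im(e_n)\simeq P_n$ that are compatible with the transition maps: both $\im(e_n)$ and $P_n$ reduce mod $I^{n-1}$ to a module isomorphic to $P_{n-1}$, lifts of a finite projective module along a square-zero extension are unique up to isomorphism, and the previously chosen isomorphism $\im(e_{n-1})\simeq P_{n-1}$ lifts to one $\im(e_n)\simeq P_n$ over $A/I^n$ using projectivity of $\im(e_n)$, surjectivity of $P_n\to P_{n-1}$, and Nakayama for the nilpotent ideal $I^{n-1}/I^n$. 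Finally $e:=\lim_n e_n\in\mathrm{M}_r(A)=\lim_n\mathrm{M}_r(A/I^n)$ is idempotent, and $P:=eA^r\in\Vect(A)$ satisfies $P/I^nP\simeq\im(e_n)\simeq P_n$ compatibly (using $eA^r\cap I^nA^r=I^n\cdot eA^r$), realizing the given system.

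The hard part is precisely the bookkeeping in essential surjectivity: obtaining a \emph{single} rank $r$ valid for all $n$ and propagating the comparison isomorphisms through the inductive lifting of idempotents; the rest is standard completeness and Nakayama. As an alternative I could deduce the lemma from Lemma~\ref{lem:perfcomplexnilpext} applied with $I_n=I^n$---the system $\{P_n\}$ corresponds to a $K\in D_\perf(A)$ with $K\otimes_A^{\mathrm{L}}A/I^n\simeq P_n$, and since $K\otimes_A^{\mathrm{L}}A/I\simeq P_1$ is a flat module in degree $0$ and $A$ is $I$-adically complete, a derived Nakayama argument forces $K\in\Vect(A)$---but I prefer the elementary route above, which sidesteps this flatness-descent input.
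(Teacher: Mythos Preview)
Your proof is correct and follows essentially the same approach as the paper, which likewise offers both the derived deduction from Lemma~\ref{lem:perfcomplexnilpext} and a direct argument via compatible projectors. The only organizational difference is that the paper lifts surjections $f_n:(A/I^n)^{\oplus N}\to P_n$ and sections $s_n$ separately and then sets $\epsilon_n=s_n\circ f_n$, which bakes the identification $\im(\epsilon_n)\simeq P_n$ into the construction and spares you the extra bookkeeping of arranging the compatible isomorphisms after the fact.
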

\begin{proof}
	This follows from Lemma \ref{lem:perfcomplexnilpext} using \cite[Corollary 2.7.33]{LurieDAGVIII} to identify $\Vect(A)$ as the dualizable objects of $D_\perf(A)$. Alternatively, we can argue directly as follows. The natural functor $F:\Vect(A) \to \lim \Vect(A/I^n)$ is fully faithful on finite free modules by assumption. By passage to summands, it follows that $F$ is fully faithful. For essential surjectivity, we must check: if $\{P_n\} \in \lim \Vect(A/I^n)$, then $P := \lim P_n$ is a finite projective $A$-module. Fix a projector $\epsilon_1 \in M_N(A/I)$ cutting out $P_1$, i.e., we fix a surjection $f_1:(A/I)^{\oplus N} \to P_1$ as well as a section $s_1:P_1 \to (A/I)^{\oplus N}$ such that $s_1 \circ f_1 = \epsilon_1$. Lifting sections gives a compatible system $\{ f_n:(A/I^n)^{\oplus N} \to P_n \}$ of surjections. By using the projectivity of $P_n$ over $A/I^n$, one inductively constructs a compatible system of sections $\{s_n:P_n \to (A/I^n)^{\oplus N}\}$ of $\{f_n\}$. The composition $\{\epsilon_n := s_n \circ f_n\}$ is a projector in $M_N(A) = \lim M_N(A/I^n)$ that defines $P$, so $P$ is projective.
\end{proof}

We also give an example showing that Theorem \ref{thm:alglimitlqs} does not extend to all algebraic stacks. 

\begin{example}
	\label{ex:abvar}
	Let $A/\C$ be an abelian variety, and consider the classifying stack $X = B(A)$. Then $X(R)$ is the groupoid of $A$-torsors over $R$, for any ring $R$. We will construct a geometrically regular $\C$-algebra $R$ which is complete along an ideal $I$, and a compatible system of $A$-torsors $Y_n \to \Spec(R/I^n)$ such that the order of each $Y_n$ in $H^1(\Spec(R/I^n),A)$ is infinite. Given such data, if there exists an $A$-torsor $Y \to \Spec(R)$ lifting each $Y_n$, then the order of $Y$ would have to be $\infty$, which cannot happen: the group $H^1(\Spec(R),A)$ is torsion as $R$ is regular by \cite[Proposition XIII.2.6]{RaynaudAmpleLB}. It follows that the family $\{Y_n\}$ defines a point of $\lim X(R/I^n)$  that does not lift to $X(R)$. To construct this data, take $R$ to be the completion of $\A^2$ along an irreducible nodal cubic $C \subset \A^2$ with $\Spec(R/I) = C$. Note that $\Spec(R)$ is geometrically regular over $\A^2$, and hence certainly so over $\Spec(\C)$. The enlarged fundamental pro-group of $\Spec(R/I)$ admits $\Z$ as a quotient. Choosing a non-torsion point $P \in A(\C)$ then gives an $A$-torsor $Y_1 \to \Spec(R/I)$ with infinite order; explicitly, we glue the trivial $A$-torsor over the normalisation of $C$ to itself using translation by $P$ as the isomorphism over the node. The map $H^1(\Spec(R/I^{n+1} ),A) \to H^1(\Spec(R/I^n),A)$ is surjective (in fact, bijective) as the obstruction to lifting an $A$-torsor $Y_n \to \Spec(R/I^n)$, viewed as a map $f_n:\Spec(R/I^n) \to B(A)$, to an $A$-torsor $Y_{n+1} \to \Spec(R/I^{n+1} )$ is an element of 
	\[ \Hom(f_n^* L_{B(A)}, I^n/I^{n+1}[1]) \simeq H^2(\Spec(R/I), \omega_{Y_1}^\vee \otimes I^n/I^{n+1} ),\]
	which is $0$. Thus, we find the desired family $\{Y_n\}$ by inductively lifting.
\end{example}

The construction in Example \ref{ex:abvar} can be modified to show that Theorem \ref{thm:alglimitlqs} fails for Artin $2$-stacks.

\begin{example}
	\label{ex:2stacklimit}	
Set $X = K(\G_m,2)$. Then $\pi_0 (X(S)) = H^2(\Spec(S),\G_m)$ for any ring $S$. We will construct a geometrically regular $\C$-algebra $R$ that is complete along an ideal $I$ such that $H^2(\Spec(R/I^n),\G_m) \simeq H^2(\Spec(R/I),\G_m)$ contains a point of infinite order. As $R$ is regular, one knows $H^2(\Spec(R),\G_m)$ is torsion by a standard result in \'etale cohomology (see \cite[Corollary IV.2.6]{MilneEC}), so the map $X(R) \to \lim X(R/I^n)$ is not essentially surjective. For the construction, take  $R_0$ to be the glueing two copies of $\A^2$ along a curve $C \subset \A^2$ of geometric genus $\geq 1$ over $\C$. Then $\Pic(C) \subset H^2(\Spec(R_0),\G_m)$ via a suitable Mayer-Vietoris sequence, so $H^2(\Spec(R_0),\G_m)$ certainly contains points of infinite order. Set $R$ to be completion of a suitable surjection $P \to R_0$ with $P$ a polynomial ring, and $I = \ker(R \to R_0)$. Then this pair $(R,I)$ satisfies the desired properties: $R$ is geometrically regular by construction, and one has $H^2(\Spec(R/I^n),\G_m) = H^2(\Spec(R/I),\G_m) = H^2(\Spec(R_0),\G_m)$ by deformation theory.
\end{example}

\newpage

\section{Formal glueing}
\label{sec:BL}

The goal of this section is to revisit the Beauville-Laszlo theorem \cite{BLDescent} and variants, and prove Theorem \ref{thm:BLintro}.  We begin by noting that the proof of Theorem \ref{thm:alglimitlqs} actually gives a criterion for deciding when a qcqs algebraic space is a colimit of a diagram of such spaces.

\begin{proposition}
	\label{prop:colimitschemes}
Let $X$ be a qcqs algebraic space, and let $\{X_i\}$ be a diagram of qcqs $X$-spaces. Assume that $D(X) \simeq \lim D(X_i)$ via the natural maps. Then $D_\perf(X) \simeq \lim D_\perf(X_i)$, and this implies that $X = \colim X_i$ in the category of qcqs algebraic spaces.
\end{proposition}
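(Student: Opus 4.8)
The plan is to split the statement into two implications: first, that the hypothesis $D(X)\simeq\lim D(X_i)$ forces $D_\perf(X)\simeq\lim D_\perf(X_i)$; and second, that this latter equivalence, combined with Tannaka duality (Theorem~\ref{thm:TD}), forces $X\simeq\colim X_i$ in qcqs algebraic spaces. The only substantive input is Theorem~\ref{thm:TD}; everything else is formal manipulation of dualizable objects and of limits of (symmetric monoidal) functor $\infty$-categories.

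For the first implication, I would observe that the comparison functor $D(X)\to\lim_i D(X_i)$ is assembled from the pullbacks $D(X)\to D(X_i)$, each of which is symmetric monoidal, so the hypothesis provides a \emph{symmetric monoidal} equivalence $D(X)\simeq\lim_i D(X_i)$, the target carrying the levelwise tensor product. For any qcqs algebraic space $Y$, the subcategory $D_\perf(Y)\subset D(Y)$ is exactly the full subcategory of dualizable objects, and pullback preserves dualizable objects, so the comparison functor restricts to $D_\perf(X)\to\lim_i D_\perf(X_i)$. This is an equivalence: full faithfulness is immediate since $D_\perf(Y)\subset D(Y)$ is full (hence so is $\lim_i D_\perf(X_i)\subset\lim_i D(X_i)$) and $D(X)\to\lim_i D(X_i)$ is fully faithful; for essential surjectivity, a compatible system $\{K_i\}$ with $K_i\in D_\perf(X_i)$ is dualizable in $\lim_i D(X_i)$ — its dual is $\{K_i^\vee\}$, the evaluation and coevaluation maps assembling levelwise — hence corresponds under the equivalence to a dualizable object of $D(X)$, i.e.\ an object of $D_\perf(X)$ mapping to $\{K_i\}$.

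For the second implication, fix an arbitrary qcqs algebraic space $T$; one must show the map
\[ \Hom(X,T)\longrightarrow\lim_i\Hom(X_i,T) \]
induced by restriction along the structure maps $X_i\to X$ is an equivalence. By Theorem~\ref{thm:TD}, applied with source $X$ (resp.\ $X_i$) and target $T$, all qcqs, there are natural identifications $\Hom(X,T)\simeq\Fun_{\otimes}(D_\perf(T),D_\perf(X))$ and $\Hom(X_i,T)\simeq\Fun_{\otimes}(D_\perf(T),D_\perf(X_i))$, under which restriction along $X_i\to X$ corresponds to post-composition with the pullback $D_\perf(X)\to D_\perf(X_i)$, i.e.\ with the $i$-th projection of the equivalence $D_\perf(X)\simeq\lim_i D_\perf(X_i)$ from the first step. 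Since $\Fun_{\otimes}(D_\perf(T),-)$ carries limits of symmetric monoidal $\infty$-categories to limits, one obtains
\begin{align*}
\lim_i\Hom(X_i,T) &\simeq \lim_i\Fun_{\otimes}(D_\perf(T),D_\perf(X_i)) \simeq \Fun_{\otimes}\big(D_\perf(T),\lim_i D_\perf(X_i)\big) \\
&\simeq \Fun_{\otimes}(D_\perf(T),D_\perf(X)) \simeq \Hom(X,T),
\end{align*}
and tracing through the identifications shows this composite is inverse to the displayed restriction map. As $T$ was arbitrary, $X$ satisfies the universal property of $\colim_i X_i$ in qcqs algebraic spaces.

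The main obstacle is not a hard estimate but rather invoking Theorem~\ref{thm:TD} correctly — checking that every space in sight is qcqs so that it applies — together with the mild diagram chase verifying that the abstractly produced equivalence $\Hom(X,T)\simeq\lim_i\Hom(X_i,T)$ is genuinely the one induced by the given structure maps $X_i\to X$. Once the Tannakian dictionary is in place, both implications are formal.
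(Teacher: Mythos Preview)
Your proof is correct and follows essentially the same approach as the paper: identify $D_\perf$ with the dualizable objects and use that duals are computed pointwise in limits of symmetric monoidal $\infty$-categories for the first implication, then invoke Theorem~\ref{thm:TD} for the second. The paper's proof is terser but the content is the same.
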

\begin{proof}
	For the first part, we use that $D_\perf(S)$ is exactly the category of dualizable objects in $D(S)$ for any qcqs algebraic space $S$ by \cite[Lemma 8.16]{LurieDAGXI}. The claim follows as duals are computed ``pointwise'' in limit of symmetric monoidal $\infty$-categories by \cite[Proposition 4.2.5.11]{LurieHA}. The second part now follows from Theorem \ref{thm:TD} as the natural map $D_\perf(X) \to \lim D_\perf(X_i)$ is exact and symmetric monoidal.
\end{proof}

\begin{remark}
	\label{rmk:colimitbasechange}
	Let $X$ and $\{X_i\}$ be as in Proposition \ref{prop:colimitschemes}. If $D(X) \simeq \lim D(X_i)$, then $X \simeq \colim X_i$ as above. However, one can say a bit more: these colimits are {\em universal} with respect to flat qcqs maps $g:Y \to X$, i.e., one has $Y \simeq \colim Y \times_X X_i$ via the natural map. To prove this, it suffices to show that $D(Y) \simeq \lim D(Y \times_X X_i)$ via the natural map. By Lurie's version \cite[Corollary 8.22]{LurieDAGXI} of \cite{BFNIntegral} and flatness, we know $D(Y \times_X S) \simeq D(Y) \otimes_{D(X)} D(S)$ for any qcqs $X$-space $S$; we note here that a qcqs algebraic space is perfect in the sense of \cite[Definition 8.14]{LurieDAGXI} by \cite[Corollary 1.5.12]{LurieDAGXII}.  Using this base change formula, a formal argument involving the diagonal $\Delta:Y \to Y \times_X Y$ shows that $D(Y)$ is self-dual as a $D(X)$-module: the map $\mathrm{coev}:D(X) \to D(Y \times_X Y)$ is $\Delta_* \circ g^*$, while $\mathrm{ev}:D(Y \times_X Y) \to D(X)$ is $g_* \circ \Delta^*$ (see \cite[Corollary 4.7]{BFNIntegral}). In particular, the functor $- \otimes_{D(X)} D(Y)$ is continuous, so
	\[ D(Y) \simeq D(Y) \otimes_{D(X)} \lim D(X_i) \simeq \lim \Big(D(Y) \otimes_{D(X)} D(X_i)\Big) \simeq \lim D(Y \times_X X_i),\]
	which proves the stability of these colimits under flat base change. 
\end{remark}

\begin{remark}
The implication $\Big(D_\perf(X) \simeq \lim D_\perf(X_i)\Big) \Rightarrow \Big(X \simeq \colim X_i\Big)$ in Proposition \ref{prop:colimitschemes} cannot be reversed. For example, consider a non-trivial finite group $G$ and let $X_\bullet$ be the usual simplicial scheme presenting $BG$ over a field $k$. Then the colimit of $X_\bullet$ as an algebraic space is simply $\Spec(k)$. On the other hand, $\lim D_\perf(X_\bullet) \simeq D_\perf(BG)$ is not $D_\perf(\Spec(k))$.
\end{remark}

\begin{remark}
	The implication $\Big(D(X) \simeq \lim D(X_i)\Big) \Rightarrow \Big(D_\perf(X) \simeq \lim D_\perf(X_i)\Big)$ in Proposition \ref{prop:colimitschemes} cannot be reversed. For example, consider $X = \Spec(\Z_p)$ and $X_i := \Spec(\Z/p^n)$. Then $D_\perf(X) \simeq \lim D_\perf(X_i)$ as in Theorem \ref{thm:alglimitlqs}. On the other hand, $D(X) \neq \lim D(X_i)$: the $\Z_p$-module $\Q_p$ maps to $0$ in each $D(X_i)$, and hence also in $\lim D(X_i)$, so $D(X) \to \lim D(X_i)$ is not even faithful.
\end{remark}

As a special case of Proposition \ref{prop:colimitschemes}, one obtains the following description of formal projective space.

\begin{example}
	\label{ex:propernoetheriancolimit}
	Let $A$ be a noetherian ring that is $I$-adically complete for an ideal $I$. Let $X$ be a proper $A$-space. Then $X = \colim X_n$ in the category of qcqs spaces, where $X_n = X \times_{\Spec(A)} \Spec(A/I^n)$ is the (classical) displayed fibre product. Indeed, by Proposition \ref{prop:colimitschemes}, it is enough to show $D_\perf(X) \simeq \lim D_\perf(X_n)$, which is a consequence of formal GAGA. (For example, one can use \cite[Theorem 5.3.2]{LurieDAGXII} and the observation that a pseudo-coherent complex $K \in D(X)$ is perfect if and only if $K|_{X_1}$ is so.) Moreover, by Remark \ref{rmk:Gabberlimit}, if $X$ is a scheme, then $X \simeq \colim X_n$ in the category of all schemes.
\end{example}

We can now explain how to generalize certain well-known ``formal glueing'' results, such as \cite{BLDescent}. We begin with the following criterion for establishing such glueing features in a general setting. 

\begin{proposition}
\label{prop:formalglueing}
	Fix a qcqs algebraic space $X$ equipped with a constructible closed subspace $Z \subset X$. Let $\pi:Y \to X$ be a qcqs map algebraic spaces such that $\pi^*$ induces an equivalence $D_Z(X) \simeq D_{\pi^{-1}(Z)}(Y)$. Let $U = X \setminus Z$, and $V = Y \setminus \pi^{-1}(Z)$. Then one has:
	\begin{enumerate}[(1)]
		\item The natural map $\Phi:D(X) \to D(Y) \times_{D(V)} D(U)$ is an equivalence.
		\item $\Phi$ induces $D_\perf(X) \simeq D_\perf(Y) \times_{D_\perf(V)} D_\perf(U)$.
		\item $\Phi$ induces $\Vect(X) \simeq \Vect(Y) \times_{\Vect(V)} \Vect(U)$.
		\item If $\pi$ is flat, then $\Phi$ induces $\QCoh(X) \simeq \QCoh(Y) \times_{\QCoh(V)} \QCoh(U)$.
		\item The fibre square
			\[ \xymatrix{ V \ar[r]^j \ar[d]^\pi & Y \ar[d]^\pi \\
				U \ar[r]^j & X }\]
		 	is a pushout in qcqs algebraic spaces. 
		\item All of the above are also true after flat qcqs base change on $X$.
	\end{enumerate}
\end{proposition}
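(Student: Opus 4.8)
The plan is to establish the statements in a chain, deducing each from the previous and feeding into the general Tannaka machinery (Theorem \ref{thm:TD}) only at the very end for the geometric assertion (5). The single hypothesis $\pi^*\colon D_Z(X) \simeq D_{\pi^{-1}(Z)}(Y)$ is the whole engine, so the first task is to bootstrap this equivalence on the ``supported'' subcategories into the equivalence (1) on the full quasi-coherent derived categories. The key structural fact is that, for a qcqs algebraic space $W$ with a constructible closed $Z_W \subset W$ and open complement $U_W$, one has a recollement: the sequence $D_{Z_W}(W) \hookrightarrow D(W) \xrightarrow{j^*} D(U_W)$ exhibits $D(W)$ as glued from $D_{Z_W}(W)$ and $D(U_W)$ via the functor $j_* j^*$ (equivalently, $D(W) \simeq D(U_W) \times_{D(U_W)} D(W)$ in the appropriate fibered-category sense, with the gluing data recorded by the local cohomology triangle $\underline{\Gamma}_{Z_W} \to \id \to j_* j^*$). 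First I would make this recollement precise for both $X$ (with $Z$, $U$) and $Y$ (with $\pi^{-1}(Z)$, $V$), observe that $\pi^*$ is compatible with the three functors $j^*$, $j_*$, $\underline{\Gamma}_Z$ up to natural equivalence (here one uses that $\pi$ is qcqs so that $j_*$ and pushforward commute with the relevant colimits, and that base change holds along the open immersions), and then deduce that $\pi^*$ induces an equivalence on the glued categories. Concretely: given $(K_Y, K_U, \phi)$ with $K_Y \in D(Y)$, $K_U \in D(U)$, $\phi\colon K_Y|_V \simeq \pi^* K_U$, one uses the equivalence on supported subcategories to lift $\underline{\Gamma}_{\pi^{-1}(Z)}(K_Y)$ uniquely to an object of $D_Z(X)$ and then forms the fiber product $\underline{\Gamma}_Z$-completion against $j_* K_U$; the fact that this inverts $\Phi$ is a diagram chase with the octahedral axiom, using conservativity of $(j^*, \underline{\Gamma}_Z)$ on $D(X)$.

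With (1) in hand, (2) is immediate: $\Phi$ is symmetric monoidal and colimit-preserving, so it preserves dualizable objects, and dualizable objects in a limit (here a fiber product) of symmetric monoidal $\infty$-categories are detected componentwise (\cite[Proposition 4.2.5.11]{LurieHA}), exactly as in the proof of Proposition \ref{prop:colimitschemes}; since $D_\perf = $ dualizable objects on each of $X$, $Y$, $U$, $V$, we get $D_\perf(X) \simeq D_\perf(Y)\times_{D_\perf(V)} D_\perf(U)$. For (3), one restricts (2) to the heart: a perfect complex on a qcqs algebraic space is a vector bundle iff it is both connective and coconnective (equivalently, concentrated in degree $0$ flat-locally), and these are conditions that can be checked after pullback along the jointly conservative pair $(\pi^*, j^*)$ — so the fiber-product description of $D_\perf$ restricts to one of $\Vect$. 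For (4), when $\pi$ is flat the pullback $\pi^*$ is $t$-exact, and so are the open-immersion pullbacks; one checks that an object of $D(X)$ lies in the heart iff its images in $D(Y)$ and $D(U)$ do (again joint conservativity of $(\pi^*, j^*)$, which is $t$-exact), hence (1) restricts to $\QCoh(X) \simeq \QCoh(Y)\times_{\QCoh(V)}\QCoh(U)$.

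For (5), the natural map from $X$ to the pushout $U \sqcup_V Y$ in qcqs algebraic spaces is tested by the functor of points, i.e.\ we must show $\Hom(X, T) \simeq \Hom(Y,T)\times_{\Hom(V,T)} \Hom(U,T)$ for every qcqs algebraic space $T$; equivalently, that $X$ is the colimit of the span $U \leftarrow V \rightarrow Y$. By Corollary \ref{cor:colimitschemescrit} (i.e.\ Proposition \ref{prop:colimitschemes} applied to this finite diagram), it suffices to know $D_\perf(X) \simeq \lim\bigl(D_\perf(U) \to D_\perf(V) \leftarrow D_\perf(Y)\bigr)$, and that limit is precisely the fiber product established in (2). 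Finally, (6) follows because the hypothesis $\pi^* \colon D_Z(X)\simeq D_{\pi^{-1}(Z)}(Y)$ is stable under flat qcqs base change $g\colon X' \to X$: by the base-change formula $D(X'\times_X W) \simeq D(X')\otimes_{D(X)} D(W)$ of \cite[Corollary 8.22]{LurieDAGXI} (valid since qcqs algebraic spaces are perfect stacks, \cite[Corollary 1.5.12]{LurieDAGXII}), together with the identification of $D_Z(X)$ as generated under colimits by $D_Z(X)\cap D_\perf(X)$ and the compatibility of local cohomology with flat pullback (as emphasized in the footnote to Theorem \ref{thm:BLintro}), one gets $D_{Z'}(X') \simeq D_{Z}(X)\otimes_{D(X)} D(X')$ and likewise over $Y$; tensoring the hypothesis equivalence with $D(X')$ over $D(X)$ then yields the base-changed hypothesis, and one reruns (1)--(5).

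\textbf{Main obstacle.} The crux is (1): carefully assembling the recollement/gluing equivalence from the supported-category equivalence, in particular checking that $\pi^*$ intertwines the local-cohomology functors $\underline{\Gamma}_Z$ and $\underline{\Gamma}_{\pi^{-1}(Z)}$ compatibly with the unit/counit triangles, so that the induced functor on fiber products is genuinely inverse to $\Phi$ and not merely an equivalence of underlying categories. Everything downstream (2)--(6) is then a formal consequence of symmetric-monoidal/t-structural bookkeeping plus the already-proven Tannaka duality.
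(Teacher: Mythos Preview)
Your plan is essentially the paper's proof, and the identification of (1) as the crux via a recollement argument is exactly right: the paper constructs the explicit right adjoint $\Psi(K,L,\eta) = \fib\big(\pi_* K \oplus j_* L \to (\pi j)_* j^* K\big)$, which is precisely your ``lift the supported part and glue against $j_* K_U$'' in coordinates. Parts (2), (4), and (5) match the paper's arguments.

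There is, however, a genuine gap in your treatment of (3). First, a perfect complex that is both connective and coconnective is merely a finitely presented module placed in degree $0$; this need not be locally free (e.g.\ $\Z/p$ over $\Spec(\Z)$ is perfect and concentrated in degree $0$ but is not a vector bundle). The correct characterization, which the paper uses via \cite[Corollary 2.7.33]{LurieDAGVIII}, is that $\Vect(S)$ consists of those $K \in D_\perf(S)$ with both $K$ and $K^\vee$ connective. More seriously, your assertion that connectivity ``can be checked after pullback along the jointly conservative pair $(\pi^*, j^*)$'' is not valid as stated: $\pi$ is not assumed flat in (3), so $\pi^*$ is only right $t$-exact, and joint conservativity of exact functors does not by itself detect membership in $D^{\leq 0}$. (Contrast with your argument for (4), which is fine precisely because flatness of $\pi$ makes $\pi^*$ $t$-exact there.) The paper fills this gap by first observing that the hypothesis $\pi^*\colon D_Z(X) \simeq D_{\pi^{-1}(Z)}(Y)$ forces $\pi$ to be surjective over $Z$---otherwise the structure sheaf of the residue field at a missing point would lie in the kernel of $\pi^*|_{D_Z(X)}$---and then using that connectivity of a \emph{perfect} complex is detected after restriction to a stratification, so the jointly surjective pair $Y \sqcup U \to X$ suffices. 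You should incorporate this step.

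For (6), your approach of base-changing the \emph{hypothesis} and rerunning (1)--(5) is a slight detour compared to the paper, which base-changes conclusion (1) directly via Remark~\ref{rmk:colimitbasechange} (tensoring the limit diagram with $D(X')$ over $D(X)$, using self-duality of $D(X')$ as a $D(X)$-module). Your route also works, but it requires the additional identification $D_{g^{-1}(Z)}(X') \simeq D_Z(X) \otimes_{D(X)} D(X')$; this follows from the same self-duality argument once you write $D_Z(X) = \ker\big(D(X) \to D(U)\big)$ and note that $-\otimes_{D(X)} D(X')$ preserves limits.
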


\begin{proof}
	Write objects in $D(Y) \times_{D(V)} D(U)$ as triples $(K,L,\eta)$ where $K \in D(Y)$, $L \in D(U)$, and $\eta:j^* K \simeq \pi^* L$. 	We first check (1). For full faithfulness, we may work locally on $X$.  As all pushforward and pullback functors involved are cocontinuous, by the projection formula, we reduce to checking that 
\begin{equation}
\label{eq:formalglueingstructuresheaf}
\calO_X \stackrel{a}{\to} \pi_* \calO_Y \oplus j_* \calO_U \stackrel{b}{\to} \pi_* j_* \calO_V 
\end{equation}
	is a cofibre sequence. This can be checked after applying the conservative functor $\underline{\Gamma}_Z(-) \oplus \Big(- \otimes j_* \calO_U\Big)$. The latter follows from the following sequence of assertions: the map $\underline{\Gamma}_Z(a)$ is an isomorphism, the map $\underline{\Gamma}_Z(b)$ is $0$, and the map $a \otimes j_* \calO_U$ is the inclusion of a direct summand with the projection on the complement given by $b \otimes j_* \calO_U$. These result from base change in coherent cohomology, and the assumption $\pi^*:D_Z(X) \simeq D_{\pi^{-1}(Z)}(Y)$. For essential surjectivity, there is a functor $\Psi:D(Y) \times_{D(V)} D(U) \to D(X)$ given as follows: if $g := (K,L,\eta) \in D(Y) \times_{D(V)} D(U)$ as before, then set 
	\[ \Psi(g) := \fib\Big(\pi_* K \oplus j_* L \to (\pi \circ j)_* j^* K\Big),\]
	where the map is induced by $\eta$. Note by construction that $\Psi$ is the right adjoint\footnote{The existence of the right adjoint can also be seen as follows: \cite[Proposition 5.5.3.13]{LurieHTT} and \cite[Theorem 1.1.4.4]{LurieHA} show that the fibre product $D(Y) \times_{D(V)} D(U)$ is a presentable stable $\infty$-category, \cite[Proposition 5.5.3.12]{LurieHTT} shows that $\Phi$ preserves colimits, whence \cite[Corollary 5.5.2.9]{LurieHTT} gives the right adjoint.} to $\Phi$. We will check that $\Psi \circ \Phi \simeq \id$ and $\Phi \circ \Psi \simeq \id$ via the (co)units of the adjunction. The first assertion is automatic from the full faithfulness of $\Phi$. For the second, note that $j^* \Psi(g) = L$, while $\underline{\Gamma}_Z(\Psi(g)) \simeq \underline{\Gamma}_Z(\pi_* K)$ corresponds to $\underline{\Gamma}_{\pi^{-1}(Z)}(K)$ under the equivalence $D_{\pi^{-1}(Z)}(Y) \simeq D_Z(X)$ induced by $\pi$. Now for $(K,L,\eta)$ as above, one has a canonical cofibre sequence
	\[ \underline{\Gamma}_{\pi^{-1}(Z)}(K) \to K \to j_* L\]
	coming from $\eta$. One then checks $\Phi(\Psi(g)) \to g$ induces isomorphisms after projection to $D(Y)$ (as it induces isomorphisms after applying $\underline{\Gamma}_{\pi^{-1}(Z)}(-)$ and $j^*$) and $D(U)$ separately, and thus $\Phi \circ \Psi \simeq \id$.
	
	To get (2), we repeat the first half of the proof of Proposition \ref{prop:colimitschemes}. 
	
	For (3), we argue as in (2) using that the essential image of the fully faithful symmetric monoidal functor $\Vect(S) \to D(S)$ is exactly the set of dualizable objects in $D_\perf^{\leq 0}(S)$ for any qcqs space $S$ by \cite[Corollary 2.7.33]{LurieDAGVIII}. In order to apply this, we must check that $K \in D_\perf(X)$ is connective if and only if $\pi^* K$ and $j^* K$ are connective. The forward direction is clear. For the converse, as connectivity of perfect complexes can be detected are restriction to a stratification, it is enough to check that $\pi:Y \to X$ is surjective over $Z$; if this was false, then $\pi^*:D_Z(X) \to D_{\pi^{-1}(Z)}(Y)$ would have a non-trivial kernel (given by the structure sheaf of the residue field at the missing point), which contradicts the assumption. 
	
	For (4), the flatness of $\pi$ shows $\Phi$ restricts to a functor $\QCoh(X) \to \QCoh(Y) \times_{\QCoh(V)} \QCoh(U)$. Thanks to (1), it is now enough to check: for $K \in D(X)$, if $j^* K$ and $\pi^* K$ are quasi-coherent sheaves, so is $K$. For such $K$, we know $\calH^i(K) \in D_Z(X)$ for $i \neq 0$ as $j^* K$ is a sheaf. As $\pi$ is flat and $\pi^* K$ is a sheaf, it follows that $\pi^* \calH^i(K) = \calH^i(\pi^* K) = 0$ for $i \neq 0$, and thus $\calH^i(K) = 0$ for $i \neq 0$  by the assumption $\pi^*:D_Z(X) \simeq D_{\pi^{-1}(Z)}(X)$, so $K \simeq \calH^0(K)$ is a sheaf.
	
	Lastly, (5) follows from Proposition \ref{prop:colimitschemes} and (2) (or (1)), while (6) comes from Remark \ref{rmk:colimitbasechange}.
\end{proof}

\begin{remark}
	In Proposition \ref{prop:formalglueing}, it is important to work with $\infty$-categories instead of their $1$-categorical truncations as the formation of the homotopy-category is incompatible with fibre products. More concretely, the natural map $D^{cl}(X) \to D^{cl}(Y) \times_{D^{cl}(V)} D^{cl}(U)$ is essentially surjective and full (which follows from the $\infty$-categorical assertion for formal reasons), but can often fail to be faithful. Intuitively, this happens as $D^{cl}(Y) \times_{D^{cl}(V)} D^{cl}(U)$ forgets ``how'' objects over $Y$ and $U$ are being identified over $V$. An explicit example illustrating this failure is given in Example \ref{ex:BLabelianfailure}.
\end{remark}

Proposition \ref{prop:formalglueing} specializes to a few commonly encountered geometric situations. To illustrate these, let $\pi:Y \to X$ be a qcqs map of qcqs algebraic spaces, and fix a constructible closed subspace $Z \subset X$. The hypothesis $D_Z(X) \simeq D_{\pi^{-1}(Z)}(Y)$ from Proposition \ref{prop:formalglueing} is satisfied (and consequently the conclusions there apply) notably in the following examples:
		
\begin{example}
	\label{ex:formalglueflat}
	The map $\pi$ is flat, and an isomorphism over $Z$. For example, $\pi$ could be an \'etale neighbourhood of $Z$ in $X$, or one could take $X$ to be a noetherian affine with $Y$ the completion of $X$ along $Z$ (in the sense of ring theory). The assumption $D_Z(X) \simeq D_{\pi^{-1}(Z)}(Y)$ for such $\pi$ comes from Lemma \ref{lem:formalglueingcrit} (2) below. The consequence $D(X) \simeq D(Y) \times_{D(V)} D(U)$ may be viewed as a derived variant of formal glueing results due to Artin \cite[\S 2]{ArtinAFMII} (which pertains to noetherian case) and Ferrand-Raynaud \cite{FerrandRaynaudDescent} (which is literally Proposition \ref{prop:formalglueing} (4)); see also \cite[Tag 05ER]{StacksProject} for more references. 
\end{example}

\begin{example}	
	\label{ex:BLcurve}
	The space $X$ is a separated smooth scheme of relative dimension $d$ over some base ring $R$, $Z \subset X$ is the image of a section $s:\Spec(R) \to X$, and $Y = \Spec(\lim A_n)$, where $A_n = \Gamma(Z_n,\calO_{Z_n})$ is the ring of functions on the $n$-fold infinitesimal thickening on $Z$ in $X$. The assumption $D_Z(X) \simeq D_{\pi^{-1}(Z)}(Y)$ comes from Lemma \ref{lem:formalglueingcrit} (2) (or (1)) below. Zariski locally on $\Spec(R)$, one may choose a local co-ordinates defining $Z \subset X$, so $A := \lim A_n \simeq R\llbracket t_1,\dots,t_d\rrbracket$ and $V =  \Spec(A) \setminus \Spec(A/(t_1,\dots,t_d))$. The consequence $\Vect(X) \simeq \Vect(Y) \times_{\Vect(V)} \Vect(U)$, in the special case where $d = 1$,  recovers the Beauville-Laszlo theorem \cite[\S 4, Example]{BLDescent} in the form in which it is often used. Note that this is not covered by Example \ref{ex:formalglueflat} as completions often fail to be flat in the non-noetherian case. In fact, if $R$ is not a coherent ring, then $A \simeq R\llbracket t \rrbracket$ can fail to be even $R$-flat; see \cite{ChaseCoherent}.
\end{example}

\begin{example}
	\label{ex:BL}
The space $X = \Spec(A)$ is an affine scheme for some ring $A$, the closed subspace $Z \subset X$ is cut out by a regular element $t \in A$, and $Y = \Spec(\widehat{A})$, where $\widehat{A} = \lim A/t^n$. The assumption $D_Z(X) \simeq D_{\pi^{-1}(Z)}(Y)$ comes from Lemma \ref{lem:formalglueingcrit} (2) (or (1)) below. This case recovers the Beauville-Laszlo equivalence \cite[\S 3, Theorem]{BLDescent} 
			\[ \Mod_t(A) \simeq \Mod_t(\widehat{A}) \times_{\Mod(\widehat{A}[\frac{1}{t}])} \Mod(A[\frac{1}{t}]),\] 
			as explained in Corollary \ref{cor:BLexactly}; here $\Mod_t(-) \subset \Mod(-)$ is the full subcategory of all $t$-regular modules. Note further that this equivalence does not extend to all $A$-modules; see Example \ref{ex:BLabelianfailure}. Proposition \ref{prop:formalglueing} shows that such an equivalence can be salvaged at the derived level, i.e., the failure of the classical statement for modules is the cost of ignoring $\Tor$ groups.
\end{example}

\begin{remark}
It is commonly asserted that the glueing result discussed in Example \ref{ex:formalglueflat} is a direct consequence of faithfully flat descent for the covering $g:Y \sqcup U \to X$. However, this is not clear to us: the latter statement would realize $\Vect(X)$ as the equalizer of the two evident maps $\Vect(Y) \times \Vect(U) \to \Vect(Y \times_X Y) \times \Vect(V) \times \Vect(U)$, which entails recording an isomorphism on $Y \times_X Y$ (and higher fibre products, if $\Vect(-)$ is replaced by $D(-)$) as part of the descent data. It is nevertheless a {\em consequence} of the formal glueing result that this additional data is extraneous.
\end{remark}

The next lemma was used above.
	
\begin{lemma}
	\label{lem:formalglueingcrit}
		Let $\pi:Y \to X$ be a map of qcqs algebraic spaces. Fix a finitely presented closed subspace $Z \subset X$. Assume one the following:
		\begin{enumerate}
			\item $\pi$ is quasi-affine, and there exists a $K \in D_\perf(X)$ with support $Z$ such that $K \simeq \pi_* \pi^* K$.
			\item $\pi$ is an isomorphism over $Z$ in the derived sense, i.e., $Z \times^L_X Y \simeq Z$. 
		\end{enumerate}
		Then $\pi^*$ induces $D_Z(X) \simeq D_{\pi^{-1}(Z)}(Y)$.
	\end{lemma}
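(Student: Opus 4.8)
The goal is to show $\pi^*: D_Z(X) \to D_{\pi^{-1}(Z)}(Y)$ is an equivalence under either hypothesis (1) or (2). The first observation is that this claim is local on $X$ for the Zariski (or \'etale) topology, since both sides are defined via localization sequences and $D_Z(-)$ sheafifies correctly; so we may assume $X = \Spec(A)$ is affine, and (under either hypothesis) $\pi$ quasi-affine, so $Y$ is quasi-affine over $A$ as well. Next, I would recall from the notation section that $D_Z(X)$ is compactly generated, with compact objects $D_Z(X) \cap D_\perf(X)$, and likewise $D_{\pi^{-1}(Z)}(Y)$ is compactly generated by $D_{\pi^{-1}(Z)}(Y) \cap D_\perf(Y)$. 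Since $\pi^*$ is cocontinuous and carries perfect complexes to perfect complexes, and carries complexes supported on $Z$ to complexes supported on $\pi^{-1}(Z)$, it restricts to a functor between these compactly generated categories; to check it is an equivalence it suffices (by a standard dévissage, e.g. via \cite[Lemma 2.2.1]{SchwedeShipleyStableCat}) to check it is fully faithful on compact objects and that its essential image generates the target under colimits.

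For hypothesis (2): here $\pi^{-1}(Z) \simeq Z$ in the derived sense, i.e. $Z \otimes_X^L Y \simeq Z$. The key point is that an object $M \in D(X)$ lies in $D_Z(X)$ iff it is ``complete'' and ``supported'' along $Z$ in the sense that it is a colimit of objects of the form $K \otimes M'$ with $K \in D_\perf(X)$ of support $Z$; but more cleanly, $D_Z(X)$ only depends on the closed subspace $Z$ together with the structure sheaf restricted to a formal/derived neighborhood, and the derived base change isomorphism $Z \otimes_X^L Y \simeq Z$ forces $\pi^* \underline{\Gamma}_Z \calO_X \simeq \underline{\Gamma}_{\pi^{-1}(Z)} \calO_Y$ (this is the derived analogue of the statement that local cohomology is insensitive to completion). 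From there, for $M, N \in D_Z(X) \cap D_\perf(X)$ one computes $\Hom_Y(\pi^* M, \pi^* N) \simeq \Gamma(Y, \pi^*(M^\vee \otimes N)) \simeq \Gamma(Y, \pi^* \underline{\Gamma}_Z(M^\vee \otimes N))$ — using that $M^\vee \otimes N \in D_Z(X)$ so it equals its own $\underline{\Gamma}_Z$ — and this equals $\Gamma(X, \underline{\Gamma}_Z(M^\vee \otimes N)) \simeq \Hom_X(M, N)$ by the base-change isomorphism for local cohomology together with $\pi_* \pi^* \simeq \id$ on $D_Z(X)$, which follows since $\pi$ is an isomorphism over $Z$ derived-ly. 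Essential surjectivity of the image onto generators: any compact object of $D_{\pi^{-1}(Z)}(Y)$ is perfect and supported on $\pi^{-1}(Z) \simeq Z$, and by the same base-change input it is already pulled back from $X$.

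For hypothesis (1): now $\pi$ is merely quasi-affine with a single $K \in D_\perf(X)$ of support exactly $Z$ such that $K \simeq \pi_* \pi^* K$. I would first note that $K$ (being perfect of support $Z$) is a compact generator of $D_Z(X)$ — indeed any perfect complex of support $Z$ is built from $K$ in the thick subcategory sense, since support-$Z$ perfect complexes generate $D_Z(X)$ and $K$ generates that thick subcategory by a Thomason-type argument (the support condition is the relevant one). Similarly $\pi^* K$ is a compact generator of $D_{\pi^{-1}(Z)}(Y)$: it is perfect, supported on $\pi^{-1}(Z)$, and its support is all of $\pi^{-1}(Z)$ (pull back the support-$Z$ condition on $K$). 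Then $\pi^*: D_Z(X) \to D_{\pi^{-1}(Z)}(Y)$ is a cocontinuous functor between compactly generated categories sending a compact generator to a compact generator, so it is an equivalence as soon as it induces an isomorphism on the endomorphism (derived) algebras $\mathrm{REnd}_X(K) \to \mathrm{REnd}_Y(\pi^* K)$ — this is the standard Schwede--Shipley / Morita-theoretic criterion. But $\mathrm{REnd}_Y(\pi^* K) \simeq \mathrm{RHom}_Y(\pi^* K, \pi^* K) \simeq \mathrm{RHom}_X(K, \pi_* \pi^* K) \simeq \mathrm{RHom}_X(K, K) = \mathrm{REnd}_X(K)$ by the adjunction $(\pi^*, \pi_*)$ together with the hypothesis $\pi_* \pi^* K \simeq K$, and one checks this identification is multiplicative (compatible with composition) because it is induced by the unit $K \to \pi_* \pi^* K$ of the adjunction being an equivalence. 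Hence $\pi^*$ is an equivalence.

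\textbf{Main obstacle.} The step I expect to be the crux is verifying that $K$ (resp. $\pi^* K$) really is a \emph{compact generator} of $D_Z(X)$ (resp. $D_{\pi^{-1}(Z)}(Y)$) in case (1) — i.e. that having support equal to $Z$ is enough to generate — and, relatedly, that the identification of derived endomorphism algebras is genuinely $E_1$-multiplicative and not merely an equivalence of complexes; getting the monoidal/multiplicative bookkeeping right (so that one may legitimately invoke a Schwede--Shipley-style recognition theorem) is where the real care is needed. In case (2) the analogous crux is the derived base-change isomorphism $\pi^* \underline{\Gamma}_Z(\calO_X) \simeq \underline{\Gamma}_{\pi^{-1}(Z)}(\calO_Y)$ and reducing full faithfulness on all of $D_Z(X)$ to it; everything else is formal manipulation with adjunctions, compact generation, and the projection formula already recorded in the paper.
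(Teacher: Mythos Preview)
Your approach is correct in outline and close to the paper's, but in case~(1) you take a more circuitous route than necessary. The paper, like you, localizes to $X$ affine (note: only in case~(1); in case~(2) there is no quasi-affineness hypothesis on $\pi$, so your parenthetical ``under either hypothesis $\pi$ quasi-affine'' is a slip, though harmless since your case~(2) argument does not use it) and invokes Thomason's classification to see that $K$ generates $D_Z(X)$ and $\pi^* K$ generates $D_{\pi^{-1}(Z)}(Y)$ under colimits. But rather than passing through Schwede--Shipley and worrying about $E_1$-multiplicativity of the comparison of endomorphism algebras, the paper simply observes that both $\pi^*$ and $\pi_*$ are cocontinuous and restrict to functors between $D_Z(X)$ and $D_{\pi^{-1}(Z)}(Y)$; hence it suffices to check the unit $\id \to \pi_*\pi^*$ and counit $\pi^*\pi_* \to \id$ are equivalences on the single generators $K$ and $\pi^*K$ respectively. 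The unit at $K$ is the hypothesis, and the counit at $\pi^*K$ reads $\pi^*(\pi_*\pi^*K) \simeq \pi^*K$, which follows by applying $\pi^*$ to the unit. This sidesteps your ``main obstacle'' entirely: no Morita theory or multiplicative bookkeeping is required, only cocontinuity.

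For case~(2), your sketch is correct in spirit but less precise than the paper. The paper does not phrase things via a base-change formula for $\underline{\Gamma}_Z(\calO_X)$; instead it verifies $\pi_*\pi^* \simeq \id$ on $D_Z(X)$ directly: by cocontinuity reduce to compact $K$, then filter any such $K$ so as to reduce to the case where $K$ is pushed forward from $Z$ (i.e., an $\calO_Z$-complex), where the claim is precisely derived base change along $Z \times_X^L Y \simeq Z$. The other composite $\pi^*\pi_* \simeq \id$ on $D_{\pi^{-1}(Z)}(Y)$ is handled symmetrically by reducing to $\calO_{\pi^{-1}(Z)}$-complexes. Your version asserts the right conclusions but does not name this filtration-to-$\calO_Z$-complexes step, which is the actual mechanism making the base-change hypothesis bite; your essential surjectivity sentence (``by the same base-change input it is already pulled back from $X$'') is where this gap is most visible.
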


	\begin{proof}
		First consider (1). The claim is local, so we may assume $X$ is affine, and $Y$ is quasi-affine. In this case, by Thomason's \cite[Lemma 3.14]{ThomasonClassification} and approximation by perfect complexes with support constraints, the smallest stable subcategory $\langle K \rangle$ of $D(X)$ containing $K$ and closed under colimits is exactly $D_Z(X)$, and similarly $\langle L \rangle = D_{\pi^{-1}(Z)}(Y)$ for $L = \pi^* K$; here we use that any stable subcategory of $D(X)$ closed under colimits is automatically an ideal (as $\langle \calO_X \rangle = D(X)$ and $\langle \calO_Y \rangle = D(Y)$, since $X$ and $Y$ are quasi-affine). As both $\pi^*$ and $\pi_*$ are cocontinuous, it follows that $\pi_* \circ \pi^* \simeq \id$ on $D_Z(X)$, and $\pi^* \circ \pi_* \simeq \id$ on $D_{\pi^{-1}(Z)}(Y)$ as the same is true on generators by base change in coherent cohomology.

		Now consider (2). We first check that $\pi_* \pi^* K \simeq K$ for $K \in D_Z(X)$. By cocontinuity, one may assume $K$ is compact. By filtering $K$ suitably, one reduces to the case where $K$ comes from an $\calO_Z$-complex (but is not necessarily compact in $D(X)$ any more).  The claim now follows by base change in coherent cohomology and the assumption on $Z$. It remains to check that $L \simeq \pi^* \pi_* L$ for $L \in D_{\pi^{-1}(Z)}(Y)$. If $L$ comes from an $\calO_{\pi^{-1}(Z)}$-complex, then this follows from base change in coherent cohomology. One then reduces to this case by cocontinuity and compact generation of $D_{\pi^{-1}(Z)}(Y)$.
	\end{proof}

\begin{remark}
	Lemma \ref{lem:formalglueingcrit} (2) is closely related to \cite[Theorem 2.6.3]{ThomasonTrobaugh}; the latter imposes a stronger flatness constraint. Note also that the hypothesis of finite presentation on $Z$ is necessary. Indeed, otherwise we may take $X = \Spec(\colim \C \llbracket t^{\frac{1}{n}} \rrbracket)$, and $Y = Z = \Spec(\C)$ with the map $t^{\frac{1}{n}} \mapsto 0$ for all $n$.
\end{remark}

We also explain why Proposition \ref{prop:formalglueing} recovers the classical Beauville-Laszlo theorem.

\begin{corollary}
	\label{cor:BLexactly}
Fix notation as in Example \ref{ex:BL}. The base change functor gives an equivalence
\[ \phi:\Mod_t(A) \simeq \Mod_t(\widehat{A}) \times_{\Mod(\widehat{A}[\frac{1}{t}])} \Mod(A[\frac{1}{t}]).\]
\end{corollary}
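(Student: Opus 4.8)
The plan is to obtain this as a consequence of Proposition~\ref{prop:formalglueing}(1) in the situation of Example~\ref{ex:BL}, and then to cut the resulting $\infty$-categorical equivalence down to the abelian level. The preliminary step is to recall the standard facts about completion along a nonzerodivisor: since $t$ is a nonzerodivisor on $A$ one has $\widehat{A}/t^{k}\widehat{A}\cong A/t^{k}A$ for all $k\ge 1$ (see~\cite{BLDescent}), hence $t$ is a nonzerodivisor on $\widehat{A}$ and $\widehat{A}[\tfrac1t]=A[\tfrac1t]+\widehat{A}$ as additive groups; in particular $Z\times^{L}_{X}Y\simeq Z$, so Lemma~\ref{lem:formalglueingcrit}(2) applies and Proposition~\ref{prop:formalglueing}(1) gives an equivalence $\Phi\colon D(A)\xrightarrow{\;\sim\;}\mathcal{E}$ with $\mathcal{E}:=D(\widehat{A})\times_{D(\widehat{A}[\frac1t])}D(A[\frac1t])$. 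I would then observe that inside $D(A)$ the full subcategory $\Mod_{t}(A)$ is exactly $\{K:\ K\in D(A)^{\heartsuit}\ \text{and}\ K\otimes^{L}_{A}A/t\in D(A)^{\heartsuit}\}$ --- for $K$ in the heart $K\otimes^{L}_{A}A/t$ is automatically connective and $H^{-1}(K\otimes^{L}_{A}A/t)$ is the $t$-torsion submodule of $K$ --- and similarly over $\widehat{A}$, whereas over $A[\tfrac1t]$ every object of the heart is $t$-regular. The entire task is then to check that $\Phi$ identifies these subcategories.

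For the forward inclusion, given $M\in\Mod_{t}(A)$ we have $\Phi(M)=(\pi^{*}M,\,M[\tfrac1t],\,\mathrm{can})$, and the substantive claim is that $\pi^{*}M=M\otimes^{L}_{A}\widehat{A}$ is a $t$-regular $\widehat{A}$-module. Discreteness of $\pi^{*}M$ is the one point where classical input is unavoidable: it is the vanishing $\mathrm{Tor}^{A}_{>0}(\widehat{A},M)=0$ for $t$-regular $M$ --- equivalently, flatness of $\widehat{A}[\tfrac1t]$ over $A[\tfrac1t]$ --- which I would take from~\cite{BLDescent}; this also reconciles the $1$-categorical fibre product in the statement with the heart-level fibre product inside $\mathcal{E}$, since then $P\otimes_{A[\frac1t]}\widehat{A}[\tfrac1t]$ coincides with its derived version. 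Granting discreteness, the computation $\pi^{*}M\otimes^{L}_{\widehat{A}}\widehat{A}/t\simeq M\otimes^{L}_{A}(\widehat{A}\otimes^{L}_{A}A/t)\simeq M\otimes^{L}_{A}A/t\simeq M/tM$ (using $Z\times^{L}_{X}Y\simeq Z$ and $t$-regularity of $M$) shows $\pi^{*}M$ has no $t$-torsion, so $\pi^{*}M\in\Mod_{t}(\widehat{A})$.

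For the reverse inclusion I would use the explicit quasi-inverse $\Psi$ built in the proof of Proposition~\ref{prop:formalglueing}: for a triple $(N,P,\eta)$ with $N$ a $t$-regular $\widehat{A}$-module, $M:=\Psi(N,P,\eta)=\fib\!\big(\pi_{*}N\oplus j_{*}P\to(\pi j)_{*}(N[\tfrac1t])\big)$ satisfies $j^{*}M=P$, while $\underline{\Gamma}_{Z}(M)\simeq\pi_{*}\underline{\Gamma}_{\pi^{-1}(Z)}(N)$ with $\underline{\Gamma}_{\pi^{-1}(Z)}(N)=\fib(N\to N[\tfrac1t])\simeq(N[\tfrac1t]/N)[-1]$ concentrated in cohomological degree $1$ (as $N$ is $t$-regular) and $\pi_{*}$ exact. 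Feeding this into the triangle $\underline{\Gamma}_{Z}(M)\to M\to j_{*}P$ (with $j_{*}$ exact) shows $M\in D(A)^{\le 0}$ with $H^{i}(M)=0$ for $i<0$, and yields a short exact sequence $0\to H^{0}(M)\to P\xrightarrow{\;\delta\;}N[\tfrac1t]/N\to H^{1}(M)\to 0$; since $P$ is $t$-regular so is $H^{0}(M)=\ker\delta$, and the remaining point is that $\delta$ is onto, i.e.\ (identifying $N[\tfrac1t]\cong P\otimes_{A[\frac1t]}\widehat{A}[\tfrac1t]$ via $\eta$) that $N+(P\otimes 1)$ is everything. That is a denominator-clearing exercise: given $p\otimes a$, choose $k$ with $t^{k}(p\otimes 1)\in N$, write $a=b+c$ with $b\in A[\tfrac1t]$, $c\in\widehat{A}$, and split $c=\beta+t^{k}\gamma$ with $\beta\in A$, $\gamma\in\widehat{A}$ (using $\widehat{A}=A+t^{k}\widehat{A}$); the $A[\tfrac1t]$-parts land in $P\otimes 1$ and $\gamma\cdot(t^{k}(p\otimes 1))$ lands in $N$. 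Together with the fact that $\Phi$ is already an equivalence, the two inclusions give the corollary, the restriction of $\Phi$ being the base change functor $\phi$. The main obstacle is exactly the discreteness of $\pi^{*}M$ above: this is the classical $\mathrm{Tor}$-vanishing of Beauville--Laszlo, and conceptually it reflects that $\pi^{*}$ is not $t$-exact, so membership in the heart cannot be tested componentwise along $\Phi$.
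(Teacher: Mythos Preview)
Your argument is correct, but it takes a different route from the paper, and the difference is worth noting because it bears on what the corollary is meant to accomplish.

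You restrict the derived equivalence $\Phi$ to the abelian level by showing that $\Phi$ and $\Psi$ carry the relevant subcategories to one another. To make this work in the forward direction you need $M\otimes^{L}_{A}\widehat{A}$ to be discrete for $t$-regular $M$, and to lift a $1$-categorical triple $(N,P,\eta)$ to $\mathcal{E}$ you need $P\otimes_{A[\frac{1}{t}]}\widehat{A}[\frac{1}{t}]$ to agree with its derived version; both of these you obtain by citing the classical $\Tor$-vanishing from \cite{BLDescent}. That is fine logically, but it means your derivation of the Beauville--Laszlo equivalence from Proposition~\ref{prop:formalglueing} is not self-contained: you are importing precisely the hard lemma of \cite{BLDescent} as a black box.

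The paper avoids this. It works directly with the underived functor $\phi$ and its explicit right adjoint $\psi(M,N,\eta)=\ker(M\oplus N\to M[\tfrac{1}{t}])$, and the only non-formal input it uses is that $-\otimes^{L}_{A}\widehat{A}$ is the identity on $t^{\infty}$-torsion complexes, which is already available internally from Lemma~\ref{lem:formalglueingcrit}. For $\psi\circ\phi\simeq\id$, one tensors the sequence $0\to M\to M[\tfrac{1}{t}]\to M[\tfrac{1}{t}]/M\to 0$ with $\widehat{A}$ and observes that the higher homotopy of $M\otimes^{L}_{A}\widehat{A}$ and of $M\otimes^{L}_{A}\widehat{A}[\tfrac{1}{t}]$ cancel in the cofibre sequence from Proposition~\ref{prop:formalglueing}(1), so one never needs to know that $M\otimes^{L}_{A}\widehat{A}$ is itself discrete. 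For essential surjectivity, rather than lifting the underived triple directly, the paper replaces the $\widehat{A}$-component by the derived fibre product $K:=M\times_{M[\frac{1}{t}]}(N\otimes^{L}_{A}\widehat{A})$ (which is allowed to have higher homotopy), descends the resulting object of $\mathcal{E}$ to some $L\in D(A)$, and then checks that $L$ is a discrete $t$-regular module by computing $L\otimes^{L}_{A}A/(t)\simeq M/(t)$ and invoking the $t^{\infty}$-torsion identity once more. So the paper genuinely recovers \cite{BLDescent} from Proposition~\ref{prop:formalglueing} rather than presupposing part of it. Your denominator-clearing argument for surjectivity of $\delta$ is a pleasant alternative to the paper's endgame, but the place where the two proofs really diverge is in whether the $\Tor$-vanishing is an input or an output.
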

\begin{proof}
The map $\phi$ has a right adjoint $\psi$ given by $(M,N,\eta) \mapsto \ker(M \oplus N \to M[\frac{1}{t}])$ with evident notation. We first check that $\psi \circ \phi \simeq \id$. For this, fix some $M \in \Mod_t(A)$. Tensoring the resulting exact sequence 
	\[ 0 \to M \to M[\frac{1}{t}] \to M[\frac{1}{t}]/M \to 0\]
	with $\widehat{A}$, as $- \otimes^L_A \widehat{A}$ is the identity on $t^\infty$-torsion modules, one concludes that $M \to M[\frac{1}{t}]$ induces an isomorphism on $\pi_i(- \otimes_A^L \widehat{A})$ for $i > 0$. The proof of Proposition \ref{prop:formalglueing} (1) then shows that
	\[ 0 \to M \to \Big(M \otimes_A \widehat{A}\Big) \oplus M[\frac{1}{t}] \to M \otimes_A \widehat{A}[\frac{1}{t}] \to 0\]
	is an exact sequence, so $\psi \circ \phi \simeq \id$, and thus $\phi$ is fully faithful. For essential surjectivity, fix some $M \in \Mod_t(\widehat{A})$, $N \in \Mod(A[\frac{1}{t}])$, and an isomorphism $\eta:M[\frac{1}{t}] \simeq N \otimes_A \widehat{A}$. Define $K := M \times_{M[\frac{1}{t}]} \Big(N \otimes^L_{A} \widehat{A}\Big)$ as a fibre product in $D(\widehat{A})$. Then projection induces $\mu:K[\frac{1}{t}] \simeq N \otimes^L_{A} \widehat{A}$, and thus an identification $\fib(K \to M) \simeq \fib(N \otimes^L_{A} \widehat{A} \to N \otimes_{A} \widehat{A})$. In particular, this fibre is connected and uniquely $t$-divisible. The triple $(K,N,\mu)$ defines an object of $D(\widehat{A}) \times_{D(\widehat{A}[\frac{1}{t}])} D(A[\frac{1}{t}])$, and thus comes from a unique $L \in D(A)$ by Proposition \ref{prop:formalglueing} (1). As $L[\frac{1}{t}] \simeq N$, we know that $H^i(L)$ is $t^\infty$-torsion for $i \neq 0$. We also have
	\[ L \otimes_A^L A/(t) \simeq L \otimes_A^L \widehat{A} \otimes_{\widehat{A}}^L A/(t) \simeq K \otimes^L_{\widehat{A}} A/(t) \simeq M/(t),\]
	where the last equality uses that $M$ is $t$-regular, and that $K \to M$ had a uniquely $t$-divisible fibre.  It follows that $H^i(L)$ is uniquely $t$-divisible for $i \neq 0,1$, $H^0(L)$ is $t$-regular, and $H^1(L)$ is $t$-divisible. The previous reduction then shows $H^i(L) \simeq 0$ for $i \neq 0,1$. As $- \otimes_A^L \widehat{A}$ is the identity operation on $t^\infty$-torsion modules, we get $H^1(L) \simeq H^1(L) \otimes_A^L \widehat{A} \simeq H^1(L \otimes_A^L \widehat{A}) = H^1(K) = 0$. Thus, $L \simeq H^0(L)$ is a $t$-regular module, and one checks that $\phi(L) = (M,N,\eta)$, proving the claim.
\end{proof}

The proof of Proposition \ref{prop:formalglueing} takes place in the derived category. This is necessary: the equivalence $\Phi$ in Proposition \ref{prop:formalglueing} does not induce an equivalence on the {\em abelian} categories of quasi-coherent sheaves. An example illustrating this failure (coming from $C^\infty$-function theory) is mentioned in \cite[\S 4, Remark 4]{BLDescent}, so we recall a different one (coming from rigid analytic geometry) here for the convenience of the reader. 

\begin{example}
	\label{ex:BLabelianfailure}
	Fix a prime $p$, and let $A$ be the ring of germs of bounded algebraic functions at $0$ on the $p$-adic unit disc, i.e., 
	\[ A = \colim \Big(\Z_p[x] \to \Z_p[\frac{x}{p}] \to \Z_p[\frac{x}{p^2}] \to \dots\Big).\] 
	Note that both $p$ and $x$ are regular elements of $A$. In fact, $A$ is a domain: we may view $A$ as the subring of $\Q_p[x]$ spanned by polynomials $f(x)$ with $f(0) \in \Z_p$. Thus $x$ is uniquely $p$-divisible in $A$ by construction, so $A/p^n \simeq \Z/p^n$, and thus $\widehat{A} = \Z_p$ (where the completion is $p$-adic). Set $X = \Spec(A)$, $Y = \Spec(\widehat{A})$, $Z = \Spec(A/p)$ with $U$ and $V$ as in Proposition \ref{prop:formalglueing}. Now if we consider $M = A/(x)$, then  the map 
\[ \eta:M \to \big(M \otimes_A \widehat{A}\big) \oplus M[\frac{1}{p}]\]
has a non-trivial kernel $K$: the elements $0 \neq \frac{x}{p^n} \in M$ for all $n \geq 1$ span a copy of $\Q_p/\Z_p$ in $K$. As $M \otimes_A \widehat{A} \simeq \widehat{A}/(x) \simeq \Z_p$ and $M[\frac{1}{p}]$ are both $p$-torsion free, studying $\Hom(A,M)$ then shows that 
\[ \QCoh(X) \to \QCoh(Y) \times_{\QCoh(V)} \QCoh(U) \]
	is not faithful. This failure is explained by the derived picture as follows: the sequence
	\[ M \to \Big(M \otimes^L_A \widehat{A}\Big) \oplus M[\frac{1}{p}] \to M \otimes_A^L \widehat{A}[\frac{1}{p}] \]
	is a cofibre sequence in $D(X) \simeq D(A)$ by Proposition \ref{prop:formalglueing} (1), but the sequence of $A$-modules 
	\[ 0 \to M \stackrel{\eta}{\to}\Big(M \otimes_A \widehat{A}\Big) \oplus M[\frac{1}{p}] \to M \otimes_A \widehat{A}[\frac{1}{p}] \]
	obtained by applying $\pi_0(-)$ to the above cofibre sequence is not exact on the left. In fact, one computes 
	\[ M \otimes_A^L \widehat{A} \simeq \cofib(A \stackrel{x}{\to} A) \otimes_A^L \widehat{A} \simeq \cofib(\widehat{A} \stackrel{x}{\to} \widehat{A}) \simeq \widehat{A} \oplus \widehat{A}[1] \simeq \Z_p \oplus \Z_p[1],\]
	where the second-to-last equality uses that $x = 0$ on $\widehat{A}$. This gives 
	\[ M \otimes^L_A \widehat{A}[\frac{1}{p}] \simeq \widehat{A}[\frac{1}{p}] \oplus \widehat{A}[\frac{1}{p}][1] \simeq \Q_p \oplus \Q_p[1]\]
	by inverting $p$, and thus the kernel $K = \ker(\eta)$ above is identified as 
	\[ K \simeq \coker\Big(\pi_1(M \otimes_A^L \widehat{A}) \to \pi_1(M \otimes_A^L \widehat{A}[\frac{1}{p}])\Big) \simeq \widehat{A}[\frac{1}{p}]/\widehat{A} \simeq \Q_p/\Z_p.\]
	Note further that this example also shows that Proposition \ref{prop:formalglueing} is {\em not} true for classical derived categories. Indeed, write $\Phi^{cl}:D^{cl}(X) \to D^{cl}(Y) \times_{D^{cl}(V)} D^{cl}(U)$ for the obvious functor. Then one computes
	\[ \Hom(\calO_X,\widetilde{M}) \simeq M \]
	in $D^{cl}(X)$, while
	\[ \Hom(\Phi^{cl}(\calO_X),\Phi^{cl}(\widetilde{M})) \simeq M/K\]
	in $D^{cl}(Y) \times_{D^{cl}(V)} D^{cl}(U)$. Of course, the latter is entirely a consequence of the non-faithfulness of 
	\[ h\Big(D(Y) \times_{D(V)} D(U)\Big) \to D^{cl}(Y) \times_{D^{cl}(V)} D^{cl}(U).\]
\end{example}

\newpage
\section{Algebraization of products: schemes}
\label{sec:algprodsch}

The goal of this section is to prove the following theorem.

\begin{theorem}
	\label{thm:algprodsch}
	Fix a set $I$ of rings $\{A_i\}_{i \in I}$ with product $A := \prod_i A_i$, and a qcqs scheme $X$. Then $X(A) \simeq \prod_i X(A_i)$ via the natural map.
\end{theorem}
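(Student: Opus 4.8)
The plan is to deduce both directions of the claimed bijection from the Brandenburg--Chirvasitu equivalence $\Hom(S,X)\simeq\Fun^L_{\otimes}(\QCoh(X),\QCoh(S))$ of Theorem~\ref{thm:BCbody} (applied with $S=\Spec(A)$, which is affine, and the given qcqs scheme $X$), by constructing the relevant symmetric monoidal functors on quasi-coherent sheaves directly out of the data $\{\epsilon_i\}$. Throughout, $\QCoh(\Spec(A))=\Mod_A$ and all functors are the classical (underived) ones, as in \S\ref{sec:TDschemes}.

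For injectivity, the key elementary observation is that a finitely presented $A$-module $P$ is recovered from its base changes: the natural map $P\to\prod_i(P\otimes_AA_i)$ is an isomorphism for $P$ finitely presented (write $P=\coker(A^b\to A^a)$; finite direct sums commute with the product over $I$, and products are exact in $\Mod_A$, so the right-hand side is $\coker(\prod_iA_i^b\to\prod_iA_i^a)=\coker(A^b\to A^a)=P$). Hence if $f,g:\Spec(A)\to X$ restrict to the same map $\epsilon_i$ over each $\Spec(A_i)$, then for every $Q\in\QCoh_{fp}(X)$ one has $f^*Q=\prod_i(f^*Q\otimes_AA_i)=\prod_i\epsilon_i^*Q=\prod_i(g^*Q\otimes_AA_i)=g^*Q$, naturally in $Q$; since $\QCoh_{fp}(X)$ generates $\QCoh(X)$ under colimits (Lemma~\ref{lem:DeligneIndQC}), this promotes to a symmetric monoidal equivalence $f^*\simeq g^*$, so $f=g$ by the full faithfulness part of Theorem~\ref{thm:BCbody}.

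For surjectivity, fix $\{\epsilon_i:\Spec(A_i)\to X\}$ and build a cocontinuous symmetric monoidal functor $F:\QCoh(X)\to\Mod_A$. On finitely presented sheaves, set $F(Q):=\prod_i\epsilon_i^*Q$, viewed as an $A$-module. Granting the boundedness statement below --- that there exist $N,N'$ depending only on $Q$ with $\epsilon_i^*Q$ admitting a presentation $A_i^{N'}\to A_i^{N}\to\epsilon_i^*Q\to0$ for every $i$ --- taking products (which preserve surjections and cokernels and commute with finite direct sums) gives $A^{N'}\to A^{N}\to F(Q)\to0$, so $F(Q)$ is finitely presented over $A$; the same uniform presentations show that the natural maps $F(Q)\otimes_AF(Q')\to F(Q\otimes_{\calO_X}Q')$ and $F(Q_2)/F(Q_1)\to F(Q_2/Q_1)$ are isomorphisms, so $F$ is symmetric monoidal and right exact on $\QCoh_{fp}(X)$. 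Left Kan extending along $\QCoh_{fp}(X)\subset\QCoh(X)$ produces (as in the proof of Corollary~\ref{cor:TDresolution}) a cocontinuous symmetric monoidal $F:\QCoh(X)\to\Mod_A$, whence Theorem~\ref{thm:BCbody} gives a map $\epsilon:\Spec(A)\to X$ with $\epsilon^*\simeq F$. Finally $\epsilon$ restricts to $\epsilon_i$ over $\Spec(A_i)$: base change along the projection $A\to A_i$ kills every factor but the $i$-th, so for $Q\in\QCoh_{fp}(X)$ one has $(\epsilon|_{\Spec(A_i)})^*Q=\epsilon^*Q\otimes_AA_i=F(Q)\otimes_AA_i=\epsilon_i^*Q$ naturally, and full faithfulness in Theorem~\ref{thm:BCbody} upgrades this to $\epsilon|_{\Spec(A_i)}=\epsilon_i$.

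What remains --- and this is where the real work lies --- is the boundedness input: for a qcqs scheme $X$ and $Q\in\QCoh_{fp}(X)$ there are integers $N,N'$ such that for every ring $B$ and every morphism $g:\Spec(B)\to X$ the $B$-module $g^*Q$ admits a presentation $B^{N'}\to B^{N}\to g^*Q\to0$. (The hypothesis that the source be affine is essential --- $\calO_{\P^1}(-1)$ is not globally generated --- but that is the only case needed.) One fixes a finite affine open cover $X=\bigcup_{k=1}^nU_k$ over which $Q$ has finite free presentations of ranks bounded by $(a,b)$; then for $g:\Spec(B)\to X$ the quasi-compact opens $g^{-1}(U_k)$ cover $\Spec(B)$ and $g^*Q$ is presented over each by free modules of ranks $\le(a,b)$. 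The point is to bound the number of \emph{global} generators and relations of $g^*Q$ over $B$ in terms of $n,a,b$ alone, uniformly in $B$: the mechanism is that a finite (Nisnevich, in particular Zariski) cover of an affine scheme admits a splitting sequence of length $\le n$, i.e.\ a filtration by subschemes over each stratum of which the cover acquires a section, so $g^*Q$ can be assembled by an $n$-step induction, each step lifting at most $a$ (resp.\ $b$) generators through a surjection onto a closed subscheme and contributing a bounded amount. This generator-counting analysis --- carried out carefully at the start of \S\ref{sec:algprodspaces} --- is the main obstacle; once it is in hand, everything above is formal manipulation with Theorem~\ref{thm:BCbody}.
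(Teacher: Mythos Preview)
Your surjectivity argument is essentially the paper's: define $G(Q)=\prod_i\epsilon_i^*Q$ on $\QCoh_{fp}(X)$, prove it lands in $\Mod_{fp,A}$ via a uniform bound on generators and relations (the paper's Lemma~\ref{lem:boundschlfp}, resting on Lemmas~\ref{lem:boundgenerators} and~\ref{lem:boundaffinecover}), check it is symmetric monoidal and right exact by projecting to each $A_i$ through the fully faithful $\Mod_{fp,A}\hookrightarrow\prod_i\Mod_{fp,A_i}$, Ind-extend via Lemma~\ref{lem:DeligneIndQC}, and invoke Theorem~\ref{thm:BCbody}. Your boundedness sketch is correct in outline; note that for schemes only Zariski covers are needed, and the paper carries this out already in \S\ref{sec:algprodsch} (the Nisnevich refinement in \S\ref{sec:algprodspaces} you cite is only required for algebraic spaces). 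One small point you gloss over: the preimages $g^{-1}(U_k)\subset\Spec(B)$ are quasi-compact opens but not affine in general, so one must first refine to a bounded affine cover (this is exactly Lemma~\ref{lem:boundaffinecover}).

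Your injectivity argument differs from the paper's and is arguably cleaner. The paper reduces to the quasi-affine case via the diagonal (Lemma~\ref{lem:algprodschinj}), which in turn needs Lemma~\ref{lem:algprodquasiaffine}. You instead observe directly that for an \emph{existing} map $f:\Spec(A)\to X$ and $Q\in\QCoh_{fp}(X)$, the module $f^*Q$ is finitely presented over $A$ (no boundedness needed---pullback of fp is fp), whence $f^*Q\simeq\prod_i(f^*Q\otimes_AA_i)$ recovers $f^*$ on $\QCoh_{fp}(X)$ from the $\epsilon_i$, and full faithfulness in Theorem~\ref{thm:BCbody} finishes. This avoids the quasi-affine detour at the cost of invoking Brandenburg--Chirvasitu for injectivity as well; the paper's diagonal argument is more elementary at that step but slightly longer.
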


The result is sharp, as illustrated in Example \ref{ex:algprodschnotqcqs}.

\begin{remark}
	We will prove a version of Theorem \ref{thm:algprodsch} for qcqs algebraic spaces in \S \ref{sec:algprodspaces}. In fact, the proofs are also similar. The main difference is that the case of spaces relies on Theorem \ref{thm:TD}, while, in the world of schemes, we may use Theorem \ref{thm:BC}. A practical consequence is that the proof for schemes is considerably more elementary (but not any more direct) than the proof for spaces.
\end{remark}

Fix $\{A_i\}_{i \in I}$ and $X$ as in the theorem. For proving injectivity of $X(A) \to \prod_i X(A_i)$, we use derived category techniques (as these shall be handy later), though one can also do this directly. The first step is to identify perfect complexes on $A$ as products of (certain) perfect complexes on each $A_i$. More precisely:

\begin{lemma}
	\label{lem:prodfullyfaithful}
	The evident map $\phi:D_\perf(A) \to \prod_i D_\perf(A_i)$ is fully faithful, and $K \simeq \prod_i K \otimes_A A_i$ for any $K \in D_\perf(A)$.
\end{lemma}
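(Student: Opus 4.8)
The plan is to leverage the fact that $A = \prod_i A_i$ exhibits each $A_i$ as a \emph{flat} $A$-algebra: concretely, $A_i \cong e_i A$ is the direct summand of the free module $A$ cut out by the idempotent $e_i \in A$ that is $1$ in the $i$-th coordinate and $0$ elsewhere. In particular $-\otimes_A A_i$ is exact, so it agrees with its derived version and no $\Tor$-terms intervene, and the canonical comparison map $A \to \prod_i A_i$ is literally the identity isomorphism.

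First I would prove the decomposition $K \simeq \prod_i (K\otimes_A A_i)$ for every $K \in D_\perf(A)$ by d\'evissage. Write $T\colon D(A) \to D(A)$ for the functor $T(K) = \prod_i (K \otimes_A A_i)$, and let $c\colon \id_{D(A)} \to T$ be the natural transformation assembled from the base-change maps $K \to K\otimes_A A_i$. The functor $T$ is exact: each $-\otimes_A A_i$ is exact by flatness, and the (infinite) product functor $D(A)^I \to D(A)$ is a right adjoint, hence preserves fibre sequences, which in the stable setting coincide with cofibre sequences. Hence the full subcategory of $D(A)$ spanned by those $K$ for which $c_K$ is an equivalence is a thick subcategory — closed under shifts, cofibres (five lemma), and retracts. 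It contains the unit $A$, since $c_A$ is the identity of $A = \prod_i A_i$; as $D_\perf(A)$ is the smallest thick subcategory of $D(A)$ containing $A$, the decomposition holds for all perfect $K$. This is the second assertion of the lemma, and along the way it shows that the infinite product $\prod_i K\otimes_A A_i$ is again perfect over $A$, which is not a priori obvious.

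For full faithfulness I would feed the decomposition (applied to the target) into the base-change adjunction: restriction of scalars along $A \to A_i$ is right adjoint to $-\otimes_A A_i$. For $K,L \in D_\perf(A)$ this yields
\[ \Hom_{D(A)}(K,L) \;\simeq\; \Hom_{D(A)}\big(K,\, \textstyle\prod_i L\otimes_A A_i\big) \;\simeq\; \prod_i \Hom_{D(A)}(K,\, L\otimes_A A_i) \;\simeq\; \prod_i \Hom_{D(A_i)}(K\otimes_A A_i,\, L\otimes_A A_i), \]
using, in order, the decomposition $L\simeq\prod_i L\otimes_A A_i$, the fact that $\Hom_{D(A)}(K,-)$ commutes with products, and the adjunction. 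A diagram chase with the triangle identities identifies this composite with the map $K\mapsto (K\otimes_A A_i)_i$ on $\Hom$-groups, i.e.\ with $\phi$, so $\phi$ is fully faithful.

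The genuine content here is light; the points needing care are bookkeeping rather than real obstacles: that arbitrary products in $D(A)$ are exact (so the d\'evissage functor $T$ really is exact), that $A_i$ is honestly $A$-flat (so there is nothing derived to track in $\otimes_A A_i$), and — in the last step — that the somewhat unnatural-looking chain of isomorphisms above genuinely reassembles into $\phi$, which is exactly where the triangle identities enter.
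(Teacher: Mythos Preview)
Your proof is correct and uses essentially the same idea as the paper: d\'evissage from the generator $A$ using that $D_\perf(A)$ is the thick subcategory generated by $A$, together with the tautology $A \simeq \prod_i A_i$. The only organizational difference is that you prove the decomposition $K \simeq \prod_i K\otimes_A A_i$ first and then derive full faithfulness from it via the base-change adjunction, whereas the paper handles full faithfulness directly by a separate d\'evissage (reducing to $\Hom(A,A) \simeq \prod_i \Hom(A_i,A_i)$) and then says the decomposition is ``proven similarly''; your route is marginally more economical but not genuinely different.
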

\begin{proof}
As $D_\perf(A)$ is generated by $A$ under finite colimits and retracts, for the first claim, it is enough to check that $A \simeq \Hom(\phi(A),\phi(A))$, which is clear. The second claim is proven similarly.
\end{proof}

Next, we prove a special case of Theorem \ref{thm:algprodsch}:

\begin{lemma}
	\label{lem:algprodquasiaffine}
If $X$ is quasi-affine, then $X(A) \simeq \prod_i X(A_i)$.
\end{lemma}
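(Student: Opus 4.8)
The plan is to imitate Example~\ref{ex:alglimitQA}, replacing the cofiltered limit appearing there with a product and using Lemma~\ref{lem:prodfullyfaithful} in place of the fact that tensoring by a perfect complex commutes with limits. First I would choose an affine scheme $\overline{X} = \Spec(B)$ containing $X$ as a quasi-compact open subset, with constructible closed complement $Z \subset \overline{X}$, and record two elementary facts: since $\overline{X}$ is affine, $\overline{X}(R) = \Hom(B,R)$ converts products of rings into products of sets, so $\overline{X}(A) \simeq \prod_i \overline{X}(A_i)$; and for any ring $R$, a point $\phi \in \overline{X}(R)$ factors (uniquely) through the open immersion $X \hookrightarrow \overline{X}$ if and only if $\phi^{-1}(Z) = \emptyset$.

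Injectivity is then immediate: if $f,g \in X(A)$ have the same image in $\prod_i X(A_i)$, then their composites with $X \hookrightarrow \overline{X}$ have the same image in $\prod_i \overline{X}(A_i) \simeq \overline{X}(A)$, hence are equal, and since $X \hookrightarrow \overline{X}$ is a monomorphism we conclude $f = g$; no derived machinery is needed for this half. For surjectivity, given a family $(\epsilon_i : \Spec(A_i) \to X)_i$, I would compose with $X \hookrightarrow \overline{X}$ and invoke the affine case to produce a map $\bar\epsilon : \Spec(A) \to \overline{X}$ restricting over each $A_i$ to $\bar\epsilon_i$, the composite of $\epsilon_i$ with the inclusion. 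It then suffices to show that $\bar\epsilon$ factors through $X$, i.e. that $\bar\epsilon^{-1}(Z) = \emptyset$; the monomorphism property then produces a unique $\epsilon \in X(A)$ with $\epsilon$ restricting to $\epsilon_i$ over each $A_i$.

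To prove $\bar\epsilon^{-1}(Z) = \emptyset$, the idea (exactly as in Example~\ref{ex:alglimitQA}) is to test against perfect complexes supported on $Z$. Write $\calO_Z = \colim_\alpha K_\alpha$ with $K_\alpha \in D_{Z,\perf}(\overline{X})$, which is possible since $D_Z(\overline{X})$ is compactly generated by $D_{Z,\perf}(\overline{X})$ (concretely, if $Z = V(b_1,\dots,b_n)$ one may take a single $K$, namely the Koszul complex on the $b_j$). For each $\alpha$, Lemma~\ref{lem:prodfullyfaithful} gives $\bar\epsilon^* K_\alpha \simeq \prod_i \bigl(\bar\epsilon^* K_\alpha \otimes_A A_i\bigr) \simeq \prod_i \bar\epsilon_i^* K_\alpha$; but each $\bar\epsilon_i$ factors through $X$, on which $K_\alpha$ restricts to $0$ (its cohomology is supported on $Z$), so every factor vanishes and $\bar\epsilon^* K_\alpha \simeq 0$. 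As $\bar\epsilon^*$ is cocontinuous, $\bar\epsilon^* \calO_Z \simeq \colim_\alpha \bar\epsilon^* K_\alpha \simeq 0$, i.e. the ideal of $Z$ becomes the unit ideal in $A$, which is exactly $\bar\epsilon^{-1}(Z) = \emptyset$.

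The main obstacle, and really the only non-formal point, is the implication ``$\bar\epsilon_i^* K = 0$ for all $i$ $\Rightarrow$ $\bar\epsilon^* K = 0$''. This is false for a general complex (it breaks already for infinite direct sums, where an infinite product of term-wise acyclic objects need not be acyclic), and it is precisely here that the perfectness of $K$ enters, via Lemma~\ref{lem:prodfullyfaithful}; this is the product analogue of the ``limit commutes with perfect tensor'' step used in Example~\ref{ex:alglimitQA}. Everything else — the choice of affine compactification, the monomorphism bookkeeping, and the cocontinuity of $\bar\epsilon^*$ — is routine.
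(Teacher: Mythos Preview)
Your proof is correct and follows essentially the same approach as the paper's: embed $X$ in an affine $\overline{X}$ with constructible complement $Z$, deduce injectivity from the affine case, and for surjectivity use Lemma~\ref{lem:prodfullyfaithful} on perfect complexes supported on $Z$ (then pass to the colimit to kill $\calO_Z$). The only minor quibble is the parenthetical explanation of why perfectness is needed---the real issue is that $K \simeq \prod_i K \otimes_A A_i$ fails for non-perfect $K$, not acyclicity of products---but this does not affect the argument.
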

\begin{proof}
	The assertion $X(A) \simeq \prod_i X(A_i)$ is clear if $X$ is affine. Hence, if $X$ is quasi-affine, then one immediately obtains injectivity of $X(A) \to \prod_i X(A_i)$. For surjectivity, fix an affine $Y$ containing $X$ as an open subscheme, with constructible closed complement $Z$. We must show that if $a:\Spec(A) \to Y$ factors through $X$ over each $\Spec(A_i) \subset \Spec(A)$, then $a$ factors through $X$, or, equivalently, that $a^* \calO_Z = 0$. Choose some  $K \in D_Z(Y) \cap D_\perf(Y)$. Then $a^* K = \prod_i (a^* K) \otimes_A A_i \simeq 0$. As $\calO_Z$ can be written as a filtered colimit of such $K$'s, one finds that $a^* \calO_Z = 0$, so $a^{-1}(Z) = \emptyset$, as wanted.
\end{proof}

Using this special case, we can establish injectivity:

\begin{lemma}
	\label{lem:algprodschinj}
	The map $X(A) \to \prod_i X(A_i)$ is injective.
\end{lemma}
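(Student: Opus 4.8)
The plan is to reduce to the quasi-affine case settled in Lemma~\ref{lem:algprodquasiaffine} via the diagonal trick from the proof of Theorem~\ref{thm:alglimitlqs}. Suppose $f,g\colon\Spec(A)\to X$ induce the same element of $\prod_i X(A_i)$; equivalently, $f$ and $g$ become equal after restriction along each of the open-and-closed immersions $\iota_i\colon\Spec(A_i)\hookrightarrow\Spec(A)$. Since $X$ is a scheme, the diagonal $\Delta\colon X\to X\times_{\Spec(\Z)}X$ is an immersion, and it is quasi-compact because $X$ is quasi-separated. Hence the base change
\[ Z:=\Spec(A)\times_{(X\times_{\Spec(\Z)}X)}X\longrightarrow\Spec(A) \]
of $\Delta$ along $(f,g)$ is a quasi-compact immersion. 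In particular $Z$ is a quasi-compact open subscheme of the (affine) scheme-theoretic closure of $Z$ in $\Spec(A)$, hence quasi-affine.

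Next I would use the universal property of $Z$: since $\Delta\circ h=(h,h)$, a section of $Z\to\Spec(A)$ is precisely a map $h\colon\Spec(A)\to X$ with $f=h=g$; similarly, the hypothesis $f|_{\Spec(A_i)}=g|_{\Spec(A_i)}$ provides a section $\sigma_i$ of $Z\times_{\Spec(A)}\Spec(A_i)\to\Spec(A_i)$. Thus we obtain $(\sigma_i)_i\in\prod_i Z(A_i)$. Applying Lemma~\ref{lem:algprodquasiaffine} to the quasi-affine scheme $Z$, the natural map $Z(A)\to\prod_i Z(A_i)$ is a bijection, so $(\sigma_i)_i$ comes from a unique $\sigma\in Z(A)$. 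Postcomposing with $Z\to\Spec(A)$ and using that the statement is obvious for affine targets (so that $(\Spec A)(A)\to\prod_i(\Spec A)(A_i)$ is injective), one sees that $\sigma$ and $\id_{\Spec(A)}$ agree, since both restrict to $\iota_i$ over each $\Spec(A_i)$; hence $\sigma$ is a section of $Z\to\Spec(A)$. By the first observation this forces $f=g$, giving the desired injectivity.

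I do not anticipate a genuine obstacle: the argument is formal once Lemma~\ref{lem:algprodquasiaffine} is available. The only points worth a word are that the diagonal of a qcqs scheme is a quasi-compact immersion (so $Z$ is quasi-affine, not merely quasi-separated), and the short bookkeeping identifying the lift $\sigma$ as an honest section over all of $\Spec(A)$.
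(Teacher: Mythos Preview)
Your proof is correct and follows the same diagonal-trick strategy as the paper: pull back $\Delta$ along $(f,g)$ to obtain a quasi-affine $Z\to\Spec(A)$, use Lemma~\ref{lem:algprodquasiaffine} to produce a global section from the given sections over each $\Spec(A_i)$, and conclude $f=g$. The only cosmetic differences are that the paper phrases the conclusion as ``a monomorphism with a section is an isomorphism'' (avoiding your explicit check that the composite $\Spec(A)\to Z\to\Spec(A)$ is the identity), and the paper invokes that $\Delta$ is a quasi-compact \emph{monomorphism} rather than an immersion---a formulation which, as noted in Remark~\ref{rmk:algprodspcinj}, carries over verbatim to algebraic spaces, whereas your immersion argument is specific to schemes.
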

\begin{proof}
	Fix two maps $a,b:\Spec(A) \to X$ which induce the same map $a_i = b_i$ over $\Spec(A_i) \subset \Spec(A)$. Now consider the pullback $z:Z \to \Spec(A)$ of $\Delta:X \to X \times X$ along $(a,b):\Spec(A) \to X \times X$. Then $Z \to \Spec(A)$ is a quasi-compact monomorphism as $\Delta$ is so. In particular, $Z$ is quasi-affine. Moreover, $Z$ admits sections over each $\Spec(A_i) \subset \Spec(A)$. Lemma \ref{lem:algprodquasiaffine} gives a unique map $\Spec(A) \to Z$ inducing the given sections over each $\Spec(A_i)$. It follows that $Z \to \Spec(A)$ is a monomorphism with a section, and thus an isomorphism. This immediately gives $a = b$, as wanted.
\end{proof}

\begin{remark}
	\label{rmk:algprodspcinj}
	The proof of Lemma \ref{lem:algprodschinj} applies {\em mutatis mutandis} to qcqs algebraic spaces.
\end{remark}

We now come to the interesting part: the surjectivity of $X(A) \to \prod_i X(A_i)$. Fix maps $f_i:\Spec(A_i) \to X$. We do not know how to directly construct a map $f:\Spec(A) \to X$ extending each $f_i$. Instead, we first define a functor $G:\QCoh(X) \to \Mod_A$ via
\[ G(\calF) := \prod_i \Gamma(\Spec(A_i),f_i^* \calF).\]
The functor $G$ will {\em not} be the pullback functor for the desired map $f:\Spec(A) \to X$. In fact, $G$ does not preserve (infinite) direct sums unless $I$ is finite, so $G$ cannot be a pullback. However, we will later build a new functor $F$ (which will be the desired pullback) from $G$, using crucially the following fact:

\begin{lemma}
	\label{lem:boundschlfp}
	$G$ preserves locally finitely presented objects.
\end{lemma}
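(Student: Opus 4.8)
The assertion is that $G(\calF)=\prod_i\Gamma(\Spec A_i,f_i^*\calF)$ is a finitely presented $A$-module whenever $\calF\in\QCoh(X)$ is finitely presented. Since $\calF$ is finitely presented and $f_i$ is a map out of an affine scheme, each $N_i:=\Gamma(\Spec A_i,f_i^*\calF)$ is already a finitely presented $A_i$-module; the only issue is that its presentation a priori grows with $i$. The first observation is that it suffices to produce integers $n,m\ge 0$ \emph{depending only on $X$ and $\calF$} together with presentations $A_i^{\oplus m}\xrightarrow{\delta_i}A_i^{\oplus n}\to N_i\to 0$ for all $i$: since an arbitrary product of exact sequences of abelian groups is exact, applying $\prod_i(-)$ yields an exact sequence $A^{\oplus m}\xrightarrow{\prod_i\delta_i}A^{\oplus n}\to\prod_iN_i=G(\calF)\to 0$, using $\prod_iA_i^{\oplus n}=A^{\oplus n}$, which is the required presentation.

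We are thus reduced to the following uniform statement: there exist $n,m$ depending only on $(X,\calF)$ such that for \emph{every} ring $B$ and \emph{every} morphism $g:\Spec B\to X$, the $B$-module $\Gamma(\Spec B,g^*\calF)$ admits a presentation $B^{\oplus m}\to B^{\oplus n}\to\Gamma(\Spec B,g^*\calF)\to 0$; one then applies this with $(B,g)=(A_i,f_i)$. This is immediate in two cases. If $X=\Spec R$ is affine and $R^{\oplus m}\to R^{\oplus n}\to M\to 0$ is a fixed presentation of the module $M$ corresponding to $\calF$, then base change along $R\to B$ (right exactness of $-\otimes_RB$) gives $B^{\oplus m}\to B^{\oplus n}\to M\otimes_RB\to 0$. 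If $X$ is merely quasi-affine, extend $\calF$ to a finitely presented sheaf on an affine scheme containing $X$ as a quasi-compact open and reduce to the affine case.

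The general case is handled by induction on the number $r$ in a chosen finite affine open cover $X=U_1\cup\dots\cup U_r$. Write $X=U\cup V$ with $U=U_1$ affine and $V=U_2\cup\dots\cup U_r$ covered by $r-1$ affines, and set $W=U\cap V$, which (as $X$ is quasi-separated) is a quasi-compact open of the affine scheme $U$, hence quasi-affine; so the conclusion holds for $U$, $V$ and $W$ by the previous paragraph and the inductive hypothesis. For $g:\Spec B\to X$ put $P=g^{-1}(U)$, $Q=g^{-1}(V)$, $P\cap Q=g^{-1}(W)$; these quasi-compact opens cover $\Spec B$, and applying the left-exact functor $\Gamma(\Spec B,-)$ to the Mayer–Vietoris sequence of $g^*\calF$ along $\{P,Q\}$ identifies $\Gamma(\Spec B,g^*\calF)$ with the kernel of a map $\Gamma(P,g^*\calF)\oplus\Gamma(Q,g^*\calF)\to\Gamma(P\cap Q,g^*\calF)$. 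One then bounds the number of generators and of relations of this kernel uniformly: the generation bound comes from exhibiting generators of $g^*\calF$ over $P$, $Q$, $P\cap Q$ that are pulled back via $g$ from a fixed finite package of data on $U$, $V$, $W$, together with the fact — available because $X$ is qcqs — that the retrocompact complements $X\setminus U$, $X\setminus V$ are cut out by a bounded number of equations, so that $P$, $Q$ are covered by a number of principal opens of $\Spec B$ bounded in terms of $X$ alone; an analogous bookkeeping, applied to the gluing maps, controls the relations.

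The heart of the matter — and the main obstacle — is precisely this last bookkeeping. When $X$ is not separated the opens $P=g^{-1}(U)$ need not be affine, so $\Gamma(P,g^*\calF)$ is not a finitely generated $B$-module and one cannot bound the generators of the Mayer–Vietoris kernel by naively invoking finite presentation over $B$; the point is that the local presentations of $\calF$, the transition maps between the pieces of the cover, and the equations cutting out the overlaps all live on the \emph{fixed} scheme $X$, and only their pullbacks along the varying $g$ intervene, so propagating these fixed numerical bounds through the Mayer–Vietoris sequence is what must be done carefully. This is the module-theoretic analogue of the "number of sections needed to generate a module over a cover" analysis carried out for perfect complexes in \S\ref{sec:algprodspaces}; everything else in the argument is formal.
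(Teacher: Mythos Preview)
Your reduction to a uniform statement --- finding $n,m$ depending only on $(X,\calF)$ such that $\Gamma(\Spec B,g^*\calF)$ admits a presentation $B^{\oplus m}\to B^{\oplus n}\to \Gamma(\Spec B,g^*\calF)\to 0$ for \emph{every} $g:\Spec B\to X$ --- is exactly right, and matches the paper's setup. The gap is that you do not actually carry out the inductive step: you yourself call the Mayer--Vietoris bookkeeping ``the heart of the matter --- and the main obstacle'' and say it ``must be done carefully'', but then stop. As written this is a strategy, not a proof, and the strategy has a real difficulty: your inductive hypothesis concerns maps from \emph{affine} schemes into $V$, but in the inductive step $Q=g^{-1}(V)$ is typically not affine, so you cannot invoke the hypothesis on $Q$ directly, and $\Gamma(Q,g^*\calF)$ is not a finitely generated $B$-module. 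Bounding generators and relations of the Mayer--Vietoris kernel inside such a module requires exactly the sort of local-to-global generator count you gesture at but never perform.

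The paper avoids this obstacle by abandoning Mayer--Vietoris entirely. Instead of pulling back the cover $\{U_j\}$ of $X$ to non-affine opens of $\Spec A_i$, it uses Lemma~\ref{lem:boundaffinecover} to produce an affine open cover $\{V_1,\dots,V_m\}$ of $\Spec A_i$, with $m$ bounded independently of $i$, such that each $V_k$ maps into some $U_j$. Now $\widetilde{M_i}|_{V_k}$ is generated by $\leq n:=\max_j n_j$ sections (pulled back from a fixed presentation of $\calF|_{U_j}$), and Lemma~\ref{lem:boundgenerators} converts this into a global bound of $n\cdot m$ generators for $M_i$. The relations are handled not by tracking a Mayer--Vietoris kernel but by a Schanuel-type trick: comparing the two resolutions $\calO_{V_k}^{\oplus n\cdot m}\twoheadrightarrow \widetilde{M_i}|_{V_k}$ and $\calO_{V_k}^{\oplus n_j}\twoheadrightarrow \widetilde{M_i}|_{V_k}$ via their fibre product gives a (non-canonical) isomorphism $\calO_{V_k}^{\oplus n\cdot m}\oplus L_k \simeq \calO_{V_k}^{\oplus n_j}\oplus \widetilde{K_i}|_{V_k}$, which bounds the local generators of the kernel $K_i$ by $n\cdot m+\ell$ with $\ell:=\max_j\ell_j$; one more application of Lemma~\ref{lem:boundgenerators} finishes. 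This is the concrete replacement for the bookkeeping you left undone.
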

\begin{proof}
	Choose affine open covers $\{U_1,\dots,U_r\}$ of $X$ and $\{V_1,\dots,V_m\}$ of $\Spec(A_i)$ such that $f_i$ carries each $V_k$ to some $U_j$, and the numbers $r$ and $m$ are bounded independently of $i \in I$; this is possible by Lemma \ref{lem:boundaffinecover}. We name the index sets $J = \{1,\dots,r\}$ and $K = \{1,\dots,m\}$ for notational simplicity. Choose a locally finitely presented $\calF \in \QCoh(X)$, and write $M := G(\calF) \in \Mod_A$. We must check that $M$ is finitely presented. For each $j \in J$, pick a presentation
	\[ \calO_{U_j}^{\oplus \ell_j} \stackrel{A_j}{\to} \calO_{U_j}^{\oplus n_j} \stackrel{B_j}{\to} \calF|_{U_j} \to 0.\]
	Set $\ell = \max(\ell_j)$ and $n = \max(n_j)$; these are ``absolute'' constants depending only on $X$ and $\calF$. Fix some index $i \in I$, and let $M_i := \Gamma(\Spec(A_i), f_i^* \calF)$. Then $M_i$ is a finite $A_i$-module, and $\widetilde{M_i}|_{V_k}$ is generated by $\leq n$ sections. Lemma \ref{lem:boundgenerators} shows that $\widetilde{M_i}$ is itself generated by $n \cdot m$ sections. This gives a surjection $A_i^{\oplus n \cdot m} \stackrel{Q_i}{\to} M_i$. Note that $n$ and $m$ are independent of the chosen $i \in I$.  Taking products, we get a surjective map $A^{\oplus n\cdot m} \stackrel{Q}{\to} M$, which shows that $M$ is finitely generated.
	
	Let $K_i = \ker(Q_i) \subset A_i^{\oplus n \cdot m}$, and $K = \ker(Q) \subset A^{\oplus n \cdot m}$. As $K = \prod_i K_i$, we must show that $K_i$ is generated by $n'$ elements, for some $n'$ independent of $i$. We will do so by bounding the number of generators for its restriction to each $V_k$. Fix some $k \in K$, and pick $j \in J$ such that $f_i(V_k) \subset U_j$. Set  $L_k := \ker(f_i^* B_j|_{V_k} )$.  Then there is a short exact sequence
	\[ 1 \to L_k \to \calO_{V_k}^{\oplus n_j} \to \widetilde{M_i}|_{V_k} \to 1.\]
	On the other hand, we also have a short exact sequence
	\[ 1 \to \widetilde{K_i}|_{V_k} \to \calO_{V_k}^{\oplus n \cdot m} \to \widetilde{M_i}|_{V_k} \to 1\]
	by definition of $K_i$. Taking a fibre product of the two penultimate maps in these exact sequences, and using that $\Ext^1_{V_k}(\calO_{V_k},-) = 0$ as $V_k$ is affine, one obtains (non-canonically) an isomorphism 
	\[ \calO_{V_k}^{\oplus n \cdot m} \oplus L_k \simeq \calO_{V_k}^{\oplus n_j} \oplus \widetilde{K_i}|_{V_k}.\]
	Note that $L_k$ is generated by $\leq \ell$ global sections as $f_i^*(A_j)|_{V_k}$ factors as 
	\[ \calO_{V_k}^{\oplus \ell_j} \twoheadrightarrow L_k \hookrightarrow \calO_{V_k}^{\oplus n_j},\]
	where the first map is surjective and the second is injective (and recall: $\ell = \max(\ell_j)$).  It follows that $\widetilde{K_i}|_{V_k}$ is generated $\leq N := n\cdot m  + \ell$ sections; note that $N$ is independent of $i$. Another application of Lemma \ref{lem:boundgenerators} shows that $K_i$ is generated $\leq n' := N \cdot m$ elements, as wanted. 
\end{proof}

The following two elementary results were used above. The first bounds the number of generators of a module in terms of local data.

\begin{lemma}
	\label{lem:boundgenerators}
	Fix a ring $R$, and a finite $R$-module $M$.  Assume there exists an integer $n \geq 0$ and an open cover $\{U_1,\dots,U_m\}$ of $\Spec(R)$ such that $\widetilde{M}|_{U_i}$ is generated by $\leq n$ sections. Then $M$ is generated by $\leq n\cdot m$ elements.
\end{lemma}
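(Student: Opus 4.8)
The plan is to refine the given open cover to a cover by $m$ \emph{basic affine} opens (crucially, without increasing the number of pieces), and then reduce to the obvious affine statement.

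First I would replace the cover $\{U_1,\dots,U_m\}$ by a more convenient one. Writing $Z_i := \Spec(R)\setminus U_i = V(I_i)$ for an ideal $I_i\subseteq R$, the hypothesis $\bigcup_i U_i = \Spec(R)$ gives $\bigcap_i V(I_i) = \emptyset$, hence $V(\sum_{i=1}^m I_i) = \emptyset$ and so $\sum_{i=1}^m I_i = R$. Choosing $a_i\in I_i$ with $a_1+\dots+a_m = 1$, we get $D(a_i)\subseteq U_i$ (as $a_i$ vanishes on $Z_i$), and $\{D(a_i)\}_{i=1}^m$ is again an open cover of $\Spec(R)$, now by $m$ distinguished affine opens.

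Next I would restrict a generating family of $\leq n$ sections of $\widetilde{M}|_{U_i}$ along $D(a_i)\hookrightarrow U_i$; this exhibits $\widetilde{M}|_{D(a_i)}$ as generated by $\leq n$ sections. Since $D(a_i) = \Spec(R_{a_i})$ is affine, the $R_{a_i}$-module $M_{a_i}$ is generated by $\leq n$ elements, and after clearing denominators (multiplying by a suitable power of the unit $a_i\in R_{a_i}$) these generators may be taken to be the images of elements $y_{i,1},\dots,y_{i,n}\in M$.

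Finally I would verify that the submodule $N\subseteq M$ generated by the at most $nm$ elements $\{y_{i,j}\}$ is all of $M$: for any prime $\mathfrak{q}\subseteq R$, choosing $i$ with $\mathfrak{q}\in D(a_i)$ yields $N_{\mathfrak{q}} = M_{\mathfrak{q}}$ (the $y_{i,j}$ already generate $M_{a_i}$), so the finite $R$-module $M/N$ has vanishing localization at every prime and hence vanishes. There is no real obstacle here; the only step requiring care is the first one — passing to $m$ basic opens while keeping the count equal to $m$ — after which the argument is routine commutative algebra (Nakayama, clearing denominators).
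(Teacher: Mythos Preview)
Your argument is correct. The key observation---that one can always refine an $m$-term open cover of an affine scheme to an $m$-term cover by distinguished opens $D(a_i)$ by writing $1 = a_1 + \dots + a_m$ with $a_i \in I_i$---is exactly what makes the count come out right, and the rest is routine. (One small remark: you do not actually use Nakayama anywhere; the final step is just the fact that a module with vanishing localization at every prime is zero.)

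The paper's proof is genuinely different. It argues by induction on $m$: set $Z = \Spec(R) \setminus U_m$, which is covered by the $m-1$ opens $U_i \cap Z$, so by induction $\widetilde{M}|_Z$ is generated by $\leq n(m-1)$ sections; lifting these to $M$ gives a map $R^{\oplus n(m-1)} \to M$ that is surjective in a neighbourhood $V$ of $Z$ by Nakayama, and then $Z' := \Spec(R) \setminus V \subset U_m$ supplies the remaining $n$ generators. Your approach is arguably more elementary and avoids the induction entirely. The payoff of the paper's inductive/stratification method is that it transports directly to the Nisnevich setting (Lemma~\ref{lem:boundgeneratorsNisnevich}), where one has sections only over the strata of a constructible flag rather than over Zariski opens; there is no analogue of your ``refine to $D(a_i)$'' trick in that context, so the inductive argument is the one that survives.
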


The proof below was explained to me by de Jong.

\begin{proof}
Set $Y = \Spec(R)$. We work by induction on $m$. There is nothing to prove if $m =1$, so assume $m \geq 2$. Let $Z := Y \setminus U_m$, viewed as a reduced closed subscheme of $Y$. Then $Z$ is an affine scheme covered by $\{V_1,\dots,V_{m-1}\}$, where $V_i = U_i \cap Z$. By the inductive hypotheses, $\widetilde{M}|_Z$ is generated by $\leq n\cdot(m-1)$ sections. Lifting these sections, we can find a map $R^{\oplus n\cdot(m-1)} \to M$ that is surjective on some open neighbourhood $V$ of $Z$ (by Nakayama's lemma). Then $Z' := Y \setminus V$ is a closed subscheme of $Y$, and $\widetilde{M}|_{Z'}$ is generated by $n$ sections (as $Z' \subset U_m$). Lifting a generating set, and adding to the generators found earlier, we obtain a map $R^{\oplus n\cdot m} \simeq R^{\oplus n} \oplus R^{\oplus n \cdot (m-1)} \to M$. This map is surjective over $V \sqcup Z' = \Spec(R)$, which proves the claim.
\end{proof}

The second bounds the number of affines needed to cover a quasi-affine scheme, universally.

\begin{lemma}
	\label{lem:boundaffinecover}
Let $j:W \to Y$ be a quasi-affine morphism of qcqs schemes. Then there exists an integer $m$ such that: for any map $\Spec(B) \to Y$, the pullback $W \times_Y \Spec(B)$ is covered by $m$ affines.
\end{lemma}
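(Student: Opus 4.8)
The plan is to exploit the quasi-affine structure of $j\colon W\to Y$ to realize $W$ as a quasi-compact open inside an affine-over-$Y$ scheme, and then reduce the whole counting problem to the previous counting lemma (Lemma~\ref{lem:boundgenerators}). Note that one cannot simply reduce to $Y$ affine, since an arbitrary map $\Spec(B)\to Y$ need not factor through an affine open of $Y$; instead the affine opens of $Y$ will only be used to produce a fixed amount of bounded local data.

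First I would set up the data. Choose a factorization $\iota\colon W\hookrightarrow \overline{W}\xrightarrow{\pi} Y$ with $\pi$ affine and $\iota$ a quasi-compact open immersion (possible since $j$ is quasi-affine). As $Y$ is qcqs and $\pi$ is affine, $\overline{W}$ is qcqs, so the closed complement $Z:=\overline{W}\setminus W$ has quasi-compact complement, hence $Z=V(\mathcal{I})$ for some finite type quasi-coherent ideal $\mathcal{I}\subseteq\calO_{\overline{W}}$. Pick a finite affine open cover $Y=\bigcup_{a=1}^{k}Y_a$; since $\pi$ is affine, $\pi^{-1}(Y_a)=\Spec(B_a)$ is affine and $\mathcal{I}$ restricts to a finitely generated ideal of $B_a$, say generated by $d$ elements $g_{a,1},\dots,g_{a,d}$ (take $d$ to be the maximum over $a$), so that $W\cap\Spec(B_a)=\bigcup_{l=1}^{d}D_{\Spec(B_a)}(g_{a,l})$. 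Similarly, each $Y\setminus Y_a$ is the vanishing locus of a finite type quasi-coherent ideal $\mathcal{K}_a\subseteq\calO_Y$, and $\mathcal{K}_a|_{Y_b}$ is generated by at most $e$ sections for all $a,b$ (take $e$ to be the maximum). The integers $k,d,e$ and the chosen generators depend only on $j\colon W\to Y$ and the above choices, not on $B$.

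The main obstacle is the following claim: for any $\phi\colon \Spec(B)\to Y$, the quasi-compact open $T_a:=\phi^{-1}(Y_a)\subseteq\Spec(B)$ can be covered by a number of affine opens bounded independently of $B$. This is exactly what Lemma~\ref{lem:boundgenerators} is designed to handle. Indeed, $\phi^{-1}(\mathcal{K}_a)\cdot\calO_{\Spec(B)}$ is a finite type quasi-coherent ideal, hence corresponds to a finitely generated ideal $I_a\subseteq B$ with $T_a=\Spec(B)\setminus V(I_a)$; and on each member of the size-$k$ open cover $\{T_b\}_{b=1}^{k}$ of $\Spec(B)$ the ideal $I_a$ is generated by at most $e$ sections (the pullbacks of the generators of $\mathcal{K}_a|_{Y_b}$). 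By Lemma~\ref{lem:boundgenerators}, $I_a$ is generated by at most $ek$ elements $f_{a,1},\dots,f_{a,ek}\in B$, so $T_a=\bigcup_{q=1}^{ek}D_{\Spec(B)}(f_{a,q})$ is covered by at most $ek$ affine opens.

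Finally I would assemble the pieces. Base change along $T_a\hookrightarrow\Spec(B)$ gives $(W\times_Y\Spec(B))\times_{\Spec(B)}T_a=(W\cap\Spec(B_a))\times_{Y_a}T_a=\bigcup_{l=1}^{d}\Spec((B_a)_{g_{a,l}})\times_{Y_a}T_a$. Since $\Spec((B_a)_{g_{a,l}})\to Y_a$ is a morphism of affine schemes, hence affine, the map $\Spec((B_a)_{g_{a,l}})\times_{Y_a}T_a\to T_a$ is affine, so pulling back the cover of $T_a$ by $\leq ek$ affine opens $D(f_{a,q})$ shows this piece is covered by $\leq ek$ affines (each of the form $\Spec((B_a)_{g_{a,l}}\otimes_{A_a}B_{f_{a,q}})$, where $A_a=\calO_Y(Y_a)$). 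Summing over $l$ and over $a$ shows that $W\times_Y\Spec(B)$ is covered by at most $k\cdot d\cdot(ek)=dek^2$ affine opens, so one may take $m=dek^{2}$ (no attempt at optimization); this completes the argument. The only genuinely nontrivial input is the previous counting lemma, invoked precisely in the third paragraph; everything else is bookkeeping with quasi-coherent ideals cutting out complements of quasi-compact opens in qcqs schemes.
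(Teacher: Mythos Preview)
Your argument is correct, but it takes a longer path than the paper's. The paper's key simplification is to \emph{replace $Y$ by $\overline{W}=\underline{\Spec}_Y(j_*\calO_W)$} at the outset: since $\pi:\overline{W}\to Y$ is affine, any $\Spec(B)\to Y$ gives an affine scheme $\overline{W}\times_Y\Spec(B)\to\overline{W}$, and $W\times_Y\Spec(B)=W\times_{\overline{W}}(\overline{W}\times_Y\Spec(B))$. So one may assume from the start that $j$ is a quasi-compact open immersion. With this reduction in hand, the paper chooses a finite type ideal $I\subset\calO_Y$ cutting out $Y\setminus W$, an affine cover $\{U_1,\dots,U_k\}$ of $Y$ on which $I$ has $\leq r$ generators, and then applies Lemma~\ref{lem:boundgenerators} \emph{once} to the pullback of $I$ along $\Spec(B)\to Y$ (using the cover $\{\phi^{-1}(U_i)\}$, which need not be affine) to get $kr$ global generators $f_1,\dots,f_{kr}$; the opens $D(f_i)$ then cover $W\times_Y\Spec(B)$, giving $m=kr$.

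Your approach avoids this replacement and instead layers two coverings: you first use Lemma~\ref{lem:boundgenerators} to cover each $T_a=\phi^{-1}(Y_a)$ by $\leq ek$ principal opens, and only then pull back the $d$ pieces of $W\cap\Spec(B_a)$. This is perfectly valid and arrives at $m=dek^2$. The trade-off is that you introduce the auxiliary ideals $\mathcal{K}_a$ cutting out $Y\setminus Y_a$ and an extra constant $e$, whereas the paper's reduction to an open immersion makes all of that unnecessary. Your opening remark that ``one cannot simply reduce to $Y$ affine'' is true for reducing to an affine open of $Y$, but the paper shows you \emph{can} reduce to the open-immersion case by enlarging the target to $\overline{W}$, which is the step that streamlines everything.
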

\begin{proof}
	By replacing $Y$ with the affine $Y$-scheme $\underline{\Spec}_Y(j_* \calO_W)$,  we may assume $j$ is a quasi-compact open immersion. Let $Z = Y \setminus W$, and choose a locally finitely generated ideal sheaf $I \subset \calO_Y$ defining $Z$. Choose an affine open cover $\{U_1,\dots,U_k\}$ of $Y$ such that $I|_{U_i}$ is generated $\leq r$ global sections (for some $r$), and set $m = k \cdot r$. Fix some map $j:\Spec(B) \to Y$ from an affine scheme. Lemma \ref{lem:boundgenerators} then gives sections $f_1,\dots,f_m \in \Gamma(\Spec(B),j^*I)$ generating $j^*I$. The corresponding distinguished opens $\{D(f_1),\dots,D(f_m)\}$ of $\Spec(B)$ give the desired open cover for $W \times_Y \Spec(B) = \Spec(B)  \setminus j^{-1}(Z)$.
\end{proof}

Write $\QCoh_{fp}(X) \subset \QCoh(X)$ for the full subcategory of finitely presented quasi-coherent sheaves, and similarly for $\Mod_{fp,A} \subset \Mod_A$. Lemma \ref{lem:boundschlfp} shows that  $G$ restricts to a functor $G_{fp}:\QCoh_{fp}(X) \to \Mod_{fp}(A)$. This functor has desirable properties:

\begin{lemma}
	$G_{fp}$ is symmetric monoidal and preserves finite colimits.
\end{lemma}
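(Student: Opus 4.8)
The plan is to treat the two claims separately, the preservation of finite colimits being essentially formal and the monoidal claim being the one that actually uses the finiteness input of Lemma~\ref{lem:boundschlfp}.

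For finite colimits I would first note that for each $i$ the functor $\calF\mapsto\Gamma(\Spec(A_i),f_i^*\calF)$ is the composite of the left adjoint $f_i^*$ with the symmetric monoidal equivalence $\QCoh(\Spec(A_i))\simeq\Mod_{A_i}$, hence preserves all colimits. Thus $G$, being the composite of $\QCoh(X)\to\prod_i\Mod_{A_i}$ with the functor $\prod_i\colon\prod_i\Mod_{A_i}\to\Mod_A$, preserves finite colimits, since $\prod_i$ commutes with finite direct sums and is exact (a product of exact sequences of modules is exact). As $\QCoh_{fp}(X)$ and $\Mod_{fp}(A)$ are closed under finite colimits, and $G$ carries the former into the latter by Lemma~\ref{lem:boundschlfp}, the restriction $G_{fp}$ preserves finite colimits.

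For the monoidal claim I would first observe that $G$ is lax symmetric monoidal for free: each $\QCoh(X)\to\Mod_{A_i}$ above is symmetric monoidal, so $\QCoh(X)\to\prod_i\Mod_{A_i}$ is symmetric monoidal for the componentwise structure, and $\prod_i\colon\prod_i\Mod_{A_i}\to\Mod_A$ is lax symmetric monoidal via the evident maps $(\prod_iM_i)\otimes_A(\prod_iN_i)\to\prod_i(M_i\otimes_{A_i}N_i)$, with unit $\prod_iA_i=A$; in particular $G(\calO_X)=A$ canonically. The content is that for $\calF,\calF'\in\QCoh_{fp}(X)$ the lax structure map $\alpha_{\calF,\calF'}\colon G(\calF)\otimes_AG(\calF')\to G(\calF\otimes\calF')$ is an isomorphism. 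Here is the argument I would give: by Lemma~\ref{lem:boundschlfp}, $G(\calF)$ and $G(\calF')$ are finitely presented $A$-modules, so choose presentations $A^{\oplus p}\to A^{\oplus q}\to G(\calF)\to 0$ and $A^{\oplus p'}\to A^{\oplus q'}\to G(\calF')\to 0$ by finite free $A$-modules. Since $A_i=Ae_i$ is a direct summand of $A$ (with $e_i$ the $i$-th idempotent), base change along $A\to A_i$ carries $G(\calF)$ to $\Gamma(\Spec(A_i),f_i^*\calF)=:M_i$, so the presentations base-change to $A_i^{\oplus p}\to A_i^{\oplus q}\to M_i\to 0$ and $A_i^{\oplus p'}\to A_i^{\oplus q'}\to M_i'\to 0$ with exponents \emph{independent of $i$}. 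Using that $f_i^*$ is monoidal, $\Gamma(\Spec(A_i),f_i^*(\calF\otimes\calF'))=M_i\otimes_{A_i}M_i'$, and the standard presentation of a tensor product of presented modules gives $M_i\otimes_{A_i}M_i'=\coker(A_i^{\oplus(pq'+qp')}\to A_i^{\oplus qq'})$ for a map built from the two chosen ones. Taking products over $i$, and using that $\prod_i$ is exact and commutes with the finite direct sums $(-)^{\oplus qq'}$ and $(-)^{\oplus(pq'+qp')}$, yields $G(\calF\otimes\calF')=\coker(A^{\oplus(pq'+qp')}\to A^{\oplus qq'})$; on the other hand right-exactness of $\otimes_A$ presents $G(\calF)\otimes_AG(\calF')$ by the very same cokernel, and $\alpha_{\calF,\calF'}$ is the resulting identification. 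The associativity and unit coherences for $G_{fp}$ are then inherited from the lax structure on $G$.

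The step I expect to be the main obstacle, or at least the one that must not be glossed over, is the isomorphy of $\alpha_{\calF,\calF'}$: infinite products do not commute with tensor products in general, so one cannot simply factor $G$ through $\prod_i\Mod_{fp}(A_i)$ with its product symmetric monoidal structure (indeed, $\prod_i$ of finitely presented $A_i$-modules with unbounded numbers of generators need not be finitely presented over $A$). What rescues the argument is precisely the finite-presentation conclusion of Lemma~\ref{lem:boundschlfp}, which forces the relevant $A_i$-presentations to have exponents independent of $i$ — exactly what is needed to let $\prod_i$ pass through finite direct sums and cokernels — and the remark that $A_i$ is a direct summand of $A$ is the convenient way to extract these uniform presentations without reopening the proof of Lemma~\ref{lem:boundschlfp}.
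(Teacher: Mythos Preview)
Your argument is correct and follows essentially the same route as the paper: reduce both claims to the individual $A_i$, where $f_i^*$ is manifestly cocontinuous and symmetric monoidal, using that finite presentations over $A$ base-change to uniform presentations over each $A_i$. The paper packages this more abstractly by observing once that the functor $\Mod_{fp,A}\to\prod_i\Mod_{fp,A_i}$ is fully faithful (since $\Hom_A(M,-)$ commutes with flat base change for $M$ finitely presented) and symmetric monoidal, so that checking the lax structure map is an isomorphism and that finite colimits are preserved can be done after projecting to each $A_i$; your explicit presentation computation is precisely what underlies that full faithfulness.
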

\begin{proof}
For $\calF_1,\calF_2 \in \QCoh(X)$, there is a natural map $G(\calF_1) \otimes G(\calF_2) \to G(\calF_1 \otimes \calF_2)$; we will first show this map is an isomorphism if $\calF_i \in \QCoh_{fp}(X)$. For this, note that $\Mod_{fp,A} \to \prod_i \Mod_{fp,A_i}$ is fully faithful and symmetric monoidal. Indeed, the latter is automatic, while the former is a consequence of $\Hom_A(M,-)$ commuting with flat base change on $A$ for $M \in \Mod_{fp,A}$. Thus, the assertions for $G_{fp}$ can be checked after composing with the projection $\Mod_{fp,A} \to \Mod_{fp,A_i}$. But the resulting functor $\QCoh(X) \to \Mod_{fp,A_i}$ is simply $\Gamma(\Spec(A_i), f_i^*(-))$, which is clearly symmetric monoidal. The preservation of finite colimits is proven similarly as $\Mod_{fp,A} \to \prod_i \Mod_{fp,A_i}$ preserves finite colimits, and because finite colimits are computed ``termwise'' in the target.
\end{proof}

To build the promised functor $F$, we use a result of Deligne \cite[Appendix, Proposition 2]{HartshorneRD} to identify $\QCoh(X)$ in terms of $\QCoh_{fp}(X)$.

\begin{lemma}[Deligne]
	\label{lem:DeligneIndQC}
	The natural inclusion $\QCoh_{fp}(X) \subset \QCoh(X)$ extends to a symmetric monoidal equivalence $\Ind(\QCoh_{fp}(X)) = \QCoh(X)$ given by $\{A_i\} \mapsto \colim A_i$.
\end{lemma}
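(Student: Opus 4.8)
The plan is to deduce the statement from Deligne's theorem \cite[Appendix, Proposition 2]{HartshorneRD} (equivalently \cite[Tag 01PG]{StacksProject}), which says that on a qcqs scheme every quasi-coherent sheaf is the filtered colimit of its finitely presented quasi-coherent subsheaves, i.e. that $\QCoh_{fp}(X)$ generates $\QCoh(X)$ under filtered colimits. To upgrade this to the claimed equivalence it suffices to supply two complementary facts: that $\QCoh_{fp}(X)$ (which is essentially small as $X$ is qcqs) sits inside $\QCoh(X)$ as a full subcategory consisting of compact objects, and that it is a symmetric monoidal full subcategory; the rest is then formal.

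First I would check that every $\calF \in \QCoh_{fp}(X)$ is compact in $\QCoh(X)$, i.e. $\Hom_{\calO_X}(\calF,-)$ commutes with filtered colimits. Since $X$ is qcqs one may choose a finite affine open cover $\{U_i = \Spec(R_i)\}$ of $X$ together with finite affine open covers of the (quasi-compact) intersections $U_i \cap U_j$; then $\Hom_{\calO_X}(\calF,-)$ is the equalizer of a finite product of functors of the form $\mathcal{G} \mapsto \Hom_{R}(\calF(W),\mathcal{G}(W))$ for $W$ ranging over this finite collection of affines, each with $\calF(W)$ a finitely presented $R$-module and $\mathcal{G} \mapsto \mathcal{G}(W)$ commuting with filtered colimits of quasi-coherent sheaves on the qcqs scheme $X$. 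As $\Hom_R(M,-)$ commutes with filtered colimits for $M$ finitely presented, and finite limits commute with filtered colimits in $\Ab$, we conclude $\calF$ is compact. Combining this with Deligne's theorem and the universal property of the $\Ind$-completion (see \cite[\S 5.3]{LurieHTT}), the canonical filtered-colimit-preserving functor $\Ind(\QCoh_{fp}(X)) \to \QCoh(X)$, $\{\calF_i\} \mapsto \colim \calF_i$, is fully faithful (image in compact objects, and fully faithful on $\QCoh_{fp}(X)$) and essentially surjective (image generates under filtered colimits), hence an equivalence. En passant this identifies the compact objects of $\QCoh(X)$ with the retracts of finitely presented sheaves, i.e. with $\QCoh_{fp}(X)$ itself.

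It remains to make the equivalence symmetric monoidal. The full subcategory $\QCoh_{fp}(X) \subset \QCoh(X)$ contains the unit $\calO_X$ and is closed under $\otimes_{\calO_X}$ (the tensor product of two finitely presented sheaves is finitely presented, $\otimes$ being right exact), so it is a symmetric monoidal full subcategory; consequently $\Ind(\QCoh_{fp}(X))$ carries a canonical symmetric monoidal structure, uniquely characterized by the requirement that the inclusion $\QCoh_{fp}(X) \hookrightarrow \Ind(\QCoh_{fp}(X))$ be symmetric monoidal and that $\otimes$ preserve filtered colimits separately in each variable (see \cite[\S 4.8.1]{LurieHA}). Since $\otimes_{\calO_X}$ on $\QCoh(X)$ likewise preserves filtered colimits in each variable — an elementary fact that may be checked after restriction to a finite affine cover, where it is the statement that $\otimes_R$ commutes with filtered colimits — and since the equivalence of the previous paragraph restricts to the identity on $\QCoh_{fp}(X)$, the aforementioned uniqueness forces it to be symmetric monoidal. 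The only input here that is not purely formal is Deligne's theorem, which we invoke as a black box; everything added in this argument is routine.
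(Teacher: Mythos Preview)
Your proof is correct. The paper does not supply its own argument for this lemma: it simply attributes the result to Deligne via the citation \cite[Appendix, Proposition 2]{HartshorneRD} and moves on. You have correctly identified that Deligne's statement is literally only the generation assertion (every quasi-coherent sheaf on a qcqs scheme is a filtered colimit of finitely presented ones), and you have supplied the two routine complements needed to upgrade this to the stated $\Ind$-equivalence---compactness of finitely presented sheaves in $\QCoh(X)$ via a finite affine cover, and preservation of the symmetric monoidal structure under $\Ind$-completion---both of which are standard and handled correctly.
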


We can now prove Theorem \ref{thm:algprod} by applying Theorem \ref{thm:BC}.

\begin{proof}[Proof of Theorem \ref{thm:algprod}]
	The symmetric monoidal functor $G_{fp}:\QCoh_{fp}(X) \to \Mod_{fp,A}$ defines a symmetric monoidal functor $F:\QCoh(X) \to \Mod_A$ by passage to ind-completions. Moreover, both $\QCoh_{fp}(X)$ and $\Mod_{fp,A}$ have finite colimits and $G_{fp}$ preserves these. By formal nonsense, $F$ preserves all colimits. Hence, by Theorem \ref{thm:BC}, there is a unique map $f:\Spec(A) \to X$ such that $F = f^*$. Note that the composition $\QCoh(X) \stackrel{F}{\to} \Mod_A \to \Mod_{A_i}$ is identified with $\Gamma(\Spec(A_i),f_i^*(-))$ as both are cocontinuous and agree on the compact objects $\QCoh_{fp}(X) \subset \QCoh(X)$. Hence, the composition $\Spec(A_i) \subset \Spec(A) \stackrel{f}{\to} X$ induces $f_i^*$ on pullback, and must therefore coincide with $f_i$ by Theorem \ref{thm:BC}. It follows that $f$ is the desired extension.
\end{proof}

\newpage
\section{Algebraization of products: algebraic spaces}
\label{sec:algprodspaces}

The goal of this section is to prove the following theorem.

\begin{theorem}
	\label{thm:algprodspaces}
Fix a set $I$ of rings $\{A_i\}_{i \in I}$ with product $A := \prod_i A_i$, and a qcqs algebraic space $X$. Then $X(A) \simeq \prod_i X(A_i)$ via the natural map.
\end{theorem}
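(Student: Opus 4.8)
The plan is to follow the template of \S\ref{sec:algprodsch}, replacing quasi-coherent sheaves by perfect complexes and Theorem \ref{thm:BC} by Theorem \ref{thm:TD}; this substitution is forced, since no analogue of Theorem \ref{thm:BC} is available for algebraic spaces. Injectivity of $X(A)\to\prod_i X(A_i)$ is immediate from Remark \ref{rmk:algprodspcinj}: if $a,b:\Spec(A)\to X$ agree over each $\Spec(A_i)$, then the base change $Z$ of $\Delta:X\to X\times X$ along $(a,b)$ is a quasi-compact monomorphism over $\Spec(A)$, hence a quasi-affine scheme carrying a section over each $\Spec(A_i)$; Lemma \ref{lem:algprodquasiaffine} produces a section of $Z\to\Spec(A)$, which is therefore an isomorphism, so $a=b$.

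For surjectivity, fix maps $f_i:\Spec(A_i)\to X$ and consider the functor $G:D_\perf(X)\to D(A)$ given by $G(K):=\prod_i f_i^*K$, the product being formed in $D(A)$ after restricting scalars along the projections $A\to A_i$. Since products of complexes are computed termwise and are exact on modules, $G$ is exact, and $G(\calO_X)=\prod_i A_i=A$; moreover $G$ is lax symmetric monoidal. The only non-formal input needed is that $G$ lands in $D_\perf(A)$ and is (strongly) symmetric monoidal there. Granting this, the argument concludes exactly as for Theorem \ref{thm:algprodsch}: Theorem \ref{thm:TD} gives a unique $f:\Spec(A)\to X$ with $f^*\simeq G$, and base changing along $A\to A_i$ — using that $G(K)$ is perfect over $A$, so $G(K)\otimes_A^L A_i$ recovers the factor indexed by $i$, namely $f_i^*K$ — identifies the composite $\Spec(A_i)\hookrightarrow\Spec(A)\stackrel{f}{\to}X$ with a map inducing $f_i^*$ on perfect complexes, hence with $f_i$ itself. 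So $f$ is the desired extension.

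To show $G$ takes values in $D_\perf(A)$, the idea is to produce \emph{uniform} finite free models: I would prove that there are integers $\ell$ and $N$, depending only on $X$ and $K$, such that each $f_i^*K$ is represented by a complex of finite projective $A_i$-modules supported in $\ell$ consecutive degrees with every term a summand of $A_i^{\oplus N}$. Taking products over $i$ then yields a bounded complex of $A$-modules whose terms are summands of $\prod_i A_i^{\oplus N}=A^{\oplus N}$ — a perfect complex — computing $\prod_i f_i^*K=G(K)$; comparing tensor products of these models supplies the symmetric monoidal structure, and comparing the factors as $K$ varies identifies the base change of $G(K)$ along $A\to A_i$ with $f_i^*K$. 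The bound on $\ell$ is routine: a perfect complex on the qcqs space $X$ has globally bounded Tor-amplitude, inherited by each pullback $f_i^*K$, and a perfect complex over a ring with Tor-amplitude in a window of length $\ell$ admits a finite projective model supported in that window (bounded-rank products of finite projectives over the $A_i$ remain finite projective over $A$, so nothing is lost by allowing projectives rather than free modules).

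The bound on $N$ is the crux and the step I expect to be the main obstacle, playing the role of Lemma \ref{lem:boundschlfp} in the scheme case but now at the level of complexes. For it, I would fix a scallop decomposition $\emptyset=W_0\subset W_1\subset\cdots\subset W_n=X$ of $X$ by quasi-compact opens (as in the proof of Lemma \ref{lem:preserveconnectivity}; see \cite[Tag 08GL]{StacksProject}), with each $W_j$ a Nisnevich pushout of $W_{j-1}$ and an affine scheme along a quasi-compact open immersion, pull it back along each $f_i$, and argue by induction on $j$ that a suitably economical finite projective model of $(f_i^*K)|_{f_i^{-1}(W_j)}$ has ranks bounded by constants depending only on the $W_j$. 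The inductive step glues an economical model over the smaller open $f_i^{-1}(W_{j-1})$ with one over the relevant affine across a Nisnevich distinguished square, and the rank bookkeeping is controlled by a Nisnevich-cover analogue of Lemma \ref{lem:boundgenerators} — bounding the number of generators of a module over a ring in terms of the numbers of generators over the two legs of a Nisnevich square — together with Lemma \ref{lem:boundaffinecover}. Establishing and assembling these elementary-but-delicate estimates is where the real work of the section lies.
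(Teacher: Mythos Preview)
Your proposal is correct and matches the paper's strategy: injectivity via the diagonal trick, surjectivity by showing $K\mapsto\prod_i f_i^*K$ defines an exact symmetric monoidal functor $D_\perf(X)\to D_\perf(A)$ and invoking Theorem \ref{thm:TD}, with the crux being a uniform bound on the ``size'' of $f_i^*K$. Where you diverge is in the execution of that bound. Rather than inducting along a scallop decomposition of $X$ and gluing models across Nisnevich squares, the paper introduces abstract notions of \emph{local} and \emph{global size} for a perfect complex over a ring (\S\ref{ss:boundperfect}), pulls back a single affine Nisnevich cover $U\to X$ to a bounded-length affine Nisnevich cover of each $\Spec(A_i)$ (Lemma \ref{lem:boundaffinepullback}), and then proves a universal implication ``local size $\leq N$ $\Rightarrow$ global size $\leq f(N)$'' (Lemma \ref{lem:boundlocalglobal}) by nested induction on cohomological and Tor amplitude, working entirely over the affine $\Spec(A_i)$. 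This sidesteps the awkwardness in your sketch of formulating the inductive hypothesis over the non-affine intermediate opens $f_i^{-1}(W_j)$: there, ``economical finite projective model'' is not quite the right notion, and rank bounds on locally free sheaves do not by themselves control embedding dimensions of projective modules (cf.\ Example \ref{ex:boundSwanexample}). Both approaches rest on the same Nisnevich generator bound (your ``Nisnevich-cover analogue of Lemma \ref{lem:boundgenerators}'' is exactly Lemma \ref{lem:boundgeneratorsNisnevich}); the paper's packaging is simply more modular and avoids any gluing.
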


The injectivity of the map appearing in Theorem \ref{thm:algprodspaces} is proven as in Theorem \ref{thm:algprodsch}. For surjectivity, we use Theorem \ref{thm:TD}. To apply this theorem, one must construct a symmetric monoidal functor $F:D_\perf(X) \to D_\perf(\prod_i A_i)$ starting from a family of maps $\{f_i:\Spec(A_i) \to X\}_{i \in I}$. In analogy with the proof of Theorem \ref{thm:algprodsch}, the obvious guess is to use $F(K) = \prod_i \Gamma(\Spec(A_i),f_i^* K) \in D(A)$. In fact, if $F = f^*$ for some map $f:\Spec(A) \to X$, then $F$ is forced to be given by this formula\footnote{This is only true for perfect complexes, not ``large'' quasi-coherent complexes.}. However, in general, it is not clear why $F(K)$ thus defined is a perfect complex: an arbitrary product of perfect complexes $K_i \in D_\perf(A_i)$ is typically not $A$-perfect. Indeed, various numerical invariants associated to the $K_i$'s (such as the cohomological amplitude, the Tor amplitude, the minimal number of generators for $H^0(K_i)$, etc.) might be unbounded as $i$ varies, which immediately precludes $\prod_i K_i$ from being $A$-perfect; see Example  \ref{ex:boundSwanexample} for an explicit example. In \S \ref{ss:boundNisnevich} and \S \ref{ss:boundperfect}, we show that the numerical obstruction is the only one: if the $K_i$'s are presented by a ``bounded amount of data'' (as $i$ varies), then $\prod_i K_i$ is indeed perfect. The phrase ``bounded amount of data'' is made precise by bounding the number and embedding dimension of the projective modules occurring in a presentation for $K_i$ over a Nisnevich cover\footnote{As we work with algebraic spaces, we are forced to use Nisnevich covers instead of Zariski ones.} of $\Spec(A_i)$ of bounded size; the key result, Lemma \ref{lem:boundlocalglobal}, is that a local bound of $N \in \N$ implies a global bound of $f(N) \in \N$ for some function $f:\N \to \N$ (which, crucially, is independent of the ring under consideration). It is then relatively straightforward to check that the functor $F$ defined above does the job, as we do in \S \ref{ss:algprodspacesproof}.

\subsection{Bounding Nisnevich covers}
\label{ss:boundNisnevich}

All algebraic spaces appearing in this subsection are assumed to be qcqs. Recall that a map $f:U \to Y$ of algebraic spaces is called a {\em Nisnevich cover} if it is an \'etale cover that admits sections over a constructible stratification of $Y$ (see \cite[\S 1]{LurieDAGXI}). We will need a quantitative variant:

\begin{definition}
	A Nisnevich cover $f:U \to Y$ of an algebraic space $Y$ has {\em length $\leq m$} if there exists a flag $\emptyset = Z_0 \subset Z_1 \subset \dots \subset Z_m := Y$ of constructible closed subsets such that $f|_{Z_i \setminus Z_{i-1}}$ has a section for all $i$.
\end{definition}

Note that every Nisnevich cover $f:U \to Y$ has $\leq m$ for some $m > 0$. Moreover, this property is stable under base change.

\begin{lemma}
Fix a Nisnevich cover $f:U \to Y$ of an algebraic space $Y$ of length $\leq m$. For any map $g:Y' \to Y$, the base change $U \times_Y Y' \to Y'$ of $f$ along $g$ is a Nisnevich cover of length $\leq m$.
\end{lemma}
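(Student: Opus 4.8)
The statement is that being a Nisnevich cover of length $\leq m$ is stable under arbitrary base change. The plan is to simply transport the given flag on $Y$ to $Y'$ and check that sections pull back. First I would fix a flag $\emptyset = Z_0 \subset Z_1 \subset \dots \subset Z_m = Y$ of constructible closed subspaces witnessing that $f:U \to Y$ has length $\leq m$, so that for each $i$ the restriction $f|_{Z_i \setminus Z_{i-1}}: U \times_Y (Z_i \setminus Z_{i-1}) \to Z_i \setminus Z_{i-1}$ admits a section $s_i$. Set $Z_i' := g^{-1}(Z_i) = Z_i \times_Y Y' \subset Y'$; since preimages of constructible closed subspaces under a map of qcqs algebraic spaces are again constructible closed subspaces, and the formation of preimages is monotone, the $Z_i'$ form a flag $\emptyset = Z_0' \subset Z_1' \subset \dots \subset Z_m' = Y'$.

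Next I would check that the base change $U \times_Y Y' \to Y'$ admits a section over each stratum $Z_i' \setminus Z_{i-1}'$. The key point is the identification of the $i$-th stratum of $Y'$ with the pullback of the $i$-th stratum of $Y$: one has $Z_i' \setminus Z_{i-1}' = (Z_i \setminus Z_{i-1}) \times_Y Y'$, which follows since $g^{-1}$ commutes with the open-complement operation $(-) \setminus (-)$ (equivalently, $g^{-1}(Z_i \setminus Z_{i-1}) = g^{-1}(Z_i) \cap g^{-1}(Y \setminus Z_{i-1}) = Z_i' \cap (Y' \setminus Z_{i-1}') = Z_i' \setminus Z_{i-1}'$). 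Base changing the section $s_i: Z_i \setminus Z_{i-1} \to U \times_Y (Z_i \setminus Z_{i-1})$ along the map $Z_i' \setminus Z_{i-1}' \to Z_i \setminus Z_{i-1}$ then produces a section of
\[ (U \times_Y Y') \times_{Y'} (Z_i' \setminus Z_{i-1}') \;\simeq\; \big(U \times_Y (Z_i \setminus Z_{i-1})\big) \times_{(Z_i \setminus Z_{i-1})} (Z_i' \setminus Z_{i-1}') \]
over $Z_i' \setminus Z_{i-1}'$, as desired. Finally, $U \times_Y Y' \to Y'$ is an \'etale cover, being a base change of the \'etale cover $f$. Combining these, the flag $\{Z_i'\}$ witnesses that $U \times_Y Y' \to Y'$ is a Nisnevich cover of length $\leq m$.

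This argument is entirely formal; there is no serious obstacle. The only points requiring (standard) justification are that constructible closed subspaces of qcqs algebraic spaces are stable under preimage and that the lattice operations (intersection, open complement) on such subspaces commute with preimage — both of which are routine. One should perhaps remark that the $Z_i'$ need not be \emph{distinct} even if the $Z_i$ were, but this is harmless: the definition of length $\leq m$ only requires the existence of a flag of that length, not a strictly increasing one.
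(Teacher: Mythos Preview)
Your proof is correct and takes essentially the same approach as the paper: pull back the witnessing flag along $g$ and use that $g^{-1}$ preserves constructible closed subspaces (since $Y$ and $Y'$ are qcqs by the standing convention, so $g$ is qcqs). The paper compresses this to a single sentence, while you have spelled out the routine verification that strata and sections base change compatibly.
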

\begin{proof}
As $Y'$ and $Y$ are qcqs, the map $g$ is a qcqs map, and thus $g^{-1}$ preserves constructibility, which immediately gives the lemma.
\end{proof}

Lengths multiply under compositions.

\begin{lemma}
	Fix a composite $Y_1 \stackrel{g_1}{\to} Y_2 \stackrel{g_2}{\to} Y_3$ of Nisnevich covers of algebraic spaces such that $g_i$ has length $\leq m_i$ for $i = \{1,2\}$ and suitable $m_i \in \N$. Then $g_2 \circ g_1$ has length $\leq m_1 \cdot m_2$.
\end{lemma}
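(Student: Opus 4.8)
The plan is to refine the $m_2$-step flag on $Y_3$ coming from $g_2$ by subdividing each of its strata into $m_1$ pieces, using the $m_1$-step flag on $Y_2$ coming from $g_1$ transported along the sections of $g_2$. Fix flags $\emptyset = W_0 \subseteq W_1 \subseteq \dots \subseteq W_{m_2} = Y_3$ witnessing that $g_2$ has length $\leq m_2$, with a section $\sigma_j$ of $g_2$ over $T_j := W_j \setminus W_{j-1}$, and $\emptyset = V_0 \subseteq V_1 \subseteq \dots \subseteq V_{m_1} = Y_2$ witnessing that $g_1$ has length $\leq m_1$, with a section $\tau_i$ of $g_1$ over $V_i \setminus V_{i-1}$. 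For each $j$, set $V_i^{(j)} := \sigma_j^{-1}(V_i) \subseteq T_j$; since the formation of finitely presented (equivalently, constructible) closed subspaces is stable under arbitrary base change, each $V_i^{(j)}$ is a constructible closed subset of $T_j$, with $V_0^{(j)} = \emptyset$, $V_{m_1}^{(j)} = T_j$, and $\sigma_j$ carrying $V_i^{(j)} \setminus V_{i-1}^{(j)}$ into $V_i \setminus V_{i-1}$.

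The key step is to lift each $V_i^{(j)}$ to an honest constructible closed subset of $Y_3$ squeezed between $W_{j-1}$ and $W_j$. Because $W_{j-1}$ is constructible closed in $W_j$, the open subspace $T_j$ is retrocompact in $W_j$; and $T_j \setminus V_i^{(j)}$ is retrocompact open in $T_j$ by constructibility of $V_i^{(j)}$. Retrocompactness being transitive along open immersions, $T_j \setminus V_i^{(j)}$ is retrocompact open in $W_j$, so $\widehat{V}_i^{(j)} := W_j \setminus (T_j \setminus V_i^{(j)})$ is a constructible closed subset of $W_j$, hence of $Y_3$. By construction $W_{j-1} = \widehat{V}_0^{(j)} \subseteq \widehat{V}_1^{(j)} \subseteq \dots \subseteq \widehat{V}_{m_1}^{(j)} = W_j$ and $\widehat{V}_i^{(j)} \cap T_j = V_i^{(j)}$, so $\widehat{V}_i^{(j)} \setminus \widehat{V}_{i-1}^{(j)} = V_i^{(j)} \setminus V_{i-1}^{(j)}$. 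Concatenating these chains over $j = 1, \dots, m_2$ (with $\widehat{V}_{m_1}^{(j)} = W_j = \widehat{V}_0^{(j+1)}$) produces a flag of constructible closed subsets of $Y_3$ with $m_1 m_2$ steps, running from $\emptyset = W_0$ to $W_{m_2} = Y_3$. Over its $(i,j)$-th stratum $V_i^{(j)} \setminus V_{i-1}^{(j)}$, the map $\tau_i \circ \bigl(\sigma_j|_{V_i^{(j)} \setminus V_{i-1}^{(j)}}\bigr)$ is a section of $g_2 \circ g_1$, since $\sigma_j$ is a section of $g_2$ on $T_j$ and lands in $V_i \setminus V_{i-1}$, where $\tau_i$ sections $g_1$. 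Hence $g_2 \circ g_1$ has length $\leq m_1 m_2$.

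The only genuinely delicate point — and the one I would flag as the main obstacle — is the lift $V_i^{(j)} \rightsquigarrow \widehat{V}_i^{(j)}$: one cannot simply take the closure of $V_i^{(j)}$ in $W_j$, because in the absence of noetherian hypotheses the closure of a constructible subset need not be constructible. Complementing inside $W_j$ instead, as above, sidesteps this, at the cost of the small bookkeeping check that retrocompactness propagates through the tower $T_j \setminus V_i^{(j)} \subseteq T_j \subseteq W_j$. The remaining verifications (that the concatenated chain is a flag and that the displayed composites are sections) are routine.
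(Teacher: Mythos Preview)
Your proof is correct and follows essentially the same strategy as the paper's: both refine the $m_2$-step flag on $Y_3$ by pulling back the $m_1$-step flag on $Y_2$ along the sections of $g_2$, then push the resulting constructible closed subsets of each stratum $T_j$ back into $Y_3$ by unioning with $W_{j-1}$ (your formula $\widehat{V}_i^{(j)} = W_j \setminus (T_j \setminus V_i^{(j)})$ is set-theoretically $W_{j-1} \cup V_i^{(j)}$, which is exactly the paper's $K^{j,'}_i \cup Z_{j-1}$). You are in fact more careful than the paper on the one genuinely delicate point---the constructibility of the lifted flag in the absence of noetherian hypotheses---which the paper leaves implicit.
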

\begin{proof}
	Let $Z_\bullet := \{\emptyset = Z_0 \subset Z_1 \subset \dots \subset Z_{m_2} = Y_3\}$ and $W_\bullet := \{\emptyset = W_0 \subset W_1 \subset \dots \subset W_{m_1} = Y_2\}$ be flags of constructible closed subsets witnessing the lengths of $g_2$ and $g_1$ respectively. We will inductively construct a length $m_2$ flag $K^i_\bullet$ of constructible closed subsets $K^i_j \subset Z_i$ such that $K^i_j$ contains $Z_{i-1}$ for all $j$, and $g_2 \circ g_1$ admits sections over each $K^i_j \setminus K^i_{j-1}$. Putting the various $K^i_\bullet$ together then gives a flag in $Y_3$ of size $m_1 \cdot m_2$ with the required properties. For $i = 1$, we simply use $K^1_\bullet := W_\bullet \cap Z_1$, where $Z_1$  is viewed as a closed subset of $Y_2$ via some chosen section of $g_2$ over $Z_1$. Assume such flags have been constructed for $Z_i$. Then $Z_{i+1} \setminus Z_i$ may be viewed as a subscheme of $Y_2$ via some chosen section. Thus, $K^{i+1,'}_\bullet := W_\bullet \cap (Z_{i+1} \setminus Z_i)$ defines a flag of constructible closed subsets of $Z_{i+1} \setminus Z_i$ of length $m_2$ such that $g_2 \circ g_1$ admits sections over $K^{i+1,'}_j \setminus K^{i+1,'}_{j-1}$. Setting $K^{i+1}_\bullet = K^{i+1,'} \cup Z_i$ gives the desired flag.
\end{proof}

As a result, lengths behave predictably under Zariski covers.

\begin{example}
	Let $f:U \to Y$ be a Nisnevich cover of algebraic spaces with length $\leq m$. Fix an open cover $\{U_1,\dots,U_k\}$ of $U$. Then the composite $\sqcup U_i \to Y$ is a Nisnevich cover of length $\leq m \cdot k$. To see this, it is enough to check that $g:\sqcup U_i \to U$ is a Nisnevich cover of length $\leq k$. We show this by induction on $k$. If $k = 1$, the claim is clear. In general, set $Z_k := U$ and $Z_{k-1} := U \setminus U_1$. Note that $\sqcup U_i \to U$ has a section over $U_1 := Z_k \setminus Z_{k-1}$. As $U_1 \cap Z_{k-1} = \emptyset$, the inductive hypotheses applies to the restriction of $g$ to $Z_{k-1}$ to give a flag $\emptyset = Z_0 \subset Z_1 \subset \dots \subset Z_{k-1}$ of closed subschemes such that $g$ admits sections over $Z_i \setminus Z_{i-1}$. It is then clear that $g$ has length $\leq k$.
\end{example}

Using lengths, one can bound the minimal number of generators of a module over a ring in terms of the corresponding number over a Nisnevich cover, in analogy with Lemma \ref{lem:boundgenerators}.

\begin{lemma}
	\label{lem:boundgeneratorsNisnevich}
	Fix a Nisnevich cover $f:U \to Y$ of length $\leq m$ of an affine scheme $Y = \Spec(R)$, and some $M \in \Mod_R$. If $f^* \widetilde{M}$ is generated by $\leq N$ global sections, then $\widetilde{M}$ is generated by $\leq Nm$ global sections.
\end{lemma}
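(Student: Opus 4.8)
The plan is to induct on $m$, following the proof of Lemma~\ref{lem:boundgenerators} but with the modifications forced by passing from Zariski to Nisnevich covers. First a preliminary remark: although $M$ is not assumed finite, it necessarily is. Indeed, choosing an \'etale surjection $\Spec(R') \to U$ with $R'$ a ring, the composite $\Spec(R') \to U \to Y$ is a faithfully flat map of affine schemes, and pulling back the hypothesis shows $M \otimes_R R'$ is generated by $\leq N$ elements over $R'$; hence $M$ is finite over $R$ by faithfully flat descent of finite generation.

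For the induction, the case $m = 1$ is immediate: a section $s$ of $f$ over all of $Y$ identifies $\widetilde M$ with $s^* f^* \widetilde M$, which is generated by $\leq N$ global sections. For the inductive step, fix a flag $\emptyset = Z_0 \subset \dots \subset Z_m = Y$ of constructible closed subsets with $f$ admitting a section over each stratum $Z_i \setminus Z_{i-1}$, and write $Z_{m-1} = V(J)$ with $J \subset R$ a finitely generated ideal, giving $Z_{m-1}$ the structure $\Spec(R/J)$. The base change $f^{-1}(Z_{m-1}) \to Z_{m-1}$ is a Nisnevich cover of length $\leq m-1$, witnessed by $Z_0 \subset \dots \subset Z_{m-1}$ (whose strata still carry sections of $f$ by restriction, and stay constructible closed since a closed immersion is qcqs), and the pullback of $f^*\widetilde M$ to it is generated by $\leq N$ global sections; so the inductive hypothesis applied to $R/J$ gives that $\widetilde M|_{Z_{m-1}} = \widetilde{M/JM}$ is generated by $\leq N(m-1)$ global sections. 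Lift such a generating set to $m_1, \dots, m_{N(m-1)} \in M$ and set $C := \coker(R^{\oplus N(m-1)} \to M)$, a finite $R$-module with $JC = C$ (since the images of the $m_i$ generate $M/JM$); by the determinant-trick form of Nakayama's lemma there is $r \in 1 + J$ with $rC = 0$. Then the $m_i$ generate $M$ over the basic open $D(r)$, which contains $Z_{m-1}$ (as $r$ is a unit modulo $J$), while $Z' := V(r) = \Spec(R/rR)$ is a finitely presented closed subscheme disjoint from $Z_{m-1}$ (as $(r) + J = R$), hence contained in the stratum $Z_m \setminus Z_{m-1}$, over which $f$ has a section. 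Therefore $\widetilde M|_{Z'} = \widetilde{M/rM}$ is generated by $\leq N$ global sections; lifting such a set to $m'_1, \dots, m'_N \in M$, I claim the map $R^{\oplus Nm} \to M$ built from all the $m_i$ together with the $m'_j$ is surjective. Indeed its cokernel $C''$ is a quotient of $C$, so $rC'' = 0$, while $C''/rC'' = 0$ because the $m'_j$ generate $M/rM$; hence $C'' = rC'' = 0$. This yields the bound of $Nm$ generators.

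The main obstacle, and the genuine difference from the Zariski case, is the step producing the second batch of generators: the stratum $Z_m \setminus Z_{m-1}$ over which $f$ has a section is only an open subset of $Y$, not a closed subscheme, so one cannot simply restrict $M$ to it and feed it back into the induction. The determinant trick resolves this by manufacturing an honest element $r$ whose closed vanishing locus $V(r)$ is a finitely presented closed subscheme contained in that open stratum and covering precisely the locus where the first batch of generators fails to span; everything else (restriction of constructible closed subsets remaining constructible closed, compatibility of sections of $f$ under restriction, compatibility of pullback with restriction) is routine bookkeeping.
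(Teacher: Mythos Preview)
Your proof is correct. It differs from the paper's in the order of the induction step: the paper peels off the \emph{bottom} stratum $Z_1$ first (using the section there to produce $N$ generators of $M$, then applying induction to the cokernel $Q$ over its support $W = \Spec(R/\Ann(Q))$, which is disjoint from $Z_1$ and hence carries a Nisnevich cover of length $\leq m-1$), whereas you peel off the \emph{top} closed piece $Z_{m-1}$ first (applying induction to $M/JM$ to get $N(m-1)$ generators, then using the determinant trick to produce a closed subscheme $V(r)$ inside the open stratum $Z_m \setminus Z_{m-1}$ where the section supplies the remaining $N$ generators). Your ordering is the direct Nisnevich analogue of the paper's proof of the Zariski Lemma~\ref{lem:boundgenerators}, while the paper chose the reversed ordering for the Nisnevich statement; both yield the same bound $Nm$. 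Your preliminary observation that $M$ is automatically finite (via faithfully flat descent) is a useful clarification: the paper's own argument implicitly uses finiteness of $Q$ when invoking its closed support, and its parenthetical ``finitely presented'' for $Q$ is not quite justified without such a remark.
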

\begin{proof}
	We prove the claim by induction on $m$.  Choose a flag $\emptyset = Z_0 \subset Z_1 \subset \dots Z_m := Y$ of constructible closed subsets of $Y$ such that $f$ admits sections over $Z_i \setminus Z_{i-1}$. If $m = 1$, then $Z_1 = Y$, so there is nothing to show. For $m > 1$, choose a map $\phi:R^{\oplus m} \to M$ that is surjective over $Z_1$; this is always possible as $\widetilde{M}|_{Z_1}$ is generated by $\leq m$ sections by the assumption on $f$, and because $Z$ and $X$ are affine. The cokernel $Q = \coker(\phi)$ is a finitely presented $R$-module whose support is a closed subscheme $W := \Spec(R/I) \subset \Spec(R)$ that does not intersect $Z_1$. The restriction $f|_{W}$ is then a Nisnevich cover of length $\leq m-1$. As $Q$ is a quotient of $M/IM$, one finds, by induction, a surjection $(R/I)^{\oplus N \cdot (m-1)} \to Q$. Lifting sections to $R$ and $M$ gives a map $\psi:R^{\oplus N \cdot (m-1)} \to M$ whose image surjects onto $Q$. The sum $\phi \oplus \psi:R^{\oplus N \cdot m} \to M$ is then easily seen to be surjective, proving the claim.
\end{proof}

One also has an analogue of Lemma \ref{lem:boundaffinecover}.

\begin{lemma}
	\label{lem:boundaffinecoverNisnevich}
Let $j:W \to Y$ be a quasi-affine morphism of algebraic spaces. Then there exists an integer $m$ such that: for any map $\Spec(B) \to Y$, the pullback $W \times_Y \Spec(B)$ is covered by $m$ affines.
\end{lemma}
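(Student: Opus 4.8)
The plan is to transcribe the proof of Lemma~\ref{lem:boundaffinecover}, replacing the finite affine \emph{open} cover of $Y$ used there --- unavailable for a general algebraic space --- by a Nisnevich cover by affines of bounded length, and using Lemma~\ref{lem:boundgeneratorsNisnevich} in place of Lemma~\ref{lem:boundgenerators} at the final counting step. First, exactly as in Lemma~\ref{lem:boundaffinecover}, I would replace $Y$ by the affine $Y$-scheme $\underline{\Spec}_Y(j_*\calO_W)$; since $\underline{\Spec}_Y(j_*\calO_W)\times_Y\Spec(B)$ is affine for every map $\Spec(B)\to Y$, no uniformity is lost, and we are reduced to the case where $j:W\hookrightarrow Y$ is a quasi-compact open immersion of qcqs algebraic spaces. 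Such a $j$ is finitely presented, so the complement $Y\setminus W$ is the vanishing locus of \emph{some} finite-type quasi-coherent ideal $\mathcal{I}\subset\calO_Y$ (a standard fact; see \cite[Tag 01PH]{StacksProject} for schemes and its analogue for algebraic spaces).

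Next I would choose a Nisnevich cover $p:Y'\to Y$ of length $\leq\ell$ with $Y'=\bigsqcup_{a=1}^{k}\Spec(A_a)$ a finite disjoint union of affine schemes; these exist by the scallop decompositions already used in the proof of Lemma~\ref{lem:preserveconnectivity} (see \cite[Tag 08GL]{StacksProject}, \cite[Theorem 1.3.8]{LurieDAGXII}, or \cite[\S 1]{LurieDAGXI}). On each chart $p^{*}\mathcal{I}$ restricts to a finitely generated ideal of $A_a$; padding with zeros, fix $r\in\N$ so that each of these is generated by $\leq r$ elements, and assemble these across the $k$ charts into $r$ global sections of $\calO_{Y'}$, so that $p^{*}\mathcal{I}$ is generated by $\leq r$ global sections on $Y'$. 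I claim that $m:=r\cdot\ell$ is the required integer.

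To check this, fix $h:\Spec(B)\to Y$ and set $T:=\Spec(B)$, $T':=Y'\times_YT$, and $W_B:=W\times_YT$. Since Nisnevich covers of bounded length are stable under base change, $p_B:T'\to T$ is a Nisnevich cover of length $\leq\ell$. Put $M:=\Gamma(T,h^{*}\mathcal{I})$, a finitely generated $B$-module, and $J:=\mathrm{im}(M\to B)$, so that $W_B=T\setminus V(J)$ because $W=Y\setminus V(\mathcal{I})$. The two equal composites $T'\to Y'\xrightarrow{p}Y$ and $T'\to T\xrightarrow{h}Y$ identify $p_B^{*}\widetilde{M}$ with $\mathrm{pr}_{Y'}^{*}(p^{*}\mathcal{I})$, which is generated by $\leq r$ global sections on $T'$; hence Lemma~\ref{lem:boundgeneratorsNisnevich}, applied to the cover $p_B:T'\to T$ of length $\leq\ell$ and the module $M$, shows that $\widetilde{M}$ --- and therefore its quotient $\widetilde{J}$ --- is generated by $\leq r\ell=m$ global sections $f_1,\dots,f_m\in B$. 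Then $W_B=\bigcup_{i=1}^{m}D(f_i)$ is covered by $m$ affines, with $m$ independent of $B$. The only step demanding genuine care is the claim in the first paragraph that a \emph{single} finite-type ideal on $Y$ (rather than just one on each Nisnevich chart) cuts out $Y\setminus W$ --- this is exactly where finite presentation of $j$ is used --- after which the numerical bound ``$\leq r$ generators'' descends from $Y'$ to $\Spec(B)$ precisely because $p_B$ inherits the length bound $\leq\ell$ from $p$, which is what Lemma~\ref{lem:boundgeneratorsNisnevich} is built to exploit.
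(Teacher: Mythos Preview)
Your proof is correct and follows essentially the same approach as the paper's own proof: reduce to a quasi-compact open immersion via $\underline{\Spec}_Y(j_*\calO_W)$, choose a finite-type ideal cutting out the complement together with a Nisnevich cover of bounded length on which the ideal is boundedly generated, and then apply Lemma~\ref{lem:boundgeneratorsNisnevich} to the base-changed cover over $\Spec(B)$. Your write-up is somewhat more detailed (e.g., you justify the existence of the finite-type ideal and spell out the identification $p_B^*\widetilde{M}\simeq\mathrm{pr}_{Y'}^*(p^*\mathcal{I})$), but the argument is the same.
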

\begin{proof}
	By replacing $Y$ with the affine $Y$-space $\underline{\Spec}_Y(j_* \calO_W)$,  we may assume $j$ is a quasi-compact open immersion. Let $Z = Y \setminus W$, and choose a locally finitely generated ideal sheaf $I \subset \calO_Y$ defining $Z$. Choose a Nisnevich cover $f:U \to Y$ of length $\leq k$ such that $f^* I$ is generated $\leq r$ global sections. Set $m = k \cdot r$. Fix some map $j:\Spec(B) \to Y$ from an affine scheme. Then $W \times_Y \Spec(B) \to \Spec(B)$ is a quasi-compact open immersion defined by the ideal $j^* I$. Lemma \ref{lem:boundgeneratorsNisnevich}, applied to the pullback of $f$ along $j$, shows that $j^*I$ is defined by $m$ global sections $f_1,\dots,f_m \in \Gamma(\Spec(B),j^*I)$. The corresponding distinguished opens $\{D(f_1),\dots,D(f_m)\}$ of $\Spec(B)$ give the desired open cover for $W \times_Y \Spec(B)$.
\end{proof}

We will actually need a more precise version of a special case of Lemma \ref{lem:boundaffinecoverNisnevich}:

\begin{lemma}
	\label{lem:boundaffinepullback}
	Let $f:U \to Y$ be a Nisnevich cover of an algebraic space $Y$ of length $\leq m'$. There exists an integer $m$ such that: for any map $\Spec(B) \to Y$, there exists a Nisnevich cover $V \to \Spec(B)$ of length $\leq m$ with $V$ affine, and a $Y$-map $V \to U$.
\end{lemma}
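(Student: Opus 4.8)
The plan is to replace $f$, once and for all, by a \emph{quasi-affine} Nisnevich cover of $Y$ of bounded length that still maps to $U$, and then feed that into Lemma \ref{lem:boundaffinecoverNisnevich}. So first I would construct a morphism $j\colon W \to Y$, depending only on $f$, which is étale, quasi-affine, surjective, a Nisnevich cover of some length $\leq m''$ (with $m''$ independent of $B$), and which admits a $Y$-morphism $W \to U$. For this I would fix a scallop presentation of $U$ as in \cite[Tag 08GL]{StacksProject} (or \cite[Theorem 1.3.8]{LurieDAGXII}): quasi-compact opens $\emptyset = U_0 \subseteq U_1 \subseteq \dots \subseteq U_n = U$, with $U_k$ the pushout of an étale map $V_{k-1} \to U_{k-1}$ along a quasi-compact open immersion $V_{k-1} \hookrightarrow \Spec(A_k)$. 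The resulting étale maps $\Spec(A_k) \to U_k \subseteq U$ assemble into $W := \bigsqcup_{k=1}^{n} \Spec(A_k) \to U$, and this is a Nisnevich cover of $U$ of length $\leq n$: the witnessing flag is $U_\bullet$, and over the stratum $U_k \setminus U_{k-1} \cong \Spec(A_k) \setminus V_{k-1}$ the obvious section is the inclusion of $\Spec(A_k) \setminus V_{k-1}$ into the $k$-th summand. Composing with $f$, and using that lengths multiply under composition of Nisnevich covers, $j := (W \to U \xrightarrow{f} Y)$ is a Nisnevich cover of $Y$ of length $\leq m'' := n m'$; it is clearly étale and surjective; and it is quasi-affine because each $\Spec(A_k) \to Y$, being a morphism out of an affine scheme into the qcqs algebraic space $Y$, is automatically separated (affine schemes are separated over $\Z$ and the diagonal of $Y$ is a monomorphism), hence, being also étale and quasi-compact, quasi-affine by Zariski's Main Theorem — and a finite disjoint union of quasi-affine $Y$-morphisms is quasi-affine. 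Finally $W \to U$ is the required $Y$-map.

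Granting such a $W$, the lemma follows with modest bookkeeping. Given $\Spec(B) \to Y$, the base change $W_B := W \times_Y \Spec(B)$ is a quasi-affine scheme (base change of the quasi-affine $j$), it is étale over $\Spec(B)$, and it is a Nisnevich cover of $\Spec(B)$ of length $\leq m''$, since base change preserves lengths. By Lemma \ref{lem:boundaffinecoverNisnevich} applied to the quasi-affine $j$ there is an integer $m_0$, independent of $B$, with $W_B = D_1 \cup \dots \cup D_{m_0}$ for affine opens $D_\ell \subseteq W_B$. Then $V := \bigsqcup_{\ell=1}^{m_0} D_\ell$ is an affine scheme, the natural map $V \to W_B$ is an étale surjection, so $V \to \Spec(B)$ is an étale cover, and $V \to W_B \to W \to U$ is a $Y$-map. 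To see that $V \to \Spec(B)$ is Nisnevich of length $\leq m := m'' m_0$, take a flag $\emptyset = W_0 \subseteq \dots \subseteq W_{m''} = \Spec(B)$ witnessing that $W_B \to \Spec(B)$ is Nisnevich of length $\leq m''$, with sections $\sigma_i\colon W_i \setminus W_{i-1} \to W_B$, and refine it inside each stratum $W_i \setminus W_{i-1}$ by the length-$m_0$ flag attached to the open cover $\{\sigma_i^{-1}(D_\ell)\}_\ell$ of that (scheme-theoretic) stratum — the standard device turning a $k$-element open cover into a length-$k$ flag, used already above for Zariski covers. Over each refined stratum the section $\sigma_i$ lands in some $D_\ell \subseteq V$, so $V \to \Spec(B)$ is Nisnevich of length $\leq m$, with $m$ depending only on $f$.

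The main obstacle is the construction of $W$ in the first paragraph: one needs a single morphism to $Y$ that is at once étale (to be a Nisnevich cover), quasi-affine (so that Lemma \ref{lem:boundaffinecoverNisnevich} applies), Nisnevich of bounded length, and equipped with a map to $U$. The tension is that the locally closed images of the sections of $f$ over the strata of $Y$ are only locally closed — never étale — over $Y$, whereas a naive étale chart of $U$ maps to $Y$ étale-but-not-Nisnevich-ly and destroys the splitting. What dissolves this is the observation that a scallop presentation of $U$ already supplies an \emph{affine-scheme} Nisnevich cover of $U$ of bounded length (over each scallop stratum the chart $\Spec(A_k)$ contains a canonical section), that composing with $f$ preserves ``Nisnevich of bounded length'', and that — crucially — the resulting morphism to $Y$ is quasi-affine, because any map from an affine scheme to a qcqs algebraic space is separated. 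After that, only routine flag-length arithmetic remains.
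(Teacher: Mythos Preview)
Your proof is correct and follows essentially the same arc as the paper's: apply Lemma~\ref{lem:boundaffinecoverNisnevich} to get a uniform bound on the number of affines covering the pullback, then use the multiplicativity of lengths under composition. The paper's argument is shorter because it applies Lemma~\ref{lem:boundaffinecoverNisnevich} directly to $f$, writes $V' := U \times_Y \Spec(B)$ as a union of $m''$ affine opens, and sets $m = m' \cdot m''$; but this tacitly assumes $f$ is quasi-affine, which is not part of the hypotheses (though it holds in the only application, where $U$ is affine). Your preliminary reduction---replacing $U$ by a disjoint union $W$ of affines via a scallop decomposition, and observing that $W \to Y$ is separated \'etale hence quasi-affine---removes this gap and makes the argument honest for arbitrary $U$, at the cost of one extra layer of length arithmetic. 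One cosmetic point: the scallop filtration $U_\bullet$ you use to witness that $W \to U$ is Nisnevich of length $\leq n$ is a flag of \emph{opens}, whereas the definition asks for constructible \emph{closed} subsets; of course the complements $U \setminus U_{n-i}$ do the job, but you might say so.
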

\begin{proof}
	Lemma \ref{lem:boundaffinecoverNisnevich} applied to $f$ gives an integer $m''$ such that $V' := U \times_Y \Spec(B)$ admits an affine open cover $\{V'_1,\dots,V'_{m''}\}$. Set $V = \sqcup V'_i$, so the map $V \to V'$ is a Nisnevich cover of length $\leq m''$. Setting $m = m' \cdot m''$ then solves the problem.
\end{proof}

\subsection{Bounding perfect complexes}
\label{ss:boundperfect}

The goal of this subsection is to formulate and prove the promised result bounding the size of the presentation of a perfect complex over an affine scheme in terms of the same data over a bounded Nisnevich cover. We make the following {\em ad hoc} definitions for the ``size'':

\begin{definition}
	Fix a ring $R$, an object $K \in D_\perf(R)$, and some positive integer $N$.  We say:
	\begin{enumerate}
		\item $K$ {\em locally has size $\leq N$} if there exists a Nisnevich cover $f:U \to \Spec(R)$ of length $\leq N$ with $U$ affine such that $f^* \widetilde{K}$ is represented by a finite complex $P^\bullet$ of finite locally free $\calO_U$-modules $P^i$ with $P^i = 0$ for $|i| > N$ and each $P^i$ being a retract of $\calO_U^{\oplus N}$.
		\item $K$ {\em globally has size $\leq N$} if $K$ can be represented as a complex $P^\bullet$ of finite projective $R$-modules $P^i$ with $P^i = 0$ for $|i| > N$ and each $P^i$ being a retract of $R^{\oplus N}$.
	\end{enumerate}
\end{definition}

We record an elementary property of these notions.

\begin{lemma}
For any ring $R$, each $K \in D_\perf(R)$ globally has size $\leq N$ for some $N > 0$. Moreover, in this case, $K$ locally has size $\leq N$.
\end{lemma}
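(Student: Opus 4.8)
The plan is to deduce the global bound directly from the standard structure theory of perfect complexes over a ring, and then to observe that the very same constant also works for the local bound because the identity map is already a Nisnevich cover.

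First I would fix a representative of $K$ as a strictly bounded complex $P^\bullet$ of finitely generated projective $R$-modules; such a representative exists for any perfect object of $D(R)$ by the standard structure theory (see, e.g., \cite[Tag 0658]{StacksProject}). Say $P^i = 0$ for $|i| > N_0$. Each nonzero $P^i$ is a direct summand of a finite free module $R^{\oplus n_i}$, and whenever $n_i \leq N$ the module $R^{\oplus n_i}$ — hence also its retract $P^i$ — is a retract of $R^{\oplus N}$. So setting $N := \max\big(N_0,\, \max_i n_i,\, 1\big)$ makes $P^\bullet$ a witness that $K$ globally has size $\leq N$.

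For the second assertion, the key point is that the identity $f := \id_{\Spec(R)}$ is itself a Nisnevich cover of $\Spec(R)$: it is an isomorphism, so it is \'etale and surjective, and it admits a section (namely $\id$) over the one-step flag $\emptyset \subset \Spec(R)$, so it has length $\leq 1 \leq N$; its source is affine. Since $f^* \widetilde{K} \simeq \widetilde{K}$ is represented by the complex $P^\bullet$ produced above — whose terms are finite locally free $\calO_{\Spec(R)}$-modules (finitely generated projective $R$-modules being exactly the finite locally free sheaves on $\Spec(R)$), vanish for $|i| > N$, and are retracts of $\calO_{\Spec(R)}^{\oplus N}$ — the cover $f$ exhibits $K$ as locally having size $\leq N$, with the same $N$.

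I do not expect any genuine obstacle: both statements amount to unwinding the definitions, the only nontrivial ingredient being the classical fact that a perfect complex over a ring admits a strictly bounded finite-projective model. (If one preferred to avoid citing that, one could instead extract $N_0$ from the very definition of perfectness and run a short induction on the amplitude of the complex, but invoking the structure theory keeps the argument cleanest.)
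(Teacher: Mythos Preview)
Your proposal is correct and is exactly what the paper has in mind: its proof reads simply ``Clear from the definition,'' and you have spelled out precisely that unwinding (a perfect complex has a bounded finite-projective model, choose $N$ large enough, then use the identity as a length-$1$ Nisnevich cover).
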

\begin{proof}
Clear from the definition.
\end{proof}

The key result is a converse to the previous lemma: one may propagate local bounds to global ones.

\begin{lemma}
	\label{lem:boundlocalglobal}
There exists a function $f:\N \to \N$ such that: for every ring $R$ and $K \in D_\perf(R)$, if $K$ locally has size $\leq N$, then $K$ globally has size $\leq f(N)$.
\end{lemma}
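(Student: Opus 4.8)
The plan is to produce a bounded-size complex of finite projective $R$-modules representing $K$ by \emph{peeling off the top cohomology of $K$ one degree at a time}, feeding the given presentation over the cover into Lemma~\ref{lem:boundgeneratorsNisnevich} at each stage to keep the number of generators over $R$ under control.

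I would first fix a witnessing Nisnevich cover $f\colon U\to\Spec(R)$ of length $\le m\le N$, with $U$ affine and $f^*\widetilde K$ represented by a complex $P^\bullet$ of finite locally free $\calO_U$-modules concentrated in degrees $[-N,N]$, each $P^i$ a retract of $\calO_U^{\oplus N}$. Since $f$ is faithfully flat, $f^*$ is exact, so $f^*H^i(K)=H^i(f^*K)$; as the right side vanishes for $|i|>N$, the complex $K$ has cohomology concentrated in the range $[a,b_0]\subseteq[-N,N]$, where $a$ and $b_0$ are the bottom and top degrees of $P^\bullet$, and in particular $K\in D^{\le b_0}$. Next I would construct perfect complexes $K^{(0)}=K,K^{(1)},K^{(2)},\dots$ over $R$, each represented over $U$ by a complex $C_j^\bullet$ of finite locally free $\calO_U$-modules with terms retracts of $\calO_U^{\oplus N_j}$, concentrated in degrees $[a,b_j]$ with $b_j$ strictly decreasing ($C_0^\bullet=P^\bullet$, $N_0=N$). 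Given $K^{(j-1)}$ with top degree $b:=b_{j-1}$, the module $H^b(f^*K^{(j-1)})$ is a quotient of the top term $C_{j-1}^b$, hence generated by $\le N_{j-1}$ sections over $U$, so Lemma~\ref{lem:boundgeneratorsNisnevich} bounds $H^b(K^{(j-1)})$ by $n_{j-1}:=N_{j-1}m\le N_{j-1}^2$ generators over $R$. Choosing such a surjection $R^{n_{j-1}}\to H^b(K^{(j-1)})$, lifting it (via $K^{(j-1)}\in D^{\le b}$) to a map $R^{n_{j-1}}[-b]\to K^{(j-1)}$, and setting $K^{(j)}:=\cofib\big(R^{n_{j-1}}[-b]\to K^{(j-1)}\big)$, a long exact sequence shows $K^{(j)}$ has cohomology in $[a,b-1]$.

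To obtain the local model $C_j^\bullet$ I would apply $f^*$ and compute the cofibre at chain level: the lift is represented by a map $\calO_U^{n_{j-1}}\to C_{j-1}^b$ whose image, together with that of the differential $C_{j-1}^{b-1}\to C_{j-1}^b$, exhausts $C_{j-1}^b$; hence $f^*K^{(j)}$ is represented by the complex gotten from $C_{j-1}^\bullet$ by enlarging $C_{j-1}^{b-1}$ to $C_{j-1}^{b-1}\oplus\calO_U^{n_{j-1}}$, whose top differential is now a split surjection onto the projective $C_{j-1}^b$, so deleting the contractible summand gives $C_j^\bullet$ in degrees $[a,b-1]$ with terms retracts of $\calO_U^{\oplus N_j}$, $N_j:=N_{j-1}+n_{j-1}\le N_{j-1}+N_{j-1}^2$. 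After $p\le b_0-a\le 2N$ steps $C_p^\bullet$ sits in the single degree $a$, so $f^*K^{(p)}$ is a finite locally free module of rank $\le N_p$ in degree $a$; as $K^{(p)}$ is perfect with Tor-amplitude (by faithfully flat descent) concentrated in degree $a$, it equals $P_a[-a]$ for a finite projective $R$-module $P_a$, which Lemma~\ref{lem:boundgeneratorsNisnevich} shows has $\le N_pm\le N_p^2$ generators. Reassembling $K=K^{(0)}$ from the cofibre triangles from the bottom up gives
\[ K\;\simeq\;\big[\,P_a\to R^{n_{p-1}}\to R^{n_{p-2}}\to\cdots\to R^{n_0}\,\big], \]
a complex of finite projective $R$-modules in degrees $[a,b_0]\subseteq[-N,N]$, every term a retract of $R^{\oplus N_p^2}$. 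Since $N_j\le g(j)$ with $g(0)=N$ and $g(t)=g(t-1)+g(t-1)^2$, and $p\le 2N$, putting $f(N):=g(2N)^2$ — a function of $N$ alone — this says exactly that $K$ globally has size $\le f(N)$.

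The main obstacle I anticipate is the bookkeeping in the inductive step: one must verify that the chain-level complex $C_j^\bullet$ genuinely represents $f^*K^{(j)}$ and that its top term stays a retract of $\calO_U^{\oplus N_j}$ after the ``cofibre and retruncate'' move, since the quantity fed into Lemma~\ref{lem:boundgeneratorsNisnevich} at the next stage is precisely the rank of that top term; the degenerate cases $H^b(K^{(j-1)})=0$ (so $n_{j-1}=0$) and $K=0$ are subsumed by the same recursion.
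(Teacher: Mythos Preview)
Your argument is correct and follows the same ``peel off the top cohomology'' strategy as the paper, but with a genuinely different organization. The paper runs a \emph{nested} induction: an outer induction on the cohomological amplitude of $K$, and, for the base case of amplitude~$0$, an inner induction on the Tor amplitude (needed because a perfect complex concentrated in one cohomological degree need not be a shifted projective). You avoid the inner induction entirely by carrying along an explicit chain model $C_j^\bullet$ of $f^*K^{(j)}$ over the fixed cover $U$, and arranging that its length drops by one at each step; when it reaches a single term, $f^*K^{(p)}$ is visibly a locally free sheaf in one degree, so faithfully flat descent of Tor-amplitude gives that $K^{(p)}$ is a shifted finite projective without further work. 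This single-induction version is more explicit (the final complex over $R$ is literally assembled from the $R^{n_j}$ and $P_a$), at the cost of the chain-level bookkeeping you flag: checking that the cone-and-retruncate move produces a genuine representative $C_j^\bullet$ with top term a retract of $\calO_U^{\oplus N_j}$, and that the reassembly $K\simeq[P_a\to R^{n_{p-1}}\to\cdots\to R^{n_0}]$ can be realized at the chain level (which follows inductively since each partial complex has projective terms, so the connecting maps $K^{(j)}\to R^{n_{j-1}}[-b_{j-1}+1]$ are represented by honest chain maps). Both approaches yield bounds of the same doubly-exponential order in~$N$.
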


The proof below constructs an $f$ with $f \sim O(N^{2^{4N}} )$; is it optimal?

\begin{proof}
	The proof involves a nested induction. The ``outer'' induction is along the cohomological amplitude, while the ``inner'' induction is along the Tor amplitude. As both these quantities are bounded by twice the local size, both inductions are actually finite. 
	
	More precisely, we will recursively construct functions $f_i:\N \to \N$ for $i = 0,\dots,2N$ such that:
	\begin{enumerate}[(1)]
		\item For any ring $R$ and $K \in D_\perf(R)$, if $K$ locally has size $\leq N$ and cohomological amplitude of size $\leq i$, then $K$ globally has size $\leq f_i(N)$.
	\end{enumerate}
	One then defines $f(N) = f_{2N}(N)$. This clearly does the job because any $K \in D_\perf(R)$ which locally has size $N$ has cohomological amplitude of size $\leq 2N$.

	Assume first $i = 0$. We will recursively construct functions $f_0^j:\N \to \N$ for $j=0,\dots,2N$ such that:
	\begin{enumerate}[(a)]
		\item For any ring $R$ and $K \in D_\perf(R)$ with cohomological amplitude $0$, if $K$ locally has size $\leq N$ and Tor amplitude of size $\leq j$, then $K$ globally has size $\leq f_0^j(N)$. 
	\end{enumerate}
	Taking $f_0 = f_0^{2N}$ then solves the problem of constructing $f_0$ as any $K \in D_\perf(R)$ which locally has size $\leq N$ also has Tor amplitude $\leq 2N$. For $j=0$, we have:
	
	\begin{claim} 
		$f_0^0(N) = N^2$ satisfies (b) for $j = 0$.
	\end{claim}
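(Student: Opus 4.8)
The plan is to exploit the rigidity of the hypotheses. A perfect complex with cohomological amplitude of size $\le 0$ is, up to shift, a finitely presented $R$-module, and demanding in addition that its Tor amplitude have size $\le 0$ forces that module to be flat, hence finite projective; so the whole claim reduces to bounding the number of generators of a finite projective module, for which Lemma~\ref{lem:boundgeneratorsNisnevich} is tailor made.

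First I would record what the hypotheses give concretely. Write $K \simeq M[-d]$ for a single integer $d$ and a finitely presented $R$-module $M$ (using cohomological amplitude $0$); then Tor amplitude $\le 0$ gives $\Tor_i^R(M,-) = 0$ for $i > 0$, so $M$ is finite projective. Since $K$ locally has size $\le N$, fix a Nisnevich cover $f : U \to \Spec(R)$ of length $\le N$ with $U$ affine and a representative $P^\bullet$ of $f^*\widetilde{K}$ whose terms $P^i$ are retracts of $\calO_U^{\oplus N}$ and vanish for $|i| > N$; as $f$ is faithfully flat this already forces $H^i(K) = 0$ for $|i| > N$, so $|d| \le N$.

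Next I would extract a local bound on generators. Over the affine $U$, splitting off the acyclic pieces at the top of $P^\bullet$ — at each degree $n > d$ with $P^n \neq 0$, the map $P^{n-1} \to P^n$ is a surjection onto a projective and hence splits, so $P^n$ may be discarded and $P^{n-1}$ replaced by one of its direct summands — shows that $f^*\widetilde{K}$ is represented by a complex concentrated in degrees $\le d$ whose term in degree $d$ is $Z^d := \ker(P^d \to P^{d+1})$, a direct summand of $P^d$ and therefore again a retract of $\calO_U^{\oplus N}$. Since $f^*\widetilde{M}$ is the degree-$d$ cohomology of this complex, it is a quotient of $Z^d$, hence generated by $\le N$ global sections over $U$.

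Finally I would assemble the pieces: Lemma~\ref{lem:boundgeneratorsNisnevich}, applied to the length-$\le N$ cover $f$, shows $\widetilde{M}$ is generated by $\le N \cdot N = N^2$ global sections over $\Spec(R)$; as $M$ is projective the resulting surjection $R^{\oplus N^2} \twoheadrightarrow M$ splits, so $M$ is a retract of $R^{\oplus N^2}$, and since $|d| \le N \le N^2$ the complex with $M$ in degree $d$ witnesses that $K$ globally has size $\le N^2$. The only step needing care is the splitting of $P^\bullet$, and there the one nontrivial point is that ``being a retract of $\calO_U^{\oplus N}$'' is inherited by direct summands (if $Q \oplus Q' \cong P$ and $P \oplus P'' \cong \calO_U^{\oplus N}$ then $Q \oplus (Q' \oplus P'') \cong \calO_U^{\oplus N}$), which prevents the bound from degrading along the induction.
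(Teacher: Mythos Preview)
Your proof is correct and follows essentially the same approach as the paper: identify $K$ as a shift of a finite projective module, observe that $Z^d(P^\bullet)$ is a retract of $\calO_U^{\oplus N}$ and surjects onto $f^*\widetilde{M}$, then apply Lemma~\ref{lem:boundgeneratorsNisnevich}. Your write-up is in fact slightly more careful than the paper's, as you explicitly verify $|d|\le N\le N^2$ (needed for the ``size'' bound on the degree) and spell out why $Z^d$ inherits the retract-of-$\calO_U^{\oplus N}$ property through the iterated splitting.
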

	\begin{proof}[Proof of Claim] Fix some $K \in D_\perf(R)$ which locally has size $\leq N$ (for some $N$), and cohomological and Tor amplitude of size $0$. Then $K = M[k]$ for a finite projective $R$-module $M$ and some $k \in \Z$. Fix a Nisnevich cover $f:U \to X$ of length $\leq N$ with $U$ affine, as well as a finite complex $P^\bullet$ of finite locally free $\calO_U$-modules as in the definition of local size. As $\tau^{> k}(K) = 0$, one checks (using that $U$ is affine) that $Z^k(P^\bullet)$ is a finite projective $\calO_U$-module which occurs as a retract of $\calO_U^{\oplus N}$. Moreover, $Z^k(P^\bullet) \to \calH^k(f^* \widetilde{K} )$ is surjective by definition. Lemma \ref{lem:boundgeneratorsNisnevich} then shows that $\widetilde{M} = \calH^k(K)$ is generated by $N^2$ global sections. As $M$ is projective, one may realize $M$ as a summand of $R^{\oplus N^2}$, which proves the claim.
	\end{proof}

	To construct the remaining $f_0^j$ inductively, fix some $0 < j \leq 2N$ and assume that $f_0^{j'}$ has been constructed for $j' < j$. It suffices to show: for any ring $R$ and any non-zero $K \in D_\perf(R)$ which locally has size $\leq N$, cohomological amplitude $0$, and Tor amplitude $\leq j$, there exists a cofibre sequence
	\[ K' \to  R^{\oplus N^2}[-k] \stackrel{h}{\to} K\]
where $k$ is the unique integer such that $H^k(K) \neq 0$, and $h$ is surjective on $H^k$. Indeed, then $K' \in D_\perf(R)$ locally has size $\leq N^2 + N$ (by the formula for a mapping cone), cohomological amplitude $0$, and Tor amplitude $\leq j-1$, so one may use $f_0^j(N) = N^2 + f_0^{j-1}(N^2 + N)$. The map $h$ can be constructed using the technique from the previous paragraph, so we have constructed $f_0$ satisfying (1).

We now construct $f_i$ for $i > 0$. Fix some $0 < i \leq 2N$, and assume that $f_j:\N \to \N$ satisfying (1) have been constructed for $j < i$. We claim that $f_i(N) = N^2 + f_{i-1}(N^2 + N)$ does the job. For this, fix some ring $R$ and $0 \neq K \in D_\perf(R)$ with local size $\leq N$ and cohomological amplitude of size $\leq i$. It is enough to construct a cofibre sequence
	\[ R^{\oplus N^2}[-k] \stackrel{g}{\to} K \to  Q \]
where $k$ is the largest integer with $H^k(K) \neq 0$, and $g$ is surjective on $H^k$. Indeed, once such a cofibre sequence is constructed, $Q$ locally has size $\leq N^2 + N$ (by the formula for the mapping cone) and cohomological amplitude of size $\leq i-1$, so $K$ globally has size $\leq N^2 + f_{i-1}(N^2 + N)$ (by the formula for the mapping cone). To construct this cofibre sequence,  it is enough to construct $g$. Choose $k$ as above. Then $H^k(K)$ can be generated by $\leq N^2$ elements by the argument used in the previous two paragraphs. This gives a surjective map $\overline{g}:R^{\oplus N^2} \to H^k(K)$. Then $\cofib(K[-k] \to H^k(K))$ is $1$-connected, so we may lift $\overline{g}$ to a map $g[-k]:R^{\oplus N^2} \to K[-k]$ that is surjective on $H^0$; shifting by $k$ gives the desired map.
\end{proof}

\subsection{Proof of Theorem \ref{thm:algprodspaces}}
\label{ss:algprodspacesproof}

Fix $X$, $\{A_i\}$, and $A$ as in Theorem \ref{thm:algprodspaces}. The proof of injectivity of $X(A) \to \prod_i X(A_i)$ proceeds exactly as before. For surjectivity, we first show that a family of projective modules over each $A_i$ with uniformly bounded embedding dimension can be patched to a projective module over $A$.

\begin{lemma}
	\label{lem:boundprojectivepatch}
	Fix projective $A_i$-modules $P_i$. Assume $P_i$ is a retract of $A_i^{\oplus N}$ for some $N$ independent of $i$. Then $P := \prod_i P_i$ is a projective $A$-module, and a retract of $A^{\oplus N}$.
\end{lemma}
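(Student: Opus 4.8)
The plan is to realize $P$ as the image of a single idempotent matrix over $A$. For each $i$, the hypothesis that $P_i$ is a retract of $A_i^{\oplus N}$ provides an $A_i$-linear surjection $\pi_i : A_i^{\oplus N} \to P_i$ together with a section $s_i : P_i \to A_i^{\oplus N}$ satisfying $\pi_i \circ s_i = \id_{P_i}$. Then $e_i := s_i \circ \pi_i \in \End_{A_i}(A_i^{\oplus N}) = M_N(A_i)$ is an idempotent endomorphism whose image is $s_i(P_i) \cong P_i$.

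Next I would assemble these into a single idempotent over $A$. Using the canonical identification $M_N(A) = M_N(\prod_i A_i) \simeq \prod_i M_N(A_i)$ (a matrix over a product ring is the same datum as a compatible family of matrices, one over each factor), the element $e := (e_i)_i$ is an idempotent in $M_N(A)$, i.e.\ an $A$-linear idempotent endomorphism of $A^{\oplus N} = \prod_i A_i^{\oplus N}$. Similarly, the maps $\pi := \prod_i \pi_i : A^{\oplus N} \to P$ and $s := \prod_i s_i : P \to A^{\oplus N}$ are $A$-linear, satisfy $\pi \circ s = \id_P$ (this holds factorwise), and $s \circ \pi = e$.

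Finally, the image of an idempotent endomorphism of a free module is a direct summand of that free module, hence projective; concretely, $s$ and $\pi$ exhibit $P$ as this summand. Since arbitrary products are exact in $\Mod_A$, one has $\im(e) = \prod_i \im(e_i) = \prod_i s_i(P_i) = s(P) \cong P$, so $P$ is projective and is a retract of $A^{\oplus N}$ via $(\pi, s)$, as claimed.

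I do not expect any genuine obstacle here: the argument is a direct compatibility check. The only two points that warrant an explicit word are the identification $M_N(\prod_i A_i) = \prod_i M_N(A_i)$ and the exactness of arbitrary products in a module category (so that $\im$ commutes with $\prod_i$), both of which are elementary and standard.
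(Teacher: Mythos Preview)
Your proof is correct and follows essentially the same approach as the paper: both realize each $P_i$ as the image of an idempotent $e_i \in M_N(A_i)$, assemble these into an idempotent $e \in M_N(A)$ via $M_N(\prod_i A_i) \simeq \prod_i M_N(A_i)$, and conclude that $P = \im(e)$ is a summand of $A^{\oplus N}$ using that products commute with cokernels (equivalently, are exact) in $\Mod_A$. Your version is slightly more explicit in that you directly construct the retraction pair $(\pi,s)$, which already certifies $P$ as a summand of $A^{\oplus N}$ without needing the separate identification $\im(e) \cong P$.
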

\begin{proof}
Choose projectors $\epsilon_i \in M_N(A_i)$ realizing $P_i$, i.e., $\epsilon_i^2 = \epsilon_i$ and $P_i = \im(\epsilon_i)$. Then $\epsilon = \prod_i \epsilon_i \in M_N(A)$ is a projector, and $P = \im(\epsilon)$ is indeed projective; we use here that the formation of cokernels commutes with arbitrary products in abelian groups.
\end{proof}

We can upgrade this to a patching result for perfect complexes.

\begin{lemma}
	\label{lem:boundglobalpatch}
Fix $K_i \in D_\perf(A_i)$ which globally have size $\leq N$ for some $N$ independent of $i$. Then $K := \prod_i K_i \in D_\perf(A)$.
\end{lemma}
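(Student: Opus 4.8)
The plan is to patch the chosen representatives termwise. For each $i$, use that $K_i$ globally has size $\leq N$ to fix a representing complex $P_i^\bullet$ of $K_i$ built from finite projective $A_i$-modules $P_i^j$ with $P_i^j = 0$ for $|j| > N$ and each $P_i^j$ a retract of $A_i^{\oplus N}$. Form the termwise product complex $P^\bullet := \prod_i P_i^\bullet$, so that $P^j = \prod_i P_i^j$. By Lemma \ref{lem:boundprojectivepatch}, each $P^j$ is a finite projective $A$-module and a retract of $\prod_i A_i^{\oplus N} = A^{\oplus N}$, while $P^j = 0$ for $|j| > N$. Hence $P^\bullet$ is a bounded complex of finite projective $A$-modules, and as such defines an object $P \in D_\perf(A)$ which, moreover, globally has size $\leq N$.

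It then remains to identify $P$ with $K = \prod_i K_i$ inside $D(A)$. Here I would use that the projection $A \to A_i$ has kernel $\prod_{j\neq i} A_j$, which acts on $P^j = \prod_{j'} P_{j'}^j$ with quotient exactly $P_i^j$, so $P^j \otimes_A A_i \cong P_i^j$; since $P^\bullet$ is a complex of projectives this gives a natural map $P \to P\otimes_A A_i \simeq P_i^\bullet \simeq K_i$ for every $i$, hence a natural map $P \to \prod_i K_i$. To see this map is an equivalence one computes homotopy groups: products in $D(A)$ are computed on homotopy, so $\pi_n(\prod_i K_i) = \prod_i \pi_n(K_i)$; on the other hand $\pi_n(P)$ is the relevant cohomology of $P^\bullet = \prod_i P_i^\bullet$, and since arbitrary products of $A$-modules are exact this equals $\prod_i$ of the cohomology of $P_i^\bullet$, i.e.\ $\prod_i \pi_n(K_i)$; and the displayed natural map induces precisely the product of the isomorphisms $\pi_n(P_i^\bullet) \cong \pi_n(K_i)$ on each factor. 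Thus $P \simeq \prod_i K_i$, and in particular $\prod_i K_i \in D_\perf(A)$ (even globally of size $\leq N$).

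The only genuine point is the last identification, namely that the termwise product of representing complexes computes the categorical product in $D(A)$; this rests entirely on the exactness of arbitrary products in the category of $A$-modules (equivalently, one notes that the product functor on chain complexes preserves quasi-isomorphisms and descends to a right adjoint of the diagonal on derived categories). Granting that, together with Lemma \ref{lem:boundprojectivepatch}, the argument is purely formal, so I expect no serious obstacle.
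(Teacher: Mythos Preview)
Your proof is correct and follows essentially the same route as the paper: represent each $K_i$ by a bounded complex of projectives of bounded rank, take the termwise product, and invoke Lemma~\ref{lem:boundprojectivepatch} to see that the result is a bounded complex of finite projective $A$-modules. The paper simply asserts $K \simeq P^\bullet$ without comment, whereas you spell out the identification via exactness of products in $\Mod_A$; this extra care is fine but not a genuinely different approach.
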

\begin{proof}
	We may represent each $K_i$ by a finite complex $P^\bullet_i$ of finite projective $A_i$-modules such that $P_i = 0$ for $|i| > N$ and $P_i$ is a retract of $A_i^{\oplus N}$. Lemma \ref{lem:boundprojectivepatch} shows that $P^\bullet := \prod_i P_i^\bullet \in D(A)$ has finite projective terms with $P^i = 0$ for $|i| > N$, so $K \simeq P^\bullet$ is perfect.
\end{proof}

Using this patching result and the bounds from \S \ref{ss:boundperfect}, we can finishing proving surjectivity.

\begin{proof}[Proof of Theorem \ref{thm:algprodspaces}]
	We have already seen that $X(A) \to \prod_i X(A_i)$ is injective (see Remark \ref{rmk:algprodspcinj}). For surjectivity, fix maps $f_i:\Spec(A_i) \to X$ for $i \in I$. We claim that the association $K \mapsto \prod_i \Gamma(\Spec(A_i),f_i^* K)$ defines a functor $F':D(X) \to D(A)$ that preserves perfect complexes. Fix some $K \in D_\perf(A)$. Then $K$ locally has size $\leq N$ for some $N$; here we implicitly use that $X$ has a Nisnevich cover by affine schemes (see \cite[Tag 08GL]{StacksProject} or \cite[Theorem 1.3.8]{LurieDAGXII}). Using Lemma \ref{lem:boundaffinepullback}, one checks that $f_i^* \widetilde{K}$ locally has size $\leq N'$ for some $N'$ independent of $i$.  By Lemma \ref{lem:boundlocalglobal}, the complex $f_i^* \widetilde{K}$ globally has size $\leq N''$ for some $N''$ independent of $i$. By Lemma \ref{lem:boundglobalpatch}, it follows that $F'(K)$ is perfect. Using Lemma \ref{lem:prodfullyfaithful}, one easily checks that the resulting functor $F:D_\perf(X) \to D_\perf(A)$ is symmetric monoidal. By Theorem \ref{thm:TD}, one obtains a map $f:\Spec(A) \to X$ such that $f^* = F$. As the composition of $F$ with any projection $D_\perf(A) \to D_\perf(A_i)$ is simply $f_i^*$, it follows that $f$ extends each $f_i$, as wanted.
\end{proof}

\newpage
\section{Some examples and applications}
\label{sec:algprodex}

The main goal of this section is to record some special cases of Theorem \ref{thm:algprodsch} and Theorem \ref{thm:algprodspaces} that can be proven by hand. Along the way, we also give counterexamples illustrating the sharpness of these results.  We begin with two examples where Theorem \ref{thm:algprodsch} can be proven directly.

\begin{example}
	\label{ex:algprodZlocal}
	With notation as in Theorem \ref{thm:algprodsch}, assume that each $A_i$ is local. Choose a Zariski cover $\{U_1,\dots,U_n\}$ of $X$, and set $U = \sqcup_j U_i$. Then every map $\Spec(A_i) \to X$ lifts to $U$, so the surjectivity of $X(A) \to \prod_i X(A_i)$ follows from that for $U$. The injectivity is proven as before. \end{example}

\begin{example}
	\label{ex:algprodprojective}
	Let $X = \P^n$, and fix a set $I$ of rings $\{A_i\}_{i \in I}$. Write $A = \prod_i A_i$. We will show $X(A) = \prod_i X(A_i)$ by interpreting $X(R)$ as the collection of tuples $(L,s_0,\dots,s_n)$ where $L \in \Pic(R)$ and $s_i \in L$ such that $R^{\oplus n+1} \stackrel{s_i}{\to} L$ is surjective (up to isomorphism). By suitably twisting, one first checks that $\underline{\Pic}(A) \to \prod_i \underline{\Pic}(A_i)$ is fully faithful; here $\underline{\Pic}(-)$ denotes the Picard {\em category} functor. It is then relatively easy to see that $X(A) \to \prod_i X(A_i)$ is injective. For surjectivity, one must show: given $x_i := (L_i,s_{i,0},\dots,s_{i,n} ) \in X(A_i)$, there exists $x := (L,s_0,\dots,s_n) \in X(A)$ inducing $x_i$. This follows by the argument in Lemma \ref{lem:boundprojectivepatch}. A similar argument works whenever $X$ is quasi-projective over an affine (using the trick from Lemma \ref{lem:algprodquasiaffine} ).
\end{example}

The argument in Example \ref{ex:algprodprojective} (or, rather, Lemma \ref{lem:boundprojectivepatch} ) crucially relies on bounding the embedding dimension of the line bundles. This is necessary: an arbitrary product of line bundles $L_i \in \Pic(A_i)$ does not give a line bundle on $A$, and thus Theorem \ref{thm:algprodsch} does not extend to Artin stacks, as the next example shows.

\begin{example}
	\label{ex:boundSwanexample}
	Let $X = B\G_m$, and fix a set $\{A_i\}$ of rings. Then $X(\prod_i A_i) \neq \prod_i X(A_i)$ in general. More precisely, the natural map $X(\prod_i A_i) \to \prod_i X(A_i)$ is not essentially surjective. To see this, it is enough to exhibit rings a sequence $\{A_n\}$ of rings with line bundles $M_n \in \Pic(A_n)$ such that $M_n$ is generated by no fewer than $f(n)$ sections, where $f:\N \to \N$ is an unbounded function; this is simply because any line bundle on $\Spec(\prod_n A_n)$ defines a line bundle on $\Spec(A_n)$ generated by $N$ sections for some $N \gg 0$ independent of $n$. Such a sequence of line bundles $\{M_n\}$ and rings $\{A_n\}$ was constructed by Swan (with $A_n$ noetherian); see \cite{SwanVectorBundles}.
\end{example}

The next example shows that Theorem \ref{thm:algprodsch} fails for the simplest Deligne-Mumford stacks; the underlying reason is the purely topological fact that the classifying space of a finite group is infinite dimensional, though we argue cohomologically in the algebraic context.

\begin{example}
	Fix an algebraically closed field $k$ of characteristic $0$, and let $G$ be a non-trivial finite group. Let $X = BG$ be the classifying stack of $G$-torsors.  We will construct affine schemes $\Spec(A_i)$ for each $i \in \N$, and maps $f_i:\Spec(A_i) \to X$ such that $\sqcup_i \Spec(A_i) \to X$ does not factor through a map $\Spec(\prod_i A_i) \to X$.  For the construction, fix a prime $p$ dividing the order of $G$, so $H^i_\et(X,\Z/p) \neq 0$ for arbitrarily large $i \in \N$\footnote{In fact, by \cite{QuillenGroupCoh}, one knows that $H^*_\et(X,\Z/p)/(\mathrm{nilpotents})$ is a finitely generated $\Z/p$-algebra of non-zero Krull dimension, and thus $H^i_\et(X,\Z/p)$ cannot be $0$ for all $i \gg 0$.}. For each $i \in \N$, choose an affine variety $U_i := \Spec(A_i)$ and a map $U_i \to X$ which is an isomorphism on $H^{\leq i}_\et(-,\Z/p)$; this can always be done by looking at the quotient by $G$ of the stabilizer-free locus in a sufficiently large representation of $G$ and using the Jounalou trick; see \cite[Lemma 1.6]{TotaroChowRingBG} or \cite[\S 4.2]{MorelVoevodsky}. Consider the resulting map $f_i:\Spec(A_i) \to X$. We claim that $\sqcup_i f_i:\sqcup U_i \to X$ does not factor through some map $f:\Spec(\prod_i A_i) \to X$. Assume towards contradiction such an $f$ does exist. As $X$ is locally finitely presented, we can find a finitely presented $k$-subalgebra $A \subset \prod_i A_i$ such that $f$ factors through some map $\alpha: \Spec(A) \to X$. As $\Spec(A)$ is an algebraic variety, one knows $H^k_\et(\Spec(A),\Z/p) = 0$ for $k > \dim(A)$ by Artin vanishing. In particular, it follows that $f_i$, viewed as the composite map $\Spec(A_i) \to \Spec(\prod_i A_i) \to \Spec(A) \to X$, induces the $0$ map on $H^{> \dim(A)}_\et(-,\Z/p)$ for all $i$. However, for $i \gg 0$, the map $f_i$ induces a non-zero map on $H^k(-,\Z/p)$ for some $k > \dim(A)$ by construction, which gives the desired contradiction.
\end{example}

The next example shows that Theorem \ref{thm:algprodsch} is false if $X$ is not qc or not qs.

\begin{example}
	\label{ex:algprodschnotqcqs}
	Take an infinite set $I$ and set $A_i := k$ for some non-zero ring $k$. If $X = \sqcup_i \Spec(A_i)$ is the displayed non-quasi-compact scheme, it is easy to see that $X(\prod_i A_i) \to \prod_i X(A_i)$ is not surjective. Now set $Y$ to be the glueing of $\overline{X} := \Spec(\prod_i A_i)$ to itself along the identity on $X \subset \overline{X}$. Then $Y$ is quasi-compact, but not quasi-separated. It is easy to see that the two distinct maps $\overline{X} \to Y$ induce the same map $\Spec(A_i) \to Y$, so $Y(\prod_i A_i) \to \prod_i Y(A_i)$ is not injective.
\end{example}

The next example contains a direct proof of an important special case of Theorem \ref{thm:algprodspaces}, and is closely related to Example \ref{ex:algprodZlocal}.

\begin{example}
	\label{ex:adelicpoints}
Let $X$ be a qcqs algebraic space, and assume $\{A_i\}$ is a set of henselian local rings. Set $A = \prod_i A_i$. Then one can show $X(A) \simeq \prod_i X(A_i)$ directly as follows. The argument for injectivity in the proof of Theorem \ref{thm:algprodspaces} is elementary,  and we offer no improvements here. For surjectivity, fix a Nisnevich cover $U \to X$ with $U$ an affine scheme. Given maps $a_i:\Spec(A_i) \to X$, one may choose lifts $\widetilde{a_i}:\Spec(A_i) \to U$ as $A_i$ is henselian local. This shows $\prod_i U(A_i) \to \prod_i X(A_i)$ is surjective. As $U$ is affine, one clearly has $U(A) = \prod_i U(A_i)$, so the composite $U(A) \to X(A) \to \prod_i X(A_i)$ is surjective, and hence $X(A) \to \prod_i X(A_i)$ is surjective. 
\end{example}

We discuss one application of  Theorem \ref{thm:algprod} to describing adelic points on algebraic spaces over global fields; in fact, only the significantly easier Example \ref{ex:adelicpoints} is used the proof, but we record the statement anyways. First, we fix some notation (and adhere to standard conventions in number theory for any unexplained notation). Let $K$ be a global field, $S$ a finite non-empty set of places of $K$ (assumed to contain the places at $\infty$), $\A_K$ the adele ring of $K$, and $\A_{K,S} \subset \A$ the subring of adeles integral outside $S$. Then we have:

\begin{corollary}
	\label{cor:adelicpoints}
	For any qcqs algebraic space $X$ over $\calO_{K,S}$, the natural map induces bijections 
	\[ X(\A_{K,S} ) \simeq \prod_{v \in S} X(K_v) \times \prod_{v \notin S} X(\calO_v).\] 
	If additionally $X$ is finitely presented over $\calO_{K,S}$, then
	\[ X(\A_K) \simeq \prod_{v \in S} X(K_v) \times {\prod_{v \notin S}}^{'}  \Big(X(K_v),X(\calO_v) \Big).\]
\end{corollary}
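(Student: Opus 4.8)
The plan is to deduce Corollary \ref{cor:adelicpoints} from Theorem \ref{thm:algprodsch}/Theorem \ref{thm:algprodspaces} together with the elementary Example \ref{ex:adelicpoints}. First I would record the ring-theoretic inputs. The key fact is that the completion rings $\calO_v$ (for $v$ finite) are complete discrete valuation rings, hence henselian local; and the residue fields $K_v$ (for $v\in S$, or archimedean) are fields, hence local. The integral adeles decompose as $\A_{K,S} \simeq \prod_{v\in S}K_v \times \prod_{v\notin S}\calO_v$ as a ring, so this is literally a product of local rings. Applying Theorem \ref{thm:algprodspaces} (with index set the places, assigning $K_v$ for $v\in S$ and $\calO_v$ for $v\notin S$) gives
\[ X(\A_{K,S}) \simeq \prod_{v\in S}X(K_v) \times \prod_{v\notin S}X(\calO_v),\]
which is the first assertion. (Example \ref{ex:adelicpoints} even gives this directly without the full strength of Theorem \ref{thm:algprodspaces}, since all the factor rings are henselian local or fields; I would mention this as the ``easy'' route actually used.)

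For the second assertion, recall $\A_K = \colim_{T\supseteq S} \A_{K,T}$ where $T$ ranges over finite sets of places containing $S$, the transition maps being the evident inclusions, and this is a \emph{filtered} colimit of rings. If $X$ is finitely presented over $\calO_{K,S}$, then $X(-)$ commutes with filtered colimits of rings, so $X(\A_K) \simeq \colim_{T} X(\A_{K,T})$. By the first part applied to each $T$, this is $\colim_T \big(\prod_{v\in T}X(K_v)\times\prod_{v\notin T}X(\calO_v)\big)$; and the transition maps replace a factor $X(\calO_v)$ by $X(K_v)$ via the localization $\calO_v \to K_v$. By definition of the restricted product, this filtered colimit is exactly $\prod_{v\in S}X(K_v) \times {\prod_{v\notin S}}^{'}\big(X(K_v),X(\calO_v)\big)$, where an element of the restricted product is a tuple $(x_v)$ with $x_v \in X(K_v)$ for all $v\notin S$ and $x_v$ lying in the image of $X(\calO_v)\to X(K_v)$ for all but finitely many $v$. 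This gives the claimed formula.

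I would be slightly careful about two points. First, ``the image of $X(\calO_v)\to X(K_v)$'': for the restricted product one needs the transition maps in the colimit to be the maps $X(\calO_v)\to X(K_v)$, and one should note that these need not be injective in general, so the restricted product should be understood with the subsets $\mathrm{im}(X(\calO_v)\to X(K_v))\subseteq X(K_v)$ as the ``integral'' conditions — this is the standard convention and matches the notation ${\prod}'(X(K_v),X(\calO_v))$ in the statement. Second, the use of ``$X$ finitely presented $\Rightarrow$ $X(-)$ commutes with filtered colimits of rings'' is standard for algebraic spaces locally of finite presentation (limit formalism, cf.\ \cite[Tag 07SU]{StacksProject}), and this is the only place the finite presentation hypothesis enters.

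The main obstacle here is essentially cosmetic rather than mathematical: making the identification of $\colim_T\big(\prod_{v\in T}X(K_v)\times\prod_{v\notin T}X(\calO_v)\big)$ with the restricted product precise, i.e.\ checking that filtered colimit and the (non-filtered) products interact as expected and that the transition maps are the localization maps on the relevant factors. Since all the real content — algebraizing a product of local points — is already contained in Theorem \ref{thm:algprodspaces} and Example \ref{ex:adelicpoints}, there is no serious difficulty beyond bookkeeping; indeed the corollary is flagged in the introduction as using only the ``significantly easier Example \ref{ex:adelicpoints}''.
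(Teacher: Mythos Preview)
Your proposal is correct and follows essentially the same route as the paper: deduce the first part from the product decomposition $\A_{K,S} \simeq \prod_{v\in S}K_v \times \prod_{v\notin S}\calO_v$ together with Example \ref{ex:adelicpoints} (all factors being henselian local), and then obtain the second part by writing $\A_K = \colim_{T\supseteq S}\A_{K,T}$, using finite presentation to commute $X(-)$ with this filtered colimit, and identifying the result with the restricted product. Your additional care about the transition maps and the meaning of the restricted product is more explicit than the paper's terse version, but the argument is the same.
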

In the special case $X = \G_a$, Corollary \ref{cor:adelicpoints} is a definition. Slight variants of Corollary \ref{cor:adelicpoints} can also be found in work of Conrad \cite[page 613-615]{ConradFinitenessAlgGroups} and \cite[Theorem 3.6]{ConradAdelicPoints}.
\begin{proof}
	The first part is immediate from Example \ref{ex:adelicpoints} as $\calO_v$ and $K_v$ are henselian local rings. For the second, note that $\A_K = \colim \A_{K,T}$, where the colimit runs over finite sets $T$ of places containing $S$. As $X$ is finitely presented, one obtains $X(\A_K) = \colim X(\A_{K,T})$. By definition of the restricted product, one also has 
	\[ \prod_{v \in S} X(K_v) \times {\prod_{v \notin S}}^{'}  \Big(X(K_v),X(\calO_v) \Big) \simeq \colim \Big(\prod_{v \in S} X(K_v) \times \prod_{v \in T \setminus S} X(K_v) \times \prod_{v \notin T} X(\calO_v)\Big),\]
	where the colimit is indexed by the same $T$'s as before. The claim is now immediate from the first part.
\end{proof}

\newpage

\bibliography{algebraise}
\end{document}